\newtheorem{theorem}{Theorem}[section]
\newtheorem{proposition}[theorem]{Proposition}
\newtheorem{lemma}[theorem]{Lemma}
\newtheorem{assumption}{\rsp{Hypothesis}}
\newtheorem{remark}{Remark}
\newcommand{\Wert}{{\vert\kern-0.25ex\vert\kern-0.25ex\vert}}
\newcommand{\figref}[1]{Fig.~\ref{fig:#1}}
\newcommand{\figlab}[1]{\label{fig:#1}}
\newcommand{\lemmaref}[1]{Lemma~\ref{lemma:#1}}
\newcommand{\lemmalab}[1]{\label{lemma:#1}}
\newcommand{\remlab}[1]{\label{remark:#1}}
\newcommand{\thmref}[1]{Theorem~\ref{theorem:#1}}
\newcommand{\thmlab}[1]{\label{theorem:#1}}
\newcommand{\appref}[1]{Appendix~\ref{app:#1}}
\newcommand{\applab}[1]{\label{app:#1}}
\newtheorem{cor}[theorem]{Corollary}
\newcommand{\corlab}[1]{\label{cor:#1}}
\newcommand{\corref}[1]{Corollary~\ref{cor:#1}}
\newcommand\KUK[1]{{\color{purple}{#1}}}
\newcommand\rsp[1]{{\color{black}{#1}}}
\newcommand\rspp[1]{{\color{black}{#1}}}
\author[1]{Kristian Uldall Kristiansen}
\author[2]{Peter Szmolyan}
\affil[1]{Department of Applied Mathematics and Computer Science, Technical University of Denmark, 2800 Kgs. Lyngby, Denmark}
\affil[2]{Institute of Analysis and Scientific Computing, Technical University of Vienna, Austria}
\affil[1]{{\tt krkri@dtu.dk}}
\affil[2]{{\tt peter.szmolyan@tuwien.ac.at}}
\title{Analytic weak-stable manifolds in unfoldings of saddle-nodes}
\begin{document}
\maketitle
\date{}
\begin{center}
   \textit{Dedicated to the memory of Claudia Wulff. A dear friend and a respected colleague.}
\end{center}

\begin{abstract}
Any attracting, hyperbolic and proper node of a two-dimensional analytic vector-field has a unique strong-stable manifold. This manifold is analytic. The corresponding weak-stable manifolds are, on the other hand, not unique, but in the nonresonant case there is a unique weak-stable manifold that is analytic. As the system approaches a saddle-node (under parameter variation), a sequence of resonances (of increasing order) occur. In this paper, we give a detailed description of the analytic weak-stable manifolds during this process. In particular, we relate a ``flapping-mechanism'', corresponding to a dramatic change of the position of the analytic weak-stable manifold as the parameter passes through the infinitely many resonances, to the lack of analyticity of the center manifold at the saddle-node. Our work is motivated and inspired by the work of Merle, Rapha\"{e}l, Rodnianski, and
              Szeftel, where this flapping mechanism is the crucial ingredient in the construction of $C^\infty$-smooth self-similar solutions of the compressible Euler equations. 
            
%
\end{abstract}
\textit{Keywords:} analytic weak-stable manifolds, center manifolds, Gevrey properties, saddle-nodes;
\newline
\textit{2020 Mathematics Subject Classification:} 34C23, 34C45, 37G05, 37G10


\tableofcontents
\section{Introduction}
\rsp{In this paper, we consider the following analytic normal form (based upon \cite[Theorem 2.2]{rousseau2005a}) for the unfolding of a planar saddle-node bifurcation:
\begin{equation}\label{eq:normalform0}
\begin{aligned}
 \dot x &= (x-\epsilon)x,\\
 \dot y &=-y(1+a^\epsilon x)+g^\epsilon(x,y),
\end{aligned}
\end{equation}
with $g^\epsilon(x,y)=\mathcal O(x^2,x^2y,xy^2)$, see Theorem \ref{thm:nf} below. Here it is important to emphasize that this formulation of the unfolding of the saddle-node is only valid on the singularity side of the saddle-node; this means that the unfolding parameter $\epsilon\ge 0$ is treated nonnegative only. In particular, the functions $a^\epsilon$ and $g^\epsilon$ depend analytically on the unfolding parameter $\epsilon\in [0,\epsilon_0)$, $\epsilon_0>0$. For $\epsilon\in (0,\epsilon_0)$, there is a saddle at $(\epsilon,\rsp{\mathcal O(\epsilon^2)})$ and a node at the origin.} It is well-known that the saddle's stable and unstable manifolds, $W^{s}$ and $W^u$, are analytic. The linearization of the node has eigenvalues $-\epsilon$ and $-1$. It is therefore resonant for $\epsilon^{-1}\in \mathbb N$. When the node is nonresonant ($\epsilon^{-1}\notin \mathbb N$) it is known \cite[Theorem 2.15]{dumortier2006a} that the node can be linearized locally by an analytic change of coordinates to the form
\begin{equation}\label{eq:xynode}
\begin{aligned}
 \dot x &=-\epsilon x,\\
 \dot y &=-y.
\end{aligned}
\end{equation}
Here $x=0$ is the strong-stable manifold $W^{ss}$, which is analytic. The invariant curves $y=c \vert x\vert^{\epsilon^{-1}}$, $c\ne 0$, tangent to the weak eigendirection at $x=0$, are all weak-stable invariant manifolds with finite smoothness at $x=0$. \rspp{The set $\{y=0\}$ is therefore the unique analytic weak-stable manifold $W^{ws}$ of \eqref{eq:xynode}}. At a resonance $\epsilon^{-1}=N\in \mathbb N$, the node can be brought into the analytic normal form
\begin{equation}\label{xynodeb}
\begin{aligned}
 \dot x &=-N^{-1} x,\\
 \dot y &=-y+b x^N,
\end{aligned}
\end{equation}
see \cite[Theorem 2.15]{dumortier2006a}. 
\rsp{All weak-stable manifolds of \eqref{xynodeb} take the graph form}
\begin{align*}
 \rsp{y=x^N c- bN x^N \log \vert x\vert,\quad c\in \mathbb R,}
\end{align*}
and therefore have finite smoothness at the \rsp{origin} in the generic case $b\ne 0$. Specifically, there is no analytic weak-stable manifold in this case. Note that this classification in the context of the normal form \eqref{eq:normalform0} is (clearly) nonuniform with respect to $\epsilon>0$. 

\rsp{For further backgound on normal forms, including formal and analytic linearizations, center manifolds and stable and unstable manifolds, we refer to the excellent book \cite{dumortier2006a}.}

\begin{figure}[h!]
\begin{center}
\includegraphics[width=.3\textwidth]{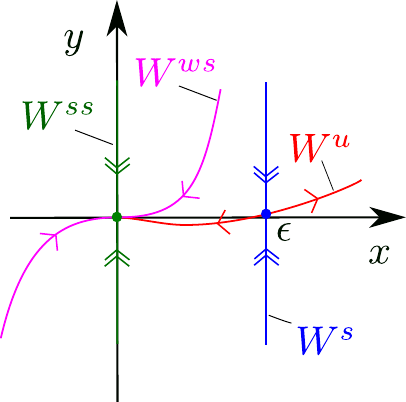}
\end{center}
\caption{Phaseportrait of \eqref{eq:normalform0} for \rsp{$0<\epsilon\ll 1$}, $\epsilon^{-1}\notin \mathbb N$, with a hyperbolic saddle at $(\epsilon,\rsp{\mathcal O(\epsilon^2)})$ with stable and unstable manifolds ($W^s$ and $W^u$ in blue and red, respectively) and a hyperbolic proper node at the origin. The node always has a unique strong-stable manifold ($W^{ss}$ in green) and in the nonresonant case ($\epsilon^{-1}\notin \mathbb N$) a unique analytic weak-stable manifold ($W^{ws}$ in magenta). }
\label{node}
\end{figure}

In the present paper, we provide a detailed description of the analytic weak-stable manifold $W^{ws}$ of \eqref{eq:normalform0} for all $0<\epsilon\ll 1$ (see our \rsp{Hypotheses} \ref{assa} and \ref{deltacond} below). Our overall strategy follows \cite{MR4445442}. Here the authors constructed $C^\infty$-smooth invariant manifolds (for a specific polynomial system) by matching a global unstable manifold with an analytic weak-stable manifold close to a saddle-node $\epsilon\rightarrow 0$. These invariant manifolds correspond to $C^\infty$-smooth self-similar {solutions} of the isentropic ideal compressible Euler equations that were used in \cite{merle2022a} to determine finite time energy blowup solutions of Navier-Stokes equations (isentropic ideal compressible), see also \cite{merle2022b} for applications to the defocusing nonlinear Schr{\"o}dinger equation.

In order to control the analytic weak-stable manifolds, the authors of \cite{MR4445442} first apply a new approach for the center manifold $W^c$ at $\epsilon=0$. In particular, they define a number $S_\infty^0$, which depends on the nonlinearity (in our case, it will depend on the full jet of $g^0$), and show that if this quantity is nonzero $S_\infty^0\ne 0$, then a ``leading order term'' of the analytic weak-stable manifold can be determined. The proof of the main result of \cite{MR4445442} is not based upon dynamical systems theory but rather on careful estimation and boot-strapping arguments in order to bound the growth of the coefficients of certain series expansions. 

\rsp{In the context of \eqref{eq:normalform0}, the center manifold $W^c$ is defined for $\epsilon=0$:
%
%
%
%
%
%
\begin{equation}\label{eq:normalformeps0}
\begin{aligned}
 \dot x &= x^2,\\
 \dot y &= -y(1+a^0 x)+g^0(x,y),
\end{aligned}
\end{equation}
as an invariant manifold of the graph form $y=m^0(x)$ tangent to $y=0$. This means that $y=m^0(x)$ solves 
\begin{align*}
 x^2 y' &= -y(1+a^0 x)+g^0(x,y),
\end{align*}
obtained from \eqref{eq:normalformeps0} by eliminating time. It is well-known that although $m^0$ has a well-defined formal series expansion, which we denote by
\begin{align}\label{eq:formalseriesm0}
\widehat m^0(x)=\sum_{k=2}^\infty m_k^0 x^k,
\end{align}
it is in general only $C^\infty$-smooth, see e.g. \cite[Theorem 2.19]{dumortier2006a}.  As an example of a nonanalytic center manifold, consider $a^0=0$, $g^0(x,y)=x^2$:
\begin{align*}
 x^2 \frac{dy}{dx}=-y+x^2.
\end{align*}
This $y$-linear case corresponds to Euler's famous example. Here one can easily show that $y= \sum_{k=2}^\infty m_k^0 x^k$ (by term-wise differentiation of the series) leads to
\begin{align*}
 m_k^0 = (-1)^k (k-1)!\quad \forall\, k\ge 2.
\end{align*}
Consequently, we have $m_k^0  x^k\not \rightarrow 0$ as $k\rightarrow \infty$ for any $x\ne 0$ and the center manifold is therefore nonanalytic. The nonanalyticity of center manifolds is also intrinsically related to their nonuniqueness  (see e.g. \figref{saddle_node} below for $x<0$). 

In general, it is also known, see e.g. \cite{bonckaert2008a}, that 
the formal series expansion $\widehat m^0(x)=\sum_{k=2}^\infty m_k^0 x^k$ is Gevrey-$1$: 
\begin{align}
\vert m^0_k\vert \le C D^{-k} k! \quad \forall\, k\ge 2,\label{eq:gevrey1}
\end{align}
for some $C,D>0$. (In fact, this formal series is $1$-summable
along any sector that is not centered along the negative real axis, see \cite[Chapter 3]{balser1994a} and \cite{bonckaert2008a} for further details.) The Gevrey-$1$ property of the formal series in \eqref{eq:gevrey1} also holds true for one-dimensional center manifolds of higher dimensional saddle-nodes (with one single zero eigenvalue of the linearization). We refer to \cite{bonckaert2008a} for further details. In contrast, higher dimensional center manifolds may only have finite $C^k$-smoothness, see \cite{Strien1979}.}

\subsection{\rsp{Informal statement of the main results}}
\rsp{In this paper, we consider the general case \eqref{eq:normalform0} (as opposed to a specific $g^\epsilon$ as in \cite{MR4445442}), while adding the following technical hypotheses: 
\begin{align}
 a^0>-2,\quad g^\epsilon(x,y)- g^\epsilon(x,0)=\mathcal O(\mu),\label{eq:gepsmu0}
\end{align}
with $0\le \mu<\mu_0$ small enough, see further details below. Here $\mu_0$ is independent of $\epsilon\ge 0$. We conjecture that our results are true without these assumptions (i.e. for any analytic and generic unfolding of a saddle-node), but leave this to future work (see Section \ref{sec:discussion}). 
We feel that \eqref{eq:gepsmu0} gives a suitable forum to present the phenomenon in an accessible way. }
\rsp{We then summarize our results as follows:
 \begin{theorem}\thmlab{main0}
 Suppose \eqref{eq:gepsmu0} and let $y=\sum_{k=2}^\infty m_k^0 x^k$ denote the formal series expansion of the center manifold $W^c$ for $\epsilon=0$. Then
 \begin{align}
  S_\infty^0 := \lim_{k\rightarrow \infty} \frac{(-1)^k m_k^0}{\Gamma(k+a^0)},\label{eq:Sinfty00}
 \end{align}
where $\Gamma$ is the gamma function (see Section \ref{sec:Gamma} below), is well-defined for all $\mu \in [0,\mu_0)$, with $\mu_0>0$ small enough. Moreover, if $S_\infty^0\ne 0$ then the following holds true:
  \begin{enumerate}
   \item The center manifold $W^{cs}$ for $\epsilon=0$ is nonanalytic (see \thmref{main1}).
   \item\label{nonint} $W^{ws} \cap W^u =\emptyset$ for all $0<\epsilon\ll 1,\,\epsilon^{-1}\notin \mathbb N$ (see \thmref{main2}).
   \item  The sign of $S_\infty^0\ne 0$  determines on what side of $W^u$ the analytic weak-stable manifold $W^{ws}$ lands (\rsp{within the strip defined by $x\in (0,\epsilon)$}, see \thmref{main2} and Fig. \ref{node}.)
   \item \label{flapping} On the far side of the saddle (i.e. for $x<0$), the weak-stable analytic manifold $W^{ws}$ transitions from intersecting $y=c>0$ to intersecting $y=-c<0$ ($c>0$ small but $\mathcal O(1)$)  as $\epsilon^{-1}$ transverses (sufficiently large) integers (the resonances) (see \corref{main2b} and \figref{node_cases}). The position of $W^{ws}$ close to each resonance $\epsilon^{-1}\in \mathbb N$, $\epsilon^{-1}\gg 1$, is determined by the sign of $S_\infty^0$ and on whether $\epsilon^{-1}$ is even or odd. 
  \end{enumerate}
 \end{theorem}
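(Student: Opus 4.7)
The plan is to treat \thmref{main0} as a consolidating statement: the existence of $S_\infty^0$ is established here from the recursion for $\{m_k^0\}$, and items (1)--(4) are reduced to the more detailed results \thmref{main1}, \thmref{main2}, and \corref{main2b} that follow. So my first step would be to derive the recursion by substituting $\widehat m^0(x)=\sum_{k\ge 2}m_k^0 x^k$ into the invariance equation $x^2 y' = -y(1+a^0 x)+g^0(x,y)$ obtained from \eqref{eq:normalformeps0}. Matching coefficients of $x^k$ yields
\begin{equation*}
m_k^0 = -(k-1+a^0)\,m_{k-1}^0 + R_k\bigl(m_2^0,\dots,m_{k-2}^0\bigr),
\end{equation*}
where $R_k$ collects the contributions from $g^0$. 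Under \eqref{eq:gepsmu0}, the $y$-dependent part of $g^0$ is $\mathcal O(\mu)$, so $R_k$ decomposes as a ``pure'' $x$-forcing term (from $g^0(x,0)$) plus a nonlinear feedback whose size is controlled by $\mu$.

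The next step is to introduce the normalized sequence $\sigma_k := (-1)^k m_k^0/\Gamma(k+a^0)$ and exploit $\Gamma(k+a^0)=(k-1+a^0)\Gamma(k-1+a^0)$ (here $a^0>-2$ is used to keep the denominators positive for $k$ large enough) to telescope the recursion into
\begin{equation*}
\sigma_k = \sigma_{k-1} + \frac{(-1)^k R_k}{\Gamma(k+a^0)}.
\end{equation*}
The existence of $S_\infty^0=\lim \sigma_k$ then reduces to the absolute summability of the correction series $\sum_k (-1)^k R_k/\Gamma(k+a^0)$. To get this, I would run a boot-strap using the Gevrey-$1$ bound \eqref{eq:gevrey1} (known from \cite{bonckaert2008a}): the bound $|m_k^0|\le C D^{-k}k!$ controls the nonlinear part of $R_k$ by a geometric series in $\mu$, so for $\mu<\mu_0$ small enough the corrections are uniformly summable and the limit exists.

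For item (1), once $S_\infty^0\ne 0$, the telescoping identity above gives the sharp asymptotics $m_k^0 \sim S_\infty^0 (-1)^k \Gamma(k+a^0)$; Stirling then shows $\widehat m^0$ has zero radius of convergence, so the center manifold is nonanalytic, as will be stated and proved in \thmref{main1}. Items (2) and (3) follow from \thmref{main2}: the strategy (modelled on \cite{MR4445442}) is to construct the analytic weak-stable manifold $W^{ws}$ for $\epsilon>0$ by a matched expansion in which the leading-order coefficient on the strip $x\in (0,\epsilon)$ is expressed as an explicit nonzero multiple of $S_\infty^0$, thereby forcing $W^{ws}\cap W^u=\emptyset$ and determining the side of the strip on which $W^{ws}$ lies from the sign of $S_\infty^0$. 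Item (4) is then deduced in \corref{main2b} by tracking $W^{ws}$ into the region $x<0$ and computing the coefficient $b=b(\epsilon)$ of the logarithmic obstruction in \eqref{xynodeb}: as $\epsilon^{-1}$ passes through each (sufficiently large) resonance $N\in\mathbb N$, $b(\epsilon)$ is shown to equal $S_\infty^0$ times a factor carrying the sign $(-1)^N$, which produces both the jump across $W^u$ and the alternating flapping between $y>0$ and $y<0$.

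The main obstacle I anticipate is the uniform control of the nonlinear feedback $R_k$ needed for the Cauchy estimate on $\sigma_k$: a naive substitution of the Gevrey bound into $R_k$ only yields boundedness of $\sigma_k$, not convergence. One has to extract one extra factor of $k^{-1}$ (or a geometric factor in $\mu$) from the nonlinearity, and this is precisely what the smallness hypothesis on $\mu$ and the restriction $a^0>-2$ are designed to deliver. The secondary obstacle, deferred to \thmref{main2}, will be to transfer the limit quantity $S_\infty^0$ --- a purely $\epsilon=0$ object --- into a quantitative leading-order coefficient for the analytic weak-stable manifold $W^{ws}$ at $\epsilon>0$; this requires a careful resummation argument, uniform in $\epsilon$, of the type developed in \cite{MR4445442}.
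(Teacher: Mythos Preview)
Your proposal is structurally sound: \thmref{main0} is indeed a consolidating statement, and the paper itself simply defers to \thmref{main1}, \thmref{main2}, and \corref{main2b}. Your telescoping quantity $\sigma_k=(-1)^k m_k^0/\Gamma(k+a^0)$ is exactly the paper's $S_k^0$.

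There are two places where your route differs from the paper's. First, for the existence of $S_\infty^0$, you propose to bootstrap from the external Gevrey-$1$ bound \eqref{eq:gevrey1}. The paper does not do this: it works directly in a weighted Banach space $\mathcal D^0$ with norm $\Vert y\Vert=\sup_k |y_k|/\Gamma(k+a^0)$ and shows, via a contraction (equivalently, an induction) argument using convolution estimates on the weights $w_k^0=\Gamma(k+a^0)$, that $\widehat m^0\in\mathcal D^0$ with $\Vert\widehat m^0\Vert\le 2F$ for $\mu$ small. The key technical point is that the nonlinear contribution to the $k$th coefficient is bounded by $\mu K\,w_{k-2}^0$ (not $w_k^0$); since $w_{k-2}^0/w_k^0\sim k^{-2}$, this directly yields summability of the correction series and hence convergence of $S_k^0$. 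Your bootstrap would have to recover this extra $k^{-2}$ from a generic Gevrey bound with an unspecified constant $D$, which is not automatic --- the paper's direct approach sidesteps the issue and is what actually delivers the ``extra factor'' you flag as an obstacle.

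Second, your description of item~(\ref{flapping}) via ``the coefficient $b=b(\epsilon)$ of the logarithmic obstruction in \eqref{xynodeb}'' is not the mechanism the paper uses. \corref{main2b} does not pass through the resonance normal form at all; instead it uses the approximation $\overline m^\epsilon\approx(-1)^{N^\epsilon}S_\infty^0\,\overline V^\epsilon$ from \thmref{main2} together with the explicit asymptotics of $\overline V^\epsilon(\epsilon\overline x_2)$ for $\overline x_2<0$ (\lemmaref{lemma:Veps}, item~\ref{expansionV}). The flapping comes from the competing sign patterns of $\overline x_2^{N^\epsilon}$ and $\overline x_2^{N^\epsilon+1}$ as $\alpha^\epsilon\to 0^+$ and $\alpha^\epsilon\to 1^-$, respectively, rather than from tracking a logarithmic term.
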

 The results of \thmref{main0} are (as indicated) stated precisely in our main result section, see Section \ref{sec:main}. 
  The (dramatic) movement of $W^{ws}$ as $\epsilon^{-1}$ transverses (sufficiently large) integers (described informally in item \ref{flapping} of \thmref{main0}) was central in \cite{MR4445442} for the construction of their special $C^\infty$ self-similar solutions. We will refer to this phenomenon as ``flapping'', see also Section \ref{sec:examples}.}
 
\rsp{While the discovery of the phenomenon in a specific problem is due to \cite{MR4445442}, our treatment of the underlying general mechanism is novel and more in the spirit of dynamical systems theory. We also feel that our proof streamlines the approach of \cite{MR4445442}, used for their specific nonlinearity (rational with numerator cubic, denominator quadratic, see \cite[Eqs. (1.9)--(1.10)]{MR4445442}). Moreover, we will perform the important estimates not by brute force calculations but by using appropriate arguments (including fixed-point theorems) in suitable normed spaces.}

\rspp{Analyticity will throughout refer to real analyticity and we will work with $x\in \mathbb R$. This is in contrast to related studies of exponentially small phenomena, see e.g. \cite{baldom2013a,hayes2015a}, where extensions into the complex plane $x\in \mathbb C$ play a crucial role.}
 

\subsection{Further background}
 
We emphasize that item \ref{nonint} of \thmref{main0} is in line with the statement in \cite[Example 3.1, p. 13]{rousseau2023a}, which says that 
\begin{align*}
W^c\notin  C^\omega  \Longrightarrow W^{ws} \cap W^u =\emptyset\quad \forall \, 0<\epsilon\ll 1,\,\epsilon^{-1}\notin \mathbb N.
\end{align*}
This is of course the generic situation. \rsp{(Keep in mind that, as the branches (away from the equilibria) of $W^{ws}$ and $W^{u}$ are orbits of the planar problem \eqref{eq:normalform0},  $W^{ws}$ and $W^{u}$ either coincide or do not intersect)}.  \rsp{However, a novel aspect of our results is that we show that the sign of $S_\infty^0\ne 0$ determines (along with the parity of $\lfloor \epsilon^{-1}\rfloor \gg 1$) on what side of $W^u$ the analytic weak-stable manifold $W^{ws}$ lands. }

\rsp{In \cite{martinet1983a}, Martinet and Ramis presented their analytic characterization of saddle-nodes through \textit{analytic invariants}. The quantity $a^0$ is here known as \textit{the formal analytic invariant}. The name is motivated by the fact that there exists a \textit{formal} transformation (a formal series with respect to $x$ and $y$) that brings the saddle-node for $\epsilon=0$ into
\begin{align*}
 \dot x &= x^2,\\
 \dot y &=-y(1+a^0x).
\end{align*}
In general (also for Poincar\'e ranks $r\in \mathbb N\setminus \{1\}$ where $\dot x = x^{r+1}$), the formal transformation can be summed (in the sense of Borel-Laplace) along complex sectors (whose union cover the origin) and in essence the analytic invariants encode the relationship between these on overlapping domains. Here there is a deep connection to the Ecalle-Voronin classification of tangent-to-the-identity maps, see \cite{ecalle1,voronin1981a,loray2004a}.}

\rsp{The quantity $S_\infty^0$ (somehow) relates to the so-called \textit{translational part of the analytic invariants of Martinet and Ramis}. Indeed, the center manifold is analytic if and only if this translational part is trivial, see also \cite[Section 1]{loray2004a}. (Alternatively, the center manifold is analytic if and only if the Borel transform of $\widehat m^0$ is entire). However, the details of this connection is still not clear to the authors. For example, we do not know whether $S_\infty^0= 0$ implies that the center manifold is analytic (we will show below that it holds in the $y$-linear case). At the same time, $S_\infty^0$ also acts like a Stokes constant that determines the properties of the unfolding. This is reminiscent of the Stokes constant in \cite{baldom2013a} for the unfolding of the zero-Hopf.}

\rsp{On the other hand, \eqref{eq:Sinfty00} also provides an estimate of the growth of the coefficients $m_k^0$ as $k\rightarrow \infty$, insofar that there is a constant $F>0$ such that 
\begin{align}\label{eq:m0kest0}
 \vert m_k^0\vert \le F\Gamma(k+a^0)\quad \forall k\ge 2.
\end{align}
By Stirling's formula (see \eqref{stirling} in Appendix \ref{sec:Gamma}):
\begin{align}\label{eq:stirling00}
 \Gamma(k+a^0) = (1+o(1)) k! k^{a^0-1},
\end{align}
for $k\gg 1$, it follows that $m_k^0$ is Gevrey-$1$ (see \eqref{eq:gevrey1}) for any $D<1$, also $D=1$ if $a^0\le 1$.} 

\subsection{Outline}
\rsp{The paper is structured as follows: In Section \ref{sec:examples}, we provide a first glimpse of the phenomena  that we study  through simple examples. We also use this section to introduce our terminology and (parts of our) notation. Subsequently, in Section \ref{sec:main} we present our hypotheses and our main results in full technical details. In Section \ref{sec:cm}, we then prove the statements relating to the center manifold. Statements relating to $\epsilon>0$ are proven in Section \ref{sec:node}. (Section \ref{sec:overview} has a more detailed overview of the proofs.) We conclude the paper with a discussion section, see Section \ref{sec:discussion}. Appendix \ref{sec:Gamma} contains some relevant properties of the gamma function that will be used throughout. }
\section{\rsp{Motivating examples}}\label{sec:examples}
\rsp{Consider first the following simple example}:
\begin{equation}\label{eq:node_init}
\begin{aligned}
 \dot x &=-\epsilon x,\\
 \dot y &=-y+u(x),
\end{aligned}
\end{equation}
with $$u(x) = \sum_{k=2}^\infty u_k x^k,\quad \vert u_k\vert\le B \rho^{-k},$$
being analytic on the open disc $\vert x\vert<\rho$. 
Here $x=0$ is the strong stable manifold $W^{ss}$ and for $\epsilon^{-1}\notin \mathbb N$ the analytic weak-stable manifold $W^{ws}$ exists and takes the graph form 
\begin{align}
 y = m^\epsilon(x),\quad m^{\rsp{\epsilon}}(x) = \sum_{k=2}^\infty \frac{u_k}{1-\epsilon k}x^k\quad \forall\,0\le \vert \rsp{x}\vert<\rho.\label{eq:m0}
\end{align}
This follows from a simple calculation. 
Notice that there are small divisors in the expression for $m^\epsilon$ for $\epsilon\approx \frac{1}{N}$, $N\in \mathbb N$ (the resonances). Let $$\epsilon^{-1} = N^\epsilon+\alpha^\epsilon\notin \mathbb N, \quad N^\epsilon:=\lfloor \epsilon^{-1}\rfloor, \quad \alpha^\epsilon\in (0,1),$$ and define
\begin{align}\label{eq:Veps00}
 V^\epsilon(x) = N^\epsilon \frac{u_{N^\epsilon}}{\alpha^\epsilon} x^{N^\epsilon}  - (N^\epsilon+1) \frac{u_{N^\epsilon+1}}{1-\alpha^\epsilon} x^{N^\epsilon+1}.
\end{align}
Then the sum of the terms in \eqref{eq:m0} with $k=N^\epsilon$ and $k=N^\epsilon+1$ takes the  following form
\begin{equation}\nonumber
\begin{aligned}
  \sum_{k=N^\epsilon}^{N^\epsilon+1} \frac{u_k}{1-\epsilon k}x^k &= \epsilon^{-1} \frac{u_{N^\epsilon}}{\alpha^\epsilon}x^{N^\epsilon}  -\epsilon^{-1} \frac{u_{N^\epsilon+1}}{1-\alpha^\epsilon}x^{N^\epsilon+1}\\
 &= \left(N^\epsilon \frac{u_{N^\epsilon}}{\alpha^\epsilon}+u_{N^\epsilon}\right) x^{N^\epsilon}  -\left((N^\epsilon+1) \frac{u_{N^\epsilon+1}}{1-\alpha^\epsilon} -u_{N^\epsilon+1} \right) x^{N^\epsilon+1}\\
 &=V^\epsilon(x)+u_{N^\epsilon} x^{N^\epsilon} + u_{N^\epsilon+1} x^{N^\epsilon+1}.
\end{aligned}
\end{equation}
It follows that $B^\epsilon:=m^\epsilon-V^{\epsilon}$ is uniformly bounded with respect to $\alpha^\epsilon\in [0,1)$, and for any $\upsilon>0$ there is a $\delta>0$ such that 
\begin{align}\label{eq:mVinit}
\vert B^\epsilon(x)\vert \le \upsilon \quad \forall\, 0\le \vert x\vert \le \delta,\,\alpha^\epsilon\in (0,1).
\end{align}
The function $V^{\epsilon}$, on the other hand, is not uniformly bounded if $u_{N^\epsilon}\ne 0$ or $u_{N^\epsilon+1}\ne 0$. Specifically, if $u_{N^\epsilon}u_{N^\epsilon+1}\ne 0$ then it follows that \textit{we can track $W^{ws}:\,y=m^\epsilon(x)$ through $y=V^{\epsilon}(x)$ for $x\ne 0$, $\alpha^\epsilon\rightarrow 0^+$  and $\alpha^\epsilon\rightarrow 1^-$} (since $V^{\epsilon}(x)$, $x\ne 0$, goes unbounded in these limits). Here by ``track'' we will mean that the position of $W^{ws}$ can qualitatively be determined as follows:
\begin{lemma}\label{lemma:baby1}
Fix $N^\epsilon\in \mathbb N$, $K>0$, suppose that $u_{N^\epsilon}\ne 0$ and define $s=\operatorname{sign}(u_{N^\epsilon})$. Let
$W^{ws}:\,y=m^\epsilon(x),\,0\le\vert x\vert <\rho,$ denote the analytic weak-stable manifold of \eqref{eq:node_init}, $\epsilon^{-1}\notin \mathbb N$.
Then the following holds true for all $0<\delta\ll 1$: 

The position of $W^{ws}$ for all $0<\alpha^\epsilon <N^\epsilon \frac{ \vert u_{N^\epsilon}\vert }{K}\delta^{N^\epsilon}$ can be determined as follows:
\begin{enumerate}
 \item Suppose that $N^\epsilon$ is even. Then $W^{ws}$ intersects $\{y = \frac{sK}{2}\}$ for both $-\delta<x<0$ and $0<x<\delta$. 
 \item Suppose that $N^\epsilon$ is odd.  Then $W^{ws}$ intersects $\{y = -\frac{sK}{2}\}$ for $-\delta<x<0$ and $\{y = \frac{sK}{2}\}$ for $0<x<\delta$. 
\end{enumerate}

\end{lemma}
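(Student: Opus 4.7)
The plan is to view $m^\epsilon = V^\epsilon + B^\epsilon$ as a perturbation of its leading small-divisor part $V^\epsilon$, with error controlled by \eqref{eq:mVinit}, and then invoke the intermediate value theorem on the interval $(0,\delta)$ and $(-\delta,0)$ separately.

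First I would fix the ingredients. Given $K>0$, I would set the tolerance $\upsilon=K/8$ (say) in \eqref{eq:mVinit} and thus obtain a $\delta_1>0$ such that $|B^\epsilon(x)|\le K/8$ for all $|x|\le \delta_1$ and all $\alpha^\epsilon\in(0,1)$. Since $N^\epsilon$ is fixed and the coefficient $u_{N^\epsilon+1}$ is a fixed constant, I would then shrink $\delta$ further so that on the one hand the hypothesis $\alpha^\epsilon<N^\epsilon\frac{|u_{N^\epsilon}|}{K}\delta^{N^\epsilon}$ forces $\alpha^\epsilon<1/2$ (hence $1-\alpha^\epsilon>1/2$), and on the other hand
\[
(N^\epsilon+1)\,\frac{|u_{N^\epsilon+1}|}{1-\alpha^\epsilon}\,\delta^{N^\epsilon+1}\le \frac{K}{8}.
\]
Both estimates only demand $\delta$ sufficiently small in terms of $N^\epsilon$, $u_{N^\epsilon}$, $u_{N^\epsilon+1}$, $K$.

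Next I would evaluate $V^\epsilon$ at $x=\pm\delta$. The hypothesis on $\alpha^\epsilon$ gives
\[
\left|N^\epsilon\frac{u_{N^\epsilon}}{\alpha^\epsilon}(\pm\delta)^{N^\epsilon}\right|>K,
\]
while the subleading term is bounded by $K/8$ by the previous step. Consequently the sign of $V^\epsilon(\pm\delta)$ is determined entirely by the leading term: at $x=+\delta$ it equals $s=\operatorname{sign}(u_{N^\epsilon})$, and at $x=-\delta$ it equals $s$ if $N^\epsilon$ is even and $-s$ if $N^\epsilon$ is odd. In all cases $|V^\epsilon(\pm\delta)|\ge K-K/8 = 7K/8$. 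Combined with the bound on $B^\epsilon$ this yields
\[
\operatorname{sign}\bigl(m^\epsilon(+\delta)\bigr)=s,\qquad \operatorname{sign}\bigl(m^\epsilon(-\delta)\bigr)=(-1)^{N^\epsilon}s,
\]
with $|m^\epsilon(\pm\delta)|\ge 7K/8-K/8=3K/4>K/2$.

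Finally, since $m^\epsilon$ is analytic on $|x|<\rho$ with $m^\epsilon(0)=0$, the intermediate value theorem applied to the continuous function $x\mapsto m^\epsilon(x)$ on $[0,\delta]$ and $[-\delta,0]$ produces the claimed crossings of the levels $\pm sK/2$ according to the parity of $N^\epsilon$. The only mild subtlety is the bookkeeping that guarantees both the smallness of $\alpha^\epsilon$ (so that $1-\alpha^\epsilon$ stays bounded away from zero and the $x^{N^\epsilon+1}$-term in $V^\epsilon$ is indeed subleading) and the uniform bound on $B^\epsilon$; everything else reduces to the size comparison between the blowing-up leading term and the controlled remainder.
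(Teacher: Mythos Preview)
Your proof is correct and follows essentially the same approach as the paper's: both decompose $m^\epsilon=V^\epsilon+B^\epsilon$, use \eqref{eq:mVinit} to bound $B^\epsilon$ for $\delta$ small, and observe that the leading term $N^\epsilon\frac{u_{N^\epsilon}}{\alpha^\epsilon}x^{N^\epsilon}$ dominates at $x=\pm\delta$ under the stated bound on $\alpha^\epsilon$. Your treatment is somewhat more explicit than the paper's in the bookkeeping (fixing $\upsilon=K/8$, forcing $\alpha^\epsilon<1/2$ to control the subleading term of $V^\epsilon$, and invoking the intermediate value theorem with $m^\epsilon(0)=0$), but the underlying argument is the same.
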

\begin{proof}
For $u_{N^\epsilon}\ne 0$, we have
\begin{align*}
 \left|N^\epsilon  \frac{u_{N^\epsilon}}{\alpha^\epsilon}\delta^{N^\epsilon}\right|>K\quad \forall\, 0<\alpha^\epsilon<N^\epsilon \frac{ \vert u_{N^\epsilon}\vert }{K}\delta^{N^\epsilon}.
\end{align*}
Then from \eqref{eq:Veps00}, 
we obtain that the following holds true for $x=-\delta$ and $x=\delta$:
\begin{align*}
 \vert V^{\epsilon}(x)\vert \ge \frac{3K}{4}\quad \forall\, 0<\alpha^\epsilon <N^\epsilon\frac{ \vert u_{N^\epsilon}\vert}{K} \delta^{N},\, 0<\delta\ll 1.
\end{align*}
Then by using $m^\epsilon = B^\epsilon+V^\epsilon$ and \eqref{eq:mVinit} with $0<\upsilon\ll K$ for $0<\delta\ll 1$ the result follows.
\end{proof}
A similar result holds for $0<1-\alpha^\epsilon\ll 1$ if $u_{N^\epsilon+1}\ne 0$, which we state without proof. 
\begin{lemma}\label{lemma:baby2}
Fix $N^\epsilon\in \mathbb N$, $K>0$, suppose that $u_{N^\epsilon+1}\ne 0$ and define $s=\operatorname{sign}(u_{N^\epsilon+1})$. Let
$W^{ws}:\,y=m^\epsilon(x),\,0\le\vert x\vert <\rho,$ denote the analytic weak-stable manifold of \eqref{eq:node_init}, $\epsilon^{-1}\notin \mathbb N$.
Then the following holds for all $0<\delta\ll 1$:

The position of $W^{ws}$ for all $0<1-\alpha^\epsilon <(N^\epsilon+1) \frac{ \vert u_{N^\epsilon+1}\vert }{K}\delta^{N^\epsilon+1}$ can be determined as follows: 
\begin{enumerate}
 \item Suppose that $N^\epsilon$ is even. Then $W^{ws}$ intersects $\{y = \frac{sK}{2}\}$ for $-\delta<x<0$ and $\{y=-\frac{sK}{2}\}$ for $0<x<\delta$. 
 \item Suppose that $N^\epsilon$ is odd.  Then $W^{ws}$ intersects $\{y = -\frac{sK}{2}\}$ for both $-\delta<x<0$ and $0<x<\delta$. 
\end{enumerate}
\end{lemma}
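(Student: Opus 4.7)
The plan is to mimic the proof of \lemmaref{baby1} step for step, but exploiting the \emph{second} term in the decomposition \eqref{eq:Veps00} of $V^\epsilon$ (instead of the first) as the source of unboundedness. Concretely, I still write $m^\epsilon = V^\epsilon + B^\epsilon$ with $|B^\epsilon(x)| \le \upsilon$ for $|x|\le \delta$ and all $\alpha^\epsilon \in (0,1)$, thanks to \eqref{eq:mVinit}, and I pick $0<\upsilon < K/4$. In the regime $0<1-\alpha^\epsilon\ll 1$, the factor $1/\alpha^\epsilon$ in the first term of \eqref{eq:Veps00} is bounded (say by $2$ once $\alpha^\epsilon \ge 1/2$), so
\[
\left| N^\epsilon \frac{u_{N^\epsilon}}{\alpha^\epsilon} x^{N^\epsilon} \right| \le 2 N^\epsilon |u_{N^\epsilon}| \delta^{N^\epsilon} \quad \text{at } x=\pm \delta,
\]
which can be made smaller than $K/4$ by taking $0<\delta\ll 1$.

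Next, I would quantify the divergence of the second term. Under the hypothesis $0 < 1 - \alpha^\epsilon < (N^\epsilon+1)|u_{N^\epsilon+1}|\delta^{N^\epsilon+1}/K$, one immediately obtains
\[
\left| (N^\epsilon+1) \frac{u_{N^\epsilon+1}}{1-\alpha^\epsilon} \delta^{N^\epsilon+1} \right| > K,
\]
and in fact the sign of this second term, evaluated at $x = \pm\delta$, is controlled by the parity of $N^\epsilon+1$. Combining with the $K/4$ bound on the first term and the $K/4$ bound on $B^\epsilon$, the value of $m^\epsilon(\pm\delta)$ matches the sign of the second term of $V^\epsilon(\pm\delta)$ and has magnitude at least $K/2$.

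The sign bookkeeping is then the content of the two cases. Write $s = \operatorname{sign}(u_{N^\epsilon+1})$; the sign of $-(N^\epsilon+1)\frac{u_{N^\epsilon+1}}{1-\alpha^\epsilon}(\pm\delta)^{N^\epsilon+1}$ is $-s\cdot\operatorname{sign}((\pm\delta)^{N^\epsilon+1})$. If $N^\epsilon$ is even, then $N^\epsilon+1$ is odd, so this sign is $-s$ at $x=\delta$ and $+s$ at $x=-\delta$; if $N^\epsilon$ is odd, the sign is $-s$ at both $x=\pm\delta$. Since $m^\epsilon(0)=0$ and $m^\epsilon$ is continuous (in fact analytic) in $x$, the intermediate value theorem applied on $[-\delta,0]$ and $[0,\delta]$ produces intersections of the graph $y=m^\epsilon(x)$ with the claimed horizontal lines $y=\pm sK/2$.

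There is no real obstacle: all the analytical work (the decomposition $m^\epsilon = V^\epsilon + B^\epsilon$, the uniform bound \eqref{eq:mVinit}, and the blow-up structure of $V^\epsilon$) has already been established in the paragraphs preceding \lemmaref{baby1}. The only delicate point is the parity bookkeeping in the sign of $(\pm\delta)^{N^\epsilon+1}$, which now differs from the $(\pm\delta)^{N^\epsilon}$ appearing in \lemmaref{baby1}, and this is precisely what flips the pattern of intersections between the two lemmas (and is the seed of the flapping phenomenon described in item \ref{flapping} of \thmref{main0}).
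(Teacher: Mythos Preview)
Your proposal is correct and follows exactly the approach the paper intends: the paper explicitly states that Lemma~\ref{lemma:baby2} is proved in the same way as Lemma~\ref{lemma:baby1} and omits the details. Your argument mirrors that proof, correctly swapping the roles of the two terms in $V^\epsilon$, bounding the now-harmless first term by taking $\delta$ small (since $1/\alpha^\epsilon\le 2$ when $\alpha^\epsilon\ge 1/2$), and tracking the sign through the parity of $N^\epsilon+1$; the explicit appeal to the intermediate value theorem via $m^\epsilon(0)=0$ is a welcome clarification of what the paper leaves implicit.
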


By Lemma \ref{lemma:baby1} and Lemma \ref{lemma:baby2}, we obtain a ``flapping phenomenon'' when $u_{N^\epsilon}u_{N^\epsilon+1}\ne 0$, \textit{whereby the position of $W^{ws}$ (at least on one side of the node) changes dramatically as $\alpha^\epsilon$ transverses the interval $(0,1)$}.  We illustrate this flapping phenomena in \figref{node_cases_init} for $u_{N^\epsilon}>0,u_{N^\epsilon+1}<0$ (which is relevant for \eqref{eq:normalform} with $0<\epsilon\ll 1$; \rsp{the reader should compare the figure} with \figref{node_cases}). 
\begin{remark}
\rspp{It is essentially the flapping mechanism that (together with a basic continuity argument) allows the authors of \cite{MR4445442} to connect their analytic weak-stable manifolds with a global analytic manifold (that does not ``flap'') and construct $C^\infty$-smooth self-similar solutions close to resonances (and close to a saddle-node where the resonances accumulate).} 
\end{remark}
\begin{figure}[h!]
\begin{center}
\includegraphics[width=.65\textwidth]{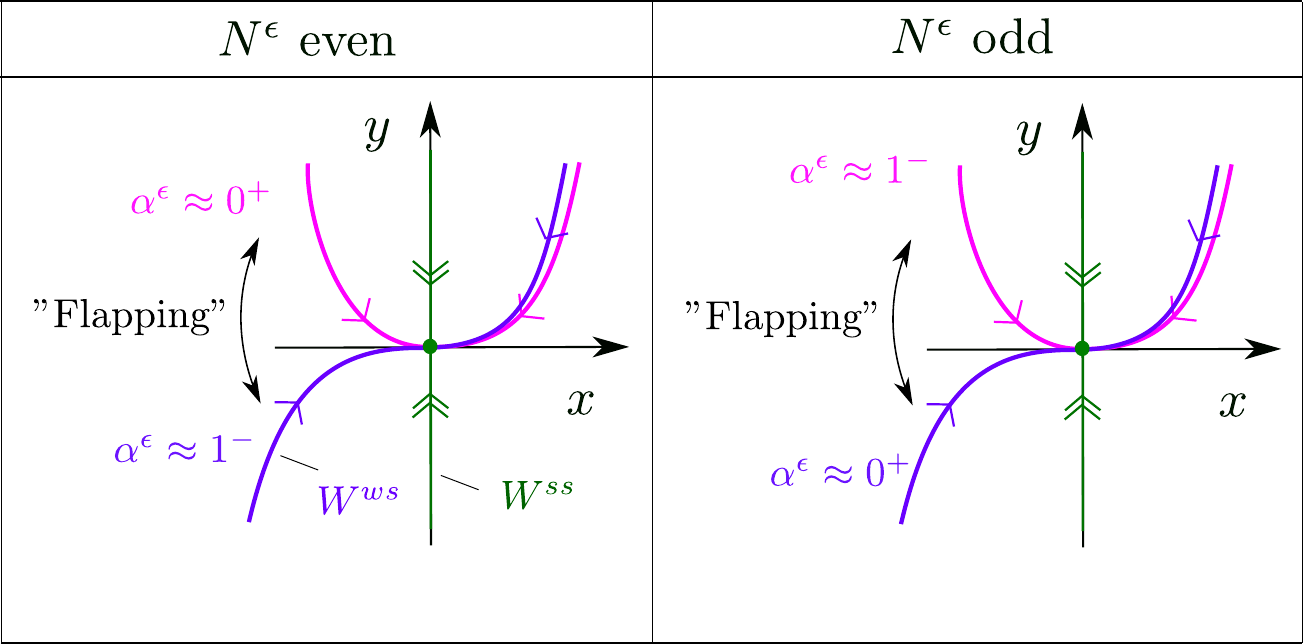}
\end{center}
\caption{The ``flapping phenomenon'' of the analytic weak-stable manifolds ($W^{ws}$ in magenta and purple) of \eqref{eq:node_init} for $u_{N^\epsilon}>0, u_{N^\epsilon+1}<0$.}
\figlab{node_cases_init}
\end{figure}

For a general (fully nonlinear) analytic system,  quantities corresponding to $u_{N^\epsilon}$ and $u_{N^\epsilon+1}$ for a hyperbolic node  can in principle be computed for any fixed $N^\epsilon$ in terms of the jet of the nonlinearity (through normal form computations \cite[Chapter 2]{dumortier2006a}). But in the context of \eqref{eq:normalform0}, our results show (see Section \ref{sec:main}) that the condition $u_{N^\epsilon}u_{N^\epsilon+1}\ne 0$ can be related to the lack of analyticity \rspp{(through $S_\infty^0$)} of the center manifold $y=m^0(x)$ of the origin for $\epsilon=0$.

\begin{remark}\label{rem:upol}
\rsp{Notice that in the context of the $y$-linear system \eqref{eq:node_init}, the flapping phenomena for $\epsilon\rightarrow 0$ does not appear if $u$ is a polynomial. Indeed, the flapping is caused by an accumulation of resonances for $\epsilon\rightarrow 0$ and if $u$ is a polynomial then there are only finitely many (possible) resonances for \eqref{eq:node_init}.}
\end{remark}

Next, to illustrate how \eqref{eq:Sinfty00} and the bound \eqref{eq:m0kest0} occur, consider the case $g^0(x,y)=f^0(x)$ (so that $g^0$ is independent of $y$) in \eqref{eq:normalformeps0}. \rsp{As we are interested in invariant manifolds, we eliminate time to obtain the following differential equation for $y=y(x)$}:
\begin{align}
 x^2 \frac{dy}{dx}+y(1+a^0x)=f^0(x),\quad f^0(x)=\sum_{k=2}^\infty f^0_k x^k.\label{eq:eps0mu01}
\end{align}
This equation is linear in $y$ and one can solve explicitly for the $m_k^0$'s of the formal series. Indeed, inserting the formal series \eqref{eq:formalseriesm0} into \eqref{eq:eps0mu01}
leads to 
\begin{align*}
 \sum_{k=2}^\infty \left( (k +a^0)m_{k}^{\rsp{0}} x^{k+1}+ m_k^0 x^k\right)=\sum_{k=2}^\infty f_k x^k,
\end{align*}
and therefore to 
the recursion relation:
\begin{align}
 m_k^0 + (k-1+a^0)m_{k-1}^0 = f^0_k\quad \forall \,k\ge 2,\label{eq:recmk0}
\end{align}
with $m_1^0= 0$. 
\begin{lemma}\lemmalab{lemma:mk0linearcase}
Suppose that $a^0>-2$ and define 
\begin{align}
 S_k^0 :=\sum_{j=2}^k \frac{(-1)^j f^0_j}{\Gamma(j+a^0)}.\label{eq:S0k}
\end{align}
Then the solution of the recursion relation \eqref{eq:recmk0} with $m_1^0=0$ is
 \begin{align}
 m_k^0  = (-1)^k \Gamma(k+a^0) S_k^0.\label{eq:mk0linearcase}
\end{align}
\end{lemma}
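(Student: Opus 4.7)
The statement is an explicit solution formula for a first-order linear recursion, and the natural plan is to verify it by induction on $k$ after putting the recursion in telescoping form. Concretely, I would introduce the auxiliary sequence
\[
T_k := \frac{(-1)^k m_k^0}{\Gamma(k+a^0)},\qquad k\ge 2,
\]
and rewrite the recursion $m_k^0+(k-1+a^0)m_{k-1}^0=f_k^0$ by dividing through by $(-1)^k\Gamma(k+a^0)$. Using the functional equation
\[
\Gamma(k+a^0)=(k-1+a^0)\,\Gamma(k-1+a^0),
\]
which is valid whenever the right-hand factors are regular, the middle term collapses to $-T_{k-1}$ and the recursion takes the telescoping form
\[
T_k-T_{k-1}=\frac{(-1)^k f_k^0}{\Gamma(k+a^0)}.
\]

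The base case $k=2$ follows directly from the recursion with $m_1^0=0$: it gives $m_2^0=f_2^0$, hence $T_2=f_2^0/\Gamma(2+a^0)=S_2^0$, which agrees with the claimed formula. For the inductive step one applies the telescoping identity for $k\ge 3$ and sums from $j=3$ to $k$, yielding $T_k=S_k^0$, i.e. $m_k^0=(-1)^k\Gamma(k+a^0)S_k^0$, as claimed.

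The only point that needs care is the role of the hypothesis $a^0>-2$: it ensures that $k+a^0>0$ for every $k\ge 2$, so $\Gamma(k+a^0)$ is well-defined (and nonzero) throughout; and it ensures $k-1+a^0>0$ for every $k\ge 3$ (since $1-a^0<3$), which is exactly what makes the Gamma functional equation applicable in the inductive step. There is no real obstacle beyond keeping track of these elementary bookkeeping issues; the proof is a short direct computation once the telescoping structure is recognised.
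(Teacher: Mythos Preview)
Your proposal is correct and follows essentially the same approach as the paper: induction on $k$ using the functional equation $\Gamma(z+1)=z\Gamma(z)$, with the base case coming from $m_1^0=0$. Your telescoping reformulation via $T_k=(-1)^k m_k^0/\Gamma(k+a^0)$ is just a clean way to organize the same induction, and your remark on the role of the hypothesis $a^0>-2$ is accurate.
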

\begin{proof}
 The result can easily be proven by induction using the base case $m_1^0=0$ and the basic property of the gamma function: $\Gamma(z+1)=z \Gamma(z)$, see \eqref{eq:Gamma}, in the induction step. 
%
%
\end{proof}
Seeing that $f^0$ is analytic, we have 
\begin{align*}
 \vert f_k^0\vert \le B\rho^{-k},
\end{align*}
for some $B>0$, $\rho>0$,
and the sum
\begin{align*}
 S_\infty^0 := \lim_{k\rightarrow \infty} S_k^0=\lim_{k\rightarrow \infty} \frac{(-1)^k m_k^0}{\Gamma(k+a^0)}= \sum_{j=2}^\infty \frac{(-1)^j f_j^0}{\Gamma(j+a^0)},
\end{align*}
see \eqref{eq:S0k},
is therefore absolutely convergent for any $a^0>-2$:
\begin{align*}
 \vert S_\infty^0\vert \le \sum_{j=2}^\infty \frac{\vert f_j^0\vert}{\Gamma(j+a^0)}\le F:=B\sum_{j=2}^\infty \frac{\rho^{-j}}{\Gamma(j+a^0)}<\infty.
\end{align*}
The property \eqref{eq:m0kest0} therefore follows from \eqref{eq:mk0linearcase} in the context of \eqref{eq:eps0mu01}.

Notice that if $S_\infty^0\ne 0$, then by \eqref{eq:stirling00} we have
\begin{align}
m_k^0 = (-1)^k \Gamma(k+a^0) S_k^0 = (-1)^k (1+o(1)) S_\infty^0 \Gamma(k+a^0) =(-1)^k (1+o(1))S_\infty^0 k^{a^0-1} k!\label{eq:mkmu0}
\end{align}
for all $k\gg 1$. 
This implies that the center manifold is nonanalytic. In the linear case \eqref{eq:eps0mu01}, it is also possible to go the other way. We collect this in the following lemma:
\begin{lemma}\label{lemma:linear}
 {Suppose that $a^0>-2$.} Then the center manifold of the linear system \eqref{eq:eps0mu01} is analytic if and only if $S_\infty^0=0$. 
\end{lemma}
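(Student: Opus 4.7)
The plan is to derive both directions of the equivalence directly from the explicit formula $m_k^0 = (-1)^k \Gamma(k+a^0)\, S_k^0$ established in the preceding lemma (see \eqref{eq:mk0linearcase}), combined with the analyticity bound $|f_k^0| \le B\rho^{-k}$ coming from analyticity of $f^0$ on $\{|x|<\rho\}$.

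For the implication ``$S_\infty^0 \neq 0 \Rightarrow$ nonanalytic'', the argument is essentially already contained in \eqref{eq:mkmu0}: by Stirling's formula one has $m_k^0 = (-1)^k (1+o(1)) S_\infty^0 \, k^{a^0-1}\,k!$, so $|m_k^0|^{1/k}\to \infty$ as $k\to \infty$ and the formal series $\widehat m^0$ has radius of convergence zero; hence it cannot represent an analytic function.

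For the converse ``$S_\infty^0 = 0 \Rightarrow$ analytic'', I would exploit that $S_\infty^0=0$ lets one rewrite $S_k^0$ as a tail,
\begin{equation*}
S_k^0 \;=\; -\sum_{j=k+1}^\infty \frac{(-1)^j f_j^0}{\Gamma(j+a^0)},
\end{equation*}
and then estimate
\begin{equation*}
|m_k^0| \;=\; \Gamma(k+a^0)\,|S_k^0| \;\le\; B \sum_{j=k+1}^\infty \frac{\Gamma(k+a^0)}{\Gamma(j+a^0)}\,\rho^{-j}.
\end{equation*}
By the functional equation $\Gamma(z+1)=z\Gamma(z)$, one has $\Gamma(j+a^0)/\Gamma(k+a^0)=\prod_{i=0}^{j-k-1}(k+i+a^0)$ for $j>k$. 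Since $a^0>-2$ forces $k+a^0\ge 1$ for all sufficiently large $k$, each factor in this product is at least $k+a^0$, giving the geometric bound
\begin{equation*}
|m_k^0| \;\le\; B\rho^{-k} \sum_{\ell=1}^\infty \bigl((k+a^0)\rho\bigr)^{-\ell} \;=\; \frac{B\rho^{-k}}{(k+a^0)\rho-1}
\end{equation*}
as soon as $(k+a^0)\rho>1$. Consequently $|m_k^0| = O(\rho^{-k}/k)$, so $\widehat m^0$ has radius of convergence at least $\rho$ and defines an analytic center manifold on $\{|x|<\rho\}$.

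There is no genuine obstacle here beyond bookkeeping of the gamma-ratio tail. The hypothesis $a^0 > -2$ enters only to keep all $\Gamma$-arguments positive for $k\ge 2$ (so that $\Gamma(k+a^0)$ is well-defined and the bound $\Gamma(k+a^0)/\Gamma(j+a^0)\le (k+a^0)^{-(j-k)}$ is valid for large $k$); no deeper role is played by this assumption, and in particular the argument does not use any genuine dynamical information beyond the linearity of \eqref{eq:eps0mu01}.
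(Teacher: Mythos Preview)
Your proof is correct and follows the same architecture as the paper's: the forward direction via the growth estimate \eqref{eq:mkmu0}, and the converse by writing $S_k^0$ as the tail $-\sum_{j>k}(-1)^jf_j^0/\Gamma(j+a^0)$ and bounding $\Gamma(k+a^0)|S_k^0|$. The only difference is in how the gamma ratio $\Gamma(k+a^0)/\Gamma(j+a^0)$ is controlled. The paper invokes Stirling's formula \eqref{stirling} to estimate this ratio and ends up with an auxiliary factor $(1+j)^2$ that is then summed via $\sum 2^{-j}(1+j)^2=12$, arriving at $|m_k^0|\le 96B\rho^{-k-1}$. Your use of the functional equation $\Gamma(z+1)=z\Gamma(z)$ to get the product bound $\Gamma(j+a^0)/\Gamma(k+a^0)\ge (k+a^0)^{j-k}$ is more elementary and yields the slightly sharper $|m_k^0|=O(\rho^{-k}/k)$ in one line; it avoids Stirling entirely. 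Both approaches deliver radius of convergence at least $\rho$, so the extra $1/k$ is cosmetic, but your route is cleaner for this particular estimate.
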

\begin{proof}
 $\Rightarrow$: From \eqref{eq:mkmu0}, we have that for any $x\ne 0$, $ m_k^0 x^k \not \rightarrow 0$ for $k\rightarrow0$. Consequently, if $S_\infty^0\ne 0$ then the center manifold is nonanalytic. 
 
 $\Leftarrow$: If $S^0_\infty =0 $ then
 \begin{align*}
  \vert S_k^0 \vert = \vert S_\infty^0 -S_k^0 \vert \le B\sum_{j=k+1}^\infty \frac{ \rho^{-j}}{\Gamma(j+a^0)},
 \end{align*}
and for any $k\ge k_0(a^0)$, $j\in \mathbb N_0$:
\begin{align*}
 \frac{\Gamma(k+a^0)}{\Gamma(k+1+j+a^0)} = \frac{(k-1)!}{(k+j)!}(1+o_{k_0\rightarrow \infty}(1))\left(\frac{k}{k+1+j}\right)^{a^0}\le \frac{2}{k^j}\left(\frac{k}{k+1+j}\right)^{-2}\le 8(1+j)^2k^{-j},
\end{align*}
with $k_0\gg 1$,
using Stirling's formula (see \eqref{stirling} below)
and
\begin{align*}
\left(\frac{k+1+j}{k}\right)^{2} \le \left(\frac{2k(1+j)}{k}\right)^{2} = 4(1+j)^2.
\end{align*}
Then upon using $\sum_{j=0}^\infty2^{-j}(1+j)^2 = 12$ it follows that 
\begin{align*}
 \vert m_k^0 \vert \le 8B\rho^{-k-1}\sum_{j=0}^\infty (\rho k)^{-j}(1+j)^2\le 96B\rho^{-k-1}\quad \forall\,k\ge k_0\ge 2\rho^{-1}.
\end{align*}
We conclude that $\sum_{k=2}^\infty m_k^0 x^k$ converges absolutely for all $0\le \vert x\vert < \rho$ if $S_\infty^0=0$.
\end{proof}


A first important step of our approach is to carry the classification of the analyticity of the center manifold for $\epsilon=0$ over to the nonlinear case. For this, we will use a fixed-point argument in an appropriate Banach space of formal series. This leads to the definition of $S_\infty^0$ for a nonlinearity $g^0$, satisfying the \rsp{Hypotheses} \ref{assa} and \ref{deltacond} below. 
\begin{figure}[h!]
\begin{center}
\includegraphics[width=.30\textwidth]{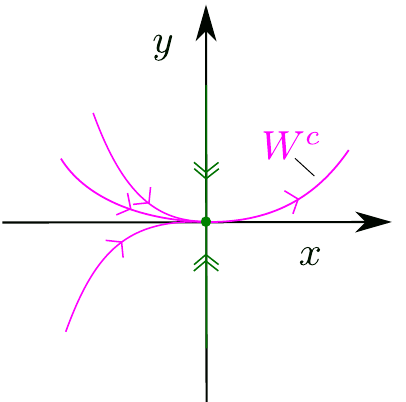}
\end{center}
\caption{The saddle node for $\epsilon=0$. The center manifold ($W^c$ in magenta) is only unique on the positive side of $x=0$.}
\label{fig:saddle_node}
\end{figure}

Subsequently, for $\epsilon>0$ and $S_\infty^0\ne 0$, we (essentially) expand the analytic weak-stable invariant manifold $y=m^\epsilon(x)$ into the form 
\begin{align}
 m^\epsilon = B^\epsilon+(-1)^{N^\epsilon}S_\infty^0 V^{\epsilon},\quad N^\epsilon = \lfloor \epsilon^{-1}\rfloor,\nonumber
\end{align}
on a subset $x\in I^\epsilon$, where (in essence, see Theorem \ref{theorem:main2} for details) \textit{only $B^\epsilon$ is uniformly bounded with respect to $\alpha^\epsilon = \epsilon^{-1}-\lfloor \epsilon^{-1}\rfloor\in (0,1)$}. We will therefore \textit{track $y=m^\epsilon(x)$ for $S_\infty^0\ne 0$ using $y=(-1)^{N^\epsilon} S_\infty^0 V^{\epsilon}(x)$ for $\alpha^\epsilon \rightarrow 0^+$ and $\alpha^\epsilon\rightarrow 1^-$} as in the example \eqref{eq:node_init} above. (It would be more accurate to say that the tracking will first be done in scaled coordinates, see \eqref{scalingxy}, and that $V^\epsilon(x)=\epsilon \overline V^\epsilon(\epsilon^{-1}x)$, see \eqref{eq:Veps0}. Moreover, $x>0$ and $x<0$ will be treated slightly different, but we refer the reader to further details and the precise statements below.) In this context, it is (again) worth pointing out that $S_\infty^0\ne 0$ essentially ensures that a condition like $u_{N^\epsilon}u_{N^\epsilon+1}\ne  0$ holds true near all resonances $\epsilon^{-1}\in \mathbb N$ for $0<\epsilon\ll 1$, see \thmref{main2} and \corref{main2b}.  

\section{Main results}\label{sec:main}
%
We first state a general result (based upon  \cite[Theorem 2.2]{rousseau2005a}) on saddle-nodes. 
\begin{theorem}\label{thm:nf}
 For any analytic and generic family of two-dimensional vector-fields unfolding a saddle-node, there exists a locally defined analytic change of coordinates, parameters and time, such that on the singularity-side ($\epsilon\ge 0$) of the bifurcation, the system takes the following normal form:
\begin{equation}\label{eq:normalform}
\begin{aligned}
 \dot x &= (x-\epsilon)x,\\
 \dot y &=-y(1+a^\epsilon x)+g^\epsilon(x,y),
\end{aligned}
\end{equation}
where 
\begin{equation}\label{eq:gcondapp}
\begin{aligned} 
 g^\epsilon(x,y) &= f^\epsilon(x)+u^\epsilon(x,y)\\
 f^\epsilon(x) = \sum_{k=2}^\infty f_{k}^\epsilon x^k, \quad & u^\epsilon(x,y) = \sum_{k=2}^\infty u_{k,1}^\epsilon x^k y+\sum_{k=1}^\infty\sum_{l=2}^\infty u_{k,l}^\epsilon x^k y^l.
\end{aligned}
\end{equation}
In particular, the following holds regarding the absolutely convergent power series expansions of $f^\epsilon$ and $u^\epsilon$ for all $\rho>0$ small enough: {Let $$D_1:=[0,\epsilon_0)\times \{0\le \vert x\vert <\rho\},\quad D_2:=[0,\epsilon_0)\times \{0\le \vert x\vert <\rho\}\times \{0\le \vert y\vert <\rho\},$$
and define
\begin{align}
 B := \sup_{(\epsilon,x)\in D_1} \vert f^\epsilon(x)\vert,\quad \mu: =\sup_{(\epsilon,x,y)\in D_2}\vert u^\epsilon(x,y)\vert.\label{eq:Bmudefn}
\end{align}}
Then 
\begin{align}
 \vert f_{k}^\epsilon\vert \le B \rho^{-k},\quad \vert u_{k,l}^\epsilon\vert \le  \mu \rho^{-k-l}\quad \mbox{and}\quad u_{k,1}^0= 0\quad \forall\,k,l\in \mathbb N,\,\epsilon \in[0,\epsilon_0)\label{eq:gkcond0app}
\end{align}

\end{theorem}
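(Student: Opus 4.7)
The plan is to reduce the theorem to a direct invocation of the Rousseau normal form \cite[Theorem 2.2]{rousseau2005a}, followed by bookkeeping of the power-series structure and standard Cauchy estimates on polydiscs.

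First, I apply \cite[Theorem 2.2]{rousseau2005a} to the given analytic and generic unfolding of the planar saddle-node. On the singularity side $\epsilon\ge 0$, this produces an analytic change of coordinates, an analytic reparametrisation of $\epsilon$, and an analytic rescaling of time, defined on a full neighbourhood of $(x,y,\epsilon)=(0,0,0)$, bringing the system to the form \eqref{eq:normalform}. Rousseau's normalisation absorbs the resonant linear-in-$y$ term into the factor $a^\epsilon x$, so that $g^\epsilon(x,y)=\mathcal O(x^2,x^2y,xy^2)$ as stated in the introduction, and both $a^\epsilon$ and $g^\epsilon$ are analytic on a common neighbourhood of $(x,y,\epsilon)=(0,0,0)$ in $\mathbb R\times\mathbb R\times[0,\epsilon_0)$.

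Next, I split $g^\epsilon(x,y)=f^\epsilon(x)+u^\epsilon(x,y)$ by setting $f^\epsilon(x):=g^\epsilon(x,0)$. Then $u^\epsilon(x,y):=g^\epsilon(x,y)-g^\epsilon(x,0)$ vanishes identically at $y=0$, so it factors as $y\,h^\epsilon(x,y)$ for some analytic $h^\epsilon$. Writing $u^\epsilon=\sum_{k\ge 0,\,l\ge 1}u^\epsilon_{k,l}x^ky^l$, the order conditions $g^\epsilon=\mathcal O(x^2,x^2y,xy^2)$ force $u^\epsilon_{0,1}=u^\epsilon_{1,1}=0$ and $u^\epsilon_{0,l}=0$ for $l\ge 2$, and similarly $f^\epsilon_0=f^\epsilon_1=0$. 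After relabelling this gives exactly the form displayed in \eqref{eq:gcondapp}.

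For the identity $u^0_{k,1}=0$, a further normal-form reduction at $\epsilon=0$ is required. The linearisation of \eqref{eq:normalform} at the saddle-node $(x,y,\epsilon)=(0,0,0)$ has eigenvalues $(0,-1)$, so the monomials $x^ky$ with $k\ge 2$ are non-resonant (the resonance relation $-1=k\cdot 0+1\cdot(-1)$ forces $k=0$). Rousseau's theorem includes this additional analytic reduction at the saddle-node itself: all non-resonant linear-in-$y$ monomials are removed by successive near-identity analytic changes of coordinates that preserve both $\dot x=x^2$ and the linear factor $-y(1+a^0x)$. This yields $u^0_{k,1}=0$ for every $k\ge 2$.

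Finally, fix $\rho>0$ so small that the common domain of analyticity contains the closed polydisc $\{|x|\le\rho,|y|\le\rho\}$ uniformly in $\epsilon\in[0,\epsilon_0)$. Then $B$ and $\mu$ in \eqref{eq:Bmudefn} are finite, and \eqref{eq:gkcond0app} follows from Cauchy's formula applied to the Taylor coefficients,
\begin{equation*}
 f^\epsilon_k=\frac{1}{2\pi i}\oint_{|x|=\rho}\frac{f^\epsilon(x)}{x^{k+1}}\,dx,\qquad u^\epsilon_{k,l}=\Bigl(\frac{1}{2\pi i}\Bigr)^{2}\oint_{|x|=\rho}\oint_{|y|=\rho}\frac{u^\epsilon(x,y)}{x^{k+1}y^{l+1}}\,dx\,dy,
\end{equation*}
giving at once $|f^\epsilon_k|\le B\rho^{-k}$ and $|u^\epsilon_{k,l}|\le \mu\rho^{-k-l}$. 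The only mildly nontrivial point in this whole argument is the vanishing $u^0_{k,1}=0$, which uses the extra analytic normalisation available specifically at $\epsilon=0$ (the absence of $y$-resonances); all remaining content is bookkeeping on top of Rousseau's normal form.
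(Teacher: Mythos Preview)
Your proposal has two genuine gaps.

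First, Rousseau's Theorem 2.2 does \emph{not} directly give the $x$-equation in the form $\dot x=(x-\epsilon)x$. The normal form in \cite{rousseau2005a} is
\[
\dot x=\lambda-x^2,\qquad \dot y=-y(1-a^\lambda x)+(\lambda-x^2)f^\lambda(x)+y^2 u^\lambda(x,y),
\]
with equilibria at $x=\pm\sqrt\lambda$. To reach \eqref{eq:normalform} one must translate one equilibrium to the origin, reparametrise $\sqrt\lambda\mapsto\epsilon$ analytically, and rescale time; the paper does this explicitly. These transformations are elementary but not vacuous: in particular the subsequent shift $\widetilde y=y+\epsilon\frac{f^\epsilon(0)}{1-\epsilon}x$ (needed to kill the $x$-linear term in $g^\epsilon$) feeds the $y^2 u$ part back into linear-in-$y$ terms, so the clean structure of Rousseau's $g^\lambda$ does not survive automatically.

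Second, and more seriously, your non-resonance argument for $u^0_{k,1}=0$ is wrong on its face. The relation $-1=k\cdot 0+1\cdot(-1)$ simplifies to $-1=-1$, which holds for \emph{every} $k$, not just $k=0$. Hence every monomial $x^k y$ in the $y$-equation is resonant at the saddle-node, and Poincar\'e--Dulac cannot remove them. The paper eliminates these terms at $\epsilon=0$ by a different mechanism: the explicit analytic conjugacy $\widetilde y=e^{-\psi(x)}y$ with $\psi(x)=\sum_{k\ge2}\frac{u^0_{k,1}}{k-1}x^{k-1}$, which works precisely because $\dot x=x^2$ at $\epsilon=0$ makes $x^2\psi'(x)=\sum_{k\ge2}u^0_{k,1}x^k$ solvable with analytic $\psi$. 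This is not a normal-form (resonance) argument; it is an integrating-factor trick specific to the $\dot x=x^2$ structure.

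The Cauchy-estimate part of your argument is fine.
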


The proof of Theorem \ref{thm:nf} (available in  \appref{app:normalform}) is obtained by 
applying elementary transformations to the normal form in \cite[Theorem 2.2]{rousseau2005a}.

In the remainder of the paper, we will assume the following \rsp{conditions on $a^\epsilon$ and $g^\epsilon$}:
\begin{assumption}\label{assa}  The following inequality holds true:
		$$a^0:=\lim_{\epsilon\rightarrow 0} a^\epsilon >-2.$$
	\end{assumption}
\begin{assumption}
 \label{deltacond}$B$ and $\rho>0$ are fixed and $\mu\ge 0$ in \eqref{eq:Bmudefn} is a parameter that is small enough (see details below). 
\end{assumption}

%
%
%

 Following \rsp{Hypothesis} \ref{deltacond}, 
 we will henceforth write
  \begin{align*}
  u^\epsilon = \mu h^\epsilon \quad\mbox{and}\quad u^\epsilon_{k,l} = \mu h^\epsilon_{k,l},
 \end{align*}
 so that $g^\epsilon$ in \eqref{eq:normalform} becomes
\begin{align}
 g^\epsilon(x,y) =: f^\epsilon(x) + \mu h^\epsilon(x,y), \label{eq:gexpansion}
\end{align}
where 
\begin{align}\label{eq:fhexpansion}
f^\epsilon(x) &=\sum_{k=2}^\infty f^\epsilon_k x^k,\quad
h^\epsilon(x,y) =\sum_{k=2}^\infty h^\epsilon_{k,1} x^k y+\sum_{k=1}^\infty \sum_{l=2}^\infty h^\epsilon_{k,l} x^k y^l,
\end{align}
with
\begin{align}
 \vert f_k^\epsilon\vert \le B \rho^{-k},\quad \vert h^\epsilon_{k,l}\vert \le \rho^{-k-l} \quad \mbox{and}\quad h_{k,1}^0 =0\quad \forall\,k,l\in \mathbb N,\,\epsilon\in [0,\epsilon_0).\label{eq:gkcond0}
\end{align}
\rsp{The results below will be stated for \eqref{eq:normalform} with $g^\epsilon$ given by \eqref{eq:gexpansion} for $0<\mu\ll 1$ (in accordance with \rsp{Hypothesis} \ref{deltacond}).}

\rsp{In \thmref{main1}, when we treat $f_2^0$ as a parameter, we will fix a compact interval $I$ so that \eqref{eq:gkcond0app} holds (with $B>0$ large enough) for all $f_2^0\in I$.}


%
%
The reference \cite{MR4445442} also assumes a condition like \rsp{Hypothesis} \ref{assa} (see \cite[Eq. 5.3]{MR4445442}) in the context of their specific rational example of an analytic unfolding, see  \cite[Eqs. (1.9)--(1.10)]{MR4445442}. On the other hand, a condition like \rsp{Hypothesis} \ref{deltacond}, which can also be viewed as \eqref{eq:gepsmu0}, does not appear in \cite{MR4445442}. We conjecture that our results are true without  \rsp{Hypotheses} \ref{assa} and \ref{deltacond} (and therefore hold true for any analytic and generic unfolding of a saddle-node), but leave this extension to future work. Whereas \rsp{Hypothesis} \ref{assa} seems relatively easy to relax, \rsp{Hypothesis} \ref{deltacond} requires extra work. We will discuss the matter further in Section \ref{sec:discussion}. 
\begin{remark}\remlab{proph1}
\rsp{Hypothesis} \ref{deltacond} is only an assumption on the nonlinearity in $y$. This follows from the last equality in \eqref{eq:gkcond0} and continuity with respect to $\epsilon$ (i.e. $h_{k,1}^\epsilon=o(1)$).
\end{remark}
\begin{remark}
Obviously, from \eqref{eq:Bmudefn} we have $\mu= \mathcal O(\rho^3)$ as $\rho\rightarrow 0$ in general (since $u^\epsilon$ starts with cubic terms) and in this sense one can achieve $\mu$ small by taking $\rho>0$ small. But this will not be helpful to us (and we do not expect it to be useful in general). {This is in contrast to arguments based upon Nagumo norms (see e.g. \cite{de2020a}), where the size of the domain can be used as a small parameter to obtain the appropriate contraction of a fixed-point formulation of Gevrey-properties of formal series.} At this stage, our approach in the present paper requires $B$ and $\rho>0$ fixed and $\mu>0$ small enough, as stated in \rsp{Hypothesis} \ref{deltacond}. 

\end{remark}

Our first main result relates to the center manifold.
\begin{theorem}\thmlab{main1}
 Consider \eqref{eq:normalform} with $g^\epsilon$ given by \eqref{eq:gexpansion} for $\epsilon=0$:
 \begin{equation}\label{eq:normalformcm}
 \begin{aligned}
  \dot x &=x^2,\\
  \dot y &=-y(1+a^0x)+g^0(x,y),
 \end{aligned}
\end{equation} 
and suppose that \rsp{Hypotheses} \ref{assa} and \ref{deltacond} hold true. Let $W^c:\,y=m^0(x),\,m^0(0)=\rspp{\frac{dm^0}{dx}}(0)=0$, with $m^0$ defined in a neighborhood of $x=0$, denote the center manifold of $(x,y)=(0,0)$.
 Then there is a $\mu_0>0$ such that for all $0\le \mu< \mu_0$ the following statements hold true:
 \begin{enumerate}
 \item \rspp{There exists a number $S_\infty^0$, which depends upon the full jet of $g^0$, such that}:
 \begin{enumerate}
 \item \begin{align*}
  \rspp{\frac{(-1)^k}{\Gamma(k+a^0)} \frac{1}{k!}\frac{d^k m^0}{dx^k}(0) \rightarrow S_\infty^{\rsp{0}}} \quad \mbox{for}\quad k\rightarrow \infty.
 \end{align*}
\item \rspp{The center manifold $W^c$ is nonanalytic if $S_\infty^0\ne 0$}.
\end{enumerate}
  \item $S_\infty^0=S_\infty^0(f_{2}^0)$ is a $C^1$-function with respect to $f_2^0\in I$ (as well as all other parameters of the system), recall \eqref{eq:fhexpansion}, satisfying
  \begin{align*}
  \frac{\partial S_\infty^0 }{\partial f_{2}^0}(f_{2}^0)=\frac{1}{\Gamma(2+a^0)}+\mathcal O(\mu)\ne 0\quad \forall\,\mu\in [0,\mu_0),\quad \mu_0=\mu_0(I)>0.
\end{align*}
   \end{enumerate}  
\end{theorem}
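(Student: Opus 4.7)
The plan is to recast the problem as a Banach fixed point in a space of formal series adapted to Gevrey-$1$ growth. Set
\begin{equation*}
E := \Bigl\{\widehat m = \sum_{k\ge 2} m_k x^k : \|\widehat m\|_E := \sup_{k\ge 2}\frac{|m_k|}{\Gamma(k+a^0)} < \infty\Bigr\}.
\end{equation*}
Inserting the formal series $\widehat m^0$ into \eqref{eq:normalformcm} and matching powers of $x$ yields the recursion
\begin{equation*}
m_k^0 + (k-1+a^0)\,m_{k-1}^0 = f_k^0 + \mu\,[h^0(x, \widehat m^0(x))]_k,
\end{equation*}
which, by the same rearrangement producing \eqref{eq:mk0linearcase} in the linear case, is equivalent to the fixed-point equation $\widehat m^0 = T(\widehat m^0)$ with
\begin{equation*}
[T(\widehat m)]_k := (-1)^k \Gamma(k+a^0)\sum_{j=2}^k \frac{(-1)^j\bigl(f_j^0 + \mu\,[h^0(x,\widehat m(x))]_j\bigr)}{\Gamma(j+a^0)}.
\end{equation*}

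The technical heart of the argument is a combinatorial bound of the form
\begin{equation*}
\sum_{k_1+\cdots+k_l = n,\,k_i\ge 2} \prod_{i=1}^l \Gamma(k_i+a^0) \le C_l\,\frac{\Gamma(n+a^0)}{n^{2}}\qquad (l\ge 2),
\end{equation*}
a consequence of the strong convexity of $\Gamma$: the sum concentrates near the endpoints $k_i\approx 2$, and $\Gamma(n-2+a^0)/\Gamma(n+a^0)=O(n^{-2})$. Crucially, the hypothesis $h^0_{k,1}\equiv 0$ at $\epsilon=0$ means only $l\ge 2$ contribute, so using $|h^0_{k,l}|\le \rho^{-k-l}$ together with the above, one obtains, uniformly for $\|\widehat m\|_E \le M$,
\begin{equation*}
\sum_{j\ge 2}\frac{|[h^0(x,\widehat m(x))]_j|}{\Gamma(j+a^0)} \le K(M) < \infty
\end{equation*}
(the regimes $\rho\ge 1$ and $\rho<1$ must be treated separately in the convolution over $k$). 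Combined with $F_0 := B\sum_{j\ge 2}\rho^{-j}/\Gamma(j+a^0) < \infty$, this yields $\|T(\widehat m)\|_E \le F_0 + \mu K(M)$ and, by the analogous estimate for $T(\widehat m)-T(\widehat n)$, a Lipschitz constant of order $\mu$. Choosing $M$ large and $\mu_0>0$ small, Banach's theorem delivers a unique fixed point $\widehat m^0$ in the closed ball.

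Item~1(a) follows immediately: the partial sums in the formula $(-1)^k m_k^0/\Gamma(k+a^0) = \sum_{j=2}^k (-1)^j(f_j^0+\mu\,[h^0(x,\widehat m^0)]_j)/\Gamma(j+a^0)$ are Cauchy thanks to the summability above, so
\begin{equation*}
S_\infty^0 := \lim_{k\to\infty}\frac{(-1)^k m_k^0}{\Gamma(k+a^0)} = \sum_{j=2}^\infty \frac{(-1)^j\bigl(f_j^0 + \mu\,[h^0(x,\widehat m^0(x))]_j\bigr)}{\Gamma(j+a^0)}
\end{equation*}
exists. Item~1(b) is then a consequence of the asymptotic $m_k^0 = (-1)^k S_\infty^0\,\Gamma(k+a^0)(1+o(1))$ combined with \eqref{eq:stirling00}, which forces $m_k^0 x^k \not\to 0$ for every $x\ne 0$ when $S_\infty^0\ne 0$, so the formal series diverges and $W^c$ cannot be analytic. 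For item~2, the Fr\'echet derivative $DT$ has operator norm of order $\mu$ on $E$, so $I - DT$ is invertible uniformly in $\mu\in[0,\mu_0)$ (shrinking $\mu_0$ if necessary) and the implicit function theorem provides the $C^1$-dependence of $\widehat m^0$ on all parameters, including $f_2^0\in I$. Term-by-term differentiation of the series for $S_\infty^0$ then yields
\begin{equation*}
\frac{\partial S_\infty^0}{\partial f_2^0} = \frac{1}{\Gamma(2+a^0)} + \mu\sum_{j\ge 2} \frac{(-1)^j}{\Gamma(j+a^0)}\,\frac{\partial [h^0(x,\widehat m^0(x))]_j}{\partial f_2^0} = \frac{1}{\Gamma(2+a^0)} + O(\mu),
\end{equation*}
the remainder being controlled by the same combinatorial estimate applied to $\partial_{f_2^0}\widehat m^0\in E$.

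\textbf{Main obstacle.} The principal difficulty is the combinatorial estimate on products of Gamma functions: one must extract polynomial decay $1/n^2$ (or better), without which the series defining $S_\infty^0$ would fail to converge and the whole fixed-point scheme would collapse. A related nuisance is ensuring uniformity across the two regimes $\rho\ge 1$ and $\rho<1$, since the dominant term in the convolution defining $[h^0(x,\widehat m)]_j$ switches from $k$ close to $j$ to $k$ of order one.
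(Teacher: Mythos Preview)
Your proposal is correct and follows essentially the same route as the paper: the same Gevrey-weighted Banach space $E=\mathcal D^0$, the same fixed-point operator $T=\mathcal P^0$, the Gamma-convolution estimate as the key technical ingredient, and the implicit function theorem for the $C^1$-dependence on $f_2^0$. One caveat worth flagging: the combinatorial bound you display, $C_l\,\Gamma(n+a^0)/n^{2}$, is weaker than what your endpoint-concentration argument actually delivers---the paper records and uses the sharper $C^{l-1}\Gamma(n-2(l-1)+a^0)$ (equivalently $C^{l-1}w^0_{n-2(l-1)}$)---and it is this sharper form, together with Lemma~\ref{lemma:wk0} item~\ref{C30}, that lets the sum over $l\ge 2$ converge for \emph{any} ball radius $M$; with only the uniform $1/n^2$ you would need $MC/\rho<1$, which is not guaranteed by the hypotheses. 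Your remark about splitting the cases $\rho\ge 1$ and $\rho<1$ is unnecessary: Lemma~\ref{lemma:wk0} item~\ref{C20} handles all $\rho>0$ at once.
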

The second statement shows that the center manifold being nonanalytic for \eqref{eq:normalform}, under the \rsp{Hypotheses} \eqref{assa} and \eqref{deltacond}, is a generic condition. {We exemplify this as follows:}
\begin{cor}\corlab{genf20}
 Suppose that the conditions of \thmref{main1} hold true, in particular $0\le \mu\ll 1$ so that $\frac{\partial S_\infty^0}{\partial f_2^0}( f_2^0)\ne 0$, $f_2^0\in I$, and suppose that the center manifold of \eqref{eq:normalformcm} is analytic ($\Rightarrow S_\infty^0(f_2^0)=0$). Then the center manifold of the perturbed system
\begin{equation}\nonumber
 \begin{aligned}
  \dot x &=x^2,\\
  \dot y &=-y(1+a^0x)+g^0(x,y)+qx^2,
 \end{aligned}
\end{equation} 
is nonanalytic for all $q\ne0$ small enough\rsp{.}  
\end{cor}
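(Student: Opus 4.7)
}
The plan is to view the perturbation $qx^2$ as a pure shift in the single coefficient $f_2^0$ and then exploit the $C^1$-dependence of $S_\infty^0$ on $f_2^0$ provided by \thmref{main1}(2). First I would observe that writing $g^0(x,y) = f^0(x) + \mu h^0(x,y)$ as in \eqref{eq:gexpansion}--\eqref{eq:fhexpansion}, the perturbation $qx^2$ contributes only to the $y$-independent part and only to its quadratic coefficient. Consequently the perturbed system fits the framework of \eqref{eq:normalformcm} with the same $a^0$, the same $h^0$, and the same $\mu$, but with $f_2^0$ replaced by $f_2^0 + q$. In particular \rsp{Hypothesis} \ref{assa} is unaffected, and \rsp{Hypothesis} \ref{deltacond} is unaffected because it restricts only the $y$-dependent part $\mu h^\epsilon$ (cf.\ \remref{proph1}); so for $|q|$ small enough, $f_2^0+q$ still lies in the compact interval $I$ fixed in the statement of \thmref{main1} and the bounds \eqref{eq:gkcond0} continue to hold (possibly after a harmless enlargement of $B$).

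Next I would invoke \thmref{main1}(2): the map $f_2^0 \mapsto S_\infty^0(f_2^0)$ is $C^1$ on $I$ with derivative
\begin{align*}
\frac{\partial S_\infty^0}{\partial f_2^0}(f_2^0) = \frac{1}{\Gamma(2+a^0)} + \mathcal{O}(\mu),
\end{align*}
which is nonzero for all $\mu\in[0,\mu_0)$. Using the hypothesis $S_\infty^0(f_2^0)=0$ together with the mean value theorem, one obtains
\begin{align*}
S_\infty^0(f_2^0+q) \;=\; q\cdot \frac{\partial S_\infty^0}{\partial f_2^0}(\xi),
\end{align*}
for some $\xi$ between $f_2^0$ and $f_2^0+q$. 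Since the derivative is bounded away from zero on a neighborhood of $f_2^0$ (by continuity of $\partial_{f_2^0} S_\infty^0$ and its nonvanishing), we conclude that $S_\infty^0(f_2^0+q)\neq 0$ for all $q\neq 0$ sufficiently small.

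Finally I would apply \thmref{main1}(1)(b) to the perturbed system: since its $S_\infty^0$ value is nonzero, its center manifold is nonanalytic, as required.

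I do not expect a real obstacle here; the argument is essentially an implicit-function-type observation. The only point that requires a little care is the bookkeeping ensuring that the perturbation $qx^2$ genuinely leaves \rsp{Hypothesis} \ref{deltacond} intact (so that $\mu_0$ from \thmref{main1} does not shrink to zero as we turn on $q$), and that the compact interval $I$ can be chosen to contain $f_2^0$ in its interior so that $f_2^0+q\in I$ for all small $q$. Both are immediate from the structure of \eqref{eq:gexpansion}.
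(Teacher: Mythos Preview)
Your proposal is correct and is precisely the argument the paper intends: the corollary is stated without proof in the paper, as it is meant to follow immediately from \thmref{main1}(2) via the observation that $qx^2$ shifts only $f_2^0\mapsto f_2^0+q$, together with \thmref{main1}(1)(b). Your bookkeeping about \rsp{Hypotheses} \ref{assa} and \ref{deltacond} is accurate and is the only point requiring any care.
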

\begin{remark}\label{rem:comp}
\rsp{The property that $\frac{\partial S_\infty^0}{\partial f_2^0}( f_2^0)\ne 0$ in \corref{genf20} is a perturbative result (obtained by perturbing away from $\mu=0$) and it is (obviously) not expected to hold true in general ($\mu=1$). In fact, in Fig. \ref{fig:Sinfty0} we illustrate the locus 
\begin{align*}
 S_\infty^0(f_2^0,p)  = 0,
\end{align*}
computed numerically, see further details below, on the domain $(f_2^0,p)\in [0,10]\times [0,2]$ 
for the following $(f_2^0,p)$-family 
\begin{align}
 x^2 \frac{dy}{dx} = -y + f_2^0 x^2 + p\left(\frac{x^3}{1-x}+3 xy^2 + xy^3\right).\label{family}
\end{align}
Here $p$ plays the role of $\mu$, but as it includes parts of $f^0$ (the $y$-independent part) in the formulation of \eqref{eq:normalformcm} we give it a different name.
We clearly see that the curve has a fold (``far away'' from $p=0$) where necessarily $\frac{\partial S_\infty^0}{\partial f_2^0}=0$ (we find that $\frac{\partial S_\infty^0}{\partial p}<0$ at this point). At the same time, $\frac{\partial S_\infty^0}{\partial f_2^0}>0$ (on the lower branch) for all $0\le p\lesssim 1.94$, in agreement with the statement of \corref{genf20}. }

\begin{figure}[h!]
\begin{center}
\includegraphics[width=.54\textwidth]{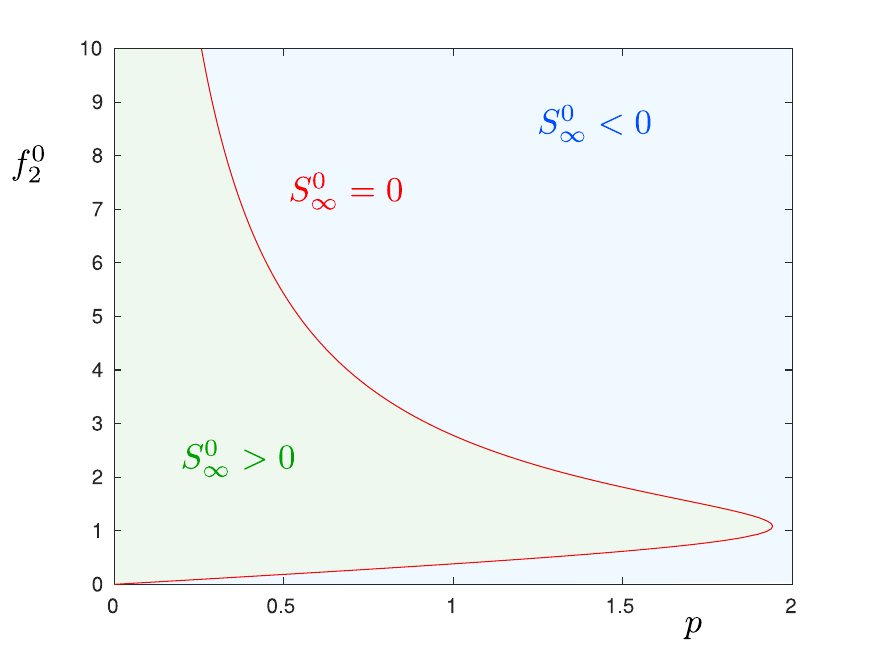}
\end{center}
\caption{\rsp{The locus $S_\infty^0(f_2^0,p)=0$ (in red) for the family \eqref{family}. The curve, which is computed numerically, see text for further details, has a fold point at $(p,f_2^0)\approx (1.94, 1.09)$, where $\frac{\partial S_\infty^0}{\partial f_2^0}$ changes sign, from $\frac{\partial S_\infty^0}{\partial f_2^0}>0$ on the lower branch (in agreement with the statement of \corref{genf20}) to $\frac{\partial S_\infty^0}{\partial f_2^0}<0$ on the upper one.}  }
\label{fig:Sinfty0}
\end{figure}

\rsp{As an approximation of $S_\infty^0=S_\infty^0(f_2^0,p)$ we used $S_{100}^0$ (as a finite sum). In fact, we observed that $\vert S_{101}^0-S_{100}^0\vert \sim 10^{-16}$ (i.e at the order of machine precision). To determine $m_2^0,\ldots,m_{97}^0$ (that are necessary to determine $S_{100}^0$ as a finite sum, see \eqref{eq:S0k} and \eqref{eq:recmk} below), we used the recursion relation offered by \eqref{eq:recmk0}, starting from $m_1^0=0$.
}
\end{remark}
Our next result relates to the analytic weak-stable invariant manifold $W^{ws}$ of \eqref{eq:normalform}. To present this, we first write \eqref{eq:normalform} in the form (\rsp{upon elimating time}) 
\begin{align}
 x(x-\epsilon) \frac{dy}{dx} + y(1+a^\epsilon x) = g^\epsilon(x,y).\label{dydx}
\end{align}
For all $\epsilon^{-1}\notin \mathbb N$, $W^{ws}$ takes the graph form
\begin{align*}
 y = m^\epsilon(x) = \sum_{k=2}^\infty m_k^\epsilon x^k;
\end{align*}
with the last equality valid locally $x\in (-\delta^\epsilon,\delta^\epsilon)$, $\lim_{\epsilon\rightarrow 0} \delta^\epsilon= 0$. 
 In particular, $y=m^\epsilon(x)$ is a (locally defined) solution of \eqref{dydx}.

Now, the blowup transformation defined by 
\begin{align}
 x = \epsilon \overline x,\quad y=\epsilon \overline y,\label{scalingxy}
\end{align}
for all $\epsilon>0$, separates the node and the saddle, so that the latter is at $\overline x=1$. \rsp{By applying the change of variables defined by \eqref{scalingxy}, \eqref{dydx} becomes}
\begin{align}
 \epsilon \overline x(\overline x-1) \frac{d\overline y}{d\overline x} + \overline y(1+\epsilon  a^\epsilon \overline x) = \epsilon^{-1} g^\epsilon(\epsilon \overline x,\epsilon \overline y),\label{eq:sp}
\end{align}
where 
\begin{align*}
 \epsilon^{-1} g^\epsilon(\epsilon \overline x,\epsilon \overline y) &=:\epsilon\overline f^\epsilon(\overline x)+\epsilon \mu \overline h^\epsilon(\overline x,\overline y),\\
 \overline f^\epsilon(\overline x) = \sum_{k=2}^\infty f_k^\epsilon \epsilon^{k-2} \overline x^k,\quad
 &\overline h^\epsilon(\overline x,\overline y)= \sum_{k=2}^\infty h_{k,1}^\epsilon \epsilon^{k-1} \overline x^k \overline y+\sum_{k=1}^\infty \sum_{l=2}^\infty h_{k,l}^\epsilon \epsilon^{k+l-2} \overline x^k \overline y^l.
\end{align*}
%
%
In these coordinates, \eqref{eq:sp} is a singularly perturbed system with respect to $0<\epsilon\ll 1$ and  $W^{ws}$ takes the following form $$\overline y = \overline m^\epsilon(\overline x):=\epsilon^{-1} m^\epsilon(\epsilon \overline x) = \sum_{k=2}^\infty \epsilon^{k-1} m_k^\epsilon \overline x^k,$$
where the last equality again holds true locally ($\overline x\in (-\epsilon^{-1} \delta^\epsilon,\epsilon^{-1} \delta^\epsilon)$).
In the language of Geometric Singular Perturbation Theory (GSPT) \cite{fen3,jones_1995}, the set $\{\overline y=0\}$ is a normally hyperbolic and attracting critical manifold \rsp{of \eqref{eq:sp} for $\epsilon=0$}. Therefore there is a (nonunique) slow manifold as a graph $\overline y = \mathcal O(\epsilon) $ over a compact subset $\overline x\in I$. This slow manifold only has finite smoothness (with respect to $\overline x$) in general, see \cite{fen3}. However, the unstable manifold $W^u$ of the saddle $(\overline x,\overline y)=(1,0)$ in the $(\overline x,\overline y)$-coordinates is an example of an analytic slow manifold of the following graph form:
\begin{align}\label{eq:Wugraph}
 W^u:\quad \overline y = \epsilon \overline H^\epsilon(\overline x), \quad \overline x\in (0,2],\quad \overline H^\epsilon(0^+)=0;
\end{align}
here $\overline H^\epsilon$ extends $C^k$-smoothly ($1\le k<\infty$, specifically not analytically, see \corref{main2b} item \ref{2new}) to $\overline x=0$ for all $0<\epsilon\ll 1$.
We will also need the following lemma (which we prove in Section \ref{sec:nodefinal}). 
\begin{lemma}\lemmalab{lemma:Veps}
  Suppose that $\epsilon^{-1}\notin \mathbb N$, $0<\epsilon\ll 1$, and write 
  \begin{align*}
  \epsilon^{-1}=:N^\epsilon+\alpha^\epsilon, \quad N^\epsilon:=\lfloor \epsilon^{-1}\rfloor \quad \mbox{and}\quad \alpha^\epsilon\in (0,1).
  \end{align*}
  Then the following holds true. 
  \begin{enumerate}
  \item \label{Vepsprop} The series 
   \begin{align}\label{eq:Veps0}
 \overline V^\epsilon(\overline x):=\frac{\Gamma(\alpha^\epsilon)\Gamma(1-\alpha^\epsilon) }{\epsilon\Gamma(\epsilon^{-1})}\sum_{k=N^\epsilon}^\infty \frac{\Gamma(k+a^\epsilon)}{\Gamma(k+1-\epsilon^{-1})}\overline x^k,
\end{align}
is absolutely convergent for all $0\le \vert \overline x\vert <1$; in particular $\overline V^\epsilon(0)=0$ and 
\begin{align}\label{eq:Vepsprop}
\overline V^\epsilon(\overline x)>0,\quad \frac{d}{d\overline x}\overline V^\epsilon(\overline x)>0\quad \forall\, \overline x\in (0,1),
\end{align}
\item \label{Vepsprop2}  Lower bound:
\begin{align}\label{eq:Vepsprop2}
\overline V^\epsilon(\overline x) \ge 
 \epsilon \left(\frac{\overline x}{1- \overline x}\right)^{N^\epsilon+1}\quad \forall\, 0\le \overline x\le \frac34.                                                                                                                                                                
\end{align}
\item \label{Vepsblowup} At the same time, for any $0<\vert \overline x\vert <1$,
$$\vert  \overline V^\epsilon(\overline x)\vert \rightarrow \infty\quad \mbox{for}\quad \alpha^\epsilon \rightarrow 0^+\,\,\mbox{and}\, \,1^-.$$
\item \label{expansionV} Asymptotics for $\overline x=\mathcal O(\epsilon)$: Let $\overline x=\epsilon\overline x_2\in [-\epsilon\delta_2,\epsilon\delta_2]$, $\delta_2>0$ fixed. Then for all  $0<\epsilon\ll 1$, $\epsilon^{-1}\notin \mathbb N$:
\begin{equation}\label{eq:expansionV}
\begin{aligned}
\overline V^\epsilon(\epsilon\overline x_2) &=(1+o(1)) \Gamma(\alpha^\epsilon) (N^\epsilon)^{a^\epsilon+1-\alpha^\epsilon} (\epsilon \overline x_2)^{N^\epsilon} \\
 &\times \left(1+ \frac{\overline x_2}{1-\alpha^\epsilon}  \left[1+\overline x_2\int_0^1 e^{(1-v)\overline x_2}  v^{1-\alpha^\epsilon} dv+o(1)\right]\right),
\end{aligned}
\end{equation}
with each $o(1)$ being uniform with respect to $\alpha^\epsilon\in (0,1)$. 
\end{enumerate}
\end{lemma}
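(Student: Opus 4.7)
My plan is to treat each of the four items as a direct consequence of the gamma-function properties recorded in Appendix~\ref{sec:Gamma}. Stirling's formula \eqref{eq:stirling00} gives $\Gamma(k+a^\epsilon)/\Gamma(k+1-\epsilon^{-1}) = (1+o(1))k^{a^\epsilon+\epsilon^{-1}-1}$ as $k\to\infty$, so the series defining $\overline V^\epsilon$ has radius of convergence $1$; the hypothesis $a^\epsilon>-2$ combined with $N^\epsilon\ge 2$ (valid once $\epsilon$ is small) yields $\Gamma(k+a^\epsilon)>0$ for all $k\ge N^\epsilon$, while $\Gamma(k+1-\epsilon^{-1})$ and the prefactor are positive since all their arguments are positive. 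This settles item \ref{Vepsprop}: all coefficients are nonnegative and the leading $\overline x^{N^\epsilon}$-coefficient is strictly positive, so $\overline V^\epsilon(0)=0$, $\overline V^\epsilon>0$ and $d\overline V^\epsilon/d\overline x>0$ on $(0,1)$. For item \ref{Vepsblowup} I would isolate the first two terms: as $\alpha^\epsilon\to 0^+$ the $k=N^\epsilon$ coefficient blows up through $\Gamma(\alpha^\epsilon)\sim 1/\alpha^\epsilon$ while the tail stays uniformly bounded; as $\alpha^\epsilon\to 1^-$ the $k=N^\epsilon+1$ coefficient picks up the diverging factor $\Gamma(1-\alpha^\epsilon)/\Gamma(2-\alpha^\epsilon)=1/(1-\alpha^\epsilon)$ while the $k=N^\epsilon$ term stays bounded. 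In both limits the diverging contribution has a definite sign for $\overline x\neq 0$, so $|\overline V^\epsilon(\overline x)|\to\infty$.

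For the asymptotic expansion in item \ref{expansionV} I would substitute $\overline x=\epsilon\overline x_2$, extract the $k=N^\epsilon$ factor, and rewrite
\begin{align*}
\overline V^\epsilon(\epsilon\overline x_2)=\frac{\Gamma(\alpha^\epsilon)\,\Gamma(N^\epsilon+a^\epsilon)}{\epsilon\,\Gamma(N^\epsilon+\alpha^\epsilon)}(\epsilon\overline x_2)^{N^\epsilon}\sum_{j=0}^\infty\frac{(N^\epsilon+a^\epsilon)_j\,\epsilon^j}{(1-\alpha^\epsilon)_j}\,\overline x_2^{\,j},
\end{align*}
where $(\,\cdot\,)_j$ denotes the Pochhammer symbol. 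Stirling applied to $\Gamma(N^\epsilon+a^\epsilon)/\Gamma(N^\epsilon+\alpha^\epsilon)\sim(N^\epsilon)^{a^\epsilon-\alpha^\epsilon}$ together with $\epsilon^{-1}=N^\epsilon+\alpha^\epsilon$ identifies the overall prefactor as $(1+o(1))\Gamma(\alpha^\epsilon)(N^\epsilon)^{a^\epsilon+1-\alpha^\epsilon}(\epsilon\overline x_2)^{N^\epsilon}$. Each Pochhammer ratio $(N^\epsilon+a^\epsilon+i)/(N^\epsilon+\alpha^\epsilon)$ tends to $1$ as $\epsilon\to 0^+$, and $|\overline x_2|^{\,j}/(1-\alpha^\epsilon)_j$ is majorised by $|\overline x_2|^{\,j}/j!$ uniformly in $\alpha^\epsilon\in(0,1)$, so dominated convergence lets me exchange sum and limit and obtain $\sum_{j\ge 0}\overline x_2^{\,j}/(1-\alpha^\epsilon)_j$ as the limiting inner series. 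The $j=0,1$ contributions give the leading $1+\overline x_2/(1-\alpha^\epsilon)$, and I rewrite the tail $j\ge 2$ via the Beta-integral identity
\begin{align*}
\sum_{j=2}^\infty\frac{\overline x_2^{\,j}}{(1-\alpha^\epsilon)_j}=\frac{\overline x_2^{\,2}}{1-\alpha^\epsilon}\int_0^1 e^{(1-v)\overline x_2}v^{1-\alpha^\epsilon}\,dv,
\end{align*}
which follows by Taylor-expanding $e^{(1-v)\overline x_2}$ and using $\int_0^1(1-v)^n v^{1-\alpha^\epsilon}dv=n!\,\Gamma(2-\alpha^\epsilon)/\Gamma(n+3-\alpha^\epsilon)$. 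Combining these pieces reproduces the bracket in \eqref{eq:expansionV}.

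The lower bound \ref{Vepsprop2} is the main technical obstacle. My strategy is term-by-term comparison: expand $\epsilon(\overline x/(1-\overline x))^{N^\epsilon+1}=\epsilon\sum_{j\ge 0}\binom{N^\epsilon+j}{N^\epsilon}\overline x^{N^\epsilon+1+j}$ and dominate each of its coefficients by the corresponding $k=N^\epsilon+1+j$ coefficient of $\overline V^\epsilon$, simply discarding the nonnegative $k=N^\epsilon$ term of $\overline V^\epsilon$. Using the reflection identity $\Gamma(\alpha^\epsilon)\Gamma(1-\alpha^\epsilon)=\pi/\sin(\pi\alpha^\epsilon)\ge\pi$ and Stirling, each such coefficient ratio is bounded below by a positive constant times $(N^\epsilon)^{3+a^\epsilon-\alpha^\epsilon}(1+j)^{a^\epsilon+\alpha^\epsilon-1}$, which uniformly exceeds $1$ for $j$ up to a polynomial threshold in $N^\epsilon$. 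For the remaining large-$j$ tail, where termwise dominance can fail (especially for $a^\epsilon$ close to $-2$), I would exploit the restriction $\overline x\in[0,3/4]$: the right-hand side tail then decays geometrically in $j$ and can be absorbed into the surplus generated at small $j$. Making this splitting quantitative and uniform in $\alpha^\epsilon\in(0,1)$ and $a^\epsilon>-2$ is the main technical work, and explains why the lemma restricts $\overline x$ to $[0,3/4]$ rather than the full interval $(0,1)$.
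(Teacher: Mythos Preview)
Your arguments for items~\ref{Vepsprop} and~\ref{Vepsblowup} are correct and match the paper's. The real divergence is in items~\ref{Vepsprop2} and~\ref{expansionV}, where the paper does \emph{not} work with the series directly. Instead it uses the decomposition $\overline V^\epsilon(\overline x)=\overline w_{N^\epsilon}^\epsilon\overline x^{N^\epsilon}+\overline U^\epsilon(\overline x)$ with $\overline U^\epsilon=(N^\epsilon+a^\epsilon)\overline w_{N^\epsilon}^\epsilon\,\mathcal T^\epsilon[(\cdot)^{N^\epsilon+1}]$, and the explicit integral
\[
\mathcal T^\epsilon[(\cdot)^{N^\epsilon+1}](\overline x)=\frac{\overline x^{\epsilon^{-1}}}{(1-\overline x)^{\epsilon^{-1}+a^\epsilon}}\int_0^{\overline x}(1-v)^{\epsilon^{-1}+a^\epsilon-1}v^{-\alpha^\epsilon}\,dv.
\]
For item~\ref{Vepsprop2} this turns the lower bound into a one-step estimate: drop the nonnegative first term, bound the integrand below on $[0,\overline x]$ (or compare with the Beta integral), and the factor $(\overline x/(1-\overline x))^{N^\epsilon+1}$ drops out immediately from the prefactor $\overline x^{\epsilon^{-1}}/(1-\overline x)^{\epsilon^{-1}+a^\epsilon}$; Hypothesis~\ref{assa} ($a^0>-2$) then finishes the constant. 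This bypasses entirely the termwise surplus/deficit bookkeeping you propose, whose large-$j$ tail absorption you left unquantified. For item~\ref{expansionV} the paper substitutes $v=\epsilon\overline x_2\widetilde v$ in the same integral, expands $(1-\epsilon\overline x_2\widetilde v)^{\epsilon^{-1}+a^\epsilon-1}=e^{-\overline x_2\widetilde v}(1+\mathcal O(\epsilon))$, and integrates by parts once; the uniformity in $\alpha^\epsilon$ is then automatic.

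Your series route for item~\ref{expansionV} is conceptually fine (and the Beta-integral identity for the tail is correct), but the dominated-convergence step as written has an error: you claim $1/(1-\alpha^\epsilon)_j\le 1/j!$ uniformly in $\alpha^\epsilon\in(0,1)$, yet $(1-\alpha^\epsilon)_j$ contains the factor $(1-\alpha^\epsilon)$ and vanishes as $\alpha^\epsilon\to 1^-$. You also need a uniform-in-$j$ bound on $\prod_{i<j}(N^\epsilon+a^\epsilon+i)\epsilon$, not just pointwise convergence of each factor to $1$. Both issues are repairable (pull out the $1/(1-\alpha^\epsilon)$ from the $j\ge 1$ terms first, then use $(2-\alpha^\epsilon)_{j-1}\ge (j-1)!$ and a crude $2^j$-type bound on the Pochhammer product for $j\le N^\epsilon$, handling the far tail separately), but as stated the majorant is wrong. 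The integral representation avoids all of this.
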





%

\begin{figure}[h!]
\begin{center}
\includegraphics[width=.4\textwidth]{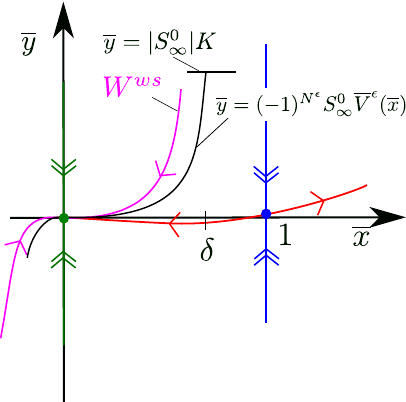}
\end{center}
\caption{Phaseportrait of \eqref{eq:sp} for $\epsilon>0$, $\epsilon^{-1}\notin \mathbb N$ (and $(-1)^{N^\epsilon}S_\infty^0>0$); please compare with Fig. \ref{node}. \thmref{main2} says that if $S_\infty^0\ne 0$ then we can track $W^{ws}$ (in magenta) by the graph $\overline  y = (-1)^{N^\epsilon}S_\infty^0 \overline V^\epsilon(\overline x)$, see also \corref{main2b} and further details in \thmref{main2}. }
\label{barnode}
\end{figure}

Our main result on the analytic weak-stable manifold then takes the following form (see Fig. \ref{barnode}).
\begin{theorem}\thmlab{main2}
Fix $K>0$, $\delta_2>0$, $0<\upsilon\ll K$ and consider \eqref{eq:sp} with $g^\epsilon$ given by \eqref{eq:gexpansion}, satisfying \rsp{Hypotheses} \ref{assa} and \ref{deltacond}. Then the quantity $S_\infty^0$ from \thmref{main1} is well-defined. 
We suppose that
 \begin{align}
  S_\infty^0\ne 0, \label{assSinf}
 \end{align}
 so that the center manifold is nonanalytic.

%
%
 Now, consider the convergent series $\overline V^\epsilon$ defined in \eqref{eq:Veps0}. Then the following holds for all $0<\epsilon\ll 1$, $\epsilon^{-1}\notin \mathbb N$: Let $W^{ws}:\,\overline y=\overline m^\epsilon(\overline x)$, with $\overline m^\epsilon$ defined in a neighborhood of the origin, denote the analytic weak-stable manifold in the $(\overline x,\overline y)$-coordinates, see \eqref{scalingxy}, and 
let $I\subset \left[-\delta_2\epsilon,\frac34\right]$ be an interval so that
 \begin{align}
\vert \overline V^\epsilon(\overline x)\vert\le K\quad \forall\, \overline x\in I.\label{VcondK}
\end{align} 
Then $I\subset \operatorname{domain}(\overline m^\epsilon)$ and
\begin{align}
\vert \overline m^\epsilon (\overline x)-(-1)^{N^\epsilon}  S_\infty^0 \overline V^\epsilon(\overline x)\vert \le \upsilon \quad \forall\,\overline x\in I. \label{eq:Wwsapp}
\end{align}
 
\end{theorem}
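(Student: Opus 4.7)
The strategy is the one sketched in the informal discussion around \eqref{eq:Veps00}: write
$$\overline m^\epsilon = (-1)^{N^\epsilon} S_\infty^0 \overline V^\epsilon + \overline B^\epsilon$$
and prove that $\overline B^\epsilon$ admits bounds on $I$ that are uniform in $\alpha^\epsilon \in (0,1)$. Once such bounds are in place, the assumption $|\overline V^\epsilon(\overline x)| \le K$ on $I$ combined with $|\overline B^\epsilon(\overline x)| \le \upsilon$ yields \eqref{eq:Wwsapp} directly. The role of \eqref{VcondK} is to delimit the region in which the explicit resonant ansatz remains of moderate size, so that the formal decomposition is meaningful. Since $\overline V^\epsilon$ is defined on $|\overline x| < 1$ and the estimates will give a radius of convergence at least $r > 3/4$ for $\overline B^\epsilon$, the inclusion $I \subset \operatorname{domain}(\overline m^\epsilon)$ follows from the decomposition together with analytic continuation.

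Substituting $\overline m^\epsilon(\overline x) = \sum_{k \ge 2} \overline m_k^\epsilon \overline x^k$ with $\overline m_k^\epsilon = \epsilon^{k-1} m_k^\epsilon$ into the invariance equation obtained from \eqref{eq:sp} yields the recursion
$$(1 - \epsilon k)\, \overline m_k^\epsilon + \epsilon (k-1+a^\epsilon)\, \overline m_{k-1}^\epsilon = \epsilon^{k-1} f_k^\epsilon + \mathcal{O}(\mu),$$
which exhibits a small divisor $1 - \epsilon k = \epsilon \alpha^\epsilon$ at $k = N^\epsilon$. A direct calculation using $\Gamma(z+1) = z \Gamma(z)$ shows that the coefficients $v_k$ of $\overline V^\epsilon$ solve the corresponding homogeneous recursion $(1-\epsilon k) v_k + \epsilon(k-1+a^\epsilon)v_{k-1} = 0$ for all $k \ge N^\epsilon + 1$, while the normalization in \eqref{eq:Veps0} is precisely the one making $(-1)^{N^\epsilon} S_\infty^0 v_k$ match the leading asymptotic behavior $m_k^0 \sim (-1)^k S_\infty^0 \Gamma(k+a^0)$ provided by \thmref{main1}, transported to the blown-up scale. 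Consequently the subtraction of $(-1)^{N^\epsilon} S_\infty^0 \overline V^\epsilon$ cancels the resonant mode, and $\overline B^\epsilon$ satisfies a recursion whose prefactor at $k = N^\epsilon$ can be rewritten without the vanishing term $\epsilon \alpha^\epsilon$.

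With the small divisor removed, I would formulate the recursion for $(\overline B_k^\epsilon)_{k \ge 2}$ as a fixed point problem in a weighted $\ell^\infty$-type Banach space $\{ (b_k) : \sup_k r^k |b_k| < \infty \}$ for some $r > 3/4$, in the same spirit as the formal-series fixed point used for the center manifold in Section \ref{sec:cm}. The linear part yields a bounded inverse on this space, with operator norm controlled uniformly in $\alpha^\epsilon$, and the nonlinear contribution of size $\mathcal{O}(\mu)$ is absorbed into the contraction constant thanks to \rsp{Hypothesis} \ref{deltacond}. One then obtains uniform coefficient bounds $|\overline B_k^\epsilon| \le C r^{-k}$, from which the pointwise estimate $|\overline B^\epsilon(\overline x)| \le \upsilon$ on $I$ follows after geometric-series summation, provided $0 < \epsilon \ll 1$ and $\upsilon$ is chosen appropriately with respect to $K$ and $\delta_2$.

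The main obstacle is the rigorous execution of the small-divisor cancellation. One needs a \emph{finite-$k$} version of the limiting statement $m_k^0/((-1)^k \Gamma(k+a^0)) \to S_\infty^0$, with an explicit remainder, and then its propagation to the recursion for $\overline B_k^\epsilon$ uniformly in $\alpha^\epsilon \in (0,1)$. In addition, the negative side $\overline x \in [-\delta_2 \epsilon, 0]$ must be treated separately: there the natural variable is $\overline x_2 = \epsilon^{-1} \overline x$, and the asymptotic expansion \eqref{eq:expansionV} in \lemmaref{Veps}(\ref{expansionV}) has to be combined with the estimates on $\overline B^\epsilon$ to transfer the bound across the node. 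I expect the matching between the ``outer'' region ($\overline x$ bounded away from $0$) and the ``inner'' region ($\overline x = \mathcal{O}(\epsilon)$), together with this quantitative version of the resonance cancellation, to be the main technical work in the proof.
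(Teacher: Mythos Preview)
Your overall picture is right, but the concrete mechanism you propose has a genuine gap, and it differs substantially from what the paper actually does.

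\textbf{The coefficient-level cancellation you claim does not occur.} The sequence $v_k$ of coefficients of $\overline V^\epsilon$ satisfies the homogeneous recursion $(1-\epsilon k)v_k+\epsilon(k-1+a^\epsilon)v_{k-1}=0$ only for $k\ge N^\epsilon+1$; at $k=N^\epsilon$ one has $v_{N^\epsilon-1}=0$ and $v_{N^\epsilon}=\overline w_{N^\epsilon}^\epsilon\ne 0$, so there is a source term, and subtracting $(-1)^{N^\epsilon}S_\infty^0\overline V^\epsilon$ does \emph{not} remove the small divisor $\epsilon\alpha^\epsilon$ at that level. In your decomposition the $N^\epsilon$-th coefficient equals
\[
\overline B_{N^\epsilon}^\epsilon=\overline m_{N^\epsilon}^\epsilon-(-1)^{N^\epsilon}S_\infty^0\,\overline w_{N^\epsilon}^\epsilon
=(-1)^{N^\epsilon}\overline w_{N^\epsilon}^\epsilon\bigl(\overline S_{N^\epsilon}^\epsilon-S_\infty^0\bigr),
\]
using $\overline m_{N^\epsilon}^\epsilon=(-1)^{N^\epsilon}\overline w_{N^\epsilon}^\epsilon\,\overline S_{N^\epsilon}^\epsilon$ (Lemma~\ref{lemma:barSkeps}). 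Now $\overline w_{N^\epsilon}^\epsilon\sim\Gamma(\alpha^\epsilon)(N^\epsilon)^{a^\epsilon+1-\alpha^\epsilon}$ blows up like $1/\alpha^\epsilon$, while $\overline S_{N^\epsilon}^\epsilon-S_\infty^0$ is $o(1)$ in $N^\epsilon\to\infty$ (Lemma~\ref{lemma:SNepslimit}) but \emph{not} in $\alpha^\epsilon\to 0^+$ at fixed $N^\epsilon$. Hence $|\overline B_{N^\epsilon}^\epsilon|$ is unbounded in $\alpha^\epsilon$, and no uniform bound $|\overline B_k^\epsilon|\le Cr^{-k}$ can hold. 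Correspondingly, your assertion that ``the linear part yields a bounded inverse with operator norm controlled uniformly in $\alpha^\epsilon$'' is exactly the small-divisor obstruction in disguise: the inverse involves $(1-\epsilon k)^{-1}$, which is $(\epsilon\alpha^\epsilon)^{-1}$ at $k=N^\epsilon$ and $-(\epsilon(1-\alpha^\epsilon))^{-1}$ at $k=N^\epsilon+1$.

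\textbf{What the paper actually does.} The proof does not work in a single geometrically-weighted $\ell^\infty$ space. It splits $\overline m^\epsilon=j^{N^\epsilon-1}[\overline m^\epsilon]+\overline m_{N^\epsilon}^\epsilon\overline x^{N^\epsilon}+\overline M^\epsilon$ at the resonant degree. The polynomial part $j^{N^\epsilon-1}[\overline m^\epsilon]$, whose \emph{number of terms tends to infinity} as $\epsilon\to 0$, is controlled in the semi-norm with weights $\overline w_k^\epsilon=\Gamma(\epsilon^{-1}-k)\Gamma(k+a^\epsilon)/(\epsilon\Gamma(\epsilon^{-1}))$, via the gamma-function convexity/convolution estimates of Lemma~\ref{lemma:wkeps} (this is why a plain geometric weight is too crude). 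The tail $\overline M^\epsilon=\mathcal O(\overline x^{N^\epsilon+1})$ is not handled through coefficients at all: it is obtained as a fixed point of the integral operator $\mathcal T^\epsilon$ on a sup-norm-type function space $\mathcal D_\delta^\epsilon$ (Lemma~\ref{lemma:TOp}, Proposition~\ref{prop:fixpoint}). The resulting decomposition (Proposition~\ref{prop:final}) has leading term $(-1)^{N^\epsilon}\overline S_{N^\epsilon}^\epsilon\,\overline V^\epsilon$, \emph{not} $(-1)^{N^\epsilon}S_\infty^0\,\overline V^\epsilon$; the replacement $\overline S_{N^\epsilon}^\epsilon\to S_\infty^0$ at the very end produces an extra error $|\overline S_{N^\epsilon}^\epsilon-S_\infty^0|\,|\overline V^\epsilon(\overline x)|$, and this is precisely where the hypothesis $|\overline V^\epsilon|\le K$ on $I$ is spent. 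In your scheme that hypothesis is never used in an essential way, which is another sign that the argument as written cannot close.
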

  In other words, when \eqref{assSinf} holds true, then by taking $0<\epsilon\ll 1$, \textit{we can  track $W^{ws}:\overline y=\overline m^\epsilon(\overline x)$ through $\overline y=(-1)^{N^\epsilon}  S_\infty^0\overline V^\epsilon(\overline x)$}. 
  Moreover, we have the following result, which we illustrate for $S_\infty^0>0$ in \figref{node_cases}; the weak manifold has to be reflected about the $x$-axis for $S_\infty^0<0$.

  \begin{cor}\corlab{main2b}
    Fix $c>0$ small enough, suppose that \rsp{Hypotheses} \ref{assa} and \ref{deltacond} hold true and that $S_\infty^0\ne 0$. Put $s=\operatorname{sign}(S_\infty^0)$ and let $W^{ws}$ denote the analytic weak-stable manifold. Then the following holds true regarding the position of $W^{ws}$ for all $N^\epsilon=\lfloor \epsilon^{-1}\rfloor\gg 1$: 
    
    \vspace{0.2cm}
    
  \noindent  \textbf{Intersections of $W^{ws}$ with $\{y=\pm c\}$ for $x>0$}:
  
    \begin{enumerate}
    \item \label{2new} $W^{ws}$ does not intersect $W^u$. More precisely, we have the following:
    \begin{enumerate}
    \item \label{1} Suppose that $N^\epsilon$ is even. Then $W^{ws}$ intersects $\{y=sc\}$ for $x>0$.  
\item \label{2}
   Suppose that $N^\epsilon$ is odd. Then $W^{ws}$ intersects $\{y=-sc\}$ for $x>0$.  
   \end{enumerate}
    \end{enumerate}

 \noindent    \textbf{Intersections of $W^{ws}$ with $\{y=\pm c\}$ for $x<0$}:
   
   \vspace{0.2cm}
    
   Define
    \begin{align}\label{eq:underlinealpha}
     \underline \alpha(N^\epsilon) : = (N^\epsilon)^{a^0-N^\epsilon},\quad 1-\overline \alpha(N^\epsilon) :=(N^\epsilon)^{a^0-1-N^\epsilon}. 
    \end{align}
    \begin{enumerate}[resume*]
    \item \label{3} Suppose that $N^\epsilon$ is even.  Then the following holds:
    \begin{enumerate}
     \item \label{3a} $W^{ws}$ intersects $\{y=sc\}$ for $x<0$ for all $0<\alpha^\epsilon\le \underline \alpha(N^\epsilon) $.
     \item \label{3b} $W^{ws}$ intersects $\{y=-sc\}$ for $x<0$ for all $0<1-\alpha^\epsilon\le 1-\overline \alpha(N^\epsilon)$.
    \end{enumerate}
\item \label{4}
    Suppose that $N^\epsilon$ is odd. Then the following holds:
    \begin{enumerate}
     \item \label{4a} $W^{ws}$ intersects $\{y=sc\}$ for $x<0$ for all $0<\alpha^\epsilon\le \underline \alpha(N^\epsilon)$.
     \item \label{4b} $W^{ws}$ intersects $\{y=-sc\}$ for $x<0$ for all $0<1-\alpha^\epsilon\le  1-\overline \alpha(N^\epsilon)$.
    \end{enumerate}
    \end{enumerate}

%

  \end{cor}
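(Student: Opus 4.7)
Let $s = \operatorname{sign}(S_\infty^0)$. My approach combines \thmref{main2}, which yields
\[
\overline m^\epsilon(\overline x) = (-1)^{N^\epsilon} S_\infty^0\, \overline V^\epsilon(\overline x) + E^\epsilon(\overline x), \qquad |E^\epsilon(\overline x)| \le \upsilon,
\]
on any interval $I \subset [-\delta_2\epsilon, 3/4]$ with $|\overline V^\epsilon| \le K$, with the explicit properties of $\overline V^\epsilon$ from \lemmaref{lemma:Veps}. The plan is to invert this approximation in the two regions $\overline x \in (0, 3/4)$ and $\overline x \in [-\delta_2\epsilon, 0)$ separately to determine the sign (and where needed the magnitude) of $\overline m^\epsilon$, and so to locate its intersections with $\{y = \pm c\}$.

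For the positive side (items \ref{1} and \ref{2}), item \ref{Vepsprop} gives $\overline V^\epsilon$ strictly positive and monotone on $(0,1)$, and item \ref{Vepsprop2} gives $\overline V^\epsilon(3/4) \ge \epsilon \cdot 3^{N^\epsilon+1} \to \infty$ as $N^\epsilon \to \infty$. Fixing $K$ and $\upsilon \ll K|S_\infty^0|$ there is $\overline x^\ast \in (0,3/4)$ with $\overline V^\epsilon(\overline x^\ast) = K$; on the subinterval of $(0, \overline x^\ast)$ where $|\overline V^\epsilon| \ge \upsilon/|S_\infty^0|$ the approximation forces the sign of $\overline m^\epsilon$ to be $(-1)^{N^\epsilon}s$. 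To upgrade this local sign information into an intersection of $W^{ws}$ with $\{y = \pm c\}$ (which in scaled coordinates corresponds to $|\overline y| = c/\epsilon$ and thus $|\overline V^\epsilon|$ well beyond $K$), I use a topological argument: the vertical lines $\{x = 0\}$ and $\{x = \epsilon\}$ are invariant for the flow \eqref{eq:normalform} (since $\dot x$ vanishes there), and by $S_\infty^0 \ne 0$ together with item \ref{nonint} of \thmref{main0} (established as part of \thmref{main2}) the branch of $W^{ws}$ with $x > 0$ does not coincide with $W^u$ inside the strip $\{0 < x < \epsilon\}$. Continued backward in time from the origin this branch therefore cannot exit through either vertical edge nor terminate at the saddle, and so it must exit through the top or bottom edge of the rectangle $\{0 < x < \epsilon\} \times \{|y| \le c\}$; the sign $(-1)^{N^\epsilon}s$ computed above selects which edge and yields items \ref{1} ($N^\epsilon$ even) and \ref{2} ($N^\epsilon$ odd).

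For the negative side (items \ref{3} and \ref{4}), the asymptotic expansion \eqref{eq:expansionV} from item \ref{expansionV} is the main tool. Writing $\overline x = \epsilon \overline x_2$ with $\overline x_2 \in [-\delta_2, 0)$, I isolate two dominant contributions in \eqref{eq:expansionV}: a leading term $T_1 = \Gamma(\alpha^\epsilon)(N^\epsilon)^{a^\epsilon + 1 - \alpha^\epsilon}(\epsilon \overline x_2)^{N^\epsilon}$, which carries sign $(-1)^{N^\epsilon}$ since $\overline x_2 < 0$, and a correction $T_2 = T_1 \cdot \overline x_2/(1-\alpha^\epsilon)$ of opposite sign $(-1)^{N^\epsilon+1}$. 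Since $\Gamma(\alpha^\epsilon) \sim 1/\alpha^\epsilon$ as $\alpha^\epsilon \to 0^+$ but $\Gamma(\alpha^\epsilon) = O(1)$ as $\alpha^\epsilon \to 1^-$, $T_1$ dominates for small $\alpha^\epsilon$ and $T_2$ for small $1-\alpha^\epsilon$. Matching $|\overline m^\epsilon(\epsilon\overline x_2)|$ with $c/\epsilon$, using $\epsilon \sim 1/N^\epsilon$, and applying the Stirling asymptotics of Appendix \ref{sec:Gamma} to the $\Gamma$-quotients produces the sharp thresholds $\underline \alpha(N^\epsilon) = (N^\epsilon)^{a^0 - N^\epsilon}$ (in the $T_1$-regime) and $1 - \overline \alpha(N^\epsilon) = (N^\epsilon)^{a^0 - 1 - N^\epsilon}$ (in the $T_2$-regime) of \eqref{eq:underlinealpha}. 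The sign of $\overline m^\epsilon$ is $(-1)^{N^\epsilon}s \cdot (-1)^{N^\epsilon} = s$ in the $T_1$-regime (items \ref{3a} and \ref{4a}: intersection with $\{y = sc\}$, independent of the parity of $N^\epsilon$) and $(-1)^{N^\epsilon}s \cdot (-1)^{N^\epsilon+1} = -s$ in the $T_2$-regime (items \ref{3b} and \ref{4b}: intersection with $\{y = -sc\}$).

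The principal obstacle is that \thmref{main2} controls $\overline m^\epsilon$ only on intervals where $|\overline V^\epsilon|$ is bounded by a fixed $K$, while the target lines $\{y = \pm c\}$ correspond to $|\overline V^\epsilon| \sim c/(\epsilon|S_\infty^0|) \to \infty$ as $\epsilon \to 0$. On the positive side this gap is closed by the topology/invariance argument above; on the negative side the explicit asymptotics \eqref{eq:expansionV} already encode the exponential magnification of $\overline V^\epsilon$ near $\alpha^\epsilon = 0$ and $\alpha^\epsilon = 1$, so the bridging reduces to a direct magnitude matching via Stirling, producing \eqref{eq:underlinealpha}.
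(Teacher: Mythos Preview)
Your outline assembles the right pieces but the bridging step---which you correctly flag as the principal obstacle---is not actually closed on either side.

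\textbf{Positive side (items \ref{1}, \ref{2}, \ref{2new}).} Your topological argument invokes $W^{ws}\cap W^u=\emptyset$, citing item \ref{nonint} of \thmref{main0}. But that item \emph{is} item \ref{2new} of the present corollary: \thmref{main0} points to \thmref{main2}, and it is this corollary that extracts the non-intersection from \thmref{main2}. So the argument is circular as written. The paper runs the logic the other way. Using the monotonicity of $\overline V^\epsilon$ and \eqref{eq:Vepsprop2} one finds $\delta\in(0,\tfrac34)$ with $\overline V^\epsilon(\delta)=K$; then \eqref{eq:Wwsapp} on $I=[0,\delta]$ shows $W^{ws}$ reaches the \emph{fixed} level $\overline y=\pm\tfrac12 S_\infty^0 K$ at some $\overline x\in(0,\delta)$. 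One then undoes the scaling and follows $W^{ws}$ to $\{y=\pm c\}$ by the backward flow of \eqref{eq:normalform} (Fig.~\ref{barnode}). Only \emph{after} this does one deduce item \ref{2new}: since $W^u$ satisfies $\overline y=\mathcal O(\epsilon)$ on $\overline x\in[0,1]$ by \eqref{eq:Wugraph}, it never meets $\{y=\pm c\}$ in this strip, so the two orbits cannot coincide. Your topological exit argument could be made to work once this $\mathcal O(\epsilon)$-bound on $W^u$ is used \emph{first} to separate the manifolds; without that you are assuming what is to be proved.

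\textbf{Negative side (items \ref{3}, \ref{4}).} ``Matching $|\overline m^\epsilon(\epsilon\overline x_2)|$ with $c/\epsilon$'' is not licensed: \thmref{main2} gives the approximation only where $|\overline V^\epsilon|\le K$ with $K$ fixed, and you cannot invoke it at levels $\sim c/\epsilon$. The paper instead shows, via \eqref{eq:expansionV}, that the leading terms at $\overline x_2=-1$ already exceed the fixed bound $K$ once $\alpha^\epsilon\le\underline\alpha(N^\epsilon)$ (resp.\ $1-\alpha^\epsilon\le 1-\overline\alpha(N^\epsilon)$); the thresholds \eqref{eq:underlinealpha} are designed for this, not for reaching $c/\epsilon$. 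This yields $\overline x_-\in(-\epsilon,0)$ with $|\overline m^\epsilon(\overline x_-)|=\tfrac12|S_\infty^0|K$ of the correct sign, and the passage to $\{y=\pm c\}$ is again by backward flow in the original coordinates, now using $W^{ss}$ as a guide. That your matching yields the same thresholds is an accident of the super-exponential decay in $N^\epsilon$; the justification is not valid as stated.
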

  \begin{proof}
  \rspp{We first consider the statements in item \ref{2new} regarding the intersections of $W^{ws}$ with $\{y=\pm c\}$ for $x>0$ (proving items \ref{1} and \ref{2}). } 
  We let $K>0$ be large enough and take $0<\upsilon\ll K$ small enough. 
   We let $0<\epsilon\ll 1$ be so that $\overline V^\epsilon(\frac34)>K$, see \eqref{eq:Vepsprop2}. Then since $\overline V^\epsilon(\overline x)$ is an increasing function of $\overline x$, see \eqref{eq:Vepsprop},  $\delta\in (0,\frac34)$ defined by the equation
  \begin{align*}
   \overline V^\epsilon(\delta)=K,
  \end{align*}
  is uniquely determined.
We then apply \thmref{main2} with $I=[0,\delta]$. In particular, from \eqref{eq:Wwsapp} we conclude that $W^{ws}$ intersects $\{\overline y=\pm \frac12 S_\infty^0K\}$ for $\overline x\in (0,\delta)$ when $N^\epsilon$ is even/odd, respectively. From $\{\overline y = \pm \frac12 S_\infty^0 K\}$, we undo the scaling \eqref{scalingxy} and return to \eqref{eq:normalform} and apply the backward flow, see Fig. \ref{barnode}. This completes the proof of items \ref{1} and \ref{2}. 
\rsp{To complete the proof of item \ref{2new}, we recall that $W^u$ and $W^{ws}$ (away from the singularities) are orbits of planar systems and therefore if branches of these manifold intersect, then they coincide. We have that $\overline y =\mathcal O(\epsilon)$ along $W^u$ for $\overline x\in [0,1]$, recall \eqref{eq:Wugraph}. Therefore $W^u$ does not intersect $\{y=\pm c\}$ within $\overline x\in [0,1]$ for all $0<\epsilon\ll 1$ and consequently $W^u$ and $W^{ws}$ do not coincide. Hence $W^s\cap W^{ws}=\emptyset$ as desired.}

We then turn to the intersection of $W^{ws}$ with $\{y=\pm c\}$ for $x<0$.  
     For this purpose, we again let $K>0$ be large enough, put $\delta_2=1$ (for concreteness), take $0<\upsilon\ll K$ small enough, $I=[-\delta,0]$ with $0<\delta\le \epsilon$
%
   and use the expansion  \eqref{eq:expansionV} for $-\delta_2\epsilon \le \overline x\le 0$ to obtain the following for all $N^\epsilon\gg 1$: Consider $\underline \alpha(N^\epsilon)$ and $\overline \alpha(N^\epsilon)$ defined in \eqref{eq:underlinealpha}. Then for any $0<\alpha^\epsilon\le \underline \alpha(N^\epsilon)\ll 1$, $\overline V^\epsilon(\epsilon \overline x_2)$ is given by
   \begin{align}
   \frac{1}{\alpha^\epsilon} (N^\epsilon)^{a^\epsilon+1-N^\epsilon} \overline x_2^{N^\epsilon}e^{\overline x_2},\label{eq:Veps1}
   \end{align}
   to leading order, whereas for any $0<1-\alpha^\epsilon\le 1-\overline \alpha(N^\epsilon)\ll 1$, $\overline V^\epsilon(\epsilon \overline x_2)$ is given by 
  \begin{align}
   \frac{1 }{1-\alpha^\epsilon} (N^\epsilon)^{a^\epsilon-N^\epsilon} \overline x_2^{N^\epsilon+1} e^{\overline x_2},\label{eq:Veps2}
   \end{align}
   to leading order. We have here used \eqref{eq:Gamma0}, $$
 \left[  1+\overline x_2 \int_0^1 e^{(1-v)\overline x_2}v dv\right] = \frac{e^{\overline x_2}-1}{\overline x_2}\quad \mbox{and}\quad 
  \left[ 1+\overline x_2\int_0^1 e^{(1-v)\overline x_2} dv\right] = e^{\overline x_2}.$$
   In both cases (\eqref{eq:Veps1} and \eqref{eq:Veps2}), there are remainder terms that we can assume are bounded by $\upsilon>0$, uniformly with respect to $\alpha^\epsilon$ (whenever \eqref{eq:Veps1} and \eqref{eq:Veps2} do not exceed $K$ in absolute value); this characterization will be adequate for our purposes.
   
   We now further claim that for  any $0<\alpha^\epsilon\le \underline \alpha(N^\epsilon)$, then \eqref{eq:Veps1} with $\overline x_2=-1$ exceeds $K>0$ in absolute value. To show this, we just estimate    
   \begin{align*}
   \left|\frac{1}{\alpha^\epsilon} (N^\epsilon)^{a^\epsilon+1-N^\epsilon} (-1)^{N^\epsilon}e^{-1}\right| \ge \frac{1}{\underline \alpha(N^\epsilon)} (N^\epsilon)^{a^\epsilon+1-N^\epsilon} e^{-1}= (N^\epsilon)^{1+a^\epsilon-a^0}e^{-1}\ge (N^\epsilon)^{\frac12}e^{-1}>K,
   \end{align*}
   using that $\vert a^\epsilon-a^0\vert\le \frac12$ for all $N^\epsilon\gg 1$. 
 A similar result holds for \eqref{eq:Veps2} for all $0<1-\alpha^\epsilon\le 1-\overline \alpha(N^\epsilon)$. We leave out the details in this case. 
     

            Consider now items \ref{3a} and \ref{4a} regarding $\alpha^\epsilon\rightarrow 0$. We then have by \eqref{eq:Veps1} (which is continuous and monotone with respect to $\overline x_2\in [-1,0)$) and \eqref{eq:Wwsapp} that for any $0<\alpha^\epsilon\le \underline\alpha(N^\epsilon)$, the equation $\vert \overline m^\epsilon(\overline x)\vert =\frac12 \vert S_\infty^0\vert K$ has a solution $\overline x_-\in (-\epsilon,0)$. The sign of $\overline m^\epsilon(\overline x_-)$ is determined by $(-1)^{N^\epsilon} s \overline x_-^{N^\epsilon}$, cf. \eqref{eq:Wwsapp} and \eqref{eq:Veps1}. From $\{\overline y = \pm \frac12 \vert S_\infty^0\vert K\}$, we undo the scaling \eqref{scalingxy} and return to \eqref{eq:normalform}. Then the proof of items \ref{3a} and \ref{4a} is completed by using the backward flow. Indeed, $W^{ws}$ aligns itself with one side of $W^{ss}$ in this case and we can therefore just use $W^{ss}$ as a guide for the backward flow up until $W^{ss}$'s transverse intersection with $\{y=\pm c\}$, see Fig. \ref{barnode}. The case $\alpha^\epsilon\rightarrow 1$ (items \ref{3b} and \ref{4b}) is similar and we therefore leave out further details.

  \end{proof}
  

\begin{figure}[h!]
\begin{center}
\includegraphics[width=.75\textwidth]{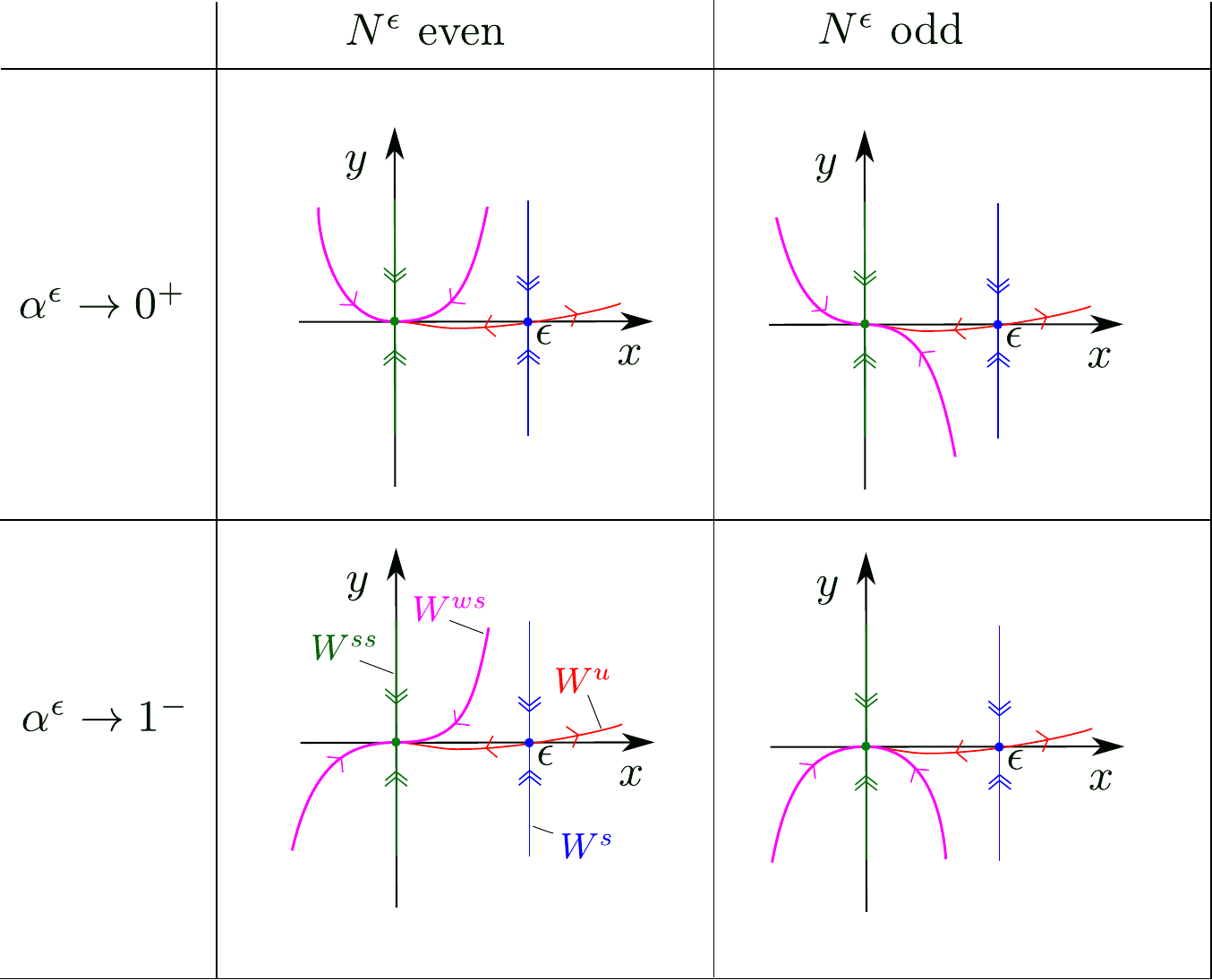}
\end{center}
\caption{Illustration of the results of \thmref{main2}, see also \corref{main2b}. The strong stable manifold $W^{ss}$ in green, the analytic weak-stable manifold \rsp{$W^{ws}$} in magenta, the stable manifold of the saddle $W^s$ in blue, and finally the unstable manifold of the saddle $W^u$ in red. The diagram assumes $S_\infty^0>0$; if $S_\infty^0<0$ then the diagram should be reflected about the $x$-axis. The analytic weak-stable manifold $W^{ws}$ ``flaps'' on the $x<0$-side of the node as $\alpha^\epsilon$ transverses $(0,1)$, aligning on $y>0$ or $y<0$ with $W^{ss}$ as either $\alpha^\epsilon\rightarrow 0^+$ or $\alpha^\epsilon\rightarrow 1^-$. On the $x>0$-side, $W^{ws}$ remains on one side of $W^u$ for all $\alpha^\epsilon \in (0,1)$ and only ``flaps'' (discontinously) when $N^\epsilon\gg 1$ changes parity. In particular, $W^{ws}$ and $W^u$ do not intersect.}
\figlab{node_cases}
\end{figure}

\subsection{\rspp{Overview} }\label{sec:overview}
We prove \thmref{main1} in Section \ref{sec:cm}. \thmref{main2} is proven in Section \ref{sec:node}, see also Section \ref{sec:nodefinal} where \lemmaref{lemma:Veps} is proven. The strategy of the proof  of \thmref{main2} follows \cite{MR4445442} insofar that we write $\overline y=\overline m^\epsilon(\overline x)$ as a finite sum $\overline y=\sum_{k=2}^{N^\epsilon} \overline m_k^\epsilon \overline x^k$, up until ``before the resonance'', plus a remainder $\overline M^\epsilon(\overline x)=\mathcal O(\overline x^{N^\epsilon+1})$ that we solve by setting up a fixed-point equation using an integral operator $\mathcal T^\epsilon$, see Lemma \ref{lemma:TOp}. A main difficult lies in estimating the growth of coefficients in the series expansion of $\overline g^\epsilon$ when composed with the finite sum $\overline y=\sum_{k=2}^{N^\epsilon} \overline m_k^\epsilon \overline x^k$ (with the number of terms going unbounded as $\epsilon\rightarrow 0$). This is covered by the novel  Lemma \ref{lemma:yklNew} (which does not depend upon \rsp{Hypothesis} \ref{deltacond}). Our treatment of $\overline M^\epsilon$ is also novel (and also does not rely on \rsp{Hypothesis} \ref{deltacond}) insofar that we view the integral operator $\mathcal T^\epsilon$ as a bounded operator on a certain Banach space $\mathcal D^\epsilon_\delta$ of analytic functions $H=H(\overline x)$ with $H(\overline x)=\mathcal O(\overline x^{N^\epsilon+1})$, see \eqref{Cepsnorm}. \rspp{We believe that these novel aspects are crucial for making conclusions regarding $W^{ws}\cap W^s$. In particular, such conclusions cannot be made from the results of \cite{MR4445442} on their specific nonlinearity (to the best of our judgement). } 

\section{The center manifold $W^c$: The proof of \thmref{main1}}\label{sec:cm}
In this section, we consider $\epsilon=0$ and \eqref{eq:normalformcm} in the equivalent form
\begin{align}
x^2 \frac{dy}{dx} + y(1+a^0 x) = g^0(x,y),\label{dydx0}
\end{align}
where 
\begin{align*}
 g^0(x,y) = f^0(x)+\mu h^0(x,y)= \sum_{k=2}^\infty f_k^0 x^k+\mu  \sum_{k=1}^\infty \sum_{l=2}^\infty h_{k,l}^0 x^k y^l,
\end{align*}
cf. \eqref{eq:fhexpansion} and \eqref{eq:gkcond0}.
Let 
\begin{align}
 \widehat m^0(x) = \sum_{k=2}^\infty m^0_k x^k,\label{m0}
\end{align}
denote the formal series expansion of the center manifold $y=m^0(x)$.
%
%
%
%
We define 
\begin{align}
 w_k^0:=\Gamma(k+a^0)\quad \forall\,k\ge 2,\label{eq:wk0}
\end{align}
and a norm
\begin{align}
 \Vert y\Vert = \sup_{k\ge 2} \frac{\vert y_k\vert }{w_{k}^0},\label{gevrey}
\end{align}
on the space \rsp{of formal series}
\begin{align*}
\rsp{\mathcal D^0=\left\{y=\sum_{k=2}^\infty y_k x^k\,:\,y_k \in \mathbb R\,\,\forall\,k\ge 2\right\}.}
\end{align*}
Notice that \eqref{eq:wk0} is well-defined by virtue of \rsp{Hypothesis} \ref{assa} and that $\mathcal D^0$ is a Banach space (due to the sequence space $l^\infty$ being Banach). For any $C>0$, we also define 
\begin{align}
\mathcal B^{C}: = \{y\in \mathcal D^0\,:\,\Vert y\Vert\le C\},\label{eq:ballC}
\end{align}
as the closed ball of radius $C$.
Moreover, for any $y(x)=\sum_{k=2}^\infty y_k x^k \in \mathcal D^0$, \rsp{the composition $g^0(x,y(x))$ of $y(x)$ with the analytic function $g^0$ is itself a formal series. The results below (see Proposition \ref{prop:boundgk0}) show that the associated operator
\begin{align*}
 \mathcal G^0:\mathcal D^0\rightarrow \mathcal D^0,\quad \mathcal G^0[y](x) = g^0(x,y(x)) = \sum_{k=2}^\infty \mathcal G^0[y]_k x^k,
\end{align*}
is well-defined. The expression also defines $ \mathcal G^0[y]_k$.} 
%
%
\rsp{$\mathcal H^0[y]$ and $\mathcal H^0[y]_k$ are similarly defined through the composition $h^0(x,y(x))$ of $y(x)$ with $h^0$}:
\begin{align*}
 \rsp{\mathcal H^0[y](x) = h^0(x,y(x)) = \sum_{k=2}^\infty \mathcal H^0[y]_kx^k.}
\end{align*}
\rsp{By  \eqref{eq:gexpansion}, we have $\mathcal H^0[y]_k=0$ for $k=2,3$ and $4$ and therefore}
\begin{align}
\rsp{\begin{cases}
\mathcal G^0[y]_2&=f^0_{2}, \\
\mathcal G^0[y]_3&= f^0_{3},\\
\mathcal G^0[y]_4 &= f^0_{4},\\
 \mathcal G^0[y]_k &= f^0_{k} + \mu \mathcal H^0[y]_k,\quad k\ge 5.
 \end{cases}}\label{gk0}
\end{align} 
\rspp{Finally, for any $l\in \mathbb N$ and any $y\in \mathcal D^0$, we define $(y^l)_k$ as the coefficients of $y^l$:
\begin{align*}
 y(x)^l = :\sum_{k=2l}^\infty (y^l)_k x^k.
\end{align*}
One can express $(y^l)_k$ in terms of $y_{2},\ldots,y_{k-2}$ using the multinomial theorem, but we will not make use of this.
It will also follow from Proposition \ref{prop:boundgk0} that $y^l\in \mathcal D^0$.}
\begin{lemma}\label{lemma:h0k}
The following holds
\begin{align}\label{gk12}
\rsp{\mathcal H^0[y]_k} &=\sum_{l=2}^{\lfloor \frac{k-1}{2}\rfloor} \sum_{j=2l}^{k-1} h_{k-j,l}^0 (y^l)_j\quad \forall\,k\ge 5.
\end{align}
 
\end{lemma}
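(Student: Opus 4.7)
The plan is to derive \eqref{gk12} by straightforward index manipulation, treating the identity as a bookkeeping exercise on formal power series. The only subtlety is to keep track of where the various indices start so that the upper limit $\lfloor (k-1)/2\rfloor$ and the lower limit $j=2l$ come out correctly.

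First I would use \rsp{Hypothesis} \ref{deltacond} together with the last equality in \eqref{eq:gkcond0} to drop the $h^0_{k,1}xy$ terms from the expansion \eqref{eq:fhexpansion} of $h^0$, so that
\begin{align*}
 h^0(x,y) = \sum_{k=1}^\infty\sum_{l=2}^\infty h^0_{k,l}\,x^k y^l.
\end{align*}
Next, for $y(x)=\sum_{k=2}^\infty y_k x^k\in \mathcal D^0$ and $l\ge 2$, the minimal degree appearing in $y(x)^l$ is $2l$ (each factor contributes at least $x^2$), so by definition of $(y^l)_j$ we have $y(x)^l=\sum_{j=2l}^\infty (y^l)_j\,x^j$. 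Substituting and formally rearranging the triple sum gives
\begin{align*}
 \mathcal H^0[y](x) = \sum_{k=1}^\infty\sum_{l=2}^\infty\sum_{j=2l}^\infty h^0_{k,l}(y^l)_j\,x^{k+j}.
\end{align*}

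Then I would set $n=k+j$ and read off the coefficient of $x^n$. Since $k\ge 1$ forces $j\le n-1$, and the constraint $j\ge 2l$ with $l\ge 2$ forces $l\le \lfloor j/2\rfloor\le \lfloor(n-1)/2\rfloor$, the admissible triples $(k,l,j)$ are exactly $l\in\{2,\dots,\lfloor(n-1)/2\rfloor\}$, $j\in\{2l,\dots,n-1\}$, and $k=n-j$. Renaming $n$ back to $k$ yields exactly the formula \eqref{gk12}. No convergence issue arises because everything is interpreted as an identity between formal series, with each coefficient a finite sum; the fact that the resulting series $\mathcal H^0[y]$ actually lies in $\mathcal D^0$ (rather than being merely formal) is deferred to Proposition \ref{prop:boundgk0}, as indicated. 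The only place one must be careful is in justifying the interchange of the order of summation, which is automatic at the level of formal series since, for each fixed exponent of $x$, only finitely many terms contribute.
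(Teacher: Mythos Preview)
Your argument is correct and is essentially the same as the paper's own proof: both drop the $l=1$ part via $h^0_{k,1}=0$, substitute $y(x)^l=\sum_{j\ge 2l}(y^l)_j x^j$ into the double sum, and then collect coefficients of $x^k$ by Cauchy's product rule (which is exactly your reindexing $n=k+j$). The only cosmetic difference is that the paper quotes Cauchy's product rule explicitly and groups the sums in the order $l,k,j$ before swapping $l$ and $k$, whereas you read off the $x^n$ coefficient directly; the resulting index bounds $2l\le j\le k-1$ and $2\le l\le \lfloor (k-1)/2\rfloor$ are derived identically.
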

\begin{proof}
We use the expansion of $h^0$ in \eqref{eq:fhexpansion} and Cauchy's product rule:
\begin{align}\label{eq:cauchyproduct}
 \sum_{k=0}^\infty q_k \sum_{l=0}^\infty p_l = \sum_{k=0}^\infty\sum_{j=0}^k q_{k-j} p_{j}.
\end{align}
We have
\begin{align*}
 \rsp{\mathcal H^0[y](x)}=h^0(x,y(x))=\sum_{l=2}^\infty \sum_{k=1}^\infty  h_{k,l}^0 x^k y(x)^l &= \sum_{l=2}^\infty\left( \sum_{k=1}^\infty   h_{k,l}^0 x^k \right) \left(\sum_{j=2l}^\infty (y^l)_j x^j\right) \\
 &= \sum_{l=2}^\infty \sum_{k=2l+1}^\infty \left( \sum_{j=2l}^{k-1} h^0_{k-j,l} (y^l)_j\right) x^k\\
 &=\sum_{k=5}^\infty \sum_{l=2}^{\lfloor \frac{k-1}{2}\rfloor} \left( \sum_{j=2l}^{k-1} h^0_{k-j,l} (y^l)_j\right) x^k.
\end{align*}

\end{proof}
\begin{proposition}\label{prop:boundgk0}
Let $y\in\mathcal B^{C}$. Then $\rsp{\mathcal G^0[y]}\in \mathcal D^0$. In particular,  there is a constant $K=K(a^0,\rho,C)$ such that 
 \begin{align}
  \vert \rsp{\mathcal G^0[y]_k}\vert \le B \rho^{-k} +\mu K w_{k-2}^0\quad \forall\,k\ge 5.\label{eq:gk0est}
 \end{align}
Moreover, $y\mapsto\rsp{\mathcal H^0[y]}$ is $C^1$ (in the sense of Fr\'echet) and 
\begin{align}
  (D(\rsp{\mathcal H^0[y]})(z))(x) = \sum_{k=5}^\infty (D(\rsp{\mathcal H^0[y]})(z))_k x^k,\quad \vert (D(\rsp{\mathcal H^0[y]})(z))_k\vert \le  K w_{k-2}^0\Vert z\Vert\quad \forall\,z\in \mathcal D^0,\label{eq:gk0dest}
\end{align}
recall the definition of $\Vert \cdot\Vert$ in \eqref{gevrey}.

\end{proposition}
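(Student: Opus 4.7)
By \eqref{gk0} and $|f_k^0|\le B\rho^{-k}$, the $\mathcal{G}^0$ bound reduces immediately to an estimate on $|\mathcal{H}^0[y]_k|$, which in turn, via \lemmaref{h0k} and $|h^0_{k-j,l}|\le\rho^{-(k-j+l)}$, reduces to controlling the convolutions $(y^l)_j$ for $y\in\mathcal{B}^C$. The analytic core is a convolution inequality for the gamma function: for every $a^0>-2$ there is $K_1=K_1(a^0)>0$ with
\begin{align*}
 \sum_{m=2}^{p-2}\Gamma(m+a^0)\Gamma(p-m+a^0) \le K_1 \Gamma(p-2+a^0) \quad \forall\, p\ge 4.
\end{align*}
The two boundary terms $m=2$ and $m=p-2$ each contribute $\Gamma(2+a^0)\Gamma(p-2+a^0)$; the interior terms $3\le m\le p-3$ satisfy $\Gamma(m+a^0)\Gamma(p-m+a^0)/\Gamma(p-2+a^0)=O(p^{-(m-2)})$ via the factorizations $\Gamma(m+a^0)=\Gamma(2+a^0)\prod_{i=2}^{m-1}(i+a^0)$ and $\Gamma(p-m+a^0)=\Gamma(p-2+a^0)/\prod_{j=p-m}^{p-3}(j+a^0)$, and therefore sum to an $O(1)$ constant. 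The hypothesis $a^0>-2$ guarantees positivity of $\Gamma(2+a^0)$ and of every $(j+a^0)$ with $j\ge 2$, and continuity of $K_1$ in $a^0$.

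Using the convolution lemma, the plan is to show by induction on $l\ge 1$ that
\begin{align*}
 |(y^l)_j| \le K_0^{l-1}\, C^l\, w_{j-2(l-1)}^0, \quad j\ge 2l,
\end{align*}
with $K_0=K_1$; the base case is $y\in\mathcal{B}^C$ and the step applies the convolution lemma to $(y^{l+1})_j=\sum_{i=2}^{j-2l}(y^l)_{j-i}y_i$ with $p=j-2(l-1)$. Inserting this into \lemmaref{h0k} and reindexing $n=j-2(l-1)$ yields
\begin{align*}
 |\mathcal{H}^0[y]_k| \le \rho^{-k}\sum_{l=2}^{\lfloor(k-1)/2\rfloor}(K_0C)^{l-1}C\rho^{l-2}\sum_{n=2}^{k-2l+1}\rho^n\,\Gamma(n+a^0).
\end{align*}
The inner sum is controlled, up to a $\rho$-dependent constant, by its largest term $\rho^{k-2l+1}\Gamma(k-2l+1+a^0)$; combined with the super-geometric decay of $\Gamma(k-2l+1+a^0)/\Gamma(k-2+a^0)=\prod_{i=0}^{2l-4}(k-3-i+a^0)^{-1}$ in $l$, the outer sum converges uniformly in $k$ to a constant multiple of $w_{k-2}^0$, producing \eqref{eq:gk0est} with a single $K=K(a^0,\rho,C)$.

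For the Fréchet derivative, the natural candidate at $y\in\mathcal{D}^0$ in direction $z\in\mathcal{D}^0$ is
\begin{align*}
 D(\mathcal{H}^0[y])(z)(x)= \sum_{l\ge 2}\sum_{k\ge 1} l\, h_{k,l}^0\, x^k\, y(x)^{l-1}z(x),
\end{align*}
whose coefficient expansion is analogous to \eqref{gk12} with $(y^l)_j$ replaced by $l\,(y^{l-1}z)_j$. A mild variant of the induction gives $|(y^{l-1}z)_j|\le K_0^{l-1}C^{l-1}\Vert z\Vert\,w_{j-2(l-1)}^0$, and the extra factor $l$ is absorbed by the already super-geometric $l$-sum, yielding \eqref{eq:gk0dest}. $C^1$-smoothness then follows by applying the same estimates to difference quotients via the identity $y_1^{l-1}-y_2^{l-1}=(y_1-y_2)\sum_{i=0}^{l-2}y_1^i y_2^{l-2-i}$. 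The main obstacle I anticipate is the uniform bookkeeping: the constant $K$ must absorb all $k$- and $l$-dependence, and in particular the interplay between the interior decay of the $\Gamma$-convolution and the behavior of $\prod_i(k-\cdots+a^0)^{-1}$ near the endpoint $l\approx k/2$ (where some factors become bounded) must be controlled uniformly on $a^0>-2$, a range on which $\Gamma(a^0)$ itself is not defined.
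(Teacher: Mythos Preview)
Your proposal is correct and follows essentially the same architecture as the paper: a gamma--convolution inequality (the paper's Lemma~\ref{lemma:wk0} item~\ref{C10}), then the inductive bound $|(y^l)_j|\le K_0^{l-1}C^l w_{j-2(l-1)}^0$ (the paper's \eqref{eq:ylk}), then control of the inner $j$-sum (item~\ref{C20}) and the outer $l$-sum (item~\ref{C30}), with the Fr\'echet derivative handled by the same estimates applied to $l(y^{l-1}z)_j$. The only noticeable difference is cosmetic: you argue the convolution inequality via direct ratio factorizations $\Gamma(m+a^0)/\Gamma(2+a^0)$ and $\Gamma(p-m+a^0)/\Gamma(p-2+a^0)$, whereas the paper proves all three items of Lemma~\ref{lemma:wk0} by a single convexity argument (writing $w_j^0 w_{k-j}^0 = e^{\Phi(j)}$ with $\Phi$ convex and bounding by its secant line); your anticipated ``endpoint $l\approx k/2$'' obstacle dissolves for the same reason in both approaches, since there the exponential prefactor $(K_0C/\rho)^{k/2}$ is dominated by the super-exponential $w_{k-2}^0\sim(k-2)!$.
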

We prove this proposition in Section \ref{sec:boundgk0} below.
First we need some intermediate results. 
\begin{lemma}\label{lemma:wk0}
 Consider $w_k^0$ defined in \eqref{eq:wk0} for all $k\in \mathbb N\setminus\{1\}$ and suppose $a^0>-2$ (\rsp{Hypothesis} \ref{assa}). Then the following holds.
 \begin{enumerate}
  \item \label{C10} Convolution estimate: There exists a $C=C(a^0)>0$ such that 
  \begin{align*}
   \sum_{j=2}^{k-2} w_j^0 w_{k-j}^0 \le C w_{k-2}^0 \quad \forall k\,\ge 4.
  \end{align*}
  \item \label{C20} Let $\rho>0$. Then there exists a $C=C(a^0,\rho)>0$ such that 
 \begin{align*}
  \sum_{j=2}^{k-2} \rho^{j-k+2} w_j^0\le C w_{k-2}^0\quad \forall\,k\ge 4.
 \end{align*}
  \item \label{C30}
  Let $\xi>0$. Then there exists a $C=C(a^0,\xi)>0$ such that 
\begin{align*}
   \sum_{l=2}^{\lfloor \frac{k}{2}\rfloor } \xi^{l-2}  w_{k-2(l-1)}^0 \le C w_{k-2}^0 \quad \forall\, k\ge 4.
  \end{align*}
 \end{enumerate}

\end{lemma}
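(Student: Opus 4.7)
All three estimates are convolution-type bounds for the Gevrey-$1$ sequence $w_k^0 = \Gamma(k+a^0)$, so my plan in each case is to reduce them to elementary manipulations with the functional equation $\Gamma(z+1) = z\Gamma(z)$ and the log-convexity of $\log\Gamma$. Hypothesis~\ref{assa} ($a^0>-2$) guarantees that $w_k^0>0$ for every $k\ge 2$ and that all successive ratios $w_k^0/w_{k-1}^0 = k-1+a^0$ are positive and grow like $k$; this is what makes all three estimates work uniformly in $k$.

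For Part~\ref{C10}, I would define $r_j := w_j^0 w_{k-j}^0/w_{k-2}^0$ and exploit the symmetry $r_j=r_{k-j}$ together with the log-convexity of $j\mapsto \log\Gamma(j+a^0)+\log\Gamma(k-j+a^0)$, which says that $r_j$ attains its maximum on $[2,k-2]$ at the endpoints, with $r_2 = r_{k-2} = \Gamma(2+a^0)$. The explicit ratio
\[
\frac{r_{j+1}}{r_j} \;=\; \frac{j+a^0}{k-j-1+a^0}
\]
shows that, away from the two endpoints, the $r_j$ decay at least as fast as a geometric sequence with ratio $O(1/k)$. I would peel off $j\in\{2,3,k-3,k-2\}$, which together contribute at most $2\Gamma(2+a^0)+2\Gamma(3+a^0)/(k-3+a^0)=O(1)$, and for $4\le j\le k-4$ use the pointwise bound $r_j\le r_4 = \Gamma(4+a^0)/[(k-4+a^0)(k-3+a^0)]$; the resulting contribution of the $(k-7)$ middle terms is at most $C(k-7)/k^2 = O(1)$ uniformly in $k$.

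Parts~\ref{C20} and~\ref{C30} can be dispatched by the same ratio-test strategy. For Part~\ref{C20}, the substitution $l := k-2-j$ turns the sum into $\sum_{l=0}^{k-4}\rho^{-l}\Gamma(k-2-l+a^0)$; the ratio of the $l$-th to the $(l-1)$-th term is $1/[\rho(k-2-l+a^0)]$, which is $\le 2/(\rho k)$ for all $l\le k/2$ once $k$ is large, yielding a geometric tail bounded by a fixed multiple of the leading term $w_{k-2}^0$. For $l>k/2$ a crude Stirling-type comparison shows that $\rho^{-l}\Gamma(k-2-l+a^0)$ is exponentially small relative to $w_{k-2}^0$. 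For Part~\ref{C30}, the consecutive-term ratio is $\xi/[(k-2l+a^0)(k-2l-1+a^0)]$, which is at most $4\xi/k^2$ as long as $l\le k/4$; this produces rapid decay from the $l=2$ term $w_{k-2}^0$. For $l>k/4$ the cumulative product of ratios from $l=2$ is already $O((4\xi/k^2)^{k/4-2})$, which dwarfs any $O(1)$ growth in the ratio near $l\sim k/2$ and makes the remaining contribution negligible compared with $w_{k-2}^0$.

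The main obstacle shared by all three parts is uniformity in $k$: the sums range over $k$-dependent index sets, and naive bounds give a factor that grows linearly in $k$ and would spoil the estimate. The actual mechanism is that in each sum the dominant contribution is concentrated on a bounded number of terms near a single endpoint, with all remaining terms suppressed by inverse-polynomial (or better) factors in $k$. Part~\ref{C30} is the most delicate because $\xi$ is unrestricted; the extra bookkeeping there amounts to checking that, by the time the geometric decay in $l$ fails (namely near $l\sim k/2$), the factorial suppression coming from $w_{k-2(l-1)}^0/w_{k-2}^0$ has already beaten $\xi^{l-2}$.
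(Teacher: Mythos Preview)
Your proposal is correct. The key mechanism you identify---that in each sum the mass is concentrated near a single endpoint, with the remaining terms suppressed by factors that are at least inverse-polynomial in $k$---is exactly what makes the estimates uniform, and your ratio-test arguments capture it. A minor slip: in Part~\ref{C30} your consecutive-term ratio should read $\xi/[(k-2l+3+a^0)(k-2l+2+a^0)]$ rather than $\xi/[(k-2l+a^0)(k-2l-1+a^0)]$, but the shift is harmless. Your tail arguments in Parts~\ref{C20} and~\ref{C30} are somewhat compressed; the point that the cumulative suppression $O((4\xi/k^2)^{k/4-2})$ beats the bounded-ratio growth over the remaining $O(k)$ terms is correct, but it would be cleaner to note explicitly that the terms are log-convex in the summation index, so the maximum over the tail range lies at one of its endpoints and can be bounded directly.

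The paper takes a more uniform route: for each part it writes the summand as $e^{\Phi(j)}$ with $\Phi$ convex (via $\phi' > 0$ for the digamma function), bounds $\Phi$ above by the secant line $Q(j-2)+P$ through the two endpoints of the relevant subinterval, and then sums the resulting geometric series after extracting the asymptotics of the slope $Q$ via Stirling. Your approach is more elementary---it uses only the functional equation $\Gamma(z+1)=z\Gamma(z)$ and avoids the digamma function---and gives slightly sharper constants in Part~\ref{C10}. The paper's secant-line method, on the other hand, is more systematic and is what the authors reuse verbatim for the $\epsilon>0$ analogue (their Lemma~\ref{lemma:wkeps}), where the weights $\overline w_k^\epsilon$ involve two gamma factors and explicit ratio bookkeeping would be messier.
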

\begin{proof}
 We prove the items \ref{C10}--\ref{C30} successively in the following.

 \textit{Proof of item \ref{C10}}. We first notice that 
 \begin{align*}
 \sum_{j=2}^{k-2} w_{j}^0 w_{k-j}^0 \le 2\sum_{j=2}^{\lfloor \frac{k}{2}\rfloor} w_{j}^0 w_{k-j}^0 ,
 \end{align*}
 with $k\ge 4$. The result follows once we have shown that 
 \begin{align}
 2\sum_{j=2}^{\lfloor \frac{k}{2}\rfloor} w_{j}^0 w_{k-j}^0 \le Cw_{k-2}^0\quad \forall\,k\ge 4,\label{firstconv}
 \end{align}
 for some $C=C(a^0)$. We believe that this result, which is a result on gamma functions, is known, but for completeness we will present a simple proof that will form the basis for proofs of similar statements later on.
  
 The starting point for this approach is to define $\Phi_1^0(j)$ for $j\in [2,k-2]$ by  
 \begin{align}\label{eq:Phi00}
 w_j^0 w_{k-j}^0=\exp({\Phi_1^0(j)}).
 \end{align}
 We have
 \begin{align*}
 \frac{d}{dj} \Phi_1^0(j) = \phi(j+a^0)-\phi(k-j+a^0),\quad \frac{d^2}{dj^2}\Phi_1^0(j) = \phi'(j+a^0)+\phi'(k-j+a^0),
 \end{align*}
 using \eqref{eq:digamma}. Since the digamma function $\phi(z)$ is strictly increasing for $z>0$, see \eqref{eq:digammaprop}, and since $a^0>-2$ (recall \rsp{Hypothesis} \ref{assa}), we conclude that  $\Phi_1^0(j)$, $j\in [2,k-2]$, is convex, having a single minimum at $j=\frac{k}{2}$. We therefore have that
\begin{align}
 \Phi_1^0(j) \le Q_1^0(j-2)+P_1^0\quad \forall\,j\in [2,\textstyle{\frac{k}{2}}],\label{eq:Phi0ineq}
\end{align}
where 
\begin{align} \label{expP0}
\exp(P_1^0) = w_2^0w_{k-2}^0\quad \mbox{and}\quad 
 Q_1^0= \frac{1}{\frac{k}{2}-2}\log \frac{(w_{k/2}^0)^2}{w_{2}^0w_{k-2}^0}<0;
\end{align}
in particular equality holds in \eqref{eq:Phi0ineq} for $j=2$ and $j=\frac{k}{2}$ so that also $$\exp\left({Q_1^0\left(\frac{k}{2}-2\right)+P_1^0}\right) = w_{k/2}^0.$$ 
We illustrate the situation in Fig. \ref{Phi1}. Then by \eqref{stirling0}, a simple calculation shows that
\begin{align}\label{Q0asymp}
Q_1^0=-\log 4 + o(1) \quad \mbox{and}\quad (w_{k/2}^0)^2/w_{k-2}^0\rightarrow 0 \quad \mbox{for}\quad  k\rightarrow \infty.\end{align} 
Therefore 
 \begin{align*}
  \sum_{j=2}^{\lfloor \frac{k}{2}\rfloor} w_{j}^0 w_{k-j}^0  &\le w_2^0 w_{k-2}^0 + \int_2^{\infty} e^{Q_1^0(j-2)+P_1^0}dj\\
  &\le (1+\log^{-1} 4) w_2^0 w_{k-2}^0 \left(1 + o_{k_0\rightarrow \infty}(1)\right),
\end{align*}
using \eqref{expP0} and \eqref{Q0asymp},
for all $k\ge k_0$ large enough. 
This finishes the proof of item \ref{C10}.

\begin{figure}[h!]
\begin{center}
\includegraphics[width=.4\textwidth]{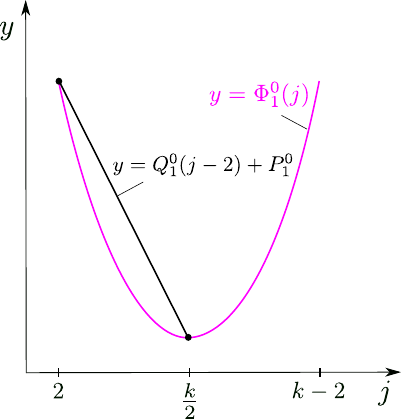}
\end{center}
\caption{Graph of the function $\Phi_1^0$ (magenta) and the secant $Q_1^0(j-2)+P_1^0$ (in black), see \eqref{eq:Phi00} and \eqref{expP0}. Since $\Phi_1^0$ is convex,  \eqref{eq:Phi0ineq} holds.}
\label{Phi1}
\end{figure}

\textit{Proof of item \ref{C20}}. We proceed as in the proof of item \ref{C10}: Let $w_j^0=\exp({\Phi_2^0(j)})$ for $j\in [2,k-2]$. Then $\Phi_2^0$ is convex; in fact $\frac{d}{dj}\Phi_2^0(j) = \phi(j+a^0)$ (positive for $j\ge 4$ since $a^0>-2$), $\frac{d^2}{dj^2}\Phi_2^0(j) = \phi'(j+a^0)>0$, see \eqref{eq:digamma} and \eqref{eq:digammaprop}. We conclude that
 \begin{align}
  \Phi_2^0(j)\le Q_2^0(j-2)+P_2^0\quad \forall\,j\in [2,k-2],\label{eq:Phi2j}
 \end{align}
 where 
 \begin{align}
  \exp({P_2^0}) = w_2^0 \quad \mbox{and}\quad Q_2^0 = \frac{1}{k-4}\log \frac{w_{k-2}^0}{w_2^0}>0;\label{P0lemma34}
 \end{align}
 in particular equality holds in \eqref{eq:Phi2j} for $j=2$ and $j=k-2$.
By \eqref{stirling0}, we find that 
\begin{align}\label{eq:Q20est}
Q_2^0=\log k-1 + o(1)\quad \mbox{for}\quad k\rightarrow \infty.
\end{align} 
We can therefore estimate
\begin{align*}
 \sum_{j=2}^{k-2} \rho^{j-k+2} w_j^0 &\le \rho^{-k+2} e^{-2Q_2^0+P_2^0} \sum_{j=2}^{k-2} (\rho e^{Q_2^0})^j.
 \end{align*}
 By \eqref{eq:Q20est}, there is a $k_0\gg 1$ such that 
\begin{align*}
 \rho e^{Q_2^0} \ge 2\quad \forall\,k\ge k_0,
\end{align*}
and therefore by estimating the geometric sum and using \eqref{P0lemma34}, we find that 
\begin{align*}
 \sum_{j=2}^{k-2} \rho^{j-k+2} w_j^0 \le  2e^{Q_2^0(k-4)+P_2^0}= 2w_{k-2}^0\quad \forall\,k\ge k_0.
\end{align*}
It follows that
\begin{align*}
 C : = \sup_{k\ge 4}\left( \frac{1}{w_{k-2}^0} \sum_{j=2}^{k-2} \rho^{j-k+2} w_j^0\right)<\infty,
\end{align*}
is well-defined. 

\textit{Proof of item \ref{C30}}. We use 
\begin{align*}
 w_j^0 \le e^{Q_2^0 (j-2) +P_2^0}\quad \forall j\in [2,k],
\end{align*}
with $Q_2^0$ and $P_2^0$ defined in \eqref{P0lemma34}, 
to estimate
\begin{align*}
   \sum_{l=2}^{\lfloor \frac{k}{2}\rfloor } \xi^{l-2}  w_{k-2(l-1)}^0 \le e^{Q_2^0 k +P_2^0} \xi^{-2} \sum_{l=2}^{\lfloor \frac{k}{2}\rfloor } (\xi e^{-2Q_2^0})^l.
  \end{align*}
By \eqref{eq:Q20est}, there is a $k_0\gg 1$ such that 
\begin{align*}
 \xi e^{-2Q_2^0}\le \frac12 \quad \forall\,k\ge k_0,
\end{align*}
and therefore by estimating the geometric sum and using \eqref{P0lemma34}, we find that
\begin{align*}
 \sum_{l=2}^{\lfloor \frac{k}{2}\rfloor } \xi^{l-2}  w_{k-2(l-1)}^0 \le 2 e^{Q_2^0(k-4)+P_2^0} = 2 w_{k-2}^0\quad \forall\, k\ge k_0.
\end{align*}
It follows that
\begin{align*}
  C : = \sup_{k\ge 4}\left( \frac{1}{w_{k-2}^0} \sum_{l=2}^{\lfloor \frac{k}{2}\rfloor } \xi^{l-2}  w_{k-2(l-1)}^0\right)<\infty,
\end{align*}
is well-defined.
\end{proof}

 \begin{remark}
  The strategy used in the proof of Lemma \ref{lemma:wk0}, based on the convexity of the functions $\Phi_i^0(j)$,  see e.g. \eqref{eq:Phi00} and Fig. \ref{Phi1}, will also be used for $\epsilon>0$ below, see Lemma \ref{lemma:wkeps} . 
 \end{remark}

\begin{lemma}\label{lemma:fh}
If $G\in \mathcal D^0$ and $H\in \mathcal D^0$ then $GH\rspp{=:\sum_{k=4}^\infty (GH)_k (\cdot)^k}\in \mathcal D^0$. In particular, there is a constant $C=C(a^0)$ such that
\begin{align}\label{eq:fh}
 \vert (GH)_k\vert &\le C\Vert G\Vert \Vert H\Vert w_{k-2}^0\quad \forall\,k\ge 4.
\end{align}

\end{lemma}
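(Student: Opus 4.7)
The plan is to reduce this directly to the convolution estimate, item~\ref{C10} of Lemma~\ref{lemma:wk0}. Writing $G=\sum_{k=2}^\infty G_k x^k$ and $H=\sum_{k=2}^\infty H_k x^k$, the product $GH$ is the formal series whose Cauchy product coefficients are
\begin{align*}
(GH)_k \;=\; \sum_{j=2}^{k-2} G_j\,H_{k-j}\quad \forall\,k\ge 4,
\end{align*}
where the range of summation reflects that both $G$ and $H$ start at index $2$ (so the first nontrivial coefficient of $GH$ is at index $4$).

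Next I would apply the definition of the norm \eqref{gevrey}: by definition $|G_j|\le \Vert G\Vert\, w_j^0$ for $j\ge 2$ and $|H_{k-j}|\le \Vert H\Vert\, w_{k-j}^0$ for $k-j\ge 2$, i.e. $j\le k-2$. Combining with the triangle inequality gives
\begin{align*}
\vert (GH)_k\vert \;\le\; \Vert G\Vert\,\Vert H\Vert \sum_{j=2}^{k-2} w_j^0\, w_{k-j}^0 \quad \forall\,k\ge 4.
\end{align*}

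Finally, I would invoke the convolution bound established in item~\ref{C10} of \lemmaref{wk0}, namely the existence of a constant $C=C(a^0)>0$ such that $\sum_{j=2}^{k-2} w_j^0 w_{k-j}^0 \le C\,w_{k-2}^0$ for all $k\ge 4$. This yields $\vert (GH)_k\vert \le C\Vert G\Vert\Vert H\Vert w_{k-2}^0$, proving \eqref{eq:fh}. Since $w_{k-2}^0/w_k^0 = 1/((k-1+a^0)(k-2+a^0))\to 0$ as $k\to\infty$ (using \rsp{Hypothesis}~\ref{assa}), it is in particular finite for every $k$, so $\Vert GH\Vert<\infty$ and hence $GH\in\mathcal D^0$.

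There is no real obstacle here — the statement is essentially a restatement of the convolution estimate in item~\ref{C10} of \lemmaref{wk0} in functional-analytic language, and the only thing to check is that the index range $j\in\{2,\dots,k-2\}$ of the Cauchy product matches exactly the range over which the convolution bound is proven. Hence the lemma follows at once.
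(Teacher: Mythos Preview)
Your proof is correct and essentially identical to the paper's: both compute the Cauchy product, bound each coefficient via the norm, apply item~\ref{C10} of Lemma~\ref{lemma:wk0}, and then observe $w_{k-2}^0/w_k^0=1/((k-1+a^0)(k-2+a^0))$ to deduce $GH\in\mathcal D^0$.
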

\begin{proof}
Notice that \eqref{eq:fh} implies the first statement since
\begin{align*}
 \frac{w_{k-2}^0}{w_k^0}= \frac{1}{(k-1+a^0)(k-2+a^0)} \quad \forall k\ge 4.
\end{align*}
using \eqref{eq:Gamma}. Next regarding \eqref{eq:fh}, we use \eqref{eq:cauchyproduct}: $(GH)_k = \sum_{j=2}^{k-2} G_j H_{k-j}\quad\Longrightarrow$
 \begin{align*}
   \vert (GH)_k\vert \le  \Vert G\Vert \Vert H\Vert\sum_{j=2}^{k-2} w_{j}^0 w_{k-j}^0 \le C\Vert G\Vert \Vert H\Vert  w_{k-2}^0,
 \end{align*}
 by Lemma \ref{lemma:wk0} item \ref{C10}.
\end{proof}
A consequence of this result is that 
  \begin{align}
  \vert (y^l)_k\vert \le \Vert y\Vert^l C^{l-1} w_{k-2(l-1)}^0\quad \forall\,k\ge 2l,\label{eq:ylk}
 \end{align}
 for all $l\ge 2$. This follows by induction. Indeed, having already established the base case, $l=2$, in Lemma \ref{lemma:fh}, we can proceed analogously for any $l$ by writing
 \begin{align*}
 (y^l)_k = \sum_{j=2(l-1)}^{k-2} (y^{l-1})_{j} y_{k-j},
\end{align*}
and using
\begin{align*}
 \sum_{j=2}^{k-2(l-1)}w_j^0 w_{k-2(l-2)-j}^0\le C w_{k-2(l-1)}^0,
\end{align*}
cf. Lemma \ref{lemma:wk0} item \ref{C10}. 
We also emphasize the following:
\begin{align}\label{eq:ylkd}
(y^l)_k, \,k\ge 2l,\quad \mbox{only depends upon}\quad y_2,\cdots,y_{k-2(l-1)}\quad \forall\, l\in \mathbb N.
\end{align}


%
%
%

\subsection{Proof of Proposition \ref{prop:boundgk0}}\label{sec:boundgk0}
We now turn to the proof of Proposition \ref{prop:boundgk0} (with $k\ge 5$). By \eqref{eq:gkcond0}, \eqref{gk0}, \eqref{gk12}  and Lemma \ref{lemma:fh}, we have 
\begin{align*}
 \vert\rsp{\mathcal G^0[y]_k}\vert &\le  B \rho^{-k} +\mu  \sum_{l=2}^{\lfloor \frac{k-1}{2}\rfloor} \Vert y\Vert^l  \rho^{-l} C^{l-1} \sum_{j=2l}^{k-1} \rho^{j-k} w_{j-2(l-1)}^0\\
&\le B \rho^{-k} +\mu  \sum_{l=2}^{\lfloor \frac{k}{2}\rfloor} \Vert y\Vert^l  \rho^{-l} C^{l-1} \sum_{j=2l}^{k} \rho^{j-k} w_{j-2(l-1)}^0;
\end{align*}
the last estimate, due to $$ \sum_{l=2}^{\lfloor \frac{k-1}{2}\rfloor} (\cdots)\sum_{j=2l}^{k-1}(\cdots)\le  \sum_{l=2}^{\lfloor \frac{k}{2}\rfloor }(\cdots) \sum_{j=2l}^k(\cdots),$$ is not important, but it streamlines some estimates for $\epsilon=0$ with similar ones for $\epsilon>0$ later on (see e.g. \eqref{thisest2}).
We focus on the final term:
\begin{align*}
 \mu  \sum_{l=2}^{\lfloor \frac{k}{2}\rfloor} \Vert y\Vert^l  \rho^{-l} C^{l-1} \sum_{j=2l}^{k} \rho^{j-k} w_{j-2(l-1)}^0
\end{align*}
By Lemma \ref{lemma:wk0} item \ref{C20} with $k\rightarrow k-2(l-2)$, we can conclude that
\begin{align*}
 \sum_{j=2l}^{k} \rho^{j-k} w_{j-2(l-1)}^0 \le C w_{k-2(l-1)}^0,
\end{align*}
where $C>0$ is large enough but independent of $l$ and $k$. We are therefore left with 
\begin{align*}
  \mu  \sum_{l=2}^{\lfloor \frac{k}{2}\rfloor} \Vert y\Vert^l  \rho^{-l} C^{l} w_{k-2(l-1)}^0, 
\end{align*}
upon increasing $C>0$ if necessary. This sum is bounded by $\mu K w_{k-2}$, with $K=K(\Vert y\Vert)>0$, for all $k\ge 5$ by Lemma \ref{lemma:wk0} item \ref{C30}. This completes the proof of \eqref{eq:gk0est}.

The proof of \eqref{eq:gk0dest} proceeds completely analogously. \rsp{In particular, we find that 
\begin{align}
 (D(\mathcal H^0[y])(z))_k =  \sum_{l= 2}^{\lfloor \frac{k}{2}\rfloor}\sum_{j=2l}^{k-1} h^0_{k-j,l} l (y^{l-1}z)_j\quad \forall\,z(x)=\sum_{k=2}^\infty z_k x^k\in\mathcal D^0,\,k\ge 5,\label{eq:DH0k}
\end{align}
using the binomial theorem, 
which is well-defined by Lemma \ref{lemma:fh}.} We therefore leave out further details. 
\qed
%
%
%

%


\subsection{\rsp{The formal expansion
of the center manifold}}
\rsp{We are now ready to show that $\widehat m^0\in \mathcal D^0$ (i.e. that it has finite $\mathcal D^0$-norm; existence of $\widehat m^0$ is well-known).
For this purpose, we define 
the nonlinear operator $\mathcal P^{0}:\mathcal D^0\rightarrow \mathcal D^0$ by
\begin{align}
 \mathcal P^{0}\left(y \right)(x) = \sum_{k=2}^\infty (-1)^k    w_k^0 \left(\sum_{j=2}^k \frac{(-1)^j \mathcal G^0[y]_j}{w_j^0}\right) x^k.\label{eq:mapT0}
\end{align}
It follows from \lemmaref{lemma:mk0linearcase} that $\widehat m^0(x) = \sum_{k=2}^\infty m_k^0 x^k$ is given recursively by
%
%
 \begin{align}
  m_k^0 = (-1)^k w_k^0 S_k^0,\quad S_k^0:=\sum_{j=2}^k \frac{(-1)^j \mathcal G^0[\widehat m^0]_j}{w_j^0},\label{eq:recmk}
 \end{align}
 where the right hand side only depends upon $m_2^0,\ldots,m_{k-3}^0$ (which is a simple consequence of \eqref{gk12} and \eqref{eq:ylkd}). Consequently, at the level of formal series, $\widehat m^0$ is a fixed-point of $\mathcal \mathcal P^{0}$:}
\begin{align*}
 \rsp{\mathcal P^{0}(\widehat m^0)=\widehat m^0.}
\end{align*}

\begin{lemma}
\rsp{Let 
\begin{align*}
 F:=B\sum_{j=2}^\infty \frac{ \rho^{-j}}{w_j^0}<\infty,
\end{align*}
Then there is a $\mu_0>0$ small enough, such that $\mathcal P^{0}:\mathcal B^{2F}\rightarrow \mathcal B^{2F}$ is well-defined for all $0\le \mu<\mu_0$. Moreover, $\mathcal P^{0}(y;f_2^0,\mu)$ is $C^1$ with respect to $y,f_2^0$ and $\mu$, specifically 
\begin{align}\label{contraction}
D\mathcal P^{0}(y) = \mathcal O(\mu) \quad \forall\,y\in \mathcal B^{2F},
\end{align}
so that $\mathcal P^0$ is a contraction on $\mathcal B^{2F}$ for all $0\le \mu\ll 1$.}
%
%
\end{lemma}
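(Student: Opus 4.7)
The plan is to reduce everything to the coefficient-level bounds already established in Proposition~\ref{prop:boundgk0}, because the norm on $\mathcal{D}^0$ is precisely the weighted sup-norm $|y_k|/w_k^0$, and the $k$-th coefficient of $\mathcal{P}^{0}(y)$ is a weighted partial sum of $\mathcal{G}^0[y]_j/w_j^0$.

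First I would verify that $F<\infty$: by Stirling's formula \eqref{eq:stirling00} applied to $w_j^0=\Gamma(j+a^0)$, this quantity grows faster than any geometric sequence, so the series defining $F$ converges for every $\rho>0$. (Hypothesis~\ref{assa} is used here only to guarantee $w_j^0>0$ for $j\ge 2$.)

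Second, to prove the self-mapping property $\mathcal{P}^{0}(\mathcal{B}^{2F})\subset\mathcal{B}^{2F}$, I would fix $y\in\mathcal{B}^{2F}$ and combine \eqref{gk0} with the bound \eqref{eq:gk0est} to obtain $|\mathcal{G}^0[y]_j|\le B\rho^{-j}+\mu K w_{j-2}^0$ for every $j\ge 2$ (the second term being absent for $j\in\{2,3,4\}$, with $K=K(a^0,\rho,2F)$ the constant from Proposition~\ref{prop:boundgk0}). Dividing by $w_j^0$ and summing in $j$ up to $k$, one obtains $|\mathcal{P}^{0}(y)_k|/w_k^0\le F+\mu K L$, where
$$L:=\sum_{j=5}^\infty \frac{w_{j-2}^0}{w_j^0}=\sum_{j=5}^\infty \frac{1}{(j-1+a^0)(j-2+a^0)}<\infty$$
by the functional equation $\Gamma(z+1)=z\Gamma(z)$ and Hypothesis~\ref{assa}. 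Choosing $\mu_0$ so small that $\mu_0 KL\le F$ then forces $\|\mathcal{P}^{0}(y)\|\le 2F$.

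Third, for the $C^1$ statement and the bound \eqref{contraction}: since $\mathcal{G}^0[y]=f^0+\mu\mathcal{H}^0[y]$ by \eqref{eq:gexpansion}, the $y$-dependence of $\mathcal{P}^{0}$ enters only through the factor $\mu$, and the Fr\'echet differentiability of $\mathcal{H}^0$ furnished by Proposition~\ref{prop:boundgk0} transfers to $\mathcal{P}^{0}$; the derivative bound \eqref{eq:gk0dest} gives $\|D\mathcal{P}^{0}(y)\|\le \mu K L$, which is \eqref{contraction}. The $C^1$-dependence on $f_2^0$ and $\mu$ is immediate from the explicit affine appearance of these parameters in $\mathcal{G}^0$. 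Shrinking $\mu_0$ further so that $\mu_0 KL<1$ and invoking the mean value inequality on the convex ball $\mathcal{B}^{2F}$ then yields the contraction. I do not anticipate a serious obstacle: the analytical heavy lifting has already been done in Proposition~\ref{prop:boundgk0} and Lemma~\ref{lemma:wk0}. The one subtle point worth double-checking is that $K$ depends on the radius $2F$ of the ball but not on $\mu$, so a single $\mu_0$ works uniformly on $\mathcal{B}^{2F}$.
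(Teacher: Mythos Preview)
Your proposal is correct and follows essentially the same approach as the paper's own proof: both bound $\|\mathcal P^0(y)\|$ by the full series $\sum_{j\ge 2}|\mathcal G^0[y]_j|/w_j^0$, split into the $f^0$-part (giving $F$) and the $\mu\mathcal H^0$-part, and control the latter via Proposition~\ref{prop:boundgk0} together with the summability of $w_{j-2}^0/w_j^0$. The only cosmetic difference is that the paper writes the tail bound as $K\sum_{j\ge 5}(j-4)^{-2}$ whereas you compute $w_{j-2}^0/w_j^0=[(j-1+a^0)(j-2+a^0)]^{-1}$ directly; both yield the same $\mathcal O(\mu)$ correction and the same $C^1$/contraction conclusion via \eqref{eq:gk0dest}.
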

 \begin{proof}
 We have 
\begin{equation}\label{eq:T0bound}
\begin{aligned}
 \Vert \mathcal P^{0} (y)\Vert &\rsp{\le}  \sum_{j=2}^{\infty} \frac{\vert \rsp{\mathcal G^0[y]_j}\vert }{w_j^0}\\
 &\le \sum_{j=2}^\infty \frac{\vert f^0_j\vert}{w_j^0}+\mu 
  \sum_{j=5}^\infty  \frac{\vert \rsp{\mathcal H^0[y]_j}\vert }{w_j^0}\\
  &\le F+\mathcal O(\mu)\le 2F,
\end{aligned}\end{equation}
for all $y\in\mathcal B^{2F}$, provided that $\mu>0$ is small enough. Here we have used that
\begin{align}\label{eq:boundgk0}
 \sum_{j=2}^\infty \frac{\vert f^0_j\vert}{w_j^0}&\le F,\quad  \sum_{j=5}^\infty \frac{\vert \rsp{\mathcal H^0[y]_j}\vert}{w_j^0} \le \rsp{K \sum_{j=5}^\infty (j-4)^{-2}}<\infty,\quad K=K(F),
\end{align}
by Proposition \ref{prop:boundgk0}, see \eqref{eq:gk0est}, and $a^0>-2$, recall \rsp{Hypothesis} \ref{assa}. \rsp{The statement regarding $\mathcal P^{0}(y;f_2^0,\mu)$ being $C^1$ follows from \eqref{eq:gk0dest} and the linearity with respect to $f_2^0$ and $\mu$.}
%
%
 \end{proof}


%
%

\begin{proposition}\label{prop:m0D0}
\rsp{
There exists a $\mu_0>0$ sufficiently small, so that $\widehat m^0\in \mathcal D^0$ for all $\mu\in [0,\mu_0)$ and 
 \begin{align}
 \Vert \widehat m^0 \Vert \le 2F\quad \forall\,\mu \in [0,\mu_0).\label{eq:m0estF}
\end{align}
Moreover, $\widehat m^0$ is $C^1$ with respect to $f_2^0\in I$ and $\mu\in [0,\mu_0(I))$, with $I\subset \mathbb R$ any fixed compact set.}
 %
\end{proposition}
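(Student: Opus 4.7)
The plan is to read the statement as a direct consequence of the Banach fixed-point theorem applied to $\mathcal P^0$ on the closed ball $\mathcal B^{2F}\subset\mathcal D^0$. The preceding lemma has already verified the two hypotheses: $\mathcal P^0$ maps $\mathcal B^{2F}$ into itself, and by \eqref{contraction} its Fr\'echet derivative satisfies $\Vert D\mathcal P^0(y)\Vert=\mathcal O(\mu)$ uniformly on $\mathcal B^{2F}$. Hence for $\mu_0>0$ small enough, $\mathcal P^0$ is a contraction (rate $\le 1/2$, say) on $\mathcal B^{2F}$ for every $\mu\in[0,\mu_0)$, and there is a unique fixed point $y^\star\in\mathcal B^{2F}$.

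Next, I would identify $y^\star$ with the formal series $\widehat m^0$. Reading off coefficients from the definition \eqref{eq:mapT0}, the fixed-point equation $\mathcal P^0(y^\star)=y^\star$ is equivalent to
\begin{align*}
 y_k^\star = (-1)^k w_k^0 \sum_{j=2}^k \frac{(-1)^j \mathcal G^0[y^\star]_j}{w_j^0}, \qquad k\ge 2.
\end{align*}
By \eqref{gk12} together with the dependency property \eqref{eq:ylkd}, $\mathcal G^0[y]_j$ depends only on $y_2,\ldots,y_{j-3}$ for $j\ge 5$ (and equals $f_j^0$ for $j=2,3,4$), so this recursion determines $y_2^\star,y_3^\star,\ldots$ inductively without freedom. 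Since the center-manifold coefficients $m_k^0$ satisfy the same recursion \eqref{eq:recmk}, I obtain $y^\star=\widehat m^0$, giving $\widehat m^0\in\mathcal D^0$ and the bound \eqref{eq:m0estF}.

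For the $C^1$-dependence on $(f_2^0,\mu)$, I would apply the parameter-dependent contraction mapping theorem (equivalently, the implicit function theorem in the Banach space $\mathcal D^0$ applied to $\Phi(y;f_2^0,\mu):=y-\mathcal P^0(y;f_2^0,\mu)$). By Proposition \ref{prop:boundgk0}, $\mathcal P^0$ is jointly $C^1$ in $(y,f_2^0,\mu)$: the $f_2^0$-dependence enters linearly through a single coefficient of $\mathcal G^0[y]_2$, the $\mu$-dependence is linear in $\mathcal H^0[y]$, and the $y$-dependence is $C^1$ by \eqref{eq:gk0dest}. The partial $D_y\Phi=I-D_y\mathcal P^0=I+\mathcal O(\mu)$ is invertible on $\mathcal D^0$ for $\mu$ small, so the unique fixed point $\widehat m^0$ is a $C^1$ function of the parameters.

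The main point requiring attention is uniformity in the compact set $I\subset\mathbb R$ of admissible $f_2^0$: one must choose $\mu_0=\mu_0(I)$ so that both the self-mapping estimate $\Vert\mathcal P^0(y)\Vert\le 2F$ in \eqref{eq:T0bound} and the contraction rate $\Vert D\mathcal P^0(y)\Vert=\mathcal O(\mu)$ hold uniformly for $f_2^0\in I$. This works because $F$ is defined in terms of $B,\rho,a^0$ only (and $B$ can be enlarged once and for all to accommodate the compact range of $f_2^0$ via \rsp{Hypothesis} \ref{deltacond}), while the bounds in Proposition \ref{prop:boundgk0} are independent of $f_2^0$. No further estimates beyond those already recorded are needed.
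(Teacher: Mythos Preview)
Your proposal is correct and essentially follows the paper's approach: the paper defines $\mathcal Q^0(y,f_2^0,\mu):=y-\mathcal P^0(y;f_2^0,\mu)$ and applies the implicit function theorem at the base point $(\widehat m^0_*,f_2^0,0)$, using $D_y\mathcal Q^0=\mathrm{Id}+\mathcal O(\mu)$, while you invoke the Banach fixed-point theorem directly and then the parameter-dependent version for $C^1$-dependence---these are equivalent here. Your explicit identification of the fixed point with $\widehat m^0$ via the recursion is a nice clarification; the paper leaves this implicit (though Remark~\ref{rem:ref2} records the induction alternative).
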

\begin{proof}
 \rsp{There are different ways to proceed (see also Remark \ref{rem:ref2} below), but we will simply use the fact that $\mathcal P^0(y;f_2^0,\mu)$ is $C^1$ and a contraction for all $0\le \mu\ll 1$. Indeed, define $\mathcal Q^0(y,f_2^0,\mu):= y-\mathcal P^{0}(y;f_2^0,\mu)$, $y\in \mathcal B^{2F}$. Fixed-points of $\mathcal P^0$ then correspond to roots of $\mathcal Q^0$.
 Define $
 \widehat m^0_*=\sum_{k=2}^\infty m_{*k}^0 x^k$ with $m_{*k}^0$ given by \lemmaref{lemma:mk0linearcase} (the $y$-linear case for $\mu=0$):
 \begin{align*}
 m_{*k}^0 = (-1)^k \Gamma(k+a^0) S_k^0,\quad S_k^0 = \sum_{j=2}^k \frac{(-1)^j f_j^0}{\Gamma(j+a^0)}. 
 \end{align*}
  Then by \eqref{contraction}, we have that:
 \begin{align*}
  \mathcal Q^0(\widehat m^0_*,f_2^0,0)=0,\quad D_y\mathcal Q^0(\widehat m^0_*,f_2^0,0)=\operatorname{Id}_{\mathcal D^0}\quad \forall\,f_2^0\in I.
 \end{align*}
The result therefore follows by the implicit function theorem.} 
\end{proof}


\begin{remark}\label{rem:ref2}
\rsp{As \eqref{eq:recmk} defines $m_k^0$ recursively, it is clearly also possible to prove \eqref{eq:m0estF} by induction. By definition, \eqref{eq:m0estF} is equivalent with
\begin{align}
 \left\vert \frac{m_{k}^0}{w_{k}^0}\right\vert \le 2F\quad \forall\,k\ge 2.\label{eq:induction}
\end{align}
The statement is clearly true for all $k=2,\ldots,4$ (base case). For the induction step, suppose that the result holds true for any $k\ge 4$. 
Then analogously to \eqref{eq:T0bound}, we find that \eqref{eq:recmk}, Proposition \ref{prop:boundgk0} and $a^0>-2$ imply that
\begin{align*}
 \left\vert \frac{m_{k+1}^0}{w_{k+1}^0}\right\vert =\vert S_{k+1}^0\vert &\le  B\sum_{j=2}^{\infty} \frac{\rho^{-j}}{w_j^0}+\mu K\sum_{j=5}^{\infty} (j-4)^2 \le 2F,
\end{align*}
for all $0\le \mu<\mu_0(F)$, as desired. Here we have used that $S_{k+1}$ is a finite sum that only depends upon $m_2^0,\ldots,m_{k-2}^0$ (where \eqref{eq:induction} holds true by the induction hypothesis). }
\end{remark}
%
%
%
%

For any $0\le \mu<\mu_0$, we have $\widehat m^0\in \mathcal D^0$.
We then define 
\begin{align}\label{eq:S0inf}
 S_\infty^0:=\lim_{k\rightarrow \infty} S_k^0 = \sum_{j=2}^\infty \frac{(-1)^j \rsp{\mathcal G^0[\widehat m^0]_j}}{w_j^0},
\end{align}
see \eqref{eq:recmk}.
\begin{lemma}\label{lemma:S0inf}
Consider the assumptions of Proposition \ref{prop:m0D0}. Then the series $S^0_\infty$ is absolutely convergent and $\vert S^0_\infty\vert\le 2F$.
\end{lemma}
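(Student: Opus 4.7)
The plan is to estimate $|S_\infty^0|$ termwise by directly applying the bound from Proposition~\ref{prop:boundgk0} to $\widehat m^0$, which lies in $\mathcal B^{2F}$ by Proposition~\ref{prop:m0D0}. The argument will essentially repeat the calculation already carried out in \eqref{eq:T0bound}; indeed, $S_\infty^0$ can be viewed as the (infinite) analogue of the partial sum appearing in the definition \eqref{eq:mapT0} of $\mathcal P^0$.

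More concretely, I split the series according to \eqref{gk0}:
\begin{align*}
|S_\infty^0| \le \sum_{j=2}^\infty \frac{|\mathcal G^0[\widehat m^0]_j|}{w_j^0} \le \sum_{j=2}^\infty \frac{|f_j^0|}{w_j^0} + \mu \sum_{j=5}^\infty \frac{|\mathcal H^0[\widehat m^0]_j|}{w_j^0}.
\end{align*}
The first sum is bounded by $F$ by the very definition of $F$, using $|f_j^0|\le B\rho^{-j}$ from \eqref{eq:gkcond0}. For the second sum, Proposition~\ref{prop:boundgk0} applied with $y=\widehat m^0\in\mathcal B^{2F}$ gives $|\mathcal H^0[\widehat m^0]_j|\le K w_{j-2}^0$ with $K=K(a^0,\rho,2F)$, and Hypothesis~\ref{assa} (i.e.\ $a^0>-2$) ensures
\begin{align*}
\frac{w_{j-2}^0}{w_j^0}=\frac{1}{(j-1+a^0)(j-2+a^0)}\le \frac{\mathrm{const}}{(j-4)^2}\qquad \forall\, j\ge 5,
\end{align*}
so the second sum is finite (uniformly in $\mu$) and of order $\mathcal O(\mu)$.

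Choosing $\mu_0>0$ sufficiently small (as in Proposition~\ref{prop:m0D0}, possibly shrinking it), the $\mathcal O(\mu)$ contribution is bounded by $F$, and the bound $|S_\infty^0|\le 2F$ follows. Absolute convergence is immediate from the same estimate. There is no real obstacle: every ingredient has already been established, and the lemma is essentially a bookkeeping consequence of Proposition~\ref{prop:boundgk0} combined with the membership $\widehat m^0\in\mathcal B^{2F}$.
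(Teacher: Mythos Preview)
Your proposal is correct and follows essentially the same approach as the paper: the paper's proof simply writes $\vert S_\infty^0\vert\le \sum_{j=2}^\infty \frac{\vert \mathcal G^0[\widehat m^0]_j\vert}{w_j^0}\le 2F$ and cites \eqref{eq:m0estF}, which is shorthand for exactly the calculation \eqref{eq:T0bound} that you have spelled out in detail (splitting into the $f^0$-part bounded by $F$ and the $\mu\mathcal H^0$-part of order $\mathcal O(\mu)$).
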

\begin{proof}
We have $$\vert S_\infty^0\vert\le \sum_{j=2}^\infty \frac{\vert \rsp{\mathcal G^0[\widehat m^0]_j}\vert}{w_j^0}\le 2F,$$ by \eqref{eq:m0estF}.
\end{proof}
In turn, if $S^0_\infty\ne 0$ then
\begin{align}\label{eq:mk0final}
  m_k^0 = (-1)^k(1+o(1)) S_\infty^0 w_k^0\quad \mbox{for}\quad k\rightarrow \infty,
\end{align}
cf. \eqref{eq:recmk} and \eqref{eq:S0inf},
and there are constants $0<C_1<C_2$ such that 
\begin{align}
 C_1 (k-1)! k^{a^0} \le \vert m^0_k\vert \le C_2 (k-1)! k^{a^0} ,\label{m0k}
\end{align}
for all $k$ large enough. Here we have used \eqref{stirling}:
\begin{align*}
 \Gamma(k+a^0) = \Gamma(k)(1+o(1)) k^{a^0} = (k-1)! (1+o(1)) k^{a^0}.
\end{align*}
In this way, we obtain our first result.
\begin{lemma}\label{lemma:nonanalytic}
 If $S^0_\infty \ne 0$ then $\widehat m^0\in \mathcal D^0$ is not convergent for any $x\ne 0$ and the center manifold of $(x,y)=(0,0)$ for \eqref{dydx0} is therefore not analytic.
\end{lemma}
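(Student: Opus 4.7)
The plan is essentially to read off the conclusion from the asymptotic estimate \eqref{m0k}, which is itself a direct consequence of \eqref{eq:mk0final} together with Stirling's formula. All the heavy lifting (bounding $\Vert \widehat m^0\Vert$, defining $S_\infty^0$, and controlling the convergence $S_k^0\to S_\infty^0$) has already been done in Proposition \ref{prop:m0D0} and Lemma \ref{lemma:S0inf}, so this final statement is a short consequence.

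First, I would fix an arbitrary $x\ne 0$ and show that the general term $m_k^0 x^k$ does not tend to zero. By \eqref{m0k}, for all $k$ sufficiently large we have $|m_k^0|\ge C_1 (k-1)! k^{a^0}$ with $C_1>0$. Therefore
\begin{align*}
|m_k^0 x^k|\ge C_1 (k-1)! k^{a^0}|x|^k,
\end{align*}
and the right-hand side tends to $+\infty$ as $k\to\infty$ for any $x\ne 0$ (since $(k-1)!$ grows faster than any geometric factor $|x|^{-k}$, while the polynomial correction $k^{a^0}$ is harmless under Hypothesis \ref{assa}). In particular, the necessary condition $m_k^0 x^k\to 0$ for convergence of $\sum_{k\ge 2} m_k^0 x^k$ is violated, so the formal series $\widehat m^0$ has radius of convergence zero.

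Second, I would argue that nonconvergence of $\widehat m^0$ at every $x\ne 0$ precludes analyticity of the center manifold. If $y=m^0(x)$ were analytic in a neighborhood of $x=0$, then by uniqueness of Taylor expansions its power series at the origin would coincide with $\widehat m^0$ (recall that $\widehat m^0$ is defined precisely as the asymptotic expansion of any smooth center manifold obtained from the invariance equation, and any two such expansions must agree coefficient-wise since they satisfy the same recursion \eqref{eq:recmk}). But an analytic function has a Taylor series with positive radius of convergence, contradicting the divergence established in the previous paragraph. Hence no such analytic center manifold exists.

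There is no serious obstacle here: the substantive inputs ($S_\infty^0$ being well-defined, $|S_\infty^0|\le 2F$, and the asymptotic \eqref{eq:mk0final}) are already in place, and the lemma amounts to observing that a Gevrey-$1$ sequence with a genuinely nonzero leading asymptotic constant cannot define an analytic germ. The only care needed is to invoke the uniqueness of $\widehat m^0$ as the formal Taylor expansion when passing from divergence of the series to nonanalyticity of $m^0$.
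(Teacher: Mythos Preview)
Your proposal is correct and follows exactly the approach the paper intends: the paper does not even provide a formal proof of this lemma, simply stating it after the bound \eqref{m0k} with the words ``In this way, we obtain our first result,'' so your argument (divergence of the general term $m_k^0 x^k$ via the factorial lower bound, hence zero radius of convergence, hence nonanalyticity) is precisely the fleshing-out the paper leaves implicit. One minor remark: invoking Hypothesis~\ref{assa} for the harmlessness of $k^{a^0}$ is unnecessary, since $(k-1)!\,k^{a^0}|x|^k\to\infty$ for any real $a^0$ by factorial dominance.
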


\begin{remark}\label{rem:converse}
 We expect that the converse: 
 \begin{align*}
 \mbox{``if $S_\infty^0=0$ holds, then the center manifold is analytic''},
 \end{align*}
 is true in general (recall Lemma \ref{lemma:linear}), \rspp{but leave this for future work}. 
\end{remark}

 
%
 \begin{lemma}\label{lemma:Sinf2}
  \rsp{$S_\infty^0=S_\infty^0(f_2,\mu)$ is $C^1$ with respect to $f_2^0$ for all $0\le \mu \ll 1$.
 In particular,
\begin{align*}
  \frac{\partial S^0_\infty}{\partial f_{2}^0} =\frac{1}{w_2^0}+\mathcal O(\mu)\ne 0.
\end{align*}}
 \end{lemma}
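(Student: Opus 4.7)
The plan is to realize $S_\infty^0$ as the composition of a $C^1$ functional on the Banach space $\mathcal D^0$ with the fixed point $\widehat m^0$, and then apply the chain rule. Concretely, define the functional
\begin{align*}
 \Sigma(y;f_2^0,\mu) := \sum_{j=2}^\infty \frac{(-1)^j \mathcal G^0[y]_j}{w_j^0},
\end{align*}
on $y\in \mathcal B^{2F}\subset \mathcal D^0$, so that $S_\infty^0(f_2^0,\mu) = \Sigma(\widehat m^0(f_2^0,\mu);f_2^0,\mu)$, cf.\ \eqref{eq:S0inf}. Splitting $\mathcal G^0[y]_j = f_j^0+\mu \mathcal H^0[y]_j$ (with the convention $\mathcal H^0[y]_j=0$ for $j\le 4$) and using the bounds from Proposition~\ref{prop:boundgk0} together with $a^0>-2$ (Hypothesis~\ref{assa}), the series defining $\Sigma$ converges absolutely and uniformly on $\mathcal B^{2F}$, and
\begin{align*}
\Sigma(y;f_2^0,\mu) = \sum_{j=2}^\infty \frac{(-1)^j f_j^0}{w_j^0} \;+\; \mu\sum_{j=5}^\infty \frac{(-1)^j \mathcal H^0[y]_j}{w_j^0}.
\end{align*}

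Next I would verify that $\Sigma$ is $C^1$ jointly in $(y,f_2^0,\mu)\in \mathcal B^{2F}\times I\times [0,\mu_0)$. The dependence on $f_2^0$ and $\mu$ is affine, hence analytic; the $y$-dependence is $C^1$ because \eqref{eq:gk0dest} provides the bound $|(D(\mathcal H^0[y])(z))_j|\le K w_{j-2}^0\|z\|$, so term-by-term Fréchet differentiation is justified by dominated convergence (the majorant $\sum_j K w_{j-2}^0/w_j^0$ converges since $w_{j-2}^0/w_j^0=\mathcal O(j^{-2})$). In particular,
\begin{align*}
 D_y \Sigma(y;f_2^0,\mu)\cdot z \;=\;\mu \sum_{j=5}^\infty \frac{(-1)^j (D(\mathcal H^0[y])(z))_j}{w_j^0}\;=\;\mathcal O(\mu)\,\|z\|,
\end{align*}
and $\partial \Sigma/\partial f_2^0 = 1/w_2^0$ (only the $j=2$ term in $\sum_j (-1)^j f_j^0/w_j^0$ contributes).

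Combining this with Proposition~\ref{prop:m0D0}, which ensures that $\widehat m^0=\widehat m^0(f_2^0,\mu)$ is $C^1$ into $\mathcal D^0$, the chain rule yields
\begin{align*}
\frac{\partial S_\infty^0}{\partial f_2^0} \;=\; \frac{\partial \Sigma}{\partial f_2^0}\;+\;D_y\Sigma(\widehat m^0)\cdot \frac{\partial \widehat m^0}{\partial f_2^0} \;=\; \frac{1}{w_2^0}\;+\;\mathcal O(\mu)\cdot \frac{\partial \widehat m^0}{\partial f_2^0}.
\end{align*}
Since $\partial \widehat m^0/\partial f_2^0$ is bounded in $\mathcal D^0$ uniformly in $(f_2^0,\mu)\in I\times [0,\mu_0)$ (it equals $-[D_y\mathcal Q^0]^{-1}\partial_{f_2^0}\mathcal Q^0$, and $D_y\mathcal Q^0=\mathrm{Id}+\mathcal O(\mu)$ by \eqref{contraction}), the claimed asymptotics and non-vanishing follow after possibly shrinking $\mu_0$.

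The only mildly delicate point is the justification of the termwise Fréchet differentiation of the tail $\sum_{j\ge 5}(-1)^j\mathcal H^0[y]_j/w_j^0$; once the uniform bound \eqref{eq:gk0dest} is in hand, this is standard, so I expect no serious obstacle beyond bookkeeping the dependence on the compact interval $I$ (which only affects the admissible size of $\mu_0=\mu_0(I)$).
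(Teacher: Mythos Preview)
Your proof is correct and follows essentially the same approach as the paper: both compute $\partial S_\infty^0/\partial f_2^0$ by differentiating \eqref{eq:S0inf} via the chain rule, using the $C^1$-smoothness of $\widehat m^0$ from Proposition~\ref{prop:m0D0} together with the Fr\'echet derivative bound \eqref{eq:gk0dest} (equivalently \eqref{eq:DH0k}). The paper's proof is a two-line sketch, whereas you have spelled out the intermediate functional $\Sigma$, the justification of termwise differentiation, and the boundedness of $\partial\widehat m^0/\partial f_2^0$ explicitly.
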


\begin{proof} 
\rsp{Having already established the $C^1$-smoothness of $\widehat m^0=\widehat m^0(f_2^0,\mu)$ the result follows from differentiation of \eqref{eq:S0inf}
 with respect to $f_2^0$ (using \eqref{eq:DH0k}).
}

\end{proof}

\thmref{main1} item 1 follows from Lemma \ref{lemma:nonanalytic}, see also \eqref{eq:mk0final} with $m_k^0=\frac{1}{k!}\frac{d^k }{dx^k}m^0(0)$, $w_k^0=\Gamma(k+a^0)$. Finally, Lemma \ref{lemma:Sinf2} is precisely the statement in \thmref{main1} item 2. 
\section{The analytic weak-stable manifold $W^{ws}$: The proof of \thmref{main2}}\label{sec:node}

To study \eqref{dydx} and the analytic weak-stable manifold for all $0<\epsilon\ll 1$, $\epsilon^{-1}\notin \mathbb N$, we use the scalings \eqref{scalingxy}, repeated here for convenience: $$x=\epsilon \overline x,\quad y=\epsilon \overline y.$$ 
\rsp{In the  $(\overline x,\overline y)$-coordinates, \eqref{eq:normalform} becomes the following singularly perturbed system:}
\begin{equation}\label{eq:barxyt}
\begin{aligned}
 \dot{\overline x} &=\epsilon \overline x(\overline x-1),\\
 \dot{\overline y} &=-\overline y(1+\epsilon a^\epsilon \overline x)+\epsilon \overline g^\epsilon(\overline x,\overline y).
\end{aligned}
 \end{equation}
\rsp{or alternatively in the form}
\begin{align}
 \epsilon \overline x (\overline x-1) \frac{d\overline y}{d \overline x} + \overline y (1+\epsilon a^\epsilon\overline x) =\epsilon \overline g^\epsilon(\overline x,\overline y),\label{ybarEqn}
\end{align}
\rsp{relevant for invariant manifold solutions},
where
\begin{equation}\label{overlineg}
\rsp{\begin{aligned}
 \overline g^\epsilon(\overline x,\overline y) :&=\epsilon^{-2} g^\epsilon(\epsilon \overline x,\epsilon \overline y)= \overline f^\epsilon (\overline x)+\mu \overline h^\epsilon(\overline x,\overline y),\\
 \overline f^\epsilon(\overline x)&:=\epsilon^{-2} f^\epsilon(\epsilon \overline x),\\
 \overline h^\epsilon(\overline x,\overline y)&:=\epsilon^{-2} h^\epsilon(\epsilon \overline x,\epsilon\overline y).
\end{aligned}}
\end{equation}
Here we have also defined $\overline f^\epsilon$ and $\overline h^\epsilon$. By \eqref{eq:fhexpansion}, we obtain the absolutely convergent power series expansion of $\overline f^\epsilon$ and $\overline h^\epsilon$:
\begin{align}
 \overline f^\epsilon(\overline x) = \sum_{k=2}^\infty f_k^\epsilon \epsilon^{k-2} \overline x^k,\quad \overline h^\epsilon(\overline x,\overline y) = \sum_{k=2}^\infty h_{k,1}^\epsilon \epsilon^{k-1} \overline x^k \overline y+\sum_{k=1}^\infty \sum_{l=2}^\infty h_{k,l}^\epsilon \epsilon^{k+l-2} \overline x^k \overline y^l.\label{eq:overlinefheps}
\end{align}


For all $\epsilon^{-1}\notin \mathbb N$, $0<\epsilon\ll 1$, $(x,y)=(0,0)$ is a nonresonant hyperbolic node of \eqref{eq:barxyt} (the eigenvalues being $-\epsilon$ and $-1$). Consequently, there is an analytic weak-stable manifold:
\begin{align}
 W^{ws}:\quad \overline y=\overline m^\epsilon( \overline x),\quad 
 \overline m^\epsilon (\overline x) = \sum_{k=2}^\infty \overline m^\epsilon_k \overline x^k,\quad \overline x\in (-\delta,\delta),\label{barmeps}
\end{align}
with 
$\delta=\delta(\epsilon)>0$,
see e.g. \cite[Theorem 2.14]{dumortier2006a}, \rsp{which solves \eqref{ybarEqn}}. Now, for any series $\overline y(\overline x)=\sum_{k=2}^\infty \overline y_k \overline x^k$, we define \rsp{${\overline{\mathcal G}^\epsilon[y]}$ and ${\overline{\mathcal G}^\epsilon[y]}_k$ as above by composition with the analytic function $\overline g^\epsilon$:
 \begin{align*}
 {\overline{\mathcal G}^\epsilon[y]}(x):=\overline g^\epsilon(\overline x,\overline y(\overline x))=\sum_{k=2}^\infty {\overline{\mathcal G}^\epsilon[y]}_k \overline x^k.
 \end{align*}
  Again, $\overline{\mathcal H}^\epsilon[y]$ and $\overline{\mathcal H}^\epsilon[y]_k$ are defined in the same way by composition with the analytic function $\overline h^\epsilon$, recall \eqref{overlineg}. We have ${\overline{\mathcal H}^\epsilon[y]}_k=0$ for $k=2$ and $3$ and therefore
\begin{align}
\begin{cases}
{\overline{\mathcal G}^\epsilon[y]}_2&=f^\epsilon_{2}, \\
{\overline{\mathcal G}^\epsilon[y]}_3 &= f^\epsilon_{3}\epsilon,\\
 {\overline{\mathcal G}^\epsilon[y]}_k &= f^\epsilon_{k}\epsilon^{k-2} + \mu \overline{\mathcal H}^\epsilon[y]_k,\quad k\ge 4.
 \end{cases}\label{gk0eps}
\end{align}
 Finally, we define $(\overline y^l)_k$, $k\ge 2l$, by
\begin{align*}
 \overline y(\overline x)^l =: \sum_{k=2l}^\infty (\overline y^l)_k \overline x^k,
\end{align*}
 for all $\overline y =\sum_{k=2}^\infty \overline y_k \overline x^k$, $l\in\mathbb N$.
}
\begin{lemma}
 The following holds: 
 \begin{align}\label{gk12eps}
 \rsp{\overline{\mathcal H}^\epsilon[y]_k} &= \sum_{j=2}^{k-2} h_{k-j,1}^\epsilon \epsilon^{k-j-1} \overline y_j+\sum_{l=2}^{\lfloor \frac{k-1}{2}\rfloor} \sum_{j=2l}^{k-1} h_{k-j,l}^\epsilon \epsilon^{k-j+l-2}(\overline y^l)_j,\quad k\ge 4.
\end{align}
(The last sum is zero for $k=4$.)
\end{lemma}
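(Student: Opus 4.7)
The proof will be a direct calculation, essentially mirroring the argument in \lemmaref{lemma:h0k} but with extra bookkeeping for the powers of $\epsilon$. Starting from the series expansion \eqref{eq:overlinefheps} for $\overline h^\epsilon$, I would substitute $\overline y(\overline x)=\sum_{j=2}^\infty \overline y_j \overline x^j$ in the linear-in-$\overline y$ part and $\overline y(\overline x)^l=\sum_{j=2l}^\infty (\overline y^l)_j \overline x^j$ in the $l\ge 2$ part, split $\overline{\mathcal H}^\epsilon[y]$ into two pieces, and compute each one by Cauchy's product rule \eqref{eq:cauchyproduct}.

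For the linear part, the product
\begin{align*}
\Bigl(\sum_{k=2}^\infty h_{k,1}^\epsilon \epsilon^{k-1}\overline x^k\Bigr)\Bigl(\sum_{j=2}^\infty \overline y_j \overline x^j\Bigr)
\end{align*}
reindexed with $n=k+j$ produces the first sum in \eqref{gk12eps}: the inner coefficient of $\overline x^n$ is $\sum_{j=2}^{n-2} h_{n-j,1}^\epsilon \epsilon^{n-j-1} \overline y_j$, valid for $n\ge 4$. For the nonlinear part, I would similarly compute
\begin{align*}
\sum_{l=2}^\infty \Bigl(\sum_{k=1}^\infty h_{k,l}^\epsilon \epsilon^{k+l-2}\overline x^k\Bigr)\Bigl(\sum_{j=2l}^\infty (\overline y^l)_j \overline x^j\Bigr),
\end{align*}
again collecting powers of $\overline x^n$ via $n=k+j$. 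The resulting inner coefficient is $\sum_{j=2l}^{n-1} h_{n-j,l}^\epsilon \epsilon^{n-j+l-2}(\overline y^l)_j$, and the constraint $n=k+j\ge 1+2l$ translates (after exchanging the order of summation in $l$ and $n$) into the bound $l\le \lfloor (n-1)/2\rfloor$. In particular, for $n=4$ the sum over $l$ from $2$ to $\lfloor 3/2\rfloor=1$ is empty, which explains the parenthetical remark in the lemma statement.

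Adding the two contributions yields exactly formula \eqref{gk12eps}. There is no real obstacle here: the only thing to verify carefully is the range of indices after the reindexing $n=k+j$ and the subsequent swap of the $l$- and $n$-sums. The well-definedness of each of the sums as formal series is automatic from the expansion \eqref{eq:overlinefheps}, and $\overline y^l$ makes sense as an element of a suitable formal series space by the same argument that justified \eqref{eq:ylk} in the $\epsilon=0$ setting.
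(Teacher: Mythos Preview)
Your proposal is correct and takes exactly the same approach as the paper, which simply states that the proof is identical to that of \lemmaref{lemma:h0k} and omits the details. Your explicit write-up of the Cauchy product computation, including the index bookkeeping and the explanation of why the $l\ge 2$ sum is empty for $k=4$, is a faithful expansion of what the paper leaves implicit.
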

\begin{proof}
 The proof is identical to the proof of Lemma \ref{lemma:h0k} and further details are therefore left out. 
\end{proof}


%

\begin{lemma}\lemmalab{lemma:barmkepsrec}
Suppose that $\epsilon^{-1}\notin \mathbb N$, $0<\epsilon\ll 1$ and let \eqref{barmeps} denote the analytic weak-stable manifold. 
Then the $\overline m^\epsilon_k$'s satisfy the recursion relation:
\begin{align}
 (1-\epsilon k) \overline m^\epsilon_k  +\epsilon ( k-1+ a^\epsilon) \overline m^\epsilon_{k-1} = \epsilon \rsp{\overline{\mathcal G}^\epsilon[\overline m^\epsilon]_k} \quad \forall\,k\ge 2;\label{barmkepsrec}
\end{align}
 here we define $\overline m^\epsilon_{1}=0$. In particular, the right side of \eqref{barmkepsrec} only depends upon $\overline m_2^\epsilon,\ldots,\overline m_{k-2}^\epsilon$. 
\end{lemma}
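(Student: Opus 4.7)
The plan is a direct substitution of the formal series \eqref{barmeps} into the invariance equation \eqref{ybarEqn} and then matching coefficients of $\overline x^k$ on both sides. Since $\overline y = \overline m^\epsilon(\overline x)$ is an analytic solution of \eqref{ybarEqn} in a neighbourhood of $\overline x = 0$, the identity of Taylor coefficients is legitimate (no purely formal argument is needed).

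First I would compute, term by term, the coefficient of $\overline x^k$ on the left-hand side of \eqref{ybarEqn}. Using $\frac{d\overline m^\epsilon}{d\overline x} = \sum_{k\ge 2} k \overline m^\epsilon_k \overline x^{k-1}$, the product $\overline x(\overline x-1)\frac{d\overline m^\epsilon}{d\overline x}$ contributes $(k-1)\overline m^\epsilon_{k-1} - k\overline m^\epsilon_k$ at order $\overline x^k$ (for $k\ge 3$, with the convention $\overline m^\epsilon_1 = 0$ taking care of the $k=2$ case), while $\overline m^\epsilon(1+\epsilon a^\epsilon \overline x)$ contributes $\overline m^\epsilon_k + \epsilon a^\epsilon \overline m^\epsilon_{k-1}$. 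Combining and multiplying the first contribution by $\epsilon$ yields
\begin{equation*}
(1-\epsilon k)\overline m^\epsilon_k + \epsilon(k-1+a^\epsilon)\overline m^\epsilon_{k-1}
\end{equation*}
on the left. On the right-hand side, by the definition of $\overline{\mathcal G}^\epsilon[\overline m^\epsilon]$ via composition of $\overline g^\epsilon$ with $\overline y = \overline m^\epsilon(\overline x)$, the coefficient at $\overline x^k$ is simply $\epsilon\,\overline{\mathcal G}^\epsilon[\overline m^\epsilon]_k$. This gives the recursion \eqref{barmkepsrec}. The case $k=2$ is checked separately using $\overline m_1^\epsilon = 0$.

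For the second claim (dependence only on $\overline m_2^\epsilon,\dots,\overline m_{k-2}^\epsilon$), I would invoke \eqref{gk0eps} and \eqref{gk12eps}. The terms $f_k^\epsilon\epsilon^{k-2}$ are parameter data, so only $\mu\overline{\mathcal H}^\epsilon[\overline m^\epsilon]_k$ matters. Its first sum $\sum_{j=2}^{k-2} h^\epsilon_{k-j,1}\epsilon^{k-j-1}\overline m^\epsilon_j$ involves $\overline m^\epsilon_j$ only for $j\le k-2$; its double sum involves $(\overline m^{\epsilon\,l})_j$ for $l\ge 2$, $j\le k-1$, and by the ``depth'' property \eqref{eq:ylkd} each such coefficient depends only on $\overline m_2^\epsilon,\dots,\overline m^\epsilon_{j-2(l-1)}$, and $j-2(l-1) \le (k-1)-2 = k-3 \le k-2$. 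Hence the entire right-hand side depends on $\overline m^\epsilon_2,\dots,\overline m^\epsilon_{k-2}$ only, as claimed.

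There is essentially no obstacle: the content is bookkeeping of Taylor coefficients and an appeal to the already established structure of $\overline{\mathcal G}^\epsilon$ and $\overline{\mathcal H}^\epsilon$. The only minor point of care is the index shift in the term $\overline x(\overline x-1)\frac{d\overline m^\epsilon}{d\overline x}$ at low order, which is why the convention $\overline m^\epsilon_1 := 0$ is introduced so that the single recursion \eqref{barmkepsrec} is uniformly valid for all $k\ge 2$.
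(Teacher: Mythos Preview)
Your proposal is correct and is precisely the ``simple calculation'' the paper alludes to: substitute the power series into \eqref{ybarEqn}, match coefficients to obtain \eqref{barmkepsrec}, and then read off the dependence claim from \eqref{gk0eps}, \eqref{gk12eps} and \eqref{eq:ylkd}. There is nothing to add.
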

\begin{proof}
 Simple calculation.
\end{proof}

%
%
%
\begin{lemma}\label{lemma:barSkeps}
For $\epsilon^{-1}\notin \mathbb N$, $0<\epsilon\ll 1$, define
 \begin{align}
  \overline w^\epsilon_k: =\frac{\Gamma \left(\epsilon^{-1}-k\right)\Gamma\left(k+a^\epsilon\right)}{\epsilon \Gamma\left(\epsilon^{-1}\right)}\quad \forall\,k\ge 2,\label{barwkeps}
 \end{align}
%
%
  and 
 \begin{align*}
  \overline S^\epsilon_k: = \sum_{j=2}^k \frac{(-1)^j \epsilon \rsp{\overline{\mathcal G}^\epsilon[\overline m^\epsilon]_j}}{\overline w^\epsilon_j (1-\epsilon j)}\quad \forall\, k\ge 2.
 \end{align*}
Then  $\overline S^\epsilon_k$ depends upon $\overline m^\epsilon_2,\ldots,\overline m^\epsilon_{k-2}$ for each $k\ge 4$ and $\overline m^\epsilon_k$ satifies
\begin{align}
 \overline m^\epsilon_k = (-1)^k  \overline w^\epsilon_k\overline S^\epsilon_k\quad \forall\,k\ge 2.\label{barmkeps}
\end{align}
\end{lemma}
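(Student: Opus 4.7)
The plan is to mirror the proof of \lemmaref{lemma:mk0linearcase}: both claims follow by induction on $k$, with the gamma-function identity $\Gamma(z+1)=z\Gamma(z)$ playing the key role. The dependence statement is immediate from \lemmaref{lemma:barmkepsrec}: each summand $\overline{\mathcal G}^\epsilon[\overline m^\epsilon]_j$, $2\le j\le k$, depends only on $\overline m^\epsilon_2,\ldots,\overline m^\epsilon_{j-2}$, so the finite sum defining $\overline S^\epsilon_k$ inherits dependence only up to index $k-2$.

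The algebraic heart of the identity \eqref{barmkeps} is the ratio
\begin{align*}
 \frac{\overline w^\epsilon_k}{\overline w^\epsilon_{k-1}} = \frac{\Gamma(\epsilon^{-1}-k)\,\Gamma(k+a^\epsilon)}{\Gamma(\epsilon^{-1}-k+1)\,\Gamma(k-1+a^\epsilon)} = \frac{k-1+a^\epsilon}{\epsilon^{-1}-k} = \frac{\epsilon(k-1+a^\epsilon)}{1-\epsilon k},
\end{align*}
obtained by applying $\Gamma(z+1)=z\Gamma(z)$ to both numerator and denominator in the definition \eqref{barwkeps}. Equivalently,
\begin{align*}
 (1-\epsilon k)\,\overline w^\epsilon_k = \epsilon\,(k-1+a^\epsilon)\,\overline w^\epsilon_{k-1},
\end{align*}
with $1-\epsilon k \ne 0$ guaranteed by the nonresonance hypothesis $\epsilon^{-1}\notin\mathbb N$.

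For the base case $k=2$, the recursion \eqref{barmkepsrec} with $\overline m^\epsilon_1=0$ and $\overline{\mathcal G}^\epsilon[\overline m^\epsilon]_2=f_2^\epsilon$ (from \eqref{gk0eps}) gives $\overline m^\epsilon_2=\epsilon f_2^\epsilon/(1-2\epsilon)$, which coincides with $(-1)^2\overline w^\epsilon_2 \overline S^\epsilon_2$ on direct computation. For the inductive step, assume \eqref{barmkeps} holds up to $k-1$. The telescoping
\begin{align*}
 \overline S^\epsilon_k = \overline S^\epsilon_{k-1} + \frac{(-1)^k \epsilon\,\overline{\mathcal G}^\epsilon[\overline m^\epsilon]_k}{\overline w^\epsilon_k(1-\epsilon k)}
\end{align*}
multiplied by $(1-\epsilon k)(-1)^k\overline w^\epsilon_k$, together with the key identity applied to the first term, yields
\begin{align*}
 (1-\epsilon k)(-1)^k\overline w^\epsilon_k\overline S^\epsilon_k &= -\epsilon(k-1+a^\epsilon)(-1)^{k-1}\overline w^\epsilon_{k-1}\overline S^\epsilon_{k-1} + \epsilon\,\overline{\mathcal G}^\epsilon[\overline m^\epsilon]_k \\
 &= -\epsilon(k-1+a^\epsilon)\,\overline m^\epsilon_{k-1} + \epsilon\,\overline{\mathcal G}^\epsilon[\overline m^\epsilon]_k,
\end{align*}
where the second equality uses the inductive hypothesis. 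The right-hand side is precisely that of \eqref{barmkepsrec}, so dividing by $1-\epsilon k\ne 0$ gives $\overline m^\epsilon_k = (-1)^k\overline w^\epsilon_k\overline S^\epsilon_k$, completing the induction. I anticipate no substantive obstacle — once the gamma ratio identity is in hand, the remainder is pure bookkeeping.
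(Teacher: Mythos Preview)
Your proof is correct and follows essentially the same approach as the paper: induction on $k$ using the recursion \eqref{barmkepsrec} together with the identity $(1-\epsilon k)\overline w^\epsilon_k = \epsilon(k-1+a^\epsilon)\overline w^\epsilon_{k-1}$ for the weights. The paper states this in a single sentence; you have simply unpacked the details.
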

\begin{proof}
The result follows from induction on $k$, with the base case being $k=2$, upon using \eqref{barmkepsrec} and the recursion relation 
 \begin{align*}
  (1-\epsilon k)\overline w^\epsilon_k = \epsilon ( k-1+a^\epsilon) \overline w^\epsilon_{k-1},
 \end{align*}
 for the $\overline w^\epsilon_k$'s in the induction step.
\end{proof}

\begin{lemma}\label{convmbarkeps}
 Write
 \begin{align}
 \overline m_k^\epsilon =: \epsilon^{k-1}m_k^\epsilon,\label{eq:mkeps}
 \end{align} 
 and let $\widehat m^0(x) = \sum_{k=2}^\infty m_k^0 x^k$ denote the formal series expansion of the center manifold for $\epsilon=0$, recall \eqref{eq:recmk}.
 Then for any fixed $k$,
 \begin{align*}
  m_k^\epsilon \rightarrow m_k^0,
 \end{align*}
 as $\epsilon\rightarrow 0$. 
\end{lemma}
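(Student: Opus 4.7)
The plan is to prove the lemma by induction on $k$, using the recursion \eqref{barmkepsrec} rewritten in terms of the rescaled coefficients $m_k^\epsilon=\epsilon^{-(k-1)}\overline m_k^\epsilon$. Dividing \eqref{barmkepsrec} by $\epsilon^{k-1}$ yields
\begin{align*}
(1-\epsilon k)\,m_k^\epsilon+(k-1+a^\epsilon)\,m_{k-1}^\epsilon = \epsilon^{2-k}\,\overline{\mathcal G}^\epsilon[\overline m^\epsilon]_k,
\end{align*}
with $m_1^\epsilon:=0$. The first step is to show that the powers of $\epsilon$ on the right-hand side collapse. Introducing the auxiliary (formal) series $y^\epsilon(x)=\sum_{k\ge 2}m_k^\epsilon x^k$, the identity $\overline y^\epsilon(\overline x)=\epsilon^{-1}y^\epsilon(\epsilon\overline x)$ gives $(\overline y^{\epsilon\, l})_j=\epsilon^{j-l}(y^{\epsilon\, l})_j$, so, substituting into \eqref{gk0eps} and \eqref{gk12eps} and extracting the factor $\epsilon^{k-2}$,
\begin{align*}
\epsilon^{2-k}\overline{\mathcal G}^\epsilon[\overline m^\epsilon]_k=f_k^\epsilon+\mu\!\left(\sum_{j=2}^{k-2}h^\epsilon_{k-j,1}m_j^\epsilon+\sum_{l=2}^{\lfloor(k-1)/2\rfloor}\sum_{j=2l}^{k-1}h^\epsilon_{k-j,l}(y^{\epsilon\, l})_j\right).
\end{align*}

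For the base case $k=2$, the nonlinear sum is empty and $(1-2\epsilon)m_2^\epsilon=f_2^\epsilon$, so $m_2^\epsilon\to f_2^0=m_2^0$ by \eqref{eq:recmk0}. For the inductive step, assume $m_j^\epsilon\to m_j^0$ for $2\le j\le k-1$. Each coefficient $(y^{\epsilon\, l})_j$ with $j\le k-1$, $l\ge 2$ is a polynomial in $m_i^\epsilon$ for $i\le j-2(l-1)\le k-3$, hence converges to $((\widehat m^0)^l)_j$ by the induction hypothesis. Combining this with $f_k^\epsilon\to f_k^0$, $a^\epsilon\to a^0$, $h^\epsilon_{k-j,l}\to h^0_{k-j,l}$, and crucially $h^\epsilon_{k-j,1}\to h^0_{k-j,1}=0$ (so the linear-in-$\overline y$ part of $\overline h^\epsilon$ drops out in the limit, matching the structure of $g^0$ in \eqref{eq:fhexpansion}), the limiting recursion becomes
\begin{align*}
m_k^0+(k-1+a^0)\,m_{k-1}^0=\mathcal G^0[\widehat m^0]_k,
\end{align*}
which is exactly the recursion defining $m_k^0$ that is obtained from inserting $\widehat m^0$ into the center manifold equation \eqref{dydx0} and comparing coefficients (cf. the derivation leading to \eqref{eq:recmk}). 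Solving for $m_k^\epsilon$ in the prefactor $(1-\epsilon k)$, which tends to $1$, we conclude $m_k^\epsilon\to m_k^0$.

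The only substantive point is the $\epsilon$-bookkeeping in the first step: at first sight the factor $\epsilon^{2-k}$ in front of $\overline{\mathcal G}^\epsilon[\overline m^\epsilon]_k$ looks dangerous, since $\overline g^\epsilon$ itself is built out of $g^\epsilon(\epsilon\overline x,\epsilon\overline y)/\epsilon^2$ and involves arbitrarily high powers of $\epsilon$ through the series expansions \eqref{eq:overlinefheps}. The cancellation is, however, exact for each fixed $k$ because every monomial $\overline x^k\overline y^l$ in $\overline h^\epsilon$ that contributes to the coefficient of $\overline x^k$, evaluated on $\overline y=\sum\epsilon^{j-1}m_j^\epsilon\overline x^j$, carries precisely the factor $\epsilon^{k-2}$. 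Once this algebraic identity is made explicit, no analytic estimates are needed since each sum is finite and induction delivers the result immediately.
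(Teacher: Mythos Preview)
Your proof is correct and follows essentially the same approach as the paper's: substitute $\overline m_k^\epsilon=\epsilon^{k-1}m_k^\epsilon$ into the recursion \eqref{barmkepsrec}, recognize that the right-hand side becomes exactly $\mathcal G^\epsilon[y^\epsilon]_k$ (composition with the \emph{unscaled} $g^\epsilon$), and then argue by induction on $k$. The paper condenses the $\epsilon$-bookkeeping you carry out explicitly into the single remark that the right-hand side equals $\mathcal G^\epsilon[y]_k$, but the content is the same; your explicit verification that $\epsilon^{2-k}\overline{\mathcal G}^\epsilon[\overline m^\epsilon]_k=\mathcal G^\epsilon[y^\epsilon]_k$ (via $\overline y^\epsilon(\overline x)=\epsilon^{-1}y^\epsilon(\epsilon\overline x)$ and $\overline g^\epsilon(\overline x,\overline y)=\epsilon^{-2}g^\epsilon(\epsilon\overline x,\epsilon\overline y)$) and the observation that $h^\epsilon_{k-j,1}\to 0$ are precisely what underlies that remark.
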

\begin{proof}
 Inserting \eqref{eq:mkeps} into \eqref{barmkepsrec}, it is straightforward to obtain
 \begin{align*}
  m_k^\epsilon (1-\epsilon k ) + (k-1+a^\epsilon) m_{k-1}^\epsilon = \rsp{\mathcal G^\epsilon[y]_k}\rightarrow m_k^0  + (k-1+a^0) m_{k-1}^0 = \rsp{\mathcal G^0[\widehat m^0]_k},
 \end{align*}
as $\epsilon\rightarrow 0$. \rsp{(Here $\mathcal G^\epsilon[y]$ is the power series defined by composition $g^\epsilon(\cdot,y)$ of a series $y$ with the analytic function $g^\epsilon$ (without bars).)} The result then follows from induction on $k$.
\end{proof}


\subsection{Growth properties of $\overline m_k^\epsilon$}
 We now study the formal series \eqref{barmeps} and the growth properties of $\overline m_k^\epsilon$, $k=2,\ldots N^\epsilon$. 
 For this purpose, the following lemma, on the properties of the $\overline w_k^\epsilon$'s, defined in \eqref{barwkeps}, will be crucial. 

\begin{lemma}\label{lemma:wkeps}
Suppose that $a^0>-2$, that $\epsilon^{-1}\notin \mathbb N$ and write 
\begin{align}
 \epsilon^{-1} =: N^\epsilon+\alpha^\epsilon,\quad N^\epsilon:=\lfloor \epsilon^{-1}\rfloor,\quad \alpha^\epsilon \in (0,1),\label{Nepsdef}
\end{align}
Then the following can be said about $\overline w_k^\epsilon$, defined in \eqref{barwkeps}, for all $2\le k \le N^\epsilon$:
\begin{enumerate}
\item \label{C0} For fixed $k\in \mathbb N\setminus \{1\}$:
\begin{align*}
 \overline w_k^\epsilon =\epsilon^{k-1}(1+o(1)) \Gamma(k+a^\epsilon),
\end{align*}
as $\epsilon\rightarrow 0$. 
\item \label{C2} Lower bound of $\overline w_{k}^\epsilon(1-\epsilon k)$: \begin{align*}
       \overline w_k^\epsilon (1-\epsilon k)\ge  \Gamma(k+a^\epsilon)\epsilon^{k-1}\quad \forall\, 2\le k \le N^\epsilon+1.
      \end{align*}
      \item \label{C5} Convolution estimate: There is a $C=C(a^0)$ such that 
\begin{align}
\sum_{j=2}^{k-2} \overline w_j^\epsilon \overline w_{k-j}^\epsilon &\le C \overline w_2^\epsilon \overline w_{k-2}^\epsilon \quad \forall\,4\le k\le N^\epsilon+1,\label{eq:C51}
\end{align}
      and
      \begin{align}
      \sum_{j=k-(N^\epsilon-1)}^{N^\epsilon-1}  \overline w_j^\epsilon   \overline w_{k-j}^\epsilon &\le C  \overline w_{N^\epsilon-1}^\epsilon  \overline w_{k-(N^\epsilon-1)}^\epsilon\quad \forall\,N^\epsilon+1\le k\le 2(N^\epsilon-1).\label{eq:C52}
      \end{align}          
\item \label{C3} Define 
\begin{equation}\label{AepsBeps}
\begin{aligned}
 Q_4^\epsilon &:= \frac{1}{N^\epsilon-3} \log \frac{\overline w_{N^\epsilon-1}^\epsilon}{\overline w_{2}^\epsilon},\\
 P_4^\epsilon &: = \log (\overline w_2^\epsilon) 
\end{aligned}
\end{equation}
Then 
\begin{equation}\label{AepsBepsApp}
\begin{aligned}
 Q_4^\epsilon &= \frac{1}{N^\epsilon-3}\left((a^\epsilon+1-\alpha^\epsilon) \log N^\epsilon + \log \frac{\Gamma(1+\alpha^\epsilon)}{\Gamma(2+a^\epsilon)}+o(1)\right),\\
 P_4^\epsilon &=\log \epsilon + \log \Gamma(2+a^\epsilon) +  o(1)
\end{aligned}
\end{equation}
and 
\begin{equation}\label{wkepsmajorant}
\begin{aligned}
 \overline w_{k}^\epsilon \le e^{Q_4^\epsilon (k-2) + P_4^\epsilon}\quad \forall\, 2\le k\le N^\epsilon-1,
\end{aligned}
\end{equation}
for all $0<\epsilon\ll 1$. 
In particular,
\begin{align*}
       \sum_{k=2}^{N^\epsilon-1}  \overline w_k^\epsilon \delta^k\le \sum_{k=2}^{N^\epsilon-1} e^{Q_4^\epsilon (k-2) + P_4^\epsilon}\delta^k \le \delta^2 C\epsilon\quad \forall\,0<\delta\le \frac34.
      \end{align*}
      for some $C>0$ and all $0< \epsilon\ll 1$.

\item \label{kinf} For fixed $\epsilon^{-1}\notin \mathbb N$, 
\begin{equation}\label{eq:kinf}
\begin{aligned}
       \overline w_k^\epsilon &= \frac{(-1)^{N^\epsilon-k}\Gamma(\alpha^\epsilon)\Gamma(1-\alpha^\epsilon) }{\epsilon \Gamma(\epsilon^{-1})} \frac{\Gamma(k+a^\epsilon)}{\Gamma(k+1-\epsilon^{-1} )}\\
       &=\mathcal O(1)k^{\epsilon^{-1}+a^\epsilon-1},
      \end{aligned}
      \end{equation}
      with respect to $k\rightarrow \infty$.
      \item \label{est1} Let $\xi>0$. Then there is a constant $C=C(a^\epsilon,\xi)$ such that 
      \begin{equation}\label{eq:est1}
      \begin{aligned}
       \sum_{j=2}^{k-2} (\xi^{-1}\epsilon)^{k-2-j} \overline w_j^\epsilon &\le C \overline w_{k-2}^\epsilon\quad  \forall\, 4\le k\le N^\epsilon+1,
      \end{aligned}
      \end{equation}
for all $0<\epsilon\ll 1$.
 \item \label{est2} Let $\xi>0$. Then there is a constant $C=C(a^\epsilon,\xi)$ such that 
      \begin{equation}\label{eq:est2}
      \begin{aligned}
       \sum_{l=2}^{\lfloor \frac{k}{2}\rfloor } (\xi^{-1}\epsilon)^{l-2} (\overline w_2^\epsilon)^{l-1} \overline w_{k-2(l-1)}^\epsilon &\le C\overline w_{2}^\epsilon \overline w_{k-2}^\epsilon\quad  \forall\, 4\le k\le N^\epsilon+1,
      \end{aligned}
      \end{equation}
for all $0<\epsilon\ll 1$.

\end{enumerate}

\end{lemma}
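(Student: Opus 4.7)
The plan is to mimic the strategy of Lemma~\ref{lemma:wk0} while taking care of the additional $\epsilon$-dependence. For items~\ref{C0} and~\ref{C2}, the starting point is the telescoping identity
\[
\overline w_k^\epsilon=\frac{\epsilon^{k-1}\Gamma(k+a^\epsilon)}{\prod_{l=1}^{k}(1-\epsilon l)},
\]
obtained by iterating $\Gamma(z+1)=z\Gamma(z)$ on $\Gamma(\epsilon^{-1})/\Gamma(\epsilon^{-1}-k)$. Item~\ref{C0} follows since $\prod_{l=1}^{k}(1-\epsilon l)\to 1$ as $\epsilon\rightarrow 0$ for fixed $k$; item~\ref{C2} follows from the observation that multiplying by $(1-\epsilon k)$ cancels the last factor and each remaining factor $1-\epsilon l$, $l\le N^\epsilon$, lies in $(0,1]$. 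For item~\ref{kinf}, I will use the reflection formula $\Gamma(z)\Gamma(1-z)=\pi/\sin(\pi z)$ with $z=\epsilon^{-1}-k$, together with $\sin(\pi(\epsilon^{-1}-k))=(-1)^{k+N^\epsilon}\sin(\pi\alpha^\epsilon)$, to recast $\Gamma(\epsilon^{-1}-k)$ in terms of $\Gamma(k+1-\epsilon^{-1})$; Stirling's formula then yields the asymptotic order $k^{\epsilon^{-1}+a^\epsilon-1}$.

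The convexity-based items~\ref{C5} and~\ref{C3} mirror Lemma~\ref{lemma:wk0} directly. Setting $\Phi_2^\epsilon(j):=\log\overline w_j^\epsilon$, one computes
\[
\frac{d^2}{dj^2}\Phi_2^\epsilon(j)=\phi'(\epsilon^{-1}-j)+\phi'(j+a^\epsilon),
\]
which is positive on $j\in[2,N^\epsilon]$ by positivity of the trigamma function together with Hypothesis~\ref{assa} (ensuring $j+a^\epsilon>0$). Convexity delivers the secant bound~\eqref{wkepsmajorant}; the asymptotics~\eqref{AepsBepsApp} for $P_4^\epsilon$ and $Q_4^\epsilon$ follow from item~\ref{C0} at $k=2$ and from item~\ref{kinf} at $k=N^\epsilon-1$ (where the reflection-formula representation is particularly clean), combined with Stirling. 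Since $Q_4^\epsilon\to 0$, the sum in item~\ref{C3} reduces to a geometric series with common ratio $\delta e^{Q_4^\epsilon}\le 3/4+o(1)<1$. For item~\ref{C5}, I introduce $\Phi_1^\epsilon(j):=\log(\overline w_j^\epsilon\,\overline w_{k-j}^\epsilon)$, convex on the relevant range and symmetric about $j=k/2$; the two sums in \eqref{eq:C51} and \eqref{eq:C52} are then bounded by their respective symmetric endpoint contributions via a geometric-series argument exactly as in Lemma~\ref{lemma:wk0}.

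Items~\ref{est1} and~\ref{est2} will be the main obstacle, since the secant bound~\eqref{wkepsmajorant} is too lossy here: its endpoint value at $j=N^\epsilon-1$ is of order $(N^\epsilon)^{a^\epsilon+1-\alpha^\epsilon}/\epsilon$, which vastly exceeds $\overline w_{k-2}^\epsilon$ when $k\ll N^\epsilon$. Instead, I will exploit the strict monotonicity of the consecutive ratio
\[
r_j:=\frac{\overline w_{j+1}^\epsilon}{\overline w_j^\epsilon}=\frac{\epsilon(j+a^\epsilon)}{1-\epsilon(j+1)},
\]
which yields $(\xi^{-1}\epsilon)\overline w_{k-2}^\epsilon/\overline w_{k-1}^\epsilon=\xi^{-1}(1-\epsilon(k-1))/(k-2+a^\epsilon)\le 1/2$ for all $k\ge k_\xi$ large enough. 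Combined with the tautology
\[
\sum_{j=2}^{k-1}(\xi^{-1}\epsilon)^{k-1-j}\overline w_j^\epsilon=(\xi^{-1}\epsilon)\sum_{j=2}^{k-2}(\xi^{-1}\epsilon)^{k-2-j}\overline w_j^\epsilon+\overline w_{k-1}^\epsilon,
\]
an induction on $k\ge k_\xi$ delivers the bound $\sum_{j=2}^{k-2}(\xi^{-1}\epsilon)^{k-2-j}\overline w_j^\epsilon\le 2\overline w_{k-2}^\epsilon$, with the finitely many small-$k$ cases handled directly via item~\ref{C0}. The same induction scheme applies to item~\ref{est2}: setting $T_k:=\sum_{l=2}^{\lfloor k/2\rfloor}(\xi^{-1}\epsilon)^{l-2}(\overline w_2^\epsilon)^{l-1}\overline w_{k-2(l-1)}^\epsilon$, a reindexing of the sum yields the recurrence $T_k\le\overline w_2^\epsilon\overline w_{k-2}^\epsilon+(\xi^{-1}\epsilon)\overline w_2^\epsilon\,T_{k-2}$, and the ratio $(\xi^{-1}\epsilon)\overline w_2^\epsilon/(r_{k-4}r_{k-3})\le 2\xi^{-1}\Gamma(2+a^\epsilon)/[(k-4+a^\epsilon)(k-3+a^\epsilon)]$ is $\le 1/2$ for all $k\ge k_\xi'$, which closes the induction with $T_k\le C\overline w_2^\epsilon\overline w_{k-2}^\epsilon$.
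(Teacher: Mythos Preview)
Your proposal is correct. For items~\ref{C0}--\ref{C3} and~\ref{kinf} you follow essentially the same path as the paper: the telescoping product for items~\ref{C0}--\ref{C2}, the reflection formula for item~\ref{kinf}, and the log-convexity/secant argument for items~\ref{C5} and~\ref{C3}. (Minor remark: for the $Q_4^\epsilon$ asymptotics you do not need item~\ref{kinf}; the direct evaluation $\overline w_{N^\epsilon-1}^\epsilon=\Gamma(1+\alpha^\epsilon)\Gamma(N^\epsilon-1+a^\epsilon)/[\epsilon\Gamma(\epsilon^{-1})]$ followed by Stirling is cleaner, which is what the paper does. Also, for \eqref{eq:C52} the paper treats $k$ close to $2(N^\epsilon-1)$ separately before invoking the secant argument; you should flag this boundary regime as well.)

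For items~\ref{est1} and~\ref{est2} your route is genuinely different. You correctly observe that the \emph{global} secant~\eqref{wkepsmajorant} with fixed endpoints $2$ and $N^\epsilon-1$ is far too lossy when $k\ll N^\epsilon$, and you resolve this with an induction on $k$ driven by the monotone ratio $r_j=\overline w_{j+1}^\epsilon/\overline w_j^\epsilon$. The paper instead retains the secant framework but makes the secant \emph{$k$-dependent}: it bounds $\overline w_j^\epsilon\le e^{Q_6^\epsilon(j-2)+P_6^\epsilon}$ on $[2,k-2]$ with $Q_6^\epsilon=(k-4)^{-1}\log(\overline w_{k-2}^\epsilon/\overline w_2^\epsilon)$, uses convexity to show $Q_6^\epsilon$ is increasing in $k$, and then bounds the sum as a geometric series with ratio $\xi\epsilon^{-1}e^{Q_6^\epsilon}\ge 2$ for $k\ge k_0$. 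Both arguments rest on the same log-convexity of $j\mapsto\overline w_j^\epsilon$ (your ratio test is exactly the discrete manifestation of this), but your induction is more elementary and sidesteps the need to track the $k$-dependence of the secant slope. The paper's version has the advantage of reusing the same machinery throughout; yours has the advantage of being self-contained once the ratio $r_j$ is written down.
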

\begin{proof}

We prove the items \ref{C0}-\ref{est2} successively in the following. 

\textit{Proof of item \ref{C0}}. For fixed $k\in \mathbb N$, we have
\begin{align*}
 \overline w_k^\epsilon = \frac{\Gamma(\epsilon^{-1}-k)}{\epsilon\Gamma(\epsilon^{-1})} \Gamma(k+a^\epsilon)=\frac{\Gamma(\epsilon^{-1})\epsilon^{k-1}}{\Gamma(\epsilon^{-1})} (1+o(1))\Gamma(k+a^\epsilon)=\mathcal O(\epsilon^{k-1}),
\end{align*}
using \eqref{stirling}, \eqref{barwkeps} and the definiton of the gamma function.

\textit{Proof of item \ref{C2}}. 
We calculate
\begin{align*}
\frac{\overline w_k^\epsilon(1-\epsilon k)}{\Gamma(k+a^\epsilon)}
 &= \frac{\Gamma(\epsilon^{-1}-k+1)}{ \Gamma(\epsilon^{-1})} = \frac{1}{\Pi_{j=1}^{k-1} (\epsilon^{-1}-j)}= \epsilon^{k-1} \Pi_{j=1}^{k-1}  \frac{1}{1-j\epsilon}\ge \epsilon^{k-1},
\end{align*}
using \eqref{eq:Gamma} and $1-(k-1)\epsilon\ge \alpha^\epsilon>0$ for $2\le k\le N^\epsilon+1$.

      \textit{Proof of item \ref{C5}}.
      We first focus on \eqref{eq:C51} and notice from item \ref{C0} that the claim holds true for all $4\le k\le k_0$ with $k_0>0$ fixed and all $0<\epsilon\ll 1$. We therefore consider $k_0<k\le N^\epsilon+1$ with $k_0>0$ fixed large. We write
      \begin{align*}
       \sum_{j=2}^{k-2} \overline w_{j}^\epsilon \overline w_{k-j}^\epsilon &\le  2\sum_{j=2}^{\lfloor \frac{k}{2}\rfloor} \overline w_{j}^\epsilon \overline w_{k-j}^\epsilon=:2\sum_{j=2}^{\lfloor \frac{k}{2}\rfloor} e^{\Phi_{31}^\epsilon(j)}.
      \end{align*}
      By proceeding as in the proof of Lemma \ref{lemma:fh}, a simple computation, using \eqref{eq:digamma} and \eqref{eq:digammaprop}, shows that $\Phi_{31}^\epsilon(j)$, $j\in [2,k-2]$, is convex, having a unique minimum at $j=\frac{k}{2}$. Therefore
\begin{align}
 \Phi_{31}^\epsilon(j) \le Q_{31}^\epsilon (j-2) +P_{31}^\epsilon,\label{eq:PhijC51}
\end{align}
where $Q_{31}^\epsilon$ and $P_{31}^\epsilon$ are chosen such that 
\begin{align*}
 Q_{31}^\epsilon =\frac{\Phi_{31}^\epsilon\left(\frac{k}{2}\right)-\Phi_{31}^\epsilon(2)}{\frac{k}{2}-2} = 
 \frac{1}{\frac{k}{2}-2}\log \frac{\left(\overline w_{\frac{k}{2}}^\epsilon\right)^2}{ \overline w_{2}^\epsilon \overline w_{k-2}^\epsilon},\quad P_{31}^\epsilon = \Phi_{31}^\epsilon(2).
\end{align*}
%
In particular, equality holds for $j=2$ and $j=\frac{k}{2}$ in \eqref{eq:PhijC51} and consequently
\begin{align*}
\left( \overline w_{\frac{k}{2}}^\epsilon\right)^2 = e^{Q_{31}^{\epsilon}(\frac{k}{2}-2)+P_{31}^\epsilon},\quad  \overline w_{2}^\epsilon\overline w_{k-2}^\epsilon = e^{P_{31}^\epsilon}.
\end{align*}
Using \eqref{barwkeps} and \eqref{stirling0}, a simple calculation shows that
\begin{align*}
 Q_{31}^\epsilon <\frac{1}{\frac{k}{2}-2}\log \frac{\Gamma\left(\frac{k}{2} +a^\epsilon\right)^2}{\Gamma(2+a^\epsilon)\Gamma(k-2+a^\epsilon)} = - \log 4 (1+o_{k_0\rightarrow \infty}(1)),
\end{align*}
for all $k_0<k\le N^\epsilon+1$, uniformly in $0<\epsilon\ll 1$. Then proceeding as in the proof of Lemma \ref{lemma:fh}, we have
\begin{align*}
 \sum_{j=2}^{k-2} \overline w_{j}^\epsilon \overline w_{k-j}^\epsilon&\le 2 \overline w_{2}^\epsilon \overline w_{k-2}^\epsilon +\int_2^{\infty} e^{Q_{31}^\epsilon (j-2)+P_{31}^\epsilon} dj\\
 &\le 2(1+\log 4) \overline w_{2}^\epsilon  \overline w_{k-2}^\epsilon (1+o(1)),
\end{align*}
which completes the proof of \eqref{eq:C51}.

The inequality \eqref{eq:C52} is proven in a similar way. First, we put $k=2(N^\epsilon-1)-p$ and use \eqref{barwkeps} and \eqref{stirling} to obtain
\begin{align*}
 \overline w_{N^\epsilon-1-p+j}^\epsilon \overline w_{N^\epsilon-1-j}^\epsilon = \Gamma(\alpha^\epsilon+1+p-j)\Gamma(\alpha^\epsilon+1+j) (N^\epsilon)^{2(a^\epsilon-\alpha^\epsilon)-p} (1+o(1)).
\end{align*}
Therefore
\begin{align*}
 \sum_{j=k-(N^\epsilon-1)}^{N^\epsilon-1} \overline w_{j}^\epsilon \overline w_{k-j}^\epsilon &= \sum_{j=0}^p \overline w_{N^\epsilon-1-p+j}^\epsilon\overline w_{N^\epsilon-1-j}^\epsilon\\
 &= (N^\epsilon)^{2(a^\epsilon-\alpha^\epsilon)-p}\sum_{j=0}^p  \Gamma(\alpha^\epsilon+1+p-j)\Gamma(\alpha^\epsilon+1+j)(1+o(1)).
\end{align*}
Here $$\sum_{j=0}^p \Gamma(\alpha^\epsilon+1+p-j)\Gamma(\alpha^\epsilon+1+j)\le C\Gamma(\alpha^\epsilon+1)\Gamma(\alpha^\epsilon+1+p),$$ cf. \eqref{firstconv} and therefore \eqref{eq:C52} holds true for all $2(N^\epsilon-1)-p\le k\le 2(N^\epsilon-1)$ and any $p>0$ provided that $0<\epsilon\ll 1$. 

We therefore proceed to consider $N^\epsilon+1\le k\le 2(N^\epsilon-1)-p$ with $p>0$ fixed large. We write
\begin{align*}
       \sum_{j=k-(N^\epsilon-1)}^{N^\epsilon-1}  \overline w_{j}^\epsilon\overline w_{k-j}^\epsilon &\le 2\sum_{j=k-(N^\epsilon-1)}^{\lfloor \frac{k}{2}\rfloor}  \overline w_{j}^\epsilon \overline w_{k-j}^\epsilon:=2\sum_{j=k-(N^\epsilon-1)}^{\lfloor \frac{k}{2}\rfloor} e^{\Phi_{32}^\epsilon(j)}.
      \end{align*}
      As above, $\Phi_{32}^\epsilon(j)$, $j\in [2,k-2]$, is convex, having a unique minimum at $j=\frac{k}{2}$, so that
\begin{align}
 \Phi_{32}^\epsilon(j) \le Q_{32}^\epsilon (j-2) +P_{32}^\epsilon,\label{eq:PhijC52}
\end{align}
where $Q_{32}^\epsilon$ and $P_{32}^\epsilon$ are now chosen such that 
\begin{align*}
 Q_{32}^\epsilon &=\frac{\Phi_{32}\left(\frac{k}{2}\right)-\Phi_{32}(k-(N^\epsilon-1))}{N^\epsilon-1-\frac{k}{2}} = 
 \frac{1}{N^\epsilon-1-\frac{k}{2}}\log \frac{\left(\overline w_{\frac{k}{2}}^\epsilon\right)^2}{ \overline w_{k-(N^\epsilon-1)}^\epsilon\overline w_{N^\epsilon-1}^\epsilon},\\ P_{32}^\epsilon &= \Phi^\epsilon(k-(N^\epsilon-1)).
\end{align*}
Equality holds in \eqref{eq:PhijC52} for $j=k-(N^\epsilon-1)$ and $j=\frac{k}{2}$. 
      Now, using \eqref{barwkeps} and \eqref{stirling0} a simple calculation shows that
      \begin{align*}
       Q_{32}^\epsilon < \frac{1}{N^\epsilon-1-\frac{k}{2}}\log \frac{\Gamma\left(\epsilon^{-1}-\frac{k}{2}\right)^2}{\Gamma(1+\alpha^\epsilon)\Gamma(\epsilon^{-1} -(k-(N^\epsilon-1)))} \le -\log 4(1+o_{p\rightarrow \infty}(1)),
      \end{align*}
      for all $N^\epsilon+1\le k\le 2(N^\epsilon-1)-p$, uniformly in $0<\epsilon\ll 1$. 
      We can now complete the proof by proceeding in the exact same way that we did in the proof of \eqref{eq:C51}.

\textit{Proof of item \ref{C3}}.
      First, we write
      \begin{align*}
       \overline w_k^\epsilon =  e^{\Phi_4^\epsilon(k)},
      \end{align*}
where
\begin{align*}
 \Phi_4^\epsilon(k) = \log \frac{\Gamma(\epsilon^{-1}-k)\Gamma(k+a^\epsilon)}{\epsilon \Gamma(\epsilon^{-1})}.
\end{align*}
Again, $\Psi_4^\epsilon(k)$ is convex on $k\in [2,N^\epsilon-1]$ (having a minimum at $k=k_m(\epsilon):=\frac{1}{2\epsilon}-\frac{a^\epsilon}{2}$). 
Next, $Q_4^\epsilon$ and $P_4^\epsilon$, defined by \eqref{AepsBeps}, are chosen such that 
\begin{align*}
 Q_4^\epsilon=\frac{\Psi_4^\epsilon(N^\epsilon-1)-\Psi_4^\epsilon(2)}{N^\epsilon-3},\quad P_4^\epsilon=\Psi_4^\epsilon(2),
\end{align*}
specifically
\begin{align*}
 \Psi_4^\epsilon(k)\le Q_4^\epsilon(k-2)+P_4^\epsilon,
\end{align*}
for all $k\in [2,N^\epsilon-1]$ with equality for $k=2$ and $k=N^\epsilon-1$:
\begin{align}
 \overline w_2^\epsilon = e^{P_4^\epsilon},\quad  \overline w_{N^\epsilon-1}^\epsilon = e^{Q_4^\epsilon(N^\epsilon-3)+P_4^\epsilon}.\label{eq:AepsBepsProp}
\end{align}

Moreover, $Q_4^\epsilon=o(1)$, see \eqref{AepsBepsApp} which we prove below. 
Consequently, for all $0<\delta\le \frac34$, we have
\begin{align*}
 \sum_{k=2}^{N^\epsilon-1}  \overline w_k^\epsilon \delta^k 
 &\le \sum_{k=2}^{N^\epsilon -1}  e^{Q_4^\epsilon(k-2)+P_4^\epsilon}\delta^k\\
 &\le e^{P_4^\epsilon}\delta^2+\int_2^{\infty} e^{Q_4^\epsilon(k-2)+P_4^\epsilon} \delta^k dk \\
 &\le C\delta^{2} e^{P_4^\epsilon}.
\end{align*}
 Here we have used that 
\begin{align*}
 \int_{2}^{\infty} e^{a(k-2)} \delta^k dk= \frac{1}{\log \delta^{-1} -a}\delta^{2}\quad \forall\,0<\delta<e^{-a}.
\end{align*}

To complete the proof of item \ref{C3}, we just have to prove the asymptotics in \eqref{AepsBepsApp}. 
The asymptotics of $P_4^\epsilon$ follows from item \ref{C0}, so we focus on $Q_4^\epsilon$. For this, we use Stirling's approximation in the form \eqref{stirling} for $N^\epsilon\gg 1$:
\begin{equation}\label{eq:Qepsapp}
\begin{aligned}
 Q_4^\epsilon &= \frac{1}{N^\epsilon-3}\log \frac{\overline w_{N^\epsilon-1}^\epsilon}{\overline w_2^\epsilon}\\
 &=\frac{1}{N^\epsilon-3}\log\left( \frac{\Gamma(1+\alpha^\epsilon)}{\Gamma(2+a^\epsilon)}\frac{\Gamma(N^\epsilon-1+a^\epsilon)}{\Gamma(N^\epsilon+\alpha^\epsilon-2)}\right)\\
 &=\frac{1}{N^\epsilon-3}\log \left(\frac{\Gamma(1+\alpha^\epsilon)}{\Gamma(2+a^\epsilon)}(1+o(1))(N^{\epsilon})^{a^\epsilon+1-\alpha^\epsilon}\right),
\end{aligned}
\end{equation}
using \eqref{barwkeps}, $\epsilon^{-1}= N^\epsilon+\alpha$ and
\begin{align*}
 \frac{\Gamma(N^\epsilon-1+a^\epsilon)}{\Gamma(N^\epsilon+\alpha^\epsilon-2)} = (1+o(1))\frac{(N^\epsilon)^{a^\epsilon-1}}{(N^\epsilon)^{\alpha^\epsilon-2}},
\end{align*}
in the last equality of \eqref{eq:Qepsapp}.

\textit{Proof of item \ref{kinf}}.
For \eqref{kinf}, we use the reflection formula \eqref{reflection} and Sterling's approximation in the form \eqref{stirling} for $k\rightarrow \infty$.

\textit{Proof of item \ref{est1}}. It is easy to verify the claim for all $4\le k\le  k_0$ for any $k_0>0$ fixed and $0<\epsilon\ll 1$ by using item \ref{C0}. We therefore consider $k_0< k\le N^\epsilon+1$ with $k_0>0$ fixed large and write
\begin{align*}
  \overline w_j^\epsilon =:e^{\Phi_6^\epsilon(j)}.
\end{align*}
Again, $\Phi_6^\epsilon$ is convex for any $j\in [2,N^\epsilon-1]$ and therefore
\begin{align*}
 \Phi_6^\epsilon(j) \le Q^\epsilon_6(j-2)+P_6^\epsilon,
\end{align*}
where $Q^\epsilon_6$ and $P_6^\epsilon$ are chosen such that equality holds for $j=2$ and $j=k-2$:
\begin{align}\label{eq:Phi6cond}
 Q^\epsilon_6 = \frac{1}{k-4} \log \frac{ \overline w_{k-2}^\epsilon}{ \overline w_2^\epsilon},\quad e^{P_6^\epsilon} =  \overline w_2^\epsilon.
\end{align}
By the convexity of $\Phi_6^\epsilon$ it follows that $Q^\epsilon_6$ is increasing. Therefore by item \ref{C0} and \eqref{stirling}
\begin{align*}
 Q^\epsilon_6 &\ge \log \epsilon +\mathcal O(\epsilon) + \log \left(\frac{\Gamma(k_0-2+a^\epsilon)}{\Gamma(2+a^\epsilon)}\right)^\frac{1}{k_0-4}\\
 &= \log \epsilon +\mathcal O (\epsilon) +\log k_0 (1+o_{k_0\rightarrow \infty}(1)),
\end{align*}
for all $k_0\le k\le N^\epsilon-1$. In turn, we can assume that
\begin{align*}
 \xi\epsilon^{-1}e^{Q_6^\epsilon}\ge 2\quad \forall\, k\in [k_0, N^\epsilon-1].
\end{align*}
This allow us to estimate the sum as a geometric sum:
 \begin{align*}
       \sum_{j=2}^{k-2} (\xi^{-1}\epsilon)^{k-2-j} \overline w_j^\epsilon &\le 
       (\xi^{-1}\epsilon)^{k-2} e^{P^\epsilon_6} \sum_{j=2}^{k-2} (\xi\epsilon^{-1}e^{Q_6^\epsilon})^{j}\\
       &\le 2 (\xi^{-1}\epsilon)^{k-2} e^{P^\epsilon_6} (\xi\epsilon^{-1}e^{Q_6^\epsilon})^{k-2}\\
       &\le 2 e^{Q_6^\epsilon(k-2)+P^\epsilon_6}\\
       &=2\overline w_{k-2}^\epsilon.
      \end{align*}

      \textit{Proof of item \ref{est2}}. It is easy to verify the claim for all $4\le k< k_0$ for any $k_0>0$ and $0<\epsilon\ll 1$ by using item \ref{C0}. We therefore consider $k_0\le k\le N^\epsilon+1$ with $k_0>0$ fixed large and write
\begin{align*}
 \overline w_j^\epsilon =:e^{\Phi_6^\epsilon(j)},
\end{align*}
as in the proof of item \ref{est1},
      with 
      \begin{align*}
       \Phi_6^\epsilon(j)\le Q_6^\epsilon (j-2)+P_6^\epsilon,
      \end{align*}
for all $j\in [2,k-2]$ with equality for $j=2$ and $j=k-2$. 
We may assume that $k_0>0$ is such that
\begin{align*}
 (\xi^{-1} \epsilon e^{-2Q_6^\epsilon+P_6^\epsilon})\le \frac12 \quad \forall\, k\in [k_0,N^\epsilon+1],
\end{align*}
for all $0<\epsilon\ll 1$. In this way, we estimate can estimate the sum as a geometric sum
\begin{align*}
 \sum_{l=2}^{\lfloor \frac{k}{2}\rfloor } (\xi^{-1}\epsilon)^{l-2} (\overline w_2^\epsilon)^{l-1} \overline w_{k-2(l-1)}^\epsilon& \le  (\xi\epsilon^{-1})^{2} \sum_{l=2}^{\lfloor \frac{k}{2}\rfloor } (\xi^{-1}\epsilon)^{l} e^{P_6^\epsilon (l-1)}e^{Q_6^\epsilon(k-2l) + P_6^\epsilon}\\
 &\le (\xi\epsilon^{-1})^{2} e^{Q_6^\epsilon k} \sum_{l=2}^{\lfloor \frac{k}{2}\rfloor } \left(\xi^{-1}\epsilon e^{-2Q_6^\epsilon+P_6^\epsilon}\right)^l\\
 &\le 2(\xi\epsilon^{-1})^{2}e^{Q_6^\epsilon k} \left(\xi^{-1}\epsilon e^{-2Q_6^\epsilon+P_6^\epsilon}\right)^2\\
 &=2  \overline w_{2}^\epsilon \overline w_{k-2}^\epsilon,
\end{align*}
for all $k_0<k\le N^\epsilon+1$. Here we have used \eqref{eq:Phi6cond} in the last equality.

\end{proof}

  In contrast to the analysis of the center manifold for $\epsilon=0$, we are in the present case of $\epsilon>0$  only interested in estimating the partial sum of \eqref{barmeps}:
 \begin{align*}
  \overline y = \sum_{k=2}^{N^\epsilon-1} \overline m_k^\epsilon\overline x^k, \quad m_k^\epsilon = (-1)^k \overline w_k^\epsilon \overline S_k^\epsilon,\quad \overline S_k^\epsilon = \sum_{j=2}^k \frac{(-1)^j \epsilon \rsp{\overline{\mathcal G}^\epsilon[\overline m^\epsilon]_j} }{\overline w_j^\epsilon(1-\epsilon j)},
 \end{align*}
 where 
 \begin{align*}
  N^\epsilon = \lfloor \epsilon^{-1}\rfloor,
 \end{align*}
recall \eqref{Nepsdef}. (We will deal with the remainder later, see Section \ref{sec:Top}).
 We therefore define the semi-norm
 \begin{align}\label{eq:seminorm}
  \Vert \sum_{k=2}^{\infty} \overline y_k \overline x^k\Vert :=\sup_{k\in [2,{N^\epsilon-1}]} \frac{\vert \overline y_k\vert}{\overline w_k^\epsilon},
 \end{align}
 on the set of formal series $\overline y=\sum_{k=2}^{\infty} \overline y_k \overline x^k$.

\begin{lemma}
 Consider $\overline y(\overline x) = \sum_{k=2}^{N^\epsilon-1} \overline y_k \overline x^k$ and define $(\overline y^l)_k, k=2l,\ldots,l(N^\epsilon-1)$ by
 \begin{align}
  \overline y(\overline x)^l =: \sum_{k=2l}^{l(N^\epsilon-1)} (\overline y^l)_k \overline x^k.\label{eq:ylkz}
 \end{align}
 Then there exists a $C=C(a^0)>0$ such that for any $l\in \mathbb N\setminus \{1\}$ and all $1\le p\le l$ the following holds true:
 \begin{equation}\label{eq:ylkeps}
 \begin{aligned}
 \vert (\overline y^l )_k\vert &\le \Vert y\Vert^{l} \begin{pmatrix} l-1 \\ p-1\end{pmatrix} C^{l-1} (\overline w_2^\epsilon)^{l-p} ( \overline w_{N^\epsilon-1}^\epsilon)^{p-1} ( \overline w_{k-(p-1)(N^\epsilon-1)-2(l-p)}^\epsilon),\\
 \forall\,k&\in [(p-1)(N^\epsilon-1)+2(l-p+1),p(N^{\epsilon}-1)+2(l-p)],
 \end{aligned}
 \end{equation}
 for all $0<\epsilon\ll 1$.
 Here $$\begin{pmatrix}                                                                            l-1\\p-1\end{pmatrix}$$ denotes the binomial coefficient for any $1\le p\le l$.
 
 In particular, for $p=1$:
  \begin{equation}\label{eq:ylkpEq1}
 \begin{aligned}
 \vert (\overline y^l )_k\vert &\le \Vert y\Vert^{l}  C^{l-1} (\overline w_2^\epsilon)^{l-1}  \overline w_{k-2(l-1)}^\epsilon,\\
 \forall\,k&\in [2l,N^{\epsilon}-1+2(l-1)],
 \end{aligned}
 \end{equation}
  for all $0<\epsilon\ll 1$.

\end{lemma}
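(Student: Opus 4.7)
I would proceed by induction on $l$, with the auxiliary index $p \in \{1, \ldots, l\}$ tracking how the truncation of $\overline y$ at order $N^\epsilon - 1$ propagates through the $l$-fold convolution: morally, $p-1$ counts the number of factors in the multinomial expansion that are forced close to the saturation value $N^\epsilon - 1$, while the remaining $l - p + 1$ factors are of the unconstrained small size $2$. The binomial coefficient $\binom{l-1}{p-1}$ should arise at the inductive step via Pascal's identity.

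For the base case $l = 2$, one has $(\overline y^2)_k = \sum_j \overline y_j \overline y_{k-j}$ with $j, k - j \in [2, N^\epsilon - 1]$. For $k \in [4, N^\epsilon + 1]$ (the $p = 1$ sub-interval), the summation range is $j \in [2, k-2]$ and Lemma \ref{lemma:wkeps}, item \ref{C5}, equation \eqref{eq:C51}, directly yields $C \overline w_2^\epsilon \overline w_{k-2}^\epsilon$, matching the claim with $\binom{1}{0} = 1$. For $k \in [N^\epsilon + 1, 2(N^\epsilon - 1)]$ (the $p = 2$ sub-interval), the constraint $k - j \le N^\epsilon - 1$ forces $j \ge k - (N^\epsilon - 1)$, and \eqref{eq:C52} then gives $C \overline w_{N^\epsilon - 1}^\epsilon \overline w_{k - (N^\epsilon - 1)}^\epsilon$, as claimed with $\binom{1}{1} = 1$.

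For the inductive step, I would write $(\overline y^{l+1})_k = \sum_{j=2}^{N^\epsilon - 1} (\overline y^l)_{k-j} \overline y_j$ and, for $k$ in the $p$-th sub-interval of $\overline y^{l+1}$, decompose the sum according to which sub-interval of $\overline y^l$ the index $k - j$ lies in. A short check on the interval endpoints shows that only $q = p$ and $q = p - 1$ can contribute. The contribution from $q = p$ (``$\overline y_j$ is small'') is bounded via the inductive hypothesis combined with \eqref{eq:C51}, producing a new factor $\overline w_2^\epsilon$ and shifting the residual weight index down by $2$; the contribution from $q = p - 1$ (``$\overline y_j$ is large'') is bounded analogously via \eqref{eq:C52}, producing a new factor $\overline w_{N^\epsilon - 1}^\epsilon$ and shifting the residual weight index down by $N^\epsilon - 1$. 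These two terms have the same final form in $\overline w_2^\epsilon$, $\overline w_{N^\epsilon-1}^\epsilon$ and the shifted weight, and their binomial prefactors combine via Pascal's identity $\binom{l-1}{p-1} + \binom{l-1}{p-2} = \binom{l}{p-1}$, giving the claim at level $l + 1$; the estimate \eqref{eq:ylkpEq1} is then the special case $p = 1$. The main obstacle, I expect, is the purely combinatorial bookkeeping needed to verify that only $q \in \{p-1, p\}$ contribute and, more importantly, that in each of these two cases the resulting summation range of $j$ is \emph{exactly} the range required for \eqref{eq:C51} or \eqref{eq:C52} to apply; once this is checked carefully at the level of the interval endpoints, the induction proceeds essentially automatically.
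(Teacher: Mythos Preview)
Your proposal is correct and follows essentially the same inductive argument as the paper: base case $l=2$ via \eqref{eq:C51} and \eqref{eq:C52}, inductive step by splitting the convolution sum for $(\overline y^{l+1})_k$ according to whether the $\overline y^l$-index lies in the $(p-1)$-th or $p$-th sub-interval, then combining via Pascal's identity. The paper writes the convolution with the roles of the indices swapped (summing over the $\overline y^l$-index rather than the single-factor index) and treats the boundary cases $p=1$ and $p=l+1$ separately, but the content is identical to what you outline.
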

\begin{proof}
The claim is proven by induction, with the base case being $l=2$, $p=1$ and $p=2$.
 
 \textit{The base case: $(l,p)=(2,1),\,(2,2)$ }.
 For $l=2$, we have by Cauchy's product formula:
 \begin{align*}
  \vert (\overline y^2 )_k\vert \le \Vert y\Vert^2 \sum_{j=\max(2,k-(N^\epsilon-1))}^{\min(k-2,N^\epsilon-1)}  \overline w_j^\epsilon \overline w_{k-j}^\epsilon.
 \end{align*}
We first consider $p=1$: $4\le k\le  (N^\epsilon-1)+2=N^\epsilon+1$. Then by item \ref{C5} of Lemma \ref{lemma:wkeps}, see \eqref{eq:C51}, we conclude that
\begin{align*}
\vert (\overline y^2 )_k\vert \le \Vert y\Vert^2 C  \overline w_2^\epsilon \overline w_{k-2}^\epsilon.
\end{align*}
Next, for $p=2$:
\begin{align*}
 \vert (\overline y^2 )_k\vert \le \Vert y\Vert^2 \sum_{j=k-(N^\epsilon-1)}^{N^\epsilon-1}  \overline w_j^\epsilon\overline w_{k-j}^\epsilon\le \Vert y\Vert^2 C \overline w_{N^\epsilon-1}^\epsilon \overline w_{k-(N^\epsilon-1)}^\epsilon,
 \end{align*}
 using \eqref{eq:C52}. 
 
 \textit{Induction step}. The induction proceeds in two steps: We assume that the claim is true for all $l\in \mathbb N\setminus \{1\}$ and all $1\le p\le l$. We then first proof that it is true for $l+1$, $1\le p\le l$. Subsequently, we consider $p=l+1$. 
 
 We assume that \eqref{eq:ylkeps} holds true. Then by using Cauchy's product formula we find that
 \begin{align*}
  (\overline y^{l+1} )_k = \sum_{j=\max(2l,k-(N^\epsilon-1))}^{\min (k-2,l(N^\epsilon-1))} (\overline y^l )_j \overline y_{k-j}.
 \end{align*}
For $p=1$ and $k\in [2(l+1),N^\epsilon-1+2l]$, we find 
\begin{align*}
   \vert (\overline y^{l+1} )_k\vert &\le\Vert y\Vert^{l+1} C^{l-1} ( \overline w_2^\epsilon)^{l-1} \sum_{j=2l}^{k-2}  \overline w_{j-2(l-1)}^\epsilon \overline w_{k-j}^\epsilon\\
   &\le \Vert y\Vert^{l+1} C^{l-1} ( \overline w_2^\epsilon)^{l-1} \sum_{j=2}^{k-2l} \overline w_{j}^\epsilon  \overline w_{(k-2(l-1))-j}^\epsilon\\
   &\le \Vert y\Vert^{l+1} C^{l} ( \overline w_2^\epsilon)^{l} \overline w_{k-2l}^\epsilon,
\end{align*}
using \eqref{eq:C51}, 
which proves \eqref{eq:ylkeps} with $l\rightarrow l+1$ and $p=1$. Next, for $2\le p\le l$, we find completely analogously that
 \begin{align*}
 \vert (\overline y^{l+1} )_k\vert &\le  \sum_{j=k-(N^\epsilon-1)}^{k-2}\vert (\overline y^l )_j\vert \vert \overline y_{k-j}\vert \\
 &\le \sum_{j=k-(N^\epsilon-1)}^{(p-1)(N^\epsilon-1)+2(l-p+1)} \vert (\overline y^l )_j\vert \vert \overline y_{k-j}\vert +\sum_{j=(p-1)(N^\epsilon-1)+2(l-p+1)}^{k-2} \vert (\overline y^l )_j\vert \vert \overline y_{k-j}\vert,
 \end{align*}
 for $$k\in [(p-1)(N^\epsilon-1)+2(l-p+1),p(N^\epsilon-1)+2(l-p+1)].$$ Therefore by \eqref{eq:ylkeps} (for $(l,p)$ and $(l,p)\rightarrow (l,p-1)$):
 \begin{align*}
  \vert (\overline y^{l+1} )_k\vert &\le \Vert \overline y\Vert^{l+1} \begin{pmatrix} 
  l-1 \\
        p-2                                                         \end{pmatrix}C^{l-1}( \overline w_2^\epsilon)^{l-p+1} (\overline w_{N^\epsilon-1}^\epsilon)^{p-2} \\
        &\times \sum_{j=k-(N^\epsilon-1)}^{(p-1)(N^\epsilon-1)+2(l-p+1)}\overline w_{j-(p-2)(N^\epsilon-1)-2(l-p+1)}^\epsilon \overline w_{k-j}^\epsilon\\
        &+\Vert y\Vert^{l+1} \begin{pmatrix} 
  l-1 \\
        p -1                                                        \end{pmatrix}C^{l-1}( \overline w_2^\epsilon)^{l-p} (\overline w_{N^\epsilon-1}^\epsilon)^{p-1} \\
        &\times \sum_{j=(p-1)(N^\epsilon-1)+2(l-p+1)}^{k-2} \overline w^\epsilon_{j-(p-1)(N^\epsilon-1)-2(l-p)} \overline w^\epsilon_{k-j}\\
       &\le \Vert y\Vert^{l+1}  C^{l} ( \overline w_2^\epsilon)^{l-p+1} (\overline w_{N^\epsilon-1}^\epsilon)^{p-1} \left(\begin{pmatrix} 
  l-1 \\
        p-2                                                         \end{pmatrix}+\begin{pmatrix} 
  l-1 \\
        p -1                                                        \end{pmatrix}\right) \\
        &\times \overline w_{k-(p-1)(N^\epsilon-1)-2(l-p+1)}^\epsilon,
\end{align*}
using \eqref{eq:C51} and \eqref{eq:C52} to estimate the two sums. Then as
\begin{align}
\begin{pmatrix} 
  l-1 \\
        p-2                                                         \end{pmatrix}+\begin{pmatrix} 
  l-1 \\
        p  -1                                                       \end{pmatrix}=\begin{pmatrix} 
  l \\
        p    -1                                                   \end{pmatrix}\end{align}
        the claim follows.

        We are left with proving that the claim holds true for $p=l+1$ and 
        $$k\in [l(N^\epsilon-1)+2, (l+1)(N^\epsilon-1)],$$
        where
        \begin{align*}
        \vert (\overline y^{l+1} )_k\vert &\le  \sum_{j=k-(N^\epsilon-1)}^{l(N^\epsilon-1)}\vert (\overline y^l )_j\vert \vert \overline y_{k-j}\vert.
\end{align*}
By the induction assumption, we have
\begin{align*}
\vert (\overline y^l )_k\vert \le \Vert \overline y\Vert^l C^{l-1} (\overline w_{N^\epsilon-1}^\epsilon)^{l-1} \overline w_{k-(l-1)(N^\epsilon-1)}^\epsilon,
\end{align*}
for all $$k\in [(l-1)(N^\epsilon-1)+2, l(N^\epsilon-1)],$$
see \eqref{eq:ylkeps} with $p= l$. Therefore
\begin{align*}
  \vert (\overline y^{l+1} )_k\vert &\le \Vert \overline y\Vert^{l+1} C^{l-1} ( \overline w_{N^\epsilon-1}^\epsilon)^{l-1}   \sum_{j=k-(N^\epsilon-1)}^{l(N^\epsilon-1)} \overline w_{j-(l-1)(N^\epsilon-1)}^\epsilon \overline w_{k-j}^\epsilon \\
  &\le \Vert \overline y\Vert^{l+1} C^{l} (\overline w_{N^\epsilon-1}^\epsilon)^{l}\overline w_{k-l(N^\epsilon-1)}^\epsilon,
\end{align*}
using \eqref{eq:C52}. This proves \eqref{eq:ylkeps} with $l\rightarrow l+1$ and $p=l+1$ and completes the proof. 

\end{proof}

By using Lemma \ref{lemma:wkeps} item \ref{C3}, we obtain the following bound on $(\overline y^l)_k$
\begin{lemma}\label{lemma:yklNew}
Consider $\overline y(\overline x) = \sum_{k=2}^{N^\epsilon-1} \overline y_k \overline x^k$ and recall the definition of $(\overline y^l)_k$ in \eqref{eq:ylkz}. Then there is a new $C>0$ such that 
\begin{align}
\vert (\overline y^l)_k\vert\le \Vert \overline y\Vert^l C^{l-1} e^{(-2Q_4^\epsilon+P_4^\epsilon)l} e^{Q_4^\epsilon k}\quad \forall\,2l\le k\le l(N^\epsilon-1), \label{eq:ylkepsfinal}
\end{align}
for all $0< \epsilon\ll 1$.
Here $Q_4^\epsilon$ and $P_4^\epsilon$ are defined in \eqref{AepsBeps}.
\end{lemma}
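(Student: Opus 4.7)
The plan is to combine the per-piece bound \eqref{eq:ylkeps} established in the previous lemma with the exponential majorant \eqref{wkepsmajorant} and the boundary identities \eqref{eq:AepsBepsProp} from Lemma \ref{lemma:wkeps} item \ref{C3}. First I would observe that the intervals
\begin{align*}
I_p := [(p-1)(N^\epsilon-1)+2(l-p+1),\, p(N^\epsilon-1)+2(l-p)],\quad p=1,\ldots,l,
\end{align*}
tile $[2l,\,l(N^\epsilon-1)]$ (the right endpoint of $I_p$ coincides with the left endpoint of $I_{p+1}$), so it suffices to establish \eqref{eq:ylkepsfinal} separately on each $I_p$.

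Fix $p\in\{1,\ldots,l\}$ and $k\in I_p$. Substituting $\overline w_2^\epsilon = e^{P_4^\epsilon}$ and $\overline w_{N^\epsilon-1}^\epsilon = e^{Q_4^\epsilon(N^\epsilon-3)+P_4^\epsilon}$ from \eqref{eq:AepsBepsProp}, and bounding the remaining factor by \eqref{wkepsmajorant} as
\begin{align*}
\overline w_{k-(p-1)(N^\epsilon-1)-2(l-p)}^\epsilon \le \exp\bigl(Q_4^\epsilon(k-(p-1)(N^\epsilon-1)-2(l-p)-2) + P_4^\epsilon\bigr),
\end{align*}
the total $Q_4^\epsilon$-exponent becomes
\begin{align*}
(p-1)(N^\epsilon-3) + k - (p-1)(N^\epsilon-1) - 2(l-p) - 2 \;=\; -2(p-1) + k - 2(l-p) - 2 \;=\; k-2l,
\end{align*}
independently of $p$, while the $P_4^\epsilon$-exponent telescopes to $(l-p)+(p-1)+1 = l$, also independently of $p$.

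Plugging this into \eqref{eq:ylkeps} and absorbing the binomial factor $\binom{l-1}{p-1}\le 2^{l-1}$ into $C^{l-1}$ (replacing $C$ by $2C$), I obtain
\begin{align*}
\vert(\overline y^l)_k\vert \le \Vert \overline y\Vert^l (2C)^{l-1} e^{P_4^\epsilon l} e^{Q_4^\epsilon(k-2l)} = \Vert \overline y\Vert^l (2C)^{l-1} e^{(-2Q_4^\epsilon+P_4^\epsilon)l} e^{Q_4^\epsilon k},
\end{align*}
which is \eqref{eq:ylkepsfinal} after renaming the constant. There is no genuine obstacle here: the entire content of the lemma is the arithmetic cancellation $(p-1)(N^\epsilon-3)-(p-1)(N^\epsilon-1) = -2(p-1)$ that makes the bound independent of $p$, and the only substantive analytic input is the convexity-based majorant from item \ref{C3} of Lemma \ref{lemma:wkeps}.
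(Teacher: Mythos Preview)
Your proof is correct and follows essentially the same approach as the paper: you invoke the piecewise bound \eqref{eq:ylkeps}, substitute the exponential identities and majorant from Lemma \ref{lemma:wkeps} item \ref{C3}, verify the arithmetic cancellation that makes the exponent independent of $p$, and absorb the binomial factor via $\binom{l-1}{p-1}\le 2^{l-1}$. Your write-up is in fact more explicit than the paper's about the tiling by the intervals $I_p$ and the exponent computation, which makes the argument easier to follow.
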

\begin{proof}
We will use \eqref{eq:ylkeps}, repeated here for convenience:
 \begin{equation}\label{eq:ylkeps2}
 \begin{aligned}
 \vert (\overline y^l )_k\vert &\le \Vert y\Vert^{l} \begin{pmatrix} l-1 \\ p-1\end{pmatrix} C^{l-1} \underline{(\overline w_2^\epsilon)^{l-p} ( \overline w_{N^\epsilon-1}^\epsilon)^{p-1}  \overline w_{k-(p-1)(N^\epsilon-1)-2(l-p)}^\epsilon},\\
 \forall\,k&\in [(p-1)(N^\epsilon-1)+2(l-p+1),p(N^{\epsilon}-1)+2(l-p)],
 \end{aligned}
 \end{equation}
where $1\le p\le l$. Using \eqref{AepsBeps} we have 
\begin{align*}
 \overline w_2^\epsilon = e^{P_4^\epsilon}, \quad \overline w_{N^\epsilon-1}^\epsilon = e^{Q_4^\epsilon (N^\epsilon-3)+P_4^\epsilon} \quad \mbox{and}\quad \overline w_{k}^\epsilon\le e^{Q_4^\epsilon(k-2)+P_4^\epsilon}\quad \forall\, 2\le k \le N^\epsilon+1,
\end{align*}
and we can therefore estimate the underlined factor in \eqref{eq:ylkeps2} as follows:
\begin{align*}
 &\underline{( \overline w_2^\epsilon)^{l-p} ( \overline w_{N^\epsilon-1}^\epsilon)^{p-1}  \overline w_{k-(p-1)(N^\epsilon-1)-2(l-p)}^\epsilon}\le  e^{P_4^\epsilon (l-p)}e^{(Q_4^\epsilon (N^\epsilon-3)+P_4^\epsilon)(p-1)} e^{(Q_4^\epsilon (k-\{\cdots\}-2)+P_4^\epsilon)},
\end{align*}
where $\{\cdots\}=(p-1)(N^\epsilon-1)+2(l-p)$. By simplifying, we obtain
\begin{align}
 \underline{( \overline w_2^\epsilon)^{l-p} ( \overline w_{N^\epsilon-1}^\epsilon)^{p-1}  \overline w_{k-(p-1)(N^\epsilon-1)-2(l-p)}^\epsilon} \le e^{(-2Q_4^\epsilon+P_4^\epsilon)l} e^{Q_4^\epsilon k}.\label{eq:underlineest}
\end{align}
Subsequently, we use  
\begin{align} \label{eq:binomest}
 \begin{pmatrix}
  l-1\\
  p-1
 \end{pmatrix}\le \sum_{q=0}^{l-1}  \begin{pmatrix}
  l-1\\
  q
 \end{pmatrix} = 2^{l-1},
\end{align}
for all $1\le p \le l$. Therefore \eqref{eq:ylkepsfinal} follows from \eqref{eq:ylkeps2}, \eqref{eq:underlineest} and \eqref{eq:binomest}. 
\end{proof}

%
 
 
 \begin{lemma}\label{lemma:gkestp1}
   Recall the definition of the semi-norm $\Vert \cdot \Vert$ in \eqref{eq:seminorm} and suppose that $\Vert \overline y\Vert\le C$ with $C>0$. Then there is a $\overline K=\overline K(C)>0$, independent of $\mu$ and $\epsilon$, such that 
   \begin{align*}
   \begin{cases} \vert  \epsilon \rsp{\overline{\mathcal G}^\epsilon[\overline y]_2}\vert &\le B \rho^{-2} \epsilon,\\
    \vert \epsilon \rsp{\overline{\mathcal G}^\epsilon[\overline y]_3}\vert &\le B \rho^{-3} \epsilon^2,\\
\vert \epsilon \rsp{\overline{\mathcal G}^\epsilon[\overline y]_k}\vert &\le B \rho^{-k} \epsilon^{k-1}+\mu  \epsilon^2 \overline K  \overline w_{k-2}^\epsilon\quad \forall\,4\le k\le N^\epsilon+1,
\end{cases}
   \end{align*}
   for all $0<\epsilon\ll 1$.
 \end{lemma}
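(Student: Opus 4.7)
The plan is to treat $k=2,3$ immediately from the explicit formulas in \eqref{gk0eps}: since $\overline{\mathcal G}^\epsilon[\overline y]_2 = f^\epsilon_2$ and $\overline{\mathcal G}^\epsilon[\overline y]_3 = f^\epsilon_3\epsilon$, the first two bounds follow at once from $|f^\epsilon_k|\le B\rho^{-k}$ in \eqref{eq:gkcond0}. For $k\ge 4$, I would again use \eqref{gk0eps} to split $\epsilon\,\overline{\mathcal G}^\epsilon[\overline y]_k = f^\epsilon_k\epsilon^{k-1} + \mu\,\epsilon\,\overline{\mathcal H}^\epsilon[\overline y]_k$, so the first summand already supplies $B\rho^{-k}\epsilon^{k-1}$. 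It then suffices to establish $|\epsilon\,\overline{\mathcal H}^\epsilon[\overline y]_k|\le \overline K\,\epsilon^2\,\overline w^\epsilon_{k-2}$, which via \eqref{gk12eps} splits into the linear-in-$\overline y$ sum $\sum_{j=2}^{k-2}h^\epsilon_{k-j,1}\epsilon^{k-j-1}\overline y_j$ and the nonlinear sum $\sum_{l\ge 2}\sum_{j=2l}^{k-1}h^\epsilon_{k-j,l}\epsilon^{k-j+l-2}(\overline y^l)_j$.

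For the linear piece I would combine $|\overline y_j|\le C\,\overline w^\epsilon_j$, the coefficient bound $|h^\epsilon_{k-j,1}|\le \rho^{-(k-j+1)}$, and the algebraic identity $\rho^{-(k-j+1)}\epsilon^{k-j-1}=\rho^{-3}\epsilon\,(\rho^{-1}\epsilon)^{k-2-j}$ to pull out the overall scale, reducing the estimate to $C\rho^{-3}\epsilon\sum_{j=2}^{k-2}(\rho^{-1}\epsilon)^{k-2-j}\overline w^\epsilon_j$. Lemma \ref{lemma:wkeps} item \ref{est1} (applied with $\xi=\rho$) then bounds this by $C'\rho^{-3}\epsilon\,\overline w^\epsilon_{k-2}$, and multiplying by the outer $\epsilon$ gives exactly the scaling $C''\epsilon^2\overline w^\epsilon_{k-2}$.

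The nonlinear piece is the substantive step. I would invoke \eqref{eq:ylkpEq1} to bound $|(\overline y^l)_j|\le C^{l-1}\Vert\overline y\Vert^l(\overline w^\epsilon_2)^{l-1}\overline w^\epsilon_{j-2(l-1)}$, then substitute $i=j-2(l-1)$ and $K=k-2(l-1)$. The resulting inner sum $\sum_{i=2}^{K-1}(\rho^{-1}\epsilon)^{K-i}\overline w^\epsilon_i$ has exponents shifted by one relative to the format of Lemma \ref{lemma:wkeps} item \ref{est1}, so I would factor out a single $\rho^{-1}\epsilon$ and then apply item \ref{est1} (with $k$ replaced by $K+1$ in the statement of that item) to get a bound of the form $C\rho^{-1}\epsilon\,\overline w^\epsilon_{K-1}=C\rho^{-1}\epsilon\,\overline w^\epsilon_{k-2l+1}$. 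Reassembling the powers of $\rho^{-1}\epsilon$ in the outer sum over $l$ in the form $(C\Vert\overline y\Vert\rho^{-1}\epsilon)^{l-2}$, the remaining sum matches Lemma \ref{lemma:wkeps} item \ref{est2} with $k$ replaced by $k-1$ and $\xi=\rho/(C\Vert\overline y\Vert)$, producing a bound of order $\overline C\,\overline w^\epsilon_2\,\overline w^\epsilon_{k-3}$.

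The main obstacle is converting this into the desired $\epsilon^2\overline w^\epsilon_{k-2}$ form after the final multiplication by $\epsilon$. I would use Lemma \ref{lemma:wkeps} item \ref{C0} to write $\overline w^\epsilon_2\le C_1\epsilon$, and then the elementary ratio identity
\begin{equation*}
\frac{\epsilon\,\overline w^\epsilon_{k-3}}{\overline w^\epsilon_{k-2}}=\frac{1-(k-2)\epsilon}{k-3+a^\epsilon}\le \frac{1}{2+a^\epsilon},
\end{equation*}
valid uniformly on $5\le k\le N^\epsilon+1$ thanks to \rsp{Hypothesis} \ref{assa}, since $a^0>-2$ forces $2+a^\epsilon$ to stay bounded away from zero for $0<\epsilon\ll 1$ (and the nonlinear sum is empty for $k=4$, so $k\ge 5$ is enough). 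Collecting all constants, this yields $|\epsilon\,\overline{\mathcal H}^\epsilon[\overline y]_k|\le \overline K\,\epsilon^2\,\overline w^\epsilon_{k-2}$ with $\overline K=\overline K(C)$ depending only on $C$, $a^0$, $B$, $\rho$ and the constants from Lemma \ref{lemma:wkeps}, hence independent of $\mu$ and $\epsilon$, as required.
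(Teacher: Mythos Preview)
Your proof is correct and follows essentially the same route as the paper: split via \eqref{gk0eps}, bound the linear-in-$\overline y$ part using \eqref{gk12eps} together with item \ref{est1} of Lemma \ref{lemma:wkeps}, and bound the nonlinear part via \eqref{eq:ylkpEq1} followed by items \ref{est1} and \ref{est2}. The only difference is cosmetic: where you keep the inner $j$-sum up to $k-1$, factor out a single $\rho^{-1}\epsilon$, and then invoke the ratio $\epsilon\,\overline w^\epsilon_{k-3}/\overline w^\epsilon_{k-2}\le (2+a^\epsilon)^{-1}$ at the end, the paper instead extends the inner sum by one term (to $j=k$) so that item \ref{est1} applies directly with output $\overline w^\epsilon_{k-2(l-1)}$ and item \ref{est2} then yields $\overline w^\epsilon_2\overline w^\epsilon_{k-2}$ without any further index conversion. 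Both arguments are valid; the paper's extension saves one step.
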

 \begin{proof}
 We use \eqref{gk0eps}:
 $$\vert \rsp{\overline{\mathcal G}^\epsilon[y]_k}\vert \le \vert \overline f_k^\epsilon\vert  +\mu \vert \rsp{\overline{\mathcal H}^\epsilon[y]_k}\vert \quad \forall\,k\ge 2. $$
 The first term on the right hand side is directly estimated by \eqref{eq:gkcond0}:
 \begin{align*}
  \vert \overline f^\epsilon_k\vert\le B \rho^{-k}\epsilon^{k-2},
 \end{align*}
for all $k\in \mathbb N\setminus\{1\}$. We therefore focus on the second term, which vanishes for $k=2$ and $k=3$. 
By using \eqref{eq:gkcond0}, \eqref{gk12eps}, $$\vert \overline y_j\vert \le \Vert \overline y\Vert \overline w_j^\epsilon \quad \forall\,j\in [2,N^\epsilon-1],$$ and \eqref{eq:ylkpEq1}, we obtain
 \begin{align*}
 \vert \rsp{ \overline{\mathcal H}^\epsilon[y]_k}\vert &\le   \sum_{j=2}^{k-2} \rho^{-k+j-1}\epsilon^{k-j-1}  \Vert \overline y\Vert \overline w_j^\epsilon\\
 &+\sum_{l=2}^{\lfloor \frac{k}{2}\rfloor } \sum_{j={2l}}^k \rho^{-k+j-l} \epsilon^{k-j+l-2}  \Vert \overline y\Vert^l C^{l-1} ( \overline w_2^\epsilon)^{l-1}  \overline w_{j-2(l-1)}^\epsilon\\
 &=  \Vert \overline y\Vert \rho^{-1} \epsilon \sum_{j=2}^{k-2} (\rho^{-1}\epsilon)^{k-2-j}  \overline w_j^\epsilon\\
 &+\sum_{l=2}^{\lfloor \frac{k}{2}\rfloor } \rho^{-l} \epsilon^{l-2} \Vert \overline y\Vert^l C^{l-1} (\overline w_2^\epsilon)^{l-1} \sum_{j={2}}^{k-2(l-1)} (\rho^{-1} \epsilon)^{k-2(l-1)-j}   \overline w_{j}^\epsilon,
 \end{align*}
 for all $4\le k\le N^\epsilon+1$. We now use \eqref{eq:est1} and \eqref{eq:est2}, respectively:
 \begin{align*}
 \vert \rsp{\overline{\mathcal H}^\epsilon[y]_k}\vert & \le \Vert \overline y\Vert \rho^{-1} \epsilon C \overline w_{k-2}^\epsilon\\
 &+ \sum_{l=2}^{\lfloor \frac{k}{2}\rfloor } \rho^{-l} \epsilon^{l-2} \Vert \overline y\Vert^l C^{l} ( \overline w_2^\epsilon)^{l-1} 
 \overline w_{k-2(l-1)}^\epsilon\\
 &\le  \Vert \overline y\Vert \rho^{-1} \epsilon C  \overline w_{k-2}^\epsilon\\
 &+\rho^{-2} \Vert \overline y\Vert^{2} C^{2} \sum_{l=2}^{\lfloor \frac{k}{2}\rfloor }   \left(\rho^{-1} \Vert \overline y\Vert C \epsilon\right)^{l-2}( \overline w_2^\epsilon)^{l-1} 
 \overline w_{k-2(l-1)}^\epsilon\\
 &\le   \overline K \epsilon \overline w_{k-2}^\epsilon,
 \end{align*}
with $\overline K=\overline K(\Vert y\Vert,a^0,\rho)>0$ large enough. Here we have used that $ \overline w_2^\epsilon=\mathcal O(\epsilon)$ cf. Lemma \ref{lemma:wkeps} item \ref{C0}.

 \end{proof}

 This leads to the following important estimate:

\begin{lemma} \label{lemma:sumquantity}
Fix $C>0$ and define
$$F = B\sum_{j=2}^{\infty} \frac{ \rho^{-j}}{\Gamma(j+a^0)}.$$
Then the following holds for all $0\le \mu<\mu_0$ with $\mu_0>0$ small enough: 
\begin{align*}
 \left| \sum_{j=2}^k \frac{(-1)^j \epsilon \rsp{\overline{\mathcal G}^\epsilon[\overline y]_j} }{\overline w^\epsilon_j (1-\epsilon j)}\right| \le 2F\quad \forall\,2\le k \le N^\epsilon,\, \Vert \overline y\Vert\le C,
 \end{align*}
for all $0<\epsilon\ll 1$.

   \end{lemma}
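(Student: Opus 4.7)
The plan is to dominate the absolute value of the sum term-by-term by two pieces: one coming from the pure forcing $f^\epsilon$ (which will converge to $F$ as $\epsilon\to 0$) and one coming from the $\mu$-nonlinearity (which will be $O(\mu)$ uniformly in $\epsilon$). I will use \lemmaref{lemma:gkestp1} to split $|\epsilon \overline{\mathcal G}^\epsilon[\overline y]_j|$ into the forcing bound $B\rho^{-j}\epsilon^{j-1}$ and the nonlinear bound $\mu\epsilon^2\overline K\,\overline w_{j-2}^\epsilon$, and then estimate the denominator $\overline w_j^\epsilon(1-\epsilon j)$ for each of the two pieces using the results of \lemmaref{lemma:wkeps}.

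For the forcing contribution, the clean lower bound in item \ref{C2} of \lemmaref{lemma:wkeps}, namely $\overline w_j^\epsilon(1-\epsilon j)\ge \Gamma(j+a^\epsilon)\epsilon^{j-1}$ for $2\le j\le N^\epsilon+1$, yields
\begin{align*}
\sum_{j=2}^k \frac{B\rho^{-j}\epsilon^{j-1}}{\overline w_j^\epsilon(1-\epsilon j)}\le \sum_{j=2}^\infty \frac{B\rho^{-j}}{\Gamma(j+a^\epsilon)}.
\end{align*}
Since $a^\epsilon\to a^0$ and the series is majorized by a convergent one for $\epsilon$ small, this tends to $F$ as $\epsilon\to 0$ and is bounded by, say, $\tfrac{3F}{2}$ for $\epsilon$ sufficiently small.

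For the nonlinear contribution, the cleanest bound comes from iterating the recursion $(1-\epsilon j)\overline w_j^\epsilon=\epsilon(j-1+a^\epsilon)\overline w_{j-1}^\epsilon$ once more, which gives
\begin{align*}
\frac{\epsilon^2\overline w_{j-2}^\epsilon}{\overline w_j^\epsilon(1-\epsilon j)} = \frac{1-\epsilon(j-1)}{(j-1+a^\epsilon)(j-2+a^\epsilon)}.
\end{align*}
For $4\le j\le N^\epsilon+1$ the numerator is nonnegative and at most $1$, while $a^0>-2$ (\rsp{Hypothesis} \ref{assa}) guarantees that the denominator is $\ge c(j-1)(j-2)$ for some $c=c(a^0)>0$ once $\epsilon$ is small. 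Hence
\begin{align*}
\sum_{j=4}^{N^\epsilon}\frac{\mu\epsilon^2\overline K\,\overline w_{j-2}^\epsilon}{\overline w_j^\epsilon(1-\epsilon j)}\le \mu\overline K\sum_{j=4}^\infty\frac{1}{(j-1+a^\epsilon)(j-2+a^\epsilon)}\le \mu C'',
\end{align*}
with $C''=C''(a^0,C)$ finite and independent of $\epsilon$.

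Combining the two estimates, the whole sum in question is bounded by $\tfrac{3F}{2}+\mu C''$. Choosing $\mu_0>0$ so small that $\mu_0 C''\le \tfrac{F}{2}$ gives the claimed bound $2F$ for all $0<\epsilon\ll 1$. I expect the main (very minor) obstacle to be the bookkeeping at $j=2,3$: these contribute only the pure forcing terms $B\rho^{-2}\epsilon$ and $B\rho^{-3}\epsilon^2$ from \lemmaref{lemma:gkestp1}, which are handled by the same forcing estimate above and vanish as $\epsilon\to 0$; no $\mu$-term appears there, so the nonlinear sum can be started safely at $j=4$ where $(j-2+a^\epsilon)>0$ is guaranteed by $a^0>-2$.
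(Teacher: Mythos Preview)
Your proof is correct and follows essentially the same approach as the paper's: split via \lemmaref{lemma:gkestp1}, bound the forcing piece using item~\ref{C2} of \lemmaref{lemma:wkeps} to get $\sum_{j\ge 2} B\rho^{-j}/\Gamma(j+a^\epsilon)\to F$, and bound the nonlinear piece via the identity $\epsilon^2\overline w_{j-2}^\epsilon/(\overline w_j^\epsilon(1-\epsilon j))=(1-\epsilon(j-1))/((j-1+a^\epsilon)(j-2+a^\epsilon))$, which the paper obtains by directly computing $\overline w_{j-2}^\epsilon/\overline w_j^\epsilon$ from the definition rather than iterating the recursion. The bookkeeping at $j=2,3$ and the final choice of $\mu_0$ are handled just as you describe.
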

\begin{proof}
  Let $\overline K=\overline K(C)>0$ be the constant in Lemma \ref{lemma:gkestp1}.
   We then estimate
   \begin{align*}
   \left| \sum_{j=2}^k \frac{(-1)^j \epsilon \rsp{\overline{\mathcal G}^\epsilon[\overline y]_j} }{\overline w^\epsilon_j (1-\epsilon j)}\right|&\le  B\sum_{j=2}^{N^\epsilon} \frac{ \rho^{-j}\epsilon^{j-1}}{\overline w_j^\epsilon (1-\epsilon j))}+\mu \overline K \sum_{j=4}^{N^\epsilon} \frac{\epsilon^2 \overline w_{j-2}^\epsilon}{\overline w_j^\epsilon(1-\epsilon j)},
   \end{align*}
   for all $2\le k\le N^\epsilon$
   using Lemma \ref{lemma:gkestp1}. Then by the definition of $\overline w_k^\epsilon$ \eqref{barwkeps} and \eqref{eq:Gamma}, we have
   \begin{align*}
    \frac{\overline w_{j-2}^\epsilon}{\overline w_j^\epsilon}&=\frac{\Gamma(\epsilon^{-1} -j+2)\Gamma(j-2+a^\epsilon)}{\Gamma(\epsilon^{-1}-j) \Gamma(j+a^\epsilon)}\\
    &=\frac{(\epsilon^{-1} -j+1)(\epsilon^{-1}-j)}{(j-1+a^\epsilon)(j-2+a^\epsilon)}.
   \end{align*}
Therefore by Lemma \ref{lemma:wkeps} item \ref{C2} we find that  
\begin{equation}\label{eq:gkestp1}
\begin{aligned}
 \left| \sum_{j=2}^k \frac{(-1)^j \epsilon \rsp{\overline{\mathcal G}^\epsilon[\overline y]_j}}{\overline w^\epsilon_j (1-\epsilon j)}\right|&\le 
B\sum_{j=2}^{N^\epsilon} \frac{ \rho^{-j}}{\Gamma(j+a^\epsilon)}+\mu \overline K \sum_{j=4}^{N^\epsilon} \frac{(1-\epsilon (j-1))(1-\epsilon j)}{(j-1+a^\epsilon)(j-2+a^\epsilon)(1-\epsilon j)}\\
&\le 
B\sum_{j=2}^{\infty} \frac{ \rho^{-j}}{\Gamma(j+a^\epsilon)}+\mu \overline K \sum_{j=4}^{\infty} \frac{1}{(j-1+a^\epsilon)(j-2+a^\epsilon)}.
\end{aligned}
\end{equation}
The result now follows.
   \end{proof}

Following Lemma \ref{lemma:barSkeps}, we have that $\overline y = \overline m^\epsilon(\overline x)$ (as a power series) is a fixed-point of the nonlinear operator $\rsp{\mathcal P^\epsilon}$ defined by 
\begin{align}\label{eq:Tepsdefn}
 \rsp{\mathcal P^\epsilon} \left( \overline y\right) = \sum_{k=2}^\infty (-1)^k  \overline w_k^\epsilon \sum_{j=2}^k \frac{(-1)^j \epsilon \rsp{ \overline{\mathcal G}^\epsilon[\overline y]_j} }{\overline w_j^\epsilon (1-\epsilon j))} x^k.
\end{align} 
By Lemma \ref{lemma:gkestp1} and Lemma \ref{lemma:sumquantity}, we have that there is a $\mu_0>0$ such that for all $0\le \mu<\mu_0$ the following estimate holds:
\begin{align*}
 \Vert \rsp{\mathcal P^\epsilon}(\overline y)\Vert \le 2F\quad \forall \,\Vert \overline y\Vert\le 2F,\,0<\epsilon\ll 1,
\end{align*}
 with respect to the semi-norm $\Vert \cdot \Vert$ (that only involves the finite sum) defined in \eqref{eq:seminorm}. \KUK{Then by proceeding as in Remark \ref{rem:ref2} (using induction on $k$), we directly obtain the following:}
\begin{proposition}\label{prop:seminormmeps}
There is a $\mu_0>0$, such that for all $0\le \mu<\mu_0$ the following holds true:
\begin{enumerate}
\item The analytic weak-stable manifold satisfies the following estimate
\begin{align*}
 \Vert \overline m^\epsilon \Vert \le 2F,
\end{align*}
for all $0<\epsilon\ll 1$.
\item The numbers
\begin{align*}
 \overline S_k^\epsilon:=\sum_{j=2}^k \frac{(-1)^j  \epsilon\rsp{\overline{\mathcal G}^\epsilon[\overline m^\epsilon]_j} }{ \overline w^\epsilon_j (1-\epsilon j)},\quad 2\le k\le N^\epsilon,
\end{align*}
are uniformly bounded with respect to $0<\epsilon\ll 1$, $\epsilon^{-1}\notin \mathbb N$.
\end{enumerate}
\end{proposition}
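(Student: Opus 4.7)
The two items are essentially equivalent. Indeed, by Lemma~\ref{lemma:barSkeps} we have $\overline m_k^\epsilon=(-1)^k\overline w_k^\epsilon\overline S_k^\epsilon$, so the bound $|\overline S_k^\epsilon|\le 2F$ for all $2\le k\le N^\epsilon$ is equivalent to $|\overline m_k^\epsilon|/\overline w_k^\epsilon\le 2F$ on the same range of indices, which is the meaning of $\Vert\overline m^\epsilon\Vert\le 2F$ in the semi-norm \eqref{eq:seminorm}. The plan is to establish the uniform bound on $\overline S_k^\epsilon$ by induction on $k$, along the lines of Remark~\ref{rem:ref2}. Before starting, fix $C=2F$ in Lemma~\ref{lemma:sumquantity} and let $\overline K=\overline K(2F)$ be the associated constant from Lemma~\ref{lemma:gkestp1}. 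This determines a $\mu_0>0$ so small that the conclusion of Lemma~\ref{lemma:sumquantity} applies with this $C$ for every $0\le\mu<\mu_0$ and every $0<\epsilon\ll 1$.

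The base cases $k=2$ and $k=3$ are direct: by \eqref{gk0eps} we have $\epsilon\overline{\mathcal G}^\epsilon[\overline y]_2=\epsilon f_2^\epsilon$ and $\epsilon\overline{\mathcal G}^\epsilon[\overline y]_3=\epsilon^2 f_3^\epsilon$, both independent of $\overline y$. Using Lemma~\ref{lemma:wkeps}~item~\ref{C2} to bound $\overline w_j^\epsilon(1-\epsilon j)$ from below by $\Gamma(j+a^\epsilon)\epsilon^{j-1}$ gives $|\overline S_2^\epsilon|,|\overline S_3^\epsilon|\le F<2F$. For the inductive step, assume $|\overline m_j^\epsilon|/\overline w_j^\epsilon\le 2F$ for all $2\le j\le k$ with $3\le k\le N^\epsilon-1$, and introduce the truncated formal series
\begin{equation*}
\overline y(\overline x):=\sum_{j=2}^{k}\overline m_j^\epsilon\,\overline x^j.
\end{equation*}
By the inductive hypothesis $\Vert\overline y\Vert\le 2F=C$. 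Now Lemma~\ref{lemma:barSkeps} says that $\overline S_{k+1}^\epsilon$ depends only on $\overline m_2^\epsilon,\dots,\overline m_{k-1}^\epsilon$, and more generally each coefficient $\overline{\mathcal G}^\epsilon[\cdot]_j$ for $j\le k+1$ depends only on the coefficients of its argument up to index $j-2\le k-1$, cf.\ \eqref{gk12eps} and \eqref{eq:ylkd}. Consequently $\overline{\mathcal G}^\epsilon[\overline y]_j=\overline{\mathcal G}^\epsilon[\overline m^\epsilon]_j$ for all $2\le j\le k+1$, so Lemma~\ref{lemma:sumquantity} applied to $\overline y$ yields
\begin{equation*}
|\overline S_{k+1}^\epsilon|=\left|\sum_{j=2}^{k+1}\frac{(-1)^j\epsilon\,\overline{\mathcal G}^\epsilon[\overline y]_j}{\overline w_j^\epsilon(1-\epsilon j)}\right|\le 2F,
\end{equation*}
giving $|\overline m_{k+1}^\epsilon|/\overline w_{k+1}^\epsilon\le 2F$ and closing the induction. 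Item~2 is then immediate from $\overline S_k^\epsilon=(-1)^k\overline m_k^\epsilon/\overline w_k^\epsilon$ together with item~1.

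The only real point requiring care is that Lemma~\ref{lemma:sumquantity} is stated for the full semi-norm $\Vert\cdot\Vert$, whereas at step $k$ we only control the first $k$ coefficients of $\overline m^\epsilon$. Passing to the truncated series $\overline y$ is what makes the induction close, and is justified precisely by the triangular structure of the recursion from Lemma~\ref{lemma:barSkeps}. All remaining bookkeeping—the choice of $\mu_0$, the independence of constants from $\epsilon$, and the restriction $2\le k\le N^\epsilon-1$ built into the semi-norm \eqref{eq:seminorm}—has already been absorbed into Lemmas~\ref{lemma:gkestp1} and \ref{lemma:sumquantity}.
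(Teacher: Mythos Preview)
Your proof is correct and follows exactly the approach the paper indicates: the paper's proof consists of the single sentence ``Then by proceeding as in Remark~\ref{rem:ref2} (using induction on $k$), we directly obtain the following,'' and your induction via the truncated series $\overline y=\sum_{j=2}^k\overline m_j^\epsilon\,\overline x^j$ together with Lemma~\ref{lemma:sumquantity} is precisely that argument spelled out. One very minor remark: your closing sentence derives item~2 from item~1, but the semi-norm \eqref{eq:seminorm} only runs up to $N^\epsilon-1$ while item~2 requires $k\le N^\epsilon$; this is harmless since your induction step at $k=N^\epsilon-1$ already yields $|\overline S_{N^\epsilon}^\epsilon|\le 2F$ directly.
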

 

\begin{lemma}\label{lemma:SNepslimit}
Let $0\le \mu<\mu_0$ with $\mu_0>0$ small enough so that Proposition \ref{prop:seminormmeps} applies and so that the series $S_\infty^0$ from Lemma \ref{lemma:S0inf} is well-defined and absolutely convergent. Then $$\overline S^\epsilon_{N^\epsilon}\rightarrow S^0_\infty \quad \mbox{for}\quad N^{\epsilon}\rightarrow  \infty.$$
\end{lemma}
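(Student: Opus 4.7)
The strategy is a standard dominated-convergence / $\eta/3$-argument: split
\begin{align*}
 \overline S^\epsilon_{N^\epsilon} - S^0_\infty &= \sum_{j=2}^{K}\!\left(\frac{(-1)^j \epsilon\, \overline{\mathcal G}^\epsilon[\overline m^\epsilon]_j}{\overline w_j^\epsilon(1-\epsilon j)} - \frac{(-1)^j \mathcal G^0[\widehat m^0]_j}{w_j^0}\right) \\
 &\qquad + \sum_{j=K+1}^{N^\epsilon}\! \frac{(-1)^j \epsilon\, \overline{\mathcal G}^\epsilon[\overline m^\epsilon]_j}{\overline w_j^\epsilon(1-\epsilon j)} \;-\; \sum_{j=K+1}^{\infty} \frac{(-1)^j \mathcal G^0[\widehat m^0]_j}{w_j^0},
\end{align*}
choose $K$ so that the two tails are each bounded by $\eta/3$ uniformly in $\epsilon$, and then take $\epsilon>0$ so small that the first (finite) sum is bounded by $\eta/3$ as well.

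\textbf{Termwise convergence for fixed $j$.} For each fixed $j$ I would combine three facts. First, by Lemma \ref{lemma:wkeps} item \ref{C0}, $\overline w_j^\epsilon = \epsilon^{j-1}(1+o(1))\Gamma(j+a^\epsilon)$ and $1-\epsilon j\to 1$, together with $a^\epsilon\to a^0$. Second, by the scaling $\overline m_k^\epsilon = \epsilon^{k-1} m_k^\epsilon$ of \eqref{eq:mkeps} and Lemma \ref{convmbarkeps}, we have $m_k^\epsilon\to m_k^0$ for each fixed $k$. Third, the composition rule \eqref{gk12eps} combined with the scaling \eqref{overlineg} shows that the $j$-th coefficient $\overline{\mathcal G}^\epsilon[\overline m^\epsilon]_j$ equals $\epsilon^{j-2}$ times a polynomial in $\overline m_2^\epsilon/\epsilon,\ldots,\overline m_{j-2}^\epsilon/\epsilon^{j-3}$ with coefficients depending continuously on $\epsilon$ (via $f_{\cdot,\cdot}^\epsilon$ and $h_{\cdot,\cdot}^\epsilon$). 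Only finitely many $m_k^\epsilon$ enter by \eqref{eq:ylkd}, so no uniform-convergence issue arises, and we obtain
\begin{align*}
 \frac{\epsilon\, \overline{\mathcal G}^\epsilon[\overline m^\epsilon]_j}{\overline w_j^\epsilon(1-\epsilon j)} \;\longrightarrow\; \frac{\mathcal G^0[\widehat m^0]_j}{w_j^0}\quad\text{as}\quad \epsilon\to 0.
\end{align*}

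\textbf{Uniform summable domination.} This is the crucial ingredient and is essentially proved already inside Lemma \ref{lemma:sumquantity}. By Proposition \ref{prop:seminormmeps}, $\Vert \overline m^\epsilon\Vert\le 2F$, so Lemma \ref{lemma:gkestp1} applies with $\overline y=\overline m^\epsilon$. Combined with the lower bound of Lemma \ref{lemma:wkeps} item \ref{C2} and the identity $\overline w_{j-2}^\epsilon/\overline w_j^\epsilon = (\epsilon^{-1}-j+1)(\epsilon^{-1}-j)/[(j-1+a^\epsilon)(j-2+a^\epsilon)]$ (as in the derivation of \eqref{eq:gkestp1}), one obtains
\begin{align*}
 \left|\frac{\epsilon\, \overline{\mathcal G}^\epsilon[\overline m^\epsilon]_j}{\overline w_j^\epsilon(1-\epsilon j)}\right| \;\le\; \frac{B\rho^{-j}}{\Gamma(j+a^\epsilon)} \;+\; \frac{\mu\,\overline K}{(j-1+a^\epsilon)(j-2+a^\epsilon)},
\end{align*}
uniformly in $2\le j\le N^\epsilon$ and $0<\epsilon\ll 1$. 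Since $a^0>-2$, the right-hand side is a convergent majorant, so the two tail sums above can be made smaller than $\eta/3$ by choosing $K$ large, independently of $\epsilon$. The same bound (in the limit $\epsilon=0$) controls the $S_\infty^0$-tail. With $K$ fixed, the termwise-convergence step gives the finite sum $\le \eta/3$ for $\epsilon$ small, completing the argument.

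\textbf{Main obstacle.} The only delicate point is ensuring that the $j$-dependent bound is summable \emph{uniformly} in $\epsilon$ while $j$ is allowed to approach $N^\epsilon\to\infty$; this is exactly what Lemma \ref{lemma:wkeps} item \ref{C2} buys us, since it gives $\overline w_j^\epsilon(1-\epsilon j)\ge \Gamma(j+a^\epsilon)\epsilon^{j-1}$ with a right-hand side that does \textbf{not} degenerate as $j\to N^\epsilon$. All remaining steps are routine.
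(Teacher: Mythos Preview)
Your proposal is correct and follows essentially the same approach as the paper: the identical three-term split (with cutoff $J$ in place of your $K$), termwise convergence via Lemma~\ref{convmbarkeps} and Lemma~\ref{lemma:wkeps} item~\ref{C0}, and uniform tail control via precisely the estimate \eqref{eq:gkestp1} obtained from Lemma~\ref{lemma:gkestp1} and Lemma~\ref{lemma:wkeps} item~\ref{C2}. The paper additionally cites Proposition~\ref{prop:boundgk0} (specifically \eqref{eq:boundgk0}) for the $S_\infty^0$-tail bound, which you subsume into ``the same bound in the limit $\epsilon=0$''; this is accurate.
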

\begin{proof}
The proof is elementary, but since this result is crucial to the whole construction, we provide the full details:

For simplicity, we write $$\rsp{\overline{\mathcal G}^\epsilon[\overline m^\epsilon]_j=:\overline{\mathcal G}_j^\epsilon,\quad \mathcal G^0[\widehat m^0]_j= :\mathcal G_j^0},$$
in the following. 
 By Lemma \ref{lemma:sumquantity}, $\vert \overline S^\epsilon_{N^\epsilon}\vert\le 2F$.
Moreover, 
 $S^0_\infty=\sum_{j=2}^\infty \frac{(-1)^j \rsp{\mathcal G^0_j}}{w_j^0}$ is absolutely convergent, recall Lemma \ref{lemma:S0inf}.

 For fixed $j$ we have (recall item \ref{C0} of Lemma \ref{lemma:wkeps})
 \begin{align*}
  \overline w^\epsilon_j & =  \Gamma\left(j+a^0\right) \epsilon^{j-1}(1+o(1))=w_j^0 \epsilon^{j-1}(1+o(1)).
 \end{align*}
 Moreover, by Lemma \ref{convmbarkeps} we have $$\epsilon^{1-j} \epsilon \rsp{\overline{\mathcal G}^\epsilon_j} \rightarrow \rsp{\mathcal G^0_j},$$ and therefore
 \begin{align}\label{eq:SjepsSj0}
  \frac{(-1)^j  \epsilon \rsp{\overline{\mathcal G^\epsilon}_j} }{\overline w^\epsilon_j(1-\epsilon j)}\rightarrow \frac{(-1)^j \rsp{\mathcal G^0_j}}{\Gamma(j+a^0)},
 \end{align}
 as $\epsilon\rightarrow 0$ (fixed $j$).

Next, we estimate
\begin{equation}\label{thisest2}
 \begin{aligned}
  \left|\sum_{j=2}^{N^\epsilon}  \frac{(-1)^j \epsilon \overline{\mathcal G^\epsilon}_j }{ \overline w^\epsilon_j(1-\epsilon j)}-\sum_{j=2}^{\infty} \frac{(-1)^j\rsp{\mathcal G^0_j}}{w_j^0} \right|&\le  \left| \sum_{j=2}^{J} \left( \frac{(-1)^j  \epsilon \overline{\rsp{\mathcal G^\epsilon_j}} }{\overline w^\epsilon_j(1-\epsilon j)}-\frac{(-1)^j\rsp{\mathcal G^0_j}}{w_j^0}\right)\right|\\
   &+ \left| \sum_{j=J+1}^{N^\epsilon} \frac{(-1)^j  \epsilon \rsp{\overline{\mathcal G^\epsilon}_j } }{\overline w^\epsilon_j(1-\epsilon j)}\right|
   +  \left| \sum_{j=J+1}^\infty \frac{(-1)^j\rsp{\mathcal G^0_j}}{w_j^0}\right|\\
   &\le   \sum_{j=2}^{J} \left|\frac{(-1)^j  \epsilon \rsp{\overline{\mathcal G^\epsilon}_j} }{\overline w^\epsilon_j(1-\epsilon j)}-\frac{(-1)^j\rsp{\mathcal G^0_j}}{w_j^0}\right|\\
   &+ \sum_{j=J+1}^{\infty}\left(\frac{B\rho^{-j}}{\Gamma(j+a^\epsilon)}+ \frac{\mu \overline K}{(j-1+a^\epsilon)(j-2+a^\epsilon)}\right)\\
   & +   \sum_{j=J+1}^\infty\left(\frac{B \rho^{-j}}{\Gamma(j+a^0)}+\frac{\mu K}{(j-1+a^0)(j-2+a^0)}\right),
   \end{aligned}
   \end{equation}
   for any $2\le J\le N^\epsilon$,
    using $w_j^0=\Gamma(j+a^0)$, Lemma \ref{lemma:gkestp1} (see also \eqref{eq:gkestp1}) and Proposition \ref{prop:boundgk0} (see also \eqref{eq:boundgk0}). 
   Consequently, we have
   \begin{equation}\label{eq:SNepsS0est}
   \begin{aligned}
\left|\sum_{j=2}^{N^\epsilon}  \frac{(-1)^j \epsilon \rsp{\overline{\mathcal G^\epsilon}_j } }{ \overline w^\epsilon_j(1-\epsilon j)}-\sum_{j=2}^{\infty} \frac{(-1)^j\rsp{\mathcal G^0_j}}{w_j^0} \right|&\le \sum_{j=2}^{J} \left|\frac{(-1)^j  \epsilon \rsp{ \overline{\mathcal G^\epsilon}_j} }{\overline w^\epsilon_j(1-\epsilon j)}-\frac{(-1)^j\rsp{\mathcal G^0_j}}{w_j^0}\right|\\
   &+ 2\mu (\overline K+K)\sum_{j=J+1}^{\infty}\frac{1}{(j-2+a^0)(j-1+a^0)}
   +  3B\sum_{j=J+1}^\infty\frac{ \rho^{-j}}{\Gamma(j+a^0)},
 \end{aligned}
 \end{equation}
 for all $0<\epsilon\ll 1$, $\epsilon^{-1}\notin \mathbb N$.
 Now, for any $\upsilon>0$, we take $J\gg 1$ (independent of $\epsilon>0$) so that each of the last two convergent series on the right hand side of \eqref{eq:SNepsS0est} are less than $\upsilon/3$. Subsequently, we then take $\epsilon>0$ small enough so that the first term on the right hand side of \eqref{eq:SNepsS0est} (using \eqref{eq:SjepsSj0}) is less than $\upsilon/3$. In total, we have
\begin{align*}
\left|\sum_{j=2}^{N^\epsilon}  \frac{(-1)^j \epsilon \rsp{ \overline{\mathcal G^\epsilon}_j} }{ \overline w^\epsilon_j(1-\epsilon j)}-\sum_{j=2}^{\infty} \frac{(-1)^j\rsp{\mathcal G^0_j}}{w_j^0} \right|\le \upsilon,
\end{align*}
and the result follows.
 \end{proof}
%


\subsection{Estimating the finite sum}
Let $j^n[H]$ denote the $n$th-order Taylor jet/partial sum of $H(\overline x)=\sum_{k=2}^\infty H_k \overline x^k$:
\begin{align}
 j^n[H]:=\sum_{k=2}^n H_k(\cdot)^k\quad \forall\,n\in \mathbb N.\label{eq:jn}
\end{align}
Moreover, we define the $n$th-order remainder by
\begin{align}
r^{n}[H] = (I-j^{n})[H]: = \sum_{k=n+1}^\infty H_k (\cdot)^k \quad \forall\,n\in \mathbb N.\label{eq:rn}
\end{align}

\begin{lemma}\label{lemmaJN_1}
 Consider the partial sum
 $$j^{N^\epsilon-1}[\overline m^\epsilon](\overline x)=\sum_{k=2}^{N^\epsilon-1} \overline m_k^\epsilon \overline x^k,$$ of the series $\overline m^\epsilon(\overline x)=\sum_{k=2}^\infty\overline m_k^\epsilon \overline x^k$.
 Then there is a constant $C>0$ such that 
 \begin{align}
  \vert j^{N^\epsilon-1}[\overline m^\epsilon](\overline x)\vert \le C \epsilon \quad \forall\,\overline x\in \left[-\frac34,\frac34\right], \label{estN_1}
 \end{align}
for all $0<\epsilon\ll 1$. 
\end{lemma}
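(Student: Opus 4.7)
The plan is to combine the uniform bound on the coefficients $\overline m_k^\epsilon$ from Proposition \ref{prop:seminormmeps} with the summability estimate from item \ref{C3} of Lemma \ref{lemma:wkeps}. These two ingredients together essentially package the desired $\mathcal O(\epsilon)$ bound for free, so I expect no real obstacle beyond bookkeeping.

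First, I would recall from Proposition \ref{prop:seminormmeps} that for all $0\le \mu<\mu_0$ and all $0<\epsilon\ll 1$ we have $\Vert \overline m^\epsilon\Vert\le 2F$, which by definition of the semi-norm \eqref{eq:seminorm} means
\begin{align*}
 |\overline m_k^\epsilon| \le 2F\,\overline w_k^\epsilon \quad \forall\, 2\le k\le N^\epsilon-1.
\end{align*}
Inserting this term-by-term bound into the partial sum and using $|\overline x|\le 3/4$ yields
\begin{align*}
 |j^{N^\epsilon-1}[\overline m^\epsilon](\overline x)| \le \sum_{k=2}^{N^\epsilon-1}|\overline m_k^\epsilon|\,|\overline x|^k \le 2F\sum_{k=2}^{N^\epsilon-1}\overline w_k^\epsilon\,|\overline x|^k.
\end{align*}

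Next, I would invoke the final estimate of item \ref{C3} in Lemma \ref{lemma:wkeps}, which states that there is a constant $C>0$ such that
\begin{align*}
 \sum_{k=2}^{N^\epsilon-1}\overline w_k^\epsilon\,\delta^k \le \delta^2 C\epsilon\quad \forall\,0<\delta\le \tfrac34,
\end{align*}
for all $0<\epsilon\ll 1$. Applying this with $\delta=|\overline x|\le 3/4$ gives the chain
\begin{align*}
 |j^{N^\epsilon-1}[\overline m^\epsilon](\overline x)| \le 2F\cdot |\overline x|^2 C\epsilon \le 2F C\bigl(\tfrac34\bigr)^2\epsilon.
\end{align*}
Setting (with a slight abuse of notation) the constant on the right to be the $C$ of \eqref{estN_1} finishes the argument. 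Thus the lemma is an immediate corollary of the semi-norm bound on $\overline m^\epsilon$ combined with the geometric-type summability of the weights $\overline w_k^\epsilon$ on $[2,N^\epsilon-1]$; no additional work is needed.
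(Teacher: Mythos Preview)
Your proof is correct and follows essentially the same approach as the paper: the paper's one-line proof simply invokes item \ref{C3} of Lemma \ref{lemma:wkeps} with $\delta=\tfrac34$, which implicitly uses the coefficient bound $|\overline m_k^\epsilon|\le 2F\,\overline w_k^\epsilon$ from Proposition \ref{prop:seminormmeps} exactly as you have spelled out. The only cosmetic difference is that you apply item \ref{C3} with $\delta=|\overline x|$ rather than $\delta=\tfrac34$, which gives a slightly sharper (but unnecessary) $|\overline x|^2$ factor.
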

\begin{proof}
  The estimate \eqref{estN_1}  follows from item \ref{C3} of Lemma \ref{lemma:wkeps} with $\delta=\frac34$. 
\end{proof}

\begin{lemma}\label{lemma:tildeg}
 For any $\overline D>0$, we \rspp{consider}
 \begin{align}\label{tildegdef}
 \overline g^\epsilon(\overline x,j^{N^\epsilon-1}[\overline m^\epsilon](\overline x)+q)\quad \forall\, \overline x\in \left[-\frac34,\frac34\right],\, q\in (-\overline D,\overline D).
 \end{align} 
  It is well-defined for all $0<\epsilon\ll 1$ and has the following absolutely convergent power series expansion
\begin{align}\label{tildeg}
\rspp{\overline g^\epsilon}(\overline x,j^{N^\epsilon-1}[\overline m^\epsilon](\overline x)+q) &=  \rspp{\overline g_0^\epsilon}(\overline x) +\overline x^2\rspp{\overline g_1^\epsilon}(\overline x) q+\overline x \sum_{l=2}^\infty \rspp{\overline g_{l}^\epsilon}(\overline x) q^l,
\end{align}
with
\begin{align*}
\rspp{\overline g_0^\epsilon}(\overline x)&= \sum_{k=2}^\infty \rsp{\overline{\mathcal G}^\epsilon[j^{N^\epsilon-1}[\overline m^\epsilon]]_k}  \overline x^k.
\end{align*}
Moreover, we have the following estimates ($Q_4^\epsilon$ is defined in \eqref{AepsBeps}):
\begin{align}\label{eq:gkest}
\vert \rsp{\overline{\mathcal G}^\epsilon[j^{N^\epsilon-1}[\overline m^\epsilon]]_k} \vert& \le C (\overline w_k^\epsilon)^2 e^{Q_4^\epsilon (k-4)} \quad \forall\,k\ge N^\epsilon+1;
\end{align}
specifically, for $k=N^\epsilon+1$:
\begin{align}\label{eq:gN1}
 \vert \rsp{\overline{\mathcal G}^\epsilon[j^{N^\epsilon-1}[\overline m^\epsilon]]_{N^\epsilon+1}} \vert\le C \overline w_2^\epsilon\overline w_{N^\epsilon-1}^\epsilon,
\end{align}
and
\begin{align}
 \vert \rsp{\overline g_{l}^\epsilon}(\overline x)\vert \le \mu C \overline D^{-l+1} \quad \forall\,l\ge 1,\label{eq:radius}
\end{align}
for all $0<\epsilon\ll 1$, $\overline x\in [-\frac34,\frac34]$.  Here $C>0$ is
some constant that is independent of $\overline D$ and $\epsilon$.
\end{lemma}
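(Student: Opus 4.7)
The plan is to prove the four assertions in turn, throughout using the decomposition $\overline g^\epsilon = \overline f^\epsilon + \mu \overline h^\epsilon$ from \eqref{overlineg} and the abbreviation $P(\overline x) := j^{N^\epsilon-1}[\overline m^\epsilon](\overline x)$. By Lemma \lemmaref{lemmaJN_1} we have $|P(\overline x)| \le C\epsilon$ on $[-3/4, 3/4]$. Since $\overline h^\epsilon$ is analytic on the bidisk $\{|\overline x|,|\overline y| < \rho/\epsilon\}$, a shift by $P(\overline x)$ preserves analyticity on the $\overline y$-disk of radius $\rho/\epsilon - C\epsilon$, which exceeds $\overline D$ for all $0<\epsilon\ll 1$; this establishes the well-definedness of \eqref{tildegdef} and the absolute convergence of \eqref{tildeg}. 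To identify the coefficients, I would substitute $\overline y = P + q$ in the series \eqref{eq:overlinefheps} for $\overline h^\epsilon$, expand each $\overline y^{l'} = (P+q)^{l'}$ by the binomial theorem, and regroup by powers of $q$. The $q^0$ part collects precisely to $\overline h^\epsilon(\overline x,P(\overline x)) = \overline{\mathcal H}^\epsilon[P](\overline x)$, whence $\overline g_0^\epsilon = \overline{\mathcal G}^\epsilon[P]$ after adding the $\overline y$-independent $\overline f^\epsilon$. The minimum-order factors of $\overline x$ for $q^l$ with $l\ge 1$ can be read off directly: for $l=1$ the $\overline y$-linear block in \eqref{eq:overlinefheps} starts with $h_{2,1}^\epsilon \epsilon\,\overline x^2$, forcing the leading $\overline x^2$; for $l\ge 2$, the block with $k=1,l'=l$ contributes $l!\,h_{1,l}^\epsilon\epsilon^{l-1}\overline x$ after differentiation, giving the single factor of $\overline x$.

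For the coefficient bound \eqref{eq:gkest}, I would expand $\overline{\mathcal H}^\epsilon[P]_k$ via \eqref{gk12eps}, taking into account that $P_j = \overline m_j^\epsilon$ only for $2\le j\le N^\epsilon-1$ (zero otherwise), so the $\overline y$-linear sum runs over $2\le j\le\min(k-2,N^\epsilon-1)$ and the convolution sums use that $(P^l)_j$ is supported on $2l\le j\le l(N^\epsilon-1)$. The $\overline y$-linear contribution is bounded via $|P_j|\le 2F\,\overline w_j^\epsilon \le 2F\,e^{Q_4^\epsilon(j-2)+P_4^\epsilon}$ (using Proposition \ref{prop:seminormmeps} together with item \ref{C3} of Lemma \lemmaref{lemma:wkeps}); the higher-$l$ sums are handled with the uniform bound from Lemma \ref{lemma:yklNew}, namely $|(P^l)_j|\le (2F)^l C^{l-1} e^{(-2Q_4^\epsilon+P_4^\epsilon)l}e^{Q_4^\epsilon j}$. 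Combining with $|h_{k-j,l}^\epsilon|\le\rho^{-(k-j)-l}$ and collapsing geometric series in $\epsilon/\rho$, the $j$-sums produce one factor $\sim e^{Q_4^\epsilon(k-2)+P_4^\epsilon}\sim\overline w_k^\epsilon$, while the $l$-sum produces a second $\overline w_k^\epsilon$-type factor with an additional $e^{-2Q_4^\epsilon}$; multiplying out, I expect the exponent to match precisely $(\overline w_k^\epsilon)^2 e^{Q_4^\epsilon(k-4)}$. The special case \eqref{eq:gN1} is isolated because for $k=N^\epsilon+1$ the maximal $j$ is $N^\epsilon$ and the permissible $l$ satisfies $l\le \lfloor N^\epsilon/2\rfloor$, so after the same expansion the dominant term is produced by a single $(P\cdot P)$-type convolution, which is bounded directly by $\overline w_2^\epsilon\overline w_{N^\epsilon-1}^\epsilon$ via \eqref{eq:C52} of Lemma \lemmaref{lemma:wkeps}.

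For the radius bound \eqref{eq:radius}, I would pass to a Cauchy estimate on $q\mapsto\mu\overline h^\epsilon(\overline x,P(\overline x)+q)$, which is analytic on $|q|<\rho/\epsilon-C\epsilon$. Crucially, each term in the expansion \eqref{eq:overlinefheps} of $\overline h^\epsilon$ carries at least one factor of $\epsilon$ (because $k+l-2\ge 1$ and $k-1\ge 1$ in the two blocks), so on the compact set $|\overline x|\le 3/4$, $|\overline y|\le 2\overline D$ the triangle inequality together with $|h_{k,l}^\epsilon|\le\rho^{-k-l}$ yields $|\overline h^\epsilon(\overline x,\overline y)|\le C_1\epsilon(1+\overline D)$, uniformly in $0<\epsilon\ll 1$. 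Applying Cauchy's formula with contour $|q|=\overline D$ then gives $\bigl|\tfrac{1}{l!}\partial_q^l[\overline h^\epsilon(\overline x,P+q)]_{q=0}\bigr|\le C_1\epsilon(1+\overline D)\overline D^{-l}$, and dividing by $\overline x^2$ or $\overline x$ (permissible by the vanishing of the Taylor expansion at $\overline x=0$ established in paragraph one, via the maximum principle on a slightly larger disk) yields $|\overline g_l^\epsilon(\overline x)|\le \mu C\overline D^{-l+1}$ after absorbing constants and $\epsilon$ into $C$.

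The main obstacle I anticipate is the combinatorial bookkeeping in step \eqref{eq:gkest}: one must simultaneously truncate the sums at the support boundaries of $P$ and $P^l$, manage the geometric series in $\epsilon/\rho$, and recombine the exponential rates from Lemma \ref{lemma:yklNew}, all while ensuring the product decomposition collapses cleanly to the stated majorant $(\overline w_k^\epsilon)^2 e^{Q_4^\epsilon(k-4)}$. In particular, keeping the binomial factor $\binom{l-1}{p-1}$ from \eqref{eq:ylkeps} under control for all $1\le p\le l$ simultaneously is what motivated the more uniform bound of Lemma \ref{lemma:yklNew}, and I would rely on that uniform bound rather than on \eqref{eq:ylkeps} directly to avoid a case distinction on $p$.
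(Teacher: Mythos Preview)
Your approach for the core estimate \eqref{eq:gkest} is essentially identical to the paper's: both expand $\overline{\mathcal H}^\epsilon[j^{N^\epsilon-1}[\overline m^\epsilon]]_k$ via \eqref{gk12eps}, bound the $\overline y$-linear sum using $\|\overline m^\epsilon\|\le 2F$ from Proposition~\ref{prop:seminormmeps} together with \eqref{wkepsmajorant}, invoke Lemma~\ref{lemma:yklNew} for the higher-$l$ convolutions, and collapse the resulting geometric series in $\rho^{-1}\epsilon e^{-Q_4^\epsilon}$ and in $l$. One remark: the paper's computation actually lands on $(\overline w_2^\epsilon)^2 e^{Q_4^\epsilon(k-4)}$ rather than $(\overline w_k^\epsilon)^2 e^{Q_4^\epsilon(k-4)}$; the former is what is used downstream (with $E=C(\overline w_2^\epsilon)^2 e^{-4Q_4^\epsilon}$, $R=e^{Q_4^\epsilon}$ in the proof of Proposition~\ref{prop:fixpoint}), and your sketch would produce exactly this. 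For \eqref{eq:gN1} the paper does not redo a convolution estimate via \eqref{eq:C52} as you suggest; it simply specializes the general bound at $k=N^\epsilon+1$ and uses \eqref{eq:AepsBepsProp}, giving $(\overline w_2^\epsilon)^2 e^{Q_4^\epsilon(N^\epsilon-3)}=\overline w_2^\epsilon\,\overline w_{N^\epsilon-1}^\epsilon$ in one line.

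For \eqref{eq:radius} you take a genuinely different route: a Cauchy estimate in $q$ on the contour $|q|=\overline D$, whereas the paper writes $\overline g_l^\epsilon$ explicitly via the binomial theorem (see \eqref{eq:radiusproof}) and sums directly to $|\overline g_l^\epsilon|\le 3\mu\rho^{-1}(2\rho^{-1})^l\epsilon^{l-1}\le 6\mu\rho^{-2}\overline D^{-l+1}$, valid whenever $\overline D<(2\rho^{-1}\epsilon)^{-1}$. Your Cauchy argument is legitimate, but two points need fixing: (i) the claimed sup bound $|\overline h^\epsilon|\le C_1\epsilon(1+\overline D)$ on $|\overline y|\le 2\overline D$ undercounts the block $\sum_{l\ge 2}(\cdots)\overline y^l$, whose leading term $h_{1,2}^\epsilon\epsilon\,\overline x\,\overline y^2$ already contributes $O(\epsilon\overline D^2)$; after this correction Cauchy gives $C_1\epsilon\overline D^2\cdot\overline D^{-l}=(C_1\epsilon\overline D)\overline D^{-l+1}$, and the factor $\epsilon\overline D$ is bounded independently of both $\epsilon$ and $\overline D$ precisely under the constraint $\overline D<\rho/(2\epsilon)$ that the paper also imposes; (ii) dividing by $\overline x$ or $\overline x^2$ via the maximum principle requires passing to complex $\overline x$, which is harmless here but should be stated. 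The paper's direct expansion avoids both detours and makes the $\epsilon^{l-1}$ decay, and hence the $\overline D$-independence of $C$, explicit.
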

\begin{proof}
 The expansion of \rspp{\eqref{tildegdef}} follows from composition of analytic functions.
  For the property of the convergence radius in \eqref{eq:radius}, we use the binomial theorem to obtain
  \begin{align}\label{eq:radiusproof}
 \rspp{\overline g^\epsilon_l}(\overline x) = \mu \sum_{n=l}^\infty  \left(\sum_{m=1}^\infty h^\epsilon_{m,n} \epsilon^{m-1} \overline  x^m \right)\epsilon^{n-1}  \begin{pmatrix}
                                                                                                                                                   n\\
                                                                                                                                                   l
                                                                                                                                                  \end{pmatrix} \left(j^{N^\epsilon-1}[\overline m^\epsilon](\overline x)\right)^{n-l},\quad l\ge 2,
  \end{align}
    cf. \eqref{overlineg} and \eqref{eq:overlinefheps}, and use \eqref{estN_1}, \eqref{eq:binomest} and \eqref{eq:gkcond0}. This gives
    \begin{align*}
     \vert \rspp{\overline g^\epsilon_l}(\overline x)\vert \le \frac32\mu\rho^{-1} \sum_{n=l}^\infty \epsilon^{n-1} \rho^{-n} 2^n (C\epsilon)^{n-l} &\le 3\mu\rho^{-1} (2\rho^{-1})^l \epsilon^{l-1}\le 6\mu \rho^{-2} \overline D^{-l+1}, \end{align*}
     for all $\overline x\in [-\frac34,\frac34]$, $\overline D <(2\rho^{-1}\epsilon)^{-1}$ and $0<\epsilon\ll 1$, upon estimating the geometric sums. 
 
 Next, we notice that \eqref{eq:gN1} follows from \eqref{eq:gkest} upon using \eqref{eq:AepsBepsProp}. 
 We therefore turn to proving \eqref{eq:gkest}. For this purpose, we use \eqref{gk0eps} and focus on estimating $$\rsp{\overline{\mathcal H}^\epsilon [ j^{N^\epsilon-1}[\overline m^\epsilon]]_k}.$$ By \eqref{gk12eps}, \eqref{eq:gkcond0}, $\Vert \overline m^\epsilon\Vert \le 2F$ in the seminorm \eqref{eq:seminorm} (cf. Proposition \ref{prop:seminormmeps}) and Lemma \ref{lemma:yklNew}, we obtain that 
 \begin{align*}
  \vert \rsp{\overline{\mathcal H}^\epsilon [ j^{N^\epsilon-1}[\overline m^\epsilon]]_k}\vert &\le 2 F\sum_{j=2}^{\min(k-2,N^\epsilon-1)}\rho^{-k+j-1} \epsilon^{k-j-1} \overline w_j^\epsilon+\\
  &+ \sum_{l=2}^{\lfloor \frac{k}{2}\rfloor} \sum_{j=2l}^{\min(k,l(N^\epsilon-1))} \rho^{-k+j-l}\epsilon^{k-j+l-2} (2F)^l C^{l-1} e^{(-2Q_4^\epsilon +P_4^\epsilon)l}e^{Q_4^\epsilon j}\\
  &\le 2F \rho^{-3} \epsilon  (\rho^{-1} \epsilon)^{k-(N^\epsilon+1)} \sum_{j=2}^{N^\epsilon-1} (\rho^{-1} \epsilon)^{N^\epsilon-1-j}\overline w_j^\epsilon\\
  &+ e^{Q_4^\epsilon k} \epsilon^{-2} C^{-1} \sum_{l=2}^{\lfloor \frac{k}{2}\rfloor} \left(2\rho^{-1}\epsilon F Ce^{-2Q_4^\epsilon+P_4^\epsilon}\right)^l\sum_{j=2l}^{\min(k,l(N^\epsilon-1))}\left(\rho^{-1}\epsilon e^{-Q_4^\epsilon}\right)^{k-j}.
 \end{align*}
Here 
\begin{align*}
 0<\rho^{-1}\epsilon e^{-Q_4^\epsilon}\ll 1,\quad 0<2\rho^{-1}\epsilon F Ce^{-2Q_4^\epsilon+P_4^\epsilon}\ll 1,
\end{align*}
for all $0<\epsilon\ll 1$, recall \eqref{AepsBepsApp}. But then, by estimating the geometric series and using $\exp(P_4^\epsilon) =  \overline w_2^\epsilon$ (see \eqref{eq:AepsBepsProp}), we conclude that
\begin{align*}
  \vert\rsp{\overline{\mathcal H}^\epsilon [ j^{N^\epsilon-1}[\overline m^\epsilon]]_k}\vert \le \overline C e^{Q_4^\epsilon k} e^{-4Q_4^\epsilon+2P_4^\epsilon} = \overline C (\overline w_2^\epsilon)^2 e^{Q_4^\epsilon(k-4)},
\end{align*}
 for some $\overline C>0$ large enough. This gives the desired estimates (upon $\overline C\rightarrow C$).
%
\end{proof}

 We now turn to estimating $j^{N^\epsilon}[\overline m^\epsilon]$; in contrast to $j^{N^\epsilon-1}[\overline m^\epsilon]$, it is not uniformly bounded with respect to $\alpha^\epsilon\in (0,1)$.
    \begin{lemma}
    Suppose that $S_\infty^0\ne 0$. Then 
    \begin{align}
     \vert \overline m_{N^\epsilon}^\epsilon \overline x^{N^\epsilon} \vert\le  (1+o(1))\vert S_\infty^0 \vert \overline w_{N^\epsilon}^\epsilon\delta^{N^\epsilon}\quad \forall \,\overline x\in [-\delta,\delta],\label{mNeps}
    \end{align}
    for all $0<\epsilon\ll 1$.
    Moreover,  fix any $K>0$ and suppose for $N^\epsilon\gg 1$ and $\alpha^\epsilon\in (0,1)$ that 
    \begin{align}\label{delta1}
       \delta \le   \min\left(\frac34,\left(\frac{K}{2\vert  S_{\infty}^0\vert \vert \Gamma(\alpha^\epsilon) (N^\epsilon)^{a^\epsilon+1-\alpha^\epsilon}}\right)^{\frac{1}{N^\epsilon}}\right).
  \end{align}
   (For fixed $\alpha^\epsilon$, the expression on the right hand side of \eqref{delta1} converges to $\frac34$ for $N^\epsilon\rightarrow \infty$).
    Then 
  \begin{align}
   \vert j^{N^\epsilon}[ \overline m^\epsilon](\overline x)\vert\le K\quad \forall\, \overline x\in [-\delta,\delta].\label{eq:JNbound}
  \end{align}
 \end{lemma}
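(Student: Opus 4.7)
The plan is to split $j^{N^\epsilon}[\overline m^\epsilon](\overline x)=j^{N^\epsilon-1}[\overline m^\epsilon](\overline x)+\overline m^\epsilon_{N^\epsilon}\overline x^{N^\epsilon}$ and handle the two pieces separately. The first piece is already controlled by Lemma~\ref{lemmaJN_1}, which gives $|j^{N^\epsilon-1}[\overline m^\epsilon](\overline x)|\le C\epsilon$ uniformly on $[-3/4,3/4]\supset[-\delta,\delta]$ (note that \eqref{delta1} forces $\delta\le 3/4$). The entire task therefore reduces to estimating the single new term $\overline m^\epsilon_{N^\epsilon}\overline x^{N^\epsilon}$, which is not uniformly bounded in $\alpha^\epsilon$ and is the reason the regime $k=N^\epsilon$ must be treated separately from $k\le N^\epsilon-1$.

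For \eqref{mNeps}, I would start from the explicit formula \eqref{barmkeps}, namely $\overline m^\epsilon_{N^\epsilon}=(-1)^{N^\epsilon}\overline w^\epsilon_{N^\epsilon}\overline S^\epsilon_{N^\epsilon}$, and invoke \lemmaref{lemma:SNepslimit} (which requires $0\le \mu<\mu_0$, already in force) to rewrite $\overline S^\epsilon_{N^\epsilon}=(1+o(1))S_\infty^0$ as $N^\epsilon\to\infty$, using $S_\infty^0\ne 0$. Taking absolute values and applying $|\overline x|\le\delta$ gives \eqref{mNeps} at once.

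For \eqref{eq:JNbound}, the key step is to turn \eqref{mNeps} into a quantitative bound by pinning down the asymptotics of $\overline w^\epsilon_{N^\epsilon}$ as $N^\epsilon\to\infty$. Using \eqref{barwkeps} with $\epsilon^{-1}=N^\epsilon+\alpha^\epsilon$,
\begin{equation*}
\overline w^\epsilon_{N^\epsilon}=\frac{\Gamma(\alpha^\epsilon)\,\Gamma(N^\epsilon+a^\epsilon)}{\epsilon\,\Gamma(N^\epsilon+\alpha^\epsilon)},
\end{equation*}
and Stirling's formula in the ratio gives $\Gamma(N^\epsilon+a^\epsilon)/\Gamma(N^\epsilon+\alpha^\epsilon)=(1+o(1))(N^\epsilon)^{a^\epsilon-\alpha^\epsilon}$, combined with $\epsilon^{-1}=(1+o(1))N^\epsilon$, yields
\begin{equation*}
\overline w^\epsilon_{N^\epsilon}=(1+o(1))\,\Gamma(\alpha^\epsilon)\,(N^\epsilon)^{a^\epsilon+1-\alpha^\epsilon}.
\end{equation*}
The definition of $\delta$ in \eqref{delta1} is precisely calibrated so that $|S_\infty^0|\,\overline w^\epsilon_{N^\epsilon}\,\delta^{N^\epsilon}\le\tfrac{K}{2}(1+o(1))$. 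Combining with the $\mathcal O(\epsilon)$ bound on $j^{N^\epsilon-1}[\overline m^\epsilon]$ yields $|j^{N^\epsilon}[\overline m^\epsilon](\overline x)|\le K$ for all $\overline x\in[-\delta,\delta]$ and all $N^\epsilon\gg 1$, as required.

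The main (but essentially routine) point of care is the uniformity in $\alpha^\epsilon\in(0,1)$ of the Stirling asymptotic for the gamma ratio. As $\alpha^\epsilon\to 0^+$ the factor $\Gamma(\alpha^\epsilon)$ blows up, but this blow-up has been absorbed into the right-hand side of \eqref{delta1}; all that is needed from Stirling is that the $o(1)$ above be uniform in $\alpha^\epsilon$ on compact subsets of $(0,1)$, which follows from the next-order Stirling expansion. The boundary behaviour $\alpha^\epsilon\to 0$ or $\alpha^\epsilon\to 1$ combined with $N^\epsilon\to\infty$ is thus already built into the scaling of $\delta$ prescribed by \eqref{delta1}.
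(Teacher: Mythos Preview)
Your argument is correct and follows essentially the same route as the paper: split off the $N^\epsilon$th term, invoke Lemma~\ref{lemmaJN_1} for the truncated sum, use $\overline m^\epsilon_{N^\epsilon}=(-1)^{N^\epsilon}\overline w^\epsilon_{N^\epsilon}\overline S^\epsilon_{N^\epsilon}$ together with Lemma~\ref{lemma:SNepslimit}, and expand $\overline w^\epsilon_{N^\epsilon}$ via Stirling. One small sharpening: the Stirling $o(1)$ is in fact uniform over all $\alpha^\epsilon\in(0,1)$ (since the shift $a^\epsilon-\alpha^\epsilon$ stays bounded), not merely over compact subsets, which is what makes the final estimate $|\overline m^\epsilon_{N^\epsilon}\overline x^{N^\epsilon}|\le \tfrac{K}{2}(1+o(1))$ hold uniformly and yields \eqref{eq:JNbound} directly.
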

 \begin{proof}
We estimate
  \begin{equation}\label{eq:mkNeps}
  \begin{aligned}
  \vert  \overline m_{N^\epsilon}^\epsilon \overline x^{N^\epsilon}\vert &\le \vert \overline S_{N^\epsilon}^\epsilon\vert   \overline w_{N^\epsilon}^\epsilon \delta^{N^\epsilon}\\
  &=(1+o(1)) \vert  S_{\infty}^0\vert \frac{\Gamma(\alpha^\epsilon)\Gamma( N^\epsilon+a^\epsilon)}{\Gamma(\epsilon^{-1}-1)}\delta^{N^\epsilon}\\
  &=(1+o(1))\vert  S_{\infty}^0\vert\Gamma(\alpha^\epsilon) (N^\epsilon)^{a^\epsilon+1-\alpha^\epsilon} \delta^{N^\epsilon},
  \end{aligned}
  \end{equation}
  using \eqref{barmkeps}, \eqref{eq:Gamma}, $S_\infty^0\ne 0$ and
   Stirling's approximation (in the form \eqref{stirling}) on the factor 
  \begin{align}\label{this2}
  \frac{\Gamma( N^\epsilon+a^\epsilon)}{\epsilon\Gamma(\epsilon^{-1})} = \frac{\Gamma( N^\epsilon+a^\epsilon)}{(1-\epsilon)\Gamma(\epsilon^{-1}-1)}= (1+o(1))\frac{\Gamma(N^\epsilon) (N^{\epsilon})^{a^\epsilon}}{\Gamma(N^\epsilon) (N^\epsilon)^{\alpha^\epsilon-1}}=(1+o(1)) (N^{\epsilon})^{a^\epsilon+1-\alpha^\epsilon},
  \end{align}
  for $N^\epsilon\rightarrow \infty$; in particular  the $o(1)$-terms in \eqref{eq:mkNeps} are uniform with respect to $\alpha^\epsilon$. Using \eqref{delta1},
  we have
  \begin{align*}
   (1+o(1))\vert  S_{\infty}^0\vert\Gamma(\alpha^\epsilon) (N^\epsilon)^{a^\epsilon+1-\alpha^\epsilon} \delta^{N^\epsilon} \le \frac12 K(1+o(1))
  \end{align*}
  The result then follows from $
   j^{N^\epsilon}[\overline m^\epsilon](\overline x)= j^{N^\epsilon-1} [\overline m^\epsilon](\overline x)+\overline m_{N^\epsilon}^\epsilon \overline x^{N^\epsilon}$. 
 \end{proof}

 If we take $\overline D>C>0$ and $0<\epsilon\ll 1$, then it follows from Lemma \ref{lemmaJN_1} \rspp{(upon setting $q=\overline m_{N^\epsilon}^\epsilon \overline x^{N^\epsilon}$)} that 
$$\rspp{\overline g^\epsilon(x,j^{N^\epsilon}[\overline m^\epsilon](\overline x))=\overline g^\epsilon(x,j^{N^\epsilon-1}[\overline m^\epsilon](\overline x)+\overline m_{N^\epsilon}^\epsilon \overline x^{N^\epsilon})},$$ is well-defined for all $\overline x\in [\delta,\delta]$ with $\delta>0$ satisfying \eqref{delta1}.

\subsection{The operator $\mathcal T^\epsilon$}\label{sec:Top}
Define $H\mapsto \mathcal T^\epsilon [H]$ by 
\begin{equation}\label{eq:Topdefn}
\begin{aligned}
 \mathcal T^\epsilon [H]({\overline x}) &:= \frac{{\overline x}^{\epsilon^{-1}}}{(1-{\overline x})^{\epsilon^{-1}+ a^\epsilon}}\int_0^{\overline x} \frac{(1-v)^{\epsilon^{-1}+a^\epsilon-1}}{v^{\epsilon^{-1}+1}} H(v) dv\\
 &:=\frac{\vert \overline x\vert^{\alpha^\epsilon} \overline x^{N^\epsilon}}{(1-{\overline x})^{\epsilon^{-1}+ a^\epsilon}}\int_0^{\overline x} \frac{(1-v)^{\epsilon^{-1}+a^\epsilon-1}}{\vert v\vert^{\alpha^\epsilon} v^{N^\epsilon+1}} H(v) dv\quad \forall -1<\overline x<1.
 \end{aligned}
 \end{equation}
It is well-defined on analytic functions $H$ with $j^{N^\epsilon}[H] =0$,  see also \cite[Section 7]{MR4445442}. 
\begin{lemma}\label{lemma:TOp0}
Suppose that $\epsilon^{-1}\notin \mathbb N$.
Then the following statements hold true:
\begin{enumerate}
\item \label{Tdiff} For any analytic $H$ with $j^{N^\epsilon}[H]=0$, $G=\mathcal T^\epsilon [H]$ is the unique solution of
\begin{align}\label{Theqn}
\epsilon {\overline x}(1-{\overline x}) \frac{dG}{d {\overline x}}-(1+\epsilon a^\epsilon {\overline x})G = \epsilon H \quad \mbox{and}\quad j^{N^\epsilon}[G] = 0.
\end{align}
\item \label{TuN1} $\mathcal T^\epsilon\left[(\cdot)^{N^\epsilon+1}\right]$ has an absolutely convergent power series representation for $-1<{\overline x}<1$:
\begin{align}
  \mathcal T^\epsilon\left[(\cdot)^{N^\epsilon+1}\right]({\overline x})&=\frac{\Gamma(1-\alpha^\epsilon)}{\Gamma(N^\epsilon+1+a^\epsilon)} \sum_{k=N^{\epsilon}+1}^\infty \frac{\Gamma(k+a^\epsilon)}{\Gamma(k+1-\epsilon^{-1})}{\overline x}^k.\label{eq:TuN1}
\end{align}
\end{enumerate}
\end{lemma}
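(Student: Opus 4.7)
The plan is to derive the representation \eqref{eq:Topdefn} via the classical integrating-factor method applied to the first-order linear ODE \eqref{Theqn}, and then for part 2 to identify the explicit series by the uniqueness obtained in part 1. Dividing \eqref{Theqn} by $\epsilon\overline x(1-\overline x)$ and applying partial fractions,
\begin{align*}
\frac{1+\epsilon a^\epsilon \overline x}{\epsilon \overline x(1-\overline x)} = \frac{\epsilon^{-1}}{\overline x}+\frac{\epsilon^{-1}+a^\epsilon}{1-\overline x},
\end{align*}
one obtains the integrating factor $\overline x^{-\epsilon^{-1}}(1-\overline x)^{\epsilon^{-1}+a^\epsilon}$. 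Integrating from $0$ to $\overline x$ reproduces the expression $\mathcal T^\epsilon[H]$ in \eqref{eq:Topdefn}, and since $H(v)=\mathcal O(v^{N^\epsilon+1})$, the integrand behaves like $v^{-\alpha^\epsilon}$ near $v=0$, which is integrable because $\alpha^\epsilon\in(0,1)$.

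For item \ref{Tdiff}, I would then verify the three required properties. (a) By the fundamental theorem of calculus, $\mathcal T^\epsilon[H]$ solves \eqref{Theqn}. (b) The second form of \eqref{eq:Topdefn} writes the prefactor as $|\overline x|^{\alpha^\epsilon}\overline x^{N^\epsilon}/(1-\overline x)^{\epsilon^{-1}+a^\epsilon}$; combined with the integral being $\mathcal O(\overline x^{1-\alpha^\epsilon})$ near $\overline x=0$ one obtains $\mathcal T^\epsilon[H](\overline x)=\mathcal O(\overline x^{N^\epsilon+1})$, i.e. $j^{N^\epsilon}[\mathcal T^\epsilon[H]]=0$. (c) For uniqueness, the associated homogeneous solution $G_h(\overline x)=\overline x^{\epsilon^{-1}}/(1-\overline x)^{\epsilon^{-1}+a^\epsilon}$ contains the factor $|\overline x|^{\alpha^\epsilon}$ with $0<\alpha^\epsilon<1$ and is therefore not analytic at $\overline x=0$; hence adding any nonzero multiple of $G_h$ produces a solution that cannot satisfy $j^{N^\epsilon}[G]=0$ in the class of analytic functions.

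For item \ref{TuN1}, I would use the uniqueness established in item \ref{Tdiff} rather than compute the integral directly. Let $G(\overline x):=\sum_{k=N^\epsilon+1}^\infty G_k\overline x^k$ with
\begin{align*}
G_k = \frac{\Gamma(1-\alpha^\epsilon)}{\Gamma(N^\epsilon+1+a^\epsilon)}\frac{\Gamma(k+a^\epsilon)}{\Gamma(k+1-\epsilon^{-1})}.
\end{align*}
The ratio $G_k/G_{k-1}=(k-1+a^\epsilon)/(k-\epsilon^{-1})\to 1$ as $k\to\infty$, so the series is absolutely convergent on $|\overline x|<1$ and defines an analytic function there with $j^{N^\epsilon}[G]=0$. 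Substituting into the ODE \eqref{Theqn} and equating coefficients of $\overline x^k$ gives, for every $k\ge N^\epsilon+1$, the identity $G_k(\epsilon k-1)-\epsilon(k-1+a^\epsilon)G_{k-1}=\epsilon\delta_{k,N^\epsilon+1}$, where $G_{N^\epsilon}:=0$. For $k>N^\epsilon+1$ this is precisely the recursion satisfied by $G_k$, and for $k=N^\epsilon+1$ it reduces, using $\epsilon^{-1}=N^\epsilon+\alpha^\epsilon$, to $\epsilon(N^\epsilon+1)-1=\epsilon(1-\alpha^\epsilon)$ together with $G_{N^\epsilon+1}=\Gamma(1-\alpha^\epsilon)/\Gamma(2-\alpha^\epsilon)=1/(1-\alpha^\epsilon)$, and hence $G_{N^\epsilon+1}\cdot\epsilon(1-\alpha^\epsilon)=\epsilon$, matching the right-hand side $\epsilon\overline x^{N^\epsilon+1}$. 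Thus $G$ satisfies both \eqref{Theqn} with $H=(\cdot)^{N^\epsilon+1}$ and $j^{N^\epsilon}[G]=0$, and the uniqueness statement from item \ref{Tdiff} forces $G=\mathcal T^\epsilon[(\cdot)^{N^\epsilon+1}]$.

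The only delicate point is the matching at the base case $k=N^\epsilon+1$, which relies on the two elementary identities $\epsilon(N^\epsilon+1)-1=\epsilon(1-\alpha^\epsilon)$ and $\Gamma(2-\alpha^\epsilon)=(1-\alpha^\epsilon)\Gamma(1-\alpha^\epsilon)$; once these are in hand, the verification is mechanical and the uniqueness in part 1 does the rest. Stating the uniqueness in the analytic class avoids having to argue directly about the nonanalytic homogeneous solution beyond observing the $|\overline x|^{\alpha^\epsilon}$ branch at the origin.
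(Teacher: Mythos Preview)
Your proof is correct. For item~\ref{Tdiff} you give essentially the same verification as the paper (integrating factor and direct differentiation), and in fact you are more explicit than the paper about why $j^{N^\epsilon}[\mathcal T^\epsilon[H]]=0$ and why uniqueness holds in the analytic class.

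For item~\ref{TuN1} there is a mild difference of route. The paper does not substitute the candidate series into the ODE directly; instead it invokes Lemma~\ref{lemma:barSkeps} (the closed form $\overline m_k^\epsilon=(-1)^k\overline w_k^\epsilon\overline S_k^\epsilon$) with the artificial choice $\epsilon\overline{\mathcal G}^\epsilon_k=-\epsilon\delta_{k,N^\epsilon+1}$, then rewrites the resulting $\overline w_k^\epsilon/\overline w_{N^\epsilon+1}^\epsilon$ via Lemma~\ref{lemma:wkeps} item~\ref{kinf} (the reflection formula) to reach the coefficients in \eqref{eq:TuN1}. Your approach bypasses that machinery and checks the two-term recursion $G_k/G_{k-1}=(k-1+a^\epsilon)/(k-\epsilon^{-1})$ together with the base case $G_{N^\epsilon+1}=1/(1-\alpha^\epsilon)$ by hand. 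The content is the same recursion in both cases; your version is self-contained, while the paper's version has the advantage of reusing the $\overline w_k^\epsilon$ bookkeeping already set up for the weak-stable manifold expansion.
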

\begin{proof}
To prove item \ref{Tdiff}, we define $\mathcal J({\overline x}):= \frac{{\overline x}^{\epsilon^{-1}}}{(1-{\overline x})^{\epsilon^{-1}+a^\epsilon}}$ and subsequently $\mathcal I({\overline x}) := \int_0^{\overline x}  \frac{1}{\mathcal J(v) v(1-v)} H(v) dv$. Then $$\mathcal T^\epsilon [H]=\mathcal J \mathcal I\quad \mbox{and}\quad \mathcal J(\overline x) I'(\overline x) = \frac{1}{{\overline x}(1-{\overline x})} H({\overline x}).$$ Moreover,
\begin{align*}
 \mathcal J'({\overline x}) &= \mathcal J({\overline x}) \left(\epsilon^{-1} {\overline x}^{-1}+(\epsilon^{-1}+a^\epsilon)(1-{\overline x})^{-1}\right) \\
 &=\mathcal J({\overline x}) \frac{1+\epsilon a^\epsilon {\overline x}}{\epsilon {\overline x}(1-{\overline x})},
\end{align*}
and therefore
\begin{align*}
 \epsilon {\overline x}(1-{\overline x}) \mathcal T^\epsilon [H]'({\overline x}) =  \left(1+\epsilon a^\epsilon {\overline x}\right)\mathcal J({\overline x})\mathcal I({\overline x})+ \epsilon H({\overline x}). 
\end{align*}
Consequently,
\begin{align*}
 \epsilon {\overline x}(1-{\overline x}) \mathcal T^\epsilon [H]'({\overline x}) -(1+\epsilon a^\epsilon {\overline x}) \mathcal T^\epsilon [H]({\overline x}) = \epsilon H({\overline x}),
 \end{align*}
as desired.

Next, to prove item \ref{TuN1},
we use item \ref{Tdiff} and the fact that the solution is unique. Then Lemma \ref{lemma:barSkeps} with 
 \begin{align*}
 \epsilon \rsp{\overline{\mathcal G}^\epsilon[\overline m^\epsilon]_k}=\begin{cases} -\epsilon & \text{for $k=N^\epsilon+1$},\\
                                   0 & \text{else},
                                  \end{cases}
                                  \end{align*}
                                  allow us to write $\mathcal T^\epsilon\left[(\cdot)^{N^\epsilon+1}\right]({\overline x})=\sum_{k=N^\epsilon+1}^\infty \overline m_k^\epsilon {\overline x}^k$ as an absolutely convergent power series; notice the change of sign when comparing \eqref{Theqn} and \eqref{ybarEqn}. In particular, we find that $$\overline S_k^\epsilon=\frac{(-1)^{N^\epsilon+1}}{\overline w_{N^\epsilon+1}^\epsilon (1-\alpha^\epsilon)} \quad \forall\,k\ge N^\epsilon+1 \,\mbox{(zero otherwise)},$$ and therefore
                                  \begin{align*}
                                   \overline m_{k}^\epsilon =  \frac{1}{1-\alpha^\epsilon}\frac{(-1)^k \overline w_k^\epsilon}{(-1)^{N^\epsilon+1} \overline w_{N^\epsilon+1}^\epsilon} \quad \forall\,k\ge N^\epsilon+1\,\mbox{(zero otherwise)},
                                  \end{align*}
                                  by \eqref{barmkeps}.
Subsequently, we then use item \ref{kinf} of Lemma \ref{lemma:wkeps} to write
                                  \begin{align*}
                                   \frac{(-1)^k \overline w_k^\epsilon}{(-1)^{N^\epsilon+1} \overline w_{N^\epsilon+1}^\epsilon} &= \frac{\Gamma(2-\alpha^\epsilon)}{\Gamma(N^\epsilon+1+a^\epsilon)}\frac{\Gamma(k+a^\epsilon)}{\Gamma(k+1-\epsilon^{-1})}\\
                                   &=\frac{(1-\alpha^\epsilon)\Gamma(1-\alpha^\epsilon)}{\Gamma(N^\epsilon+1+a^\epsilon)}\frac{\Gamma(k+a^\epsilon)}{\Gamma(k+1-\epsilon^{-1})}.
                                  \end{align*}
This gives the desired expression  for $\mathcal T^\epsilon \left[(\cdot)^{N+1}\right]$ in item \ref{TuN1}.

\end{proof}

We will view $\mathcal T^\epsilon$ on the Banach space $\mathcal D^\epsilon_\delta$ of analytic functions $H:[0,\delta]\rightarrow \mathbb R$ with $\vert H({\overline x}){\overline x}^{-N^\epsilon-1} \vert$ bounded at ${\overline x}=0$. 
More specifically, we define
\begin{align*}
\mathcal D^\epsilon_\delta:= \{H:[0,\delta] \rightarrow \mathbb R\,\,\text{analytic}\,:\,\Wert H\Wert_{\delta} <\infty\},
\end{align*}
with the Banach norm
\begin{align}
 \Wert H\Wert_{\delta}:=\sup_{{\overline x}\in (0,\delta]}\vert H({\overline x})\vert \frac{\mathcal T\left[(\cdot)^{N^\epsilon+1}\right](\delta)}{\mathcal T\left[(\cdot)^{N^\epsilon+1}\right]({\overline x})}; \label{Cepsnorm}
\end{align}
here we have used that $\mathcal T\left[(\cdot)^{N^\epsilon+1}\right]({\overline x})=\mathcal O({\overline x}^{N^\epsilon+1})$ as ${\overline x}\rightarrow 0$ and that $\mathcal T\left[(\cdot)^{N^\epsilon+1}\right]({\overline x})>0$ for all ${\overline x}\in (0,1)$, cf.  Lemma \ref{lemma:TOp0} item \ref{TuN1}.

The case ${\overline x}<0$ has to be treated slightly different (we will have to take $0\le -{\overline x}\le \delta_2\epsilon$); we will consider this case at the end of Section \ref{sec:completing} below.

A nice property of the Banach norm \eqref{Cepsnorm} is highlighted in the following Lemma.
\begin{lemma}
Define
\begin{align}
\Vert H\Vert_\delta:=\sup_{{\overline x}\in [0,\delta]} \vert H({\overline x})\vert.\label{eq:Vertdefn}
\end{align}
Then the following estimate holds: 
\begin{align*}
 \Vert H\Vert_\delta \le \Wert H\Wert_{\delta}\quad \forall\,H\in \mathcal D_\delta^\epsilon.
\end{align*}
\end{lemma}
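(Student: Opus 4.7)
The plan is to exploit the explicit power-series representation of $\mathcal T^\epsilon[(\cdot)^{N^\epsilon+1}]$ given in Lemma \ref{lemma:TOp0} item \ref{TuN1} and observe that all of its coefficients are strictly positive, which forces monotonicity on $[0,\delta]$. Once that is in hand, the inequality between the two norms is immediate from their definitions.

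Concretely, first I would write
\begin{align*}
\mathcal T^\epsilon[(\cdot)^{N^\epsilon+1}](\overline x) = \frac{\Gamma(1-\alpha^\epsilon)}{\Gamma(N^\epsilon+1+a^\epsilon)} \sum_{k=N^\epsilon+1}^{\infty} \frac{\Gamma(k+a^\epsilon)}{\Gamma(k+1-\epsilon^{-1})}\,\overline x^{k},
\end{align*}
and check positivity coefficient-by-coefficient. Since $\alpha^\epsilon\in(0,1)$, the factor $\Gamma(1-\alpha^\epsilon)>0$; by Hypothesis \ref{assa} and continuity of $a^\epsilon$ in $\epsilon$, we have $N^\epsilon+1+a^\epsilon>0$ for all $0<\epsilon\ll 1$, so $\Gamma(N^\epsilon+1+a^\epsilon)>0$ and likewise $\Gamma(k+a^\epsilon)>0$ for every $k\ge N^\epsilon+1$. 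For the denominator, $k+1-\epsilon^{-1}=k+1-N^\epsilon-\alpha^\epsilon\ge 2-\alpha^\epsilon>1$, so $\Gamma(k+1-\epsilon^{-1})>0$ as well. Hence each term in the series is a positive multiple of $\overline x^{k}$.

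Because the power series has only positive coefficients and starts at order $N^\epsilon+1$, $\mathcal T^\epsilon[(\cdot)^{N^\epsilon+1}]$ is strictly increasing on $[0,\delta]$ and strictly positive on $(0,\delta]$. In particular,
\begin{align*}
 \frac{\mathcal T^\epsilon[(\cdot)^{N^\epsilon+1}](\overline x)}{\mathcal T^\epsilon[(\cdot)^{N^\epsilon+1}](\delta)} \le 1 \quad \forall\,\overline x\in (0,\delta].
\end{align*}
By the very definition \eqref{Cepsnorm} of $\Wert \cdot \Wert_\delta$, this gives
\begin{align*}
 |H(\overline x)| \le \Wert H\Wert_\delta \cdot \frac{\mathcal T^\epsilon[(\cdot)^{N^\epsilon+1}](\overline x)}{\mathcal T^\epsilon[(\cdot)^{N^\epsilon+1}](\delta)} \le \Wert H\Wert_\delta \quad \forall\,\overline x\in(0,\delta].
\end{align*}

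Finally, since $H\in\mathcal D^\epsilon_\delta$ implies $H(\overline x)=\mathcal O(\overline x^{N^\epsilon+1})$ as $\overline x\to 0^+$, we have $H(0)=0$, so the bound extends trivially to $\overline x=0$. Taking the supremum over $\overline x\in[0,\delta]$ then yields $\Vert H\Vert_\delta\le \Wert H\Wert_\delta$, as desired. There is no real obstacle here; the only subtlety is verifying the positivity of each gamma factor, which follows from Hypothesis \ref{assa} and the fact that $\epsilon^{-1}\notin \mathbb N$ ensures $\alpha^\epsilon\in(0,1)$.
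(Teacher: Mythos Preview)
Your proof is correct and follows essentially the same approach as the paper: both use that $\mathcal T^\epsilon[(\cdot)^{N^\epsilon+1}]$ is increasing on $[0,\delta]$ (so the ratio in the definition of $\Wert\cdot\Wert_\delta$ is bounded by $1$), together with $H(0)=0$. You simply spell out in more detail why the power-series coefficients are positive, whereas the paper just cites this fact from Lemma~\ref{lemma:TOp0} item~\ref{TuN1}.
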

\begin{proof}
The proof is elementary. Indeed, for any ${\overline x}\in (0,\delta]$ we have 
\begin{equation}\label{eq:huest}
\begin{aligned}
\vert H({\overline x})\vert &\le  \left(\left| H({\overline x})\right|\frac{\mathcal T\left[(\cdot)^{N^\epsilon+1}\right](\delta)}{\mathcal T\left[(\cdot)^{N^\epsilon+1}\right]({\overline x})}\right) \times \frac{\mathcal T\left[(\cdot)^{N^\epsilon+1}\right]({\overline x})}{\mathcal T\left[(\cdot)^{N^\epsilon+1}\right](\delta)}\le \Wert H\Wert_{\delta}\times \frac{\mathcal T\left[(\cdot)^{N^\epsilon+1}\right]({\overline x})}{\mathcal T\left[(\cdot)^{N^\epsilon+1}\right](\delta)},
\end{aligned}
\end{equation}
and therefore 
\begin{align}
\vert H({\overline x})\vert&\le \Wert H\Wert_{\delta} \times 1,\label{eq:huest2}
\end{align}
since $\mathcal T\left[(\cdot)^{N^\epsilon+1}\right]({\overline x})$ is an increasing function of ${\overline x}\in [0,1)$.
As $H(0)=0$ the inequality \eqref{eq:huest2} holds for all ${\overline x}\in [0,\delta]$, completing the proof.
\end{proof}
Notice also the (obvious) fact that
\begin{align*}
 \Wert H \Wert_{\delta'}\le \Wert H \Wert_{\delta},
\end{align*}
for any $0<\delta'<\delta$. This also holds with $\Wert$ replaced by  $\Vert$, recall \eqref{eq:Vertdefn}. We will use these properties without further mention in the following. 
The following set of equalities
\begin{align}
 \Vert \mathcal T^\epsilon\left[(\cdot)^{N^\epsilon+1}\right] \Vert_{\delta} = \Wert \mathcal T^\epsilon\left[(\cdot)^{N^\epsilon+1}\right]\Wert_{\delta}  = \mathcal T^\epsilon\left[(\cdot)^{N^\epsilon+1}\right](\delta)\quad \forall\,0<\delta<1,\label{VertWertT}
\end{align}
are consequences of $\mathcal T^\epsilon\left[(\cdot)^{N^\epsilon+1}\right]({\overline x})$ being an increasing function of ${\overline x}\in [0,1)$, and they will also be important.

It turns out that 
\begin{align}\label{eq:deltacond0}
 0<\delta\le \frac34,
\end{align}
will be adequate for our purposes.

\rsp{In the following (see e.g. item \ref{composition}),  we will use $(\cdot,\overline Y)$ to denote the composition $\overline x\mapsto (\overline x,\overline Y(\overline x))$ for given analytic functions $\overline Y:\overline x\mapsto \overline Y(\overline x)$.}

\begin{lemma} \label{lemma:TOp}
%
%
%
Fix any $\delta_2>0$, suppose that $\epsilon^{-1}\notin \mathbb N$ and that \eqref{eq:deltacond0} holds. Then there exists a $K_2=K_2(\delta_2,a^0)$ such that the following holds true.
\begin{enumerate}
\item \label{phiest}
 Define $\sigma_\epsilon:[-\delta_2\epsilon,\frac34]\rightarrow \mathbb R_+$ by 
 \begin{align}\label{eq:phidefn}
  \sigma_\epsilon({\overline x}): = \begin{cases}
                                 1 & \quad \forall\, 0 \le \vert {\overline x}\vert\le\delta_2 \epsilon\\
                                 ({\overline x}^{-1}\delta_2\epsilon)^{1-\alpha^\epsilon} & \quad \forall\, \delta_2 \epsilon<{\overline x}\le \frac34,
                                \end{cases}
 \end{align}
 so that 
 \begin{align}\label{eq:phibound}
 \left(\frac43 \delta_2 \epsilon\right)^{1-\alpha^\epsilon} \le \sigma_\epsilon({\overline x})\le 1\quad \forall\, {\overline x}\in \left[-\delta_2\epsilon,\frac34\right].
 \end{align}
Then there are constants $0<C_1<C_2$, $C_i=C_i(\delta_2,a^0)$, such that the following holds for all $0<\epsilon\ll 1$, $\epsilon^{-1}\notin \mathbb N$:
\begin{align}
 C_1 \left(\frac{\vert {\overline x}\vert}{1- {\overline x}}\right)^{N^\epsilon+1} \sigma_\epsilon({\overline x}) \le (1-\alpha^\epsilon)\left|\mathcal T^\epsilon\left[(\cdot)^{N^\epsilon+1}\right]({\overline x})\right|\le C_2 \left(\frac{\vert {\overline x}\vert }{1-{\overline x}}\right)^{N^\epsilon+1} \sigma_\epsilon({\overline x}),\label{eq:phiest}
\end{align}
for all $ -\delta_2\epsilon\le {\overline x}\le \frac34$.
\item \label{Tu2} Asymptotics for $\overline x=\mathcal O(\epsilon)$: For any $\overline x =\epsilon\overline x_2$, $\overline x_2\in [-\delta_2,\delta_2]$, the following asymptotics hold true
\begin{align*}
 \mathcal T^\epsilon\left[(\cdot)^{N^\epsilon+1}\right](\epsilon {\overline x}_2) = \frac{1}{1-\alpha^\epsilon} (\epsilon \overline x_2)^{N^\epsilon+1} \left[1+\overline x_2\int_0^1 e^{(1-v)\overline x_2}  v^{1-\alpha^\epsilon} dv+\mathcal O(\epsilon)\right]
 \quad \forall\,\overline x_2\in [-\delta_2,\delta_2], 
\end{align*}
with $\mathcal O(\epsilon)$ being uniform with respect to $\alpha^\epsilon \in (0,1)$. 
\item \label{Tbounded} 
 $\mathcal T^\epsilon:\mathcal D^\epsilon_\delta\rightarrow \mathcal D^\epsilon_\delta$ is a bounded operator. In particular, let 
 \begin{align*}
  \Wert \mathcal T^\epsilon\Wert_\delta: = \sup_{\Wert H\Wert_\delta=1} \Wert \mathcal T^\epsilon [H]\Wert_\delta,
 \end{align*}
denote the operator norm. Then 
\begin{align*}
\Wert \mathcal T^\epsilon\Wert_\delta \le \frac{K_2}{1-\alpha^\epsilon}(1+ \log \sigma_\epsilon(\delta)^{-1}).
\end{align*}
\item \label{boundT2i}
The following holds for any $i\in \mathbb N$:
\begin{align}
  \Wert \mathcal T^\epsilon\left[(\cdot)^i H)\right]\Wert_\delta &\le \frac{K_2\delta^i}{i}\Wert H\Wert_\delta \quad \forall\,\,H\in \mathcal D^\epsilon_\delta.\label{Tuih}
 \end{align}
 \item \label{boundT2i2}
The following holds for any $l\in \mathbb N$:
\begin{align}
  \Wert \mathcal T^\epsilon\left[(\cdot)^{lN^\epsilon+1} \right]\Wert_\delta &\le \delta^{(l-1)N^\epsilon} \mathcal T^\epsilon\left[(\cdot)^{N^\epsilon+1}\right](\delta).\label{Tuih2}
 \end{align}
\item \label{Thanal} Suppose that $E,R>0$, $0<\delta <R$ and consider
\begin{align*}
 H({\overline x}) = \sum_{k=N^\epsilon+1}^\infty H_k {\overline x}^k\quad \forall\, {\overline x}\in [0,\delta],
\end{align*}
with $\vert H_k\vert \le E R^k$. Then
\begin{align*}
 \Wert \mathcal T^\epsilon[H]\Wert_\delta &\le
 \frac{E R^{N^\epsilon+1}}{1-R\delta} \mathcal T\left[(\cdot)^{N^\epsilon+1}\right](\delta),
\end{align*}
for all $0<\epsilon\ll 1$.
\item \label{composition} Let $C:=C_2C_1^{-1}>1$, with $C_i>0$ defined in \eqref{eq:phiest}, $E,R>0$, and suppose that $\overline Y\in \mathcal D^\epsilon_\delta$ and
\begin{align}
H({\overline x},\overline Y)= \sum_{l=2}^\infty H_{l}({\overline x}) \overline Y^l\quad \forall\, {\overline x}\in [0,\delta],\,0\le \Wert \overline Y\Wert_\delta <R^{-1},
\end{align}
with $$\Vert H_l\Vert_\delta \le  E R^{l-1}\quad \forall\, l\ge 2,$$ recall \eqref{eq:Vertdefn}.  Then 
\begin{align*}
 \Wert \mathcal T^\epsilon\left[H(\cdot,\overline Y)\right]\Wert_\delta \le  4\epsilon E K_2 C R \Wert \overline Y\Wert_\delta^2\quad \forall\,  0\le \Wert\overline Y\Wert_\delta<\frac12 (CR)^{-1}, 
\end{align*}
uniformly in $\alpha^\epsilon\in (0,1)$.
In particular, $\overline Y\mapsto \mathcal T^\epsilon\left[h(\cdot,\overline Y)\right]$ is $C^1$ and for all $0<\epsilon\ll 1$, it is a contraction:
\begin{align}\label{DY}
 \Wert D_{\overline Y}(\mathcal T^\epsilon\left[H(\cdot,\overline Y)\right](\overline Z)\Wert_\delta\le \mathcal O(\epsilon) \Wert \overline Z\Wert_\delta\quad \forall\,\overline Z\in\mathcal D^\epsilon_\delta,\,\Wert \overline Y\Wert_\delta<\frac12 (CR)^{-1}.
\end{align}

\end{enumerate}

\end{lemma}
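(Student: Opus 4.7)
\textbf{Overall strategy and items \ref{phiest}--\ref{Tu2}.} The entire lemma pivots on sharp control of the reference function $\mathcal T^\epsilon[(\cdot)^{N^\epsilon+1}]$ used to define the norm \eqref{Cepsnorm}. My plan is to start from the closed integral form obtained by the change of variables $v = \overline x s$ in \eqref{eq:Topdefn}:
$$\mathcal T^\epsilon[(\cdot)^{N^\epsilon+1}](\overline x) = \frac{\overline x^{N^\epsilon+1}}{(1-\overline x)^{\epsilon^{-1}+a^\epsilon}}\int_0^1 (1-\overline x s)^{\epsilon^{-1}+a^\epsilon-1} s^{-\alpha^\epsilon}\, ds.$$
For $|\overline x| \le \delta_2 \epsilon$ the integrand is essentially $s^{-\alpha^\epsilon}$, integrating to $1/(1-\alpha^\epsilon)$; expanding $(1-\overline x s)^{\epsilon^{-1}+a^\epsilon-1} = e^{-(\epsilon^{-1}+a^\epsilon-1)\log(1-\overline x s)}$ then yields both item \ref{Tu2} and the small-$\overline x$ branch of item \ref{phiest}, after recognizing the beta-type integral $\int_0^1 e^{(1-v)\overline x_2} v^{1-\alpha^\epsilon}dv$. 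For $\delta_2\epsilon < \overline x \le 3/4$, I will pass to the equivalent form
$$\mathcal T^\epsilon[(\cdot)^{N^\epsilon+1}](\overline x) = \overline x^{\epsilon^{-1}}(1-\overline x)^{-\epsilon^{-1}-a^\epsilon}\int_{1-\overline x}^1 w^{\epsilon^{-1}+a^\epsilon-1}(1-w)^{-\alpha^\epsilon}dw,$$
then rescale $w = e^{-\epsilon \tau}$ so that the large exponent $\epsilon^{-1}$ becomes an order-one Laplace exponent $e^{-\tau(1+\epsilon a^\epsilon)}$. The mass concentrates at $\tau = O(1)$ (i.e. $w$ near $1$), and a direct comparison yields uniform upper and lower bounds of the form $\epsilon^{1-\alpha^\epsilon}\Gamma(1-\alpha^\epsilon)/(1-\alpha^\epsilon)$, which after combining with the prefactor $\overline x^{\epsilon^{-1}}(1-\overline x)^{-\epsilon^{-1}-a^\epsilon}$ and repackaging using $\overline x^{\epsilon^{-1}} = \overline x^{N^\epsilon}\overline x^{\alpha^\epsilon}$ produces the factor $\sigma_\epsilon(\overline x)$ in \eqref{eq:phiest}.

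\textbf{Items \ref{Tbounded}--\ref{boundT2i2}.} Given $H \in \mathcal D^\epsilon_\delta$ with $\Wert H\Wert_\delta \le 1$, the pointwise majorization $|H(v)| \le \mathcal T^\epsilon[(\cdot)^{N^\epsilon+1}](v)/\mathcal T^\epsilon[(\cdot)^{N^\epsilon+1}](\delta)$ inserted into the integral defining $\mathcal T^\epsilon[H](\overline x)$, followed by the same $v=\overline x s$ substitution and item \ref{phiest}, produces the claimed operator-norm bound; the logarithmic correction $1+\log\sigma_\epsilon(\delta)^{-1}$ appears precisely from integrating $dv/v$ against $s^{-\alpha^\epsilon}$ across the transition region $[\delta_2\epsilon,\delta]$ of $\sigma_\epsilon$, where $\sigma_\epsilon$ switches from $1$ to $(\overline x^{-1}\delta_2\epsilon)^{1-\alpha^\epsilon}$. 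For items \ref{boundT2i} and \ref{boundT2i2} I will simply insert the additional factor $v^i$ (respectively $v^{(l-1)N^\epsilon}$) inside the integrand, bound it crudely by $\delta^i$ (respectively $\delta^{(l-1)N^\epsilon}$) using $v \le \overline x \le \delta$, and exploit positivity of the kernel; the $1/i$ factor in \eqref{Tuih} arises from the elementary observation $\int_0^\delta v^{i-1}\mathcal T[\,\cdot\,]$-weight $\lesssim \delta^i/i$ in the rescaled integral.

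\textbf{Items \ref{Thanal}--\ref{composition} and main obstacle.} For item \ref{Thanal} I will split $H = \sum_{k\ge N^\epsilon+1} H_k(\cdot)^k$, apply $\mathcal T^\epsilon$ term-wise using items \ref{boundT2i} or \ref{boundT2i2}, and sum the geometric series in $R\delta < 1$ to produce the prefactor $E R^{N^\epsilon+1}/(1-R\delta)$ against the reference $\mathcal T^\epsilon[(\cdot)^{N^\epsilon+1}](\delta)$. For item \ref{composition} I expand $H(\overline x,\overline Y) = \sum_{l \ge 2} H_l(\overline x)\overline Y(\overline x)^l$, and use the key observation that any $\overline Y \in \mathcal D^\epsilon_\delta$ satisfies $|\overline Y(v)| \le \Wert\overline Y\Wert_\delta\,\mathcal T^\epsilon[(\cdot)^{N^\epsilon+1}](v)/\mathcal T^\epsilon[(\cdot)^{N^\epsilon+1}](\delta)$, so that $|H_l\overline Y^l|$ behaves essentially like $(\cdot)^{l(N^\epsilon+1)}$ by item \ref{phiest}. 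Applying $\mathcal T^\epsilon$ to $(\cdot)^{l(N^\epsilon+1)}$ for $l \ge 2$ produces an integrand $s^{(l-1)(N^\epsilon+1)+1-\alpha^\epsilon}$ whose integral over $[0,1]$ is $\lesssim \epsilon/(l-1)$—this is the origin of the $\epsilon$-factor in the final bound. Summing the geometric series in $CR\Wert\overline Y\Wert_\delta < 1/2$ yields the claimed estimate, and differentiating term-wise in $\overline Y$ gives \eqref{DY}. The main obstacle I anticipate is carrying out item \ref{phiest} with sharp two-sided bounds that are uniform in $\alpha^\epsilon \in (0,1)$: both the $1/(1-\alpha^\epsilon)$ divisor and the delicate $\sigma_\epsilon(\overline x)$ profile in the transition region $\overline x \sim \epsilon$ must be tracked with matching upper and lower constants, and this requires controlling the Laplace integral right up to the endpoint $\alpha^\epsilon \to 1^-$, where $s^{-\alpha^\epsilon}$ ceases to be integrable in the limit.
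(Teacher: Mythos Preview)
Your proposal is essentially correct and follows the same skeleton as the paper, but it takes detours at two places where the paper's route is much shorter.

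\textbf{Item \ref{phiest}, the range $\delta_2\epsilon<\overline x\le \tfrac34$.} You propose the substitution $w=1-v$ followed by $w=e^{-\epsilon\tau}$ and a Laplace-type analysis. The paper avoids this entirely: for the \emph{upper} bound it simply extends the integral $\int_0^{\overline x}(1-v)^{\epsilon^{-1}+a^\epsilon-1}v^{-\alpha^\epsilon}\,dv$ to $\int_0^1$ and evaluates via the Euler beta identity \eqref{eq:integral}, obtaining $\Gamma(\epsilon^{-1}+a^\epsilon)\Gamma(1-\alpha^\epsilon)/\Gamma(\epsilon^{-1}+a^\epsilon+1-\alpha^\epsilon)$, which Stirling reduces to $(1+o(1))\Gamma(1-\alpha^\epsilon)\epsilon^{1-\alpha^\epsilon}$. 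For the \emph{lower} bound it restricts the integral to $[0,\delta_2\epsilon]$, where $(1-v)^{\epsilon^{-1}+a^\epsilon-1}$ is uniformly bounded below by \eqref{thisest}, and integrates $v^{-\alpha^\epsilon}$ explicitly. Both moves are one-liners and yield matching constants times $\epsilon^{1-\alpha^\epsilon}/(1-\alpha^\epsilon)$; combining with the prefactor $\overline x^{\alpha^\epsilon-1}$ gives $\sigma_\epsilon(\overline x)$. Your Laplace argument would work but is unnecessary.

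\textbf{Item \ref{Thanal}.} You propose term-wise application of items \ref{boundT2i}/\ref{boundT2i2} followed by geometric summation. This is workable but awkward (item \ref{boundT2i2} only covers $k=lN^\epsilon+1$, and item \ref{boundT2i} needs $\Wert(\cdot)^{N^\epsilon+1}\Wert_\delta$ as input, which introduces extra constants). The paper instead sums the pointwise series first: $|H(\overline x)|\le E\sum_{k\ge N^\epsilon+1}R^k\overline x^k\le \tfrac{ER^{N^\epsilon+1}}{1-R\delta}\,\overline x^{N^\epsilon+1}$, and then uses positivity of the kernel in \eqref{eq:Topdefn} to get $|\mathcal T^\epsilon[H](\overline x)|\le \tfrac{ER^{N^\epsilon+1}}{1-R\delta}\,\mathcal T^\epsilon[(\cdot)^{N^\epsilon+1}](\overline x)$, which is exactly the claim. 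This is two lines.

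\textbf{Minor slip in item \ref{boundT2i}.} You write that you will ``bound it crudely by $\delta^i$'' and also that the $1/i$ factor comes from integrating $v^{i-1}$. These are mutually exclusive: the crude bound $v^i\le\delta^i$ loses the $1/i$. The paper keeps $v^i$ inside the integral and computes $\int_0^{\overline x}v^{i-\alpha^\epsilon}\sigma_\epsilon(v)\,dv$ directly (again splitting at $\delta_2\epsilon$), which is where the $1/i$ emerges.

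Your items \ref{Tu2}, \ref{Tbounded}, \ref{boundT2i2} and \ref{composition} match the paper's approach closely.
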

\begin{proof}
We prove the items \ref{phiest}--\ref{composition} successively in the following.

\textit{Proof of item \ref{phiest}}. The result follows from \cite[Lemma 7.2]{MR4445442}, see \cite[Eq. (7.10)]{MR4445442}, and it is based on the integral representation for $\mathcal T^\epsilon\left[(\cdot)^{N^\epsilon+1}\right]$:
\begin{equation}\label{eq:TN1defn}
\begin{aligned}
 \mathcal T^\epsilon\left[(\cdot)^{N^\epsilon+1}\right]({\overline x}) &= \frac{{\overline x}^{\epsilon^{-1}}}{(1-{\overline x})^{\epsilon^{-1}+ a^\epsilon}}\int_0^{\overline x}  {(1-v)^{\epsilon^{-1}+ a^\epsilon-1}}{v^{-\alpha^\epsilon}} dv\\
 &=\frac{\vert \overline x\vert^{\alpha^\epsilon} {\overline x}^{N^\epsilon}}{(1-{\overline x})^{\epsilon^{-1}+ a^\epsilon}}\int_0^{\overline x}  {(1-v)^{\epsilon^{-1}+ a^\epsilon-1}}{\vert v\vert^{-\alpha^\epsilon}} dv
 \end{aligned}
\end{equation} For completeness, we include the details (which will also be important later): 
Firstly, for $\overline x=\epsilon\overline x_2\in [-\epsilon\delta_2,\epsilon\delta_2]$, $\delta_2>0$ fixed, we use:
\begin{equation}\label{thisest}
\begin{aligned}
 (1-{\overline x})^{\epsilon^{-1}}& = e^{\epsilon^{-1} \log (1-{\overline x})}=e^{-\overline x_2}\left(1+\mathcal O(\epsilon\overline x_2^2)\right)=\mathcal O(1)\gtrless \begin{cases} e^{-2\delta_2}\\
 e^{2\delta_2}
 \end{cases}
\end{aligned}
\end{equation}
for all $0<\epsilon\ll 1$. 
Consequently, for ${\overline x}\in [-\epsilon\delta_2,\epsilon \delta_2]$
\begin{align*}
\mathcal T^\epsilon\left[(\cdot)^{N^\epsilon+1}\right]({\overline x}) &=\left( \frac{{\overline x}}{1-{\overline x}}\right)^{N^\epsilon+1} (1+o(1)) {\vert\overline x\vert}^{\alpha^\epsilon} \overline x^{-1} \int_0^{\overline x} \mathcal O(1) \vert v\vert^{-\alpha^\epsilon }dv\\
&=\left( \frac{{\overline x}}{1-{\overline x}}\right)^{N^\epsilon+1} (1+o(1))\frac{\mathcal O(1)}{1-\alpha^\epsilon},
\end{align*}
where $0<C_1<\mathcal O(1)<C_2$. Next, for $\delta_2\epsilon<{\overline x}\le \frac34$, we use a separate set of estimates: 
\begin{align*}
 \mathcal T\left[(\cdot)^{N^\epsilon+1}\right]({\overline x})\vert &\le \frac{{\overline x}^{\epsilon^{-1}}}{(1-{\overline x})^{\epsilon^{-1}+a^\epsilon}}\int_0^1 (1-v)^{\epsilon^{-1}+a^\epsilon-1} v^{-\alpha^\epsilon} dv \\
  &= \frac{{\overline x}^{\epsilon^{-1}}}{(1-{\overline x})^{\epsilon^{-1}+a^\epsilon}} \frac{\Gamma(\epsilon^{-1}+a^\epsilon)\Gamma(1-\alpha^\epsilon) }{\Gamma(\epsilon^{-1}+a^\epsilon+1-\alpha^\epsilon)}\\
  &=\frac{{\overline x}^{\epsilon^{-1}}}{(1-{\overline x})^{\epsilon^{-1}+a^\epsilon}} {(1+o(1))\Gamma(1-\alpha^\epsilon)}\epsilon^{1-\alpha^\epsilon} \\
  &\le C_2\left(\frac{{\overline x}}{1-{\overline x}}\right)^{N^\epsilon+1} ({\overline x}^{-1} \delta_2 \epsilon)^{1-\alpha^\epsilon} (1-\alpha^\epsilon)^{-1},
\end{align*}
and 
\begin{align*}
  \mathcal T\left[(\cdot)^{N^\epsilon+1}\right]({\overline x})\vert  &\ge \frac{{\overline x}^{\epsilon^{-1}}}{(1-{\overline x})^{\epsilon^{-1}+a^\epsilon}}\int_0^{\delta_2 \epsilon } (1-v)^{\epsilon^{-1}+a^\epsilon-1} v^{-\alpha^\epsilon}dv\\
  &\ge C_1 \left(\frac{{\overline x}}{1-{\overline x}}\right)^{N^\epsilon+1} \frac{({\overline x}^{-1}\delta_2 \epsilon)^{1-\alpha^\epsilon}}{1-\alpha^\epsilon}.
\end{align*}
for some $C_1=C_1(\delta_2,a^0)$ small enough, cf. \eqref{thisest}. Here we have also used \eqref{eq:integral},  \eqref{stirling} and \eqref{eq:Gamma0}.

%

 \textit{Proof of item \ref{Tu2}}. For item \ref{Tu2}, we use \eqref{eq:TN1defn} with the substitution $v=\overline x \widetilde v$ and \eqref{thisest} with $\overline x=\epsilon\overline x_2\in [-\epsilon\delta_2,\epsilon\delta_2]$. This gives
 \begin{align*}
 \mathcal T^\epsilon\left[(\cdot)^{N^\epsilon+1}\right]({\overline x})
 &={\overline x}^{N^\epsilon+1} e^{\overline x_2}(1+\mathcal O(\epsilon)) \int_0^1 e^{-(1+\epsilon(a^\epsilon-1)) \widetilde v \overline x_2} (1+\mathcal O(\epsilon \widetilde v^2)) \widetilde v^{-\alpha^\epsilon} d\widetilde v\\
 &={\overline x}^{N^\epsilon+1} e^{\overline x_2}(1+\mathcal O(\epsilon)) \left(\int_0^1 e^{-(1+\epsilon(a^\epsilon-1))v\overline x_2}  v^{-\alpha^\epsilon} dv+\mathcal O(\epsilon)\right),
 \end{align*}
 with each $\mathcal O(\epsilon)$ uniform with respect to $\alpha^\epsilon$. 
 We now use integration by parts on the remaining integral:
 \begin{align*}
 \int_0^1 e^{-(1+\epsilon(a^\epsilon-1))t\overline x_2}  v^{-\alpha^\epsilon} dv &=\frac{e^{-(1+\epsilon(a^\epsilon-1))\overline x_2}}{1-\alpha^\epsilon}\left[1+(1+\epsilon(a^\epsilon-1))\overline x_2 \int_0^1 e^{(1+\epsilon(a^\epsilon-1))(1-v)\overline x_2}  v^{1-\alpha^\epsilon} dv\right]\\
 &=\frac{e^{-\overline x_2}}{1-\alpha^\epsilon}\left[1+\overline x_2\int_0^1 e^{(1-v)\overline x_2}  v^{1-\alpha^\epsilon} dv+\mathcal O(\epsilon)\right].
\end{align*}
This completes the proof.

\textit{Proof of item \ref{Tbounded}}. We estimate using \eqref{eq:Topdefn}, \eqref{eq:huest} and \eqref{eq:phiest}
\begin{align*}
 \left|\mathcal T^\epsilon[H](\overline x)\frac{\mathcal T\left[(\cdot)^{N^\epsilon+1}\right](\delta)}{\mathcal T\left[(\cdot)^{N^\epsilon+1}\right]({\overline x})}\right|&\le 
\frac{{\overline x}^{\epsilon^{-1}}}{(1-{\overline x})^{\epsilon^{-1}+ a^\epsilon}} \frac{1}{\mathcal T\left[(\cdot)^{N^\epsilon+1}\right]({\overline x})} \int_0^{\overline x} \frac{(1-v)^{\epsilon^{-1}+a^\epsilon-1}}{v^{\epsilon^{-1}+1}} \mathcal T\left[(\cdot)^{N^\epsilon+1}\right](v) dv\Wert H\Wert_\delta \\
 &\le C_2 C_1^{-1} \frac{{\overline x}^{\epsilon^{-1}}}{(1-{\overline x})^{\epsilon^{-1}+ a^\epsilon}}  \left(\frac{1-{\overline x}}{{\overline x}}\right)^{N^\epsilon+1} \sigma_\epsilon({\overline x})^{-1} \\
 &\times \int_0^{\overline x} \frac{(1-v)^{\epsilon^{-1}+a^\epsilon-1}}{v^{\epsilon^{-1}+1}} \left(\frac{v}{1-v}\right)^{N^\epsilon+1} \sigma_\epsilon(v)  dv\Wert H\Wert_\delta\\
 &\le C_2 C_1^{-1} (1-{\overline x})^{1-\alpha^\epsilon-a^\epsilon} \frac{{\overline x}^{\alpha^\epsilon-1}}{ \sigma_\epsilon({\overline x})} \\
 &\times \int_0^{\overline x} (1-v)^{\alpha^\epsilon+a^\epsilon -2} v^{-\alpha^\epsilon} \sigma_\epsilon(v) dv \Wert H\Wert_\delta\\
&\le  K_2 \frac{{\overline x}^{\alpha^\epsilon-1}}{\sigma_\epsilon({\overline x})}\int_0^{\overline x} v^{-\alpha^\epsilon} \sigma_\epsilon(v) dv \Wert H\Wert_\delta\quad \forall\,0<{\overline x}\le \delta\le \frac34,
\end{align*}
for some $K_2=K_2(\delta_2,a^0)>0$. Here we have used uniform bounds on 
\begin{align*}
 (1-{\overline x})^{1-\alpha^\epsilon-a^\epsilon}\quad \mbox{and}\quad (1-{\overline x})^{\alpha^\epsilon+a^\epsilon-2}\quad \mbox{for}\quad {\overline x}\in \left[0,\frac34\right].
\end{align*}
Due to \eqref{eq:phidefn}, we estimate $0<{\overline x}\le \delta_2\epsilon$ and $\delta_2\epsilon<{\overline x}\le \delta$ separately. The former gives an estimate
\begin{align*}
 \left|\mathcal T^\epsilon[H](\overline x) \frac{\mathcal T\left[(\cdot)^{N^\epsilon+1}\right](\delta)}{\mathcal T\left[(\cdot)^{N^\epsilon+1}\right]({\overline x})}\right|\le \frac{K_2}{1-\alpha^\epsilon}\Wert H\Wert_\delta\quad \forall\,0<{\overline x}\le \delta_2\epsilon,
 \end{align*}
 directly. We therefore consider $\delta_2\epsilon<{\overline x}\le \delta\le \frac34$ and find
\begin{align*}
 \frac{{\overline x}^{\alpha^\epsilon-1}}{\sigma_\epsilon({\overline x})}\int_0^{\overline x} v^{-\alpha^\epsilon} \sigma_\epsilon(v) dv &=  \frac{1}{(\delta_2\epsilon)^{1-\alpha^\epsilon}}\int_0^{\delta_2\epsilon} v^{-\alpha^\epsilon} dv + \int_{\delta_2\epsilon}^{\overline x} v^{-1} dv\\
 &= \frac{1}{1-\alpha^\epsilon}-\log ({\overline x}^{-1} \delta_2 \epsilon)\\
 &=\frac{1}{1-\alpha^\epsilon}(1+\log \sigma_\epsilon({\overline x})^{-1}).
\end{align*}
This completes the proof.

\textit{Proof of item \ref{boundT2i}}.
Proceeding as in the proof of item \ref{Tbounded}, we find
\begin{equation}\label{eq:Tepsibound}
\begin{aligned}
  \left|\mathcal T^\epsilon\left[(\cdot)^iH\right](\overline x) \frac{\mathcal T\left[(\cdot)^{N^\epsilon+1}\right](\delta)}{\mathcal T\left[(\cdot)^{N^\epsilon+1}\right]({\overline x})}\right| &\le  K_2 \frac{{\overline x}^{\alpha^\epsilon-1}}{\sigma_\epsilon({\overline x})} \int_0^{\overline x} v^{i-\alpha^\epsilon} \sigma_\epsilon(v)dv \Wert H\Wert_\delta\quad \forall\,0<{\overline x}\le \delta.
\end{aligned}
\end{equation}
As above, we estimate $0<{\overline x}\le \delta_2\epsilon$ and $\delta_2\epsilon<{\overline x}\le \delta$ separately. In the former case, we directly obtain that 
\begin{align*}
\left|\mathcal T^\epsilon[(\cdot)^iH](\overline x) \frac{\mathcal T\left[(\cdot)^{N^\epsilon+1}\right](\delta)}{\mathcal T\left[(\cdot)^{N^\epsilon+1}\right]({\overline x})}\right| &\le \frac{K_2{\overline x}^i}{i+1-\alpha^\epsilon} \Wert H\Wert_\delta\\
&\le \frac{K_2{\overline x}^i}{i}\Wert H\Wert_\delta\quad \forall\,0<{\overline x}\le \delta_2\epsilon.
\end{align*}
We are therefore left with $\delta_2\epsilon<{\overline x}\le \delta$ where
\begin{align*}
\frac{{\overline x}^{\alpha^\epsilon-1}}{\sigma_\epsilon({\overline x})} \int_0^{\overline x} v^{i-\alpha^\epsilon} \sigma_\epsilon(v)dv &\le \int_0^{\overline x} v^{i-1} dv = \frac{{\overline x}^i}{i}\quad \forall\,\epsilon\delta_2<{\overline x}\le \delta,
\end{align*}
Here we have used that 
$$\frac{1}{(\delta_2\epsilon)^{1-\alpha^\epsilon}}\int_0^{\delta_2\epsilon} v^{i-\alpha^\epsilon} dv\le \int_0^{\delta_2\epsilon} v^{i-1} dv.$$
This completes the proof.

\textit{Proof of item \ref{boundT2i2}}.
This case is easy:
\begin{align*}
 \left|\mathcal T^\epsilon\left[(\cdot)^{lN^\epsilon+1}\right]({\overline x})\right| \le \delta^{(l-1)N^\epsilon} \mathcal T^\epsilon\left[(\cdot)^{N^\epsilon+1}\right]({\overline x})\quad \forall\,0\le {\overline x}\le \delta,
\end{align*}
and therefore 
\begin{align*}
\Wert \mathcal T^\epsilon\left[(\cdot)^{lN^\epsilon+1}\right]\Wert_\delta \le \delta^{(l-1)N^\epsilon}\mathcal T^\epsilon\left[(\cdot)^{N^\epsilon+1}\right](\delta).
\end{align*}

\textit{Proof of item \ref{Thanal}}. We have
\begin{align*}
 \vert H({\overline x})\vert \le  E R^{N^\epsilon+1} {\overline x}^{N^\epsilon+1} \sum_{k=0}^\infty R^{k}\delta^k = \frac{ER^{N^\epsilon+1} {\overline x}^{N^\epsilon+1}}{1-R\delta}\quad \forall 0\le {\overline x}\le \delta,
\end{align*}
and consequently
\begin{align*}
\left|\mathcal T^\epsilon[H]({\overline x}) \right|&\le  \frac{ER^{N^\epsilon+1}}{1-R\delta}\mathcal T^\epsilon\left[(\cdot)^{N^\epsilon+1}\right]({\overline x})\quad \forall\, 0<{\overline x}\le \delta.
\end{align*}
The result follows. 

          \textit{Proof of item \ref{composition}}.
 Now, for item \ref{composition} we use the linearity of $\mathcal T^\epsilon$ and first estimate each of the terms of the sum $\sum_{l\ge 2} \mathcal T^{\epsilon} \left[H_l(\cdot )\overline Y^l\right]$. 
By \eqref{eq:Topdefn}, \eqref{eq:huest} and \eqref{eq:phiest}, we find that 
\begin{equation}\label{eq:compositionest}
\begin{aligned}
 \left| \mathcal T^\epsilon\left[ \overline Y^l \right]({\overline x})\frac{\mathcal T\left[(\cdot)^{N^\epsilon+1}\right](\delta)}{\mathcal T\left[(\cdot)^{N^\epsilon+1}\right]({\overline x})} \right| &\le K_2 \frac{{\overline x}^{\alpha^\epsilon-1}}{\sigma_\epsilon({\overline x}) }\int_0^{\overline x} v^{-\alpha^\epsilon} \sigma_\epsilon(v)\left(\frac{\mathcal T\left[(\cdot)^{N^\epsilon+1}\right](v)}{\mathcal T\left[(\cdot)^{N^\epsilon+1}\right](\delta)}\right)^{l-1} dv\Wert \overline Y\Wert_\delta^l\\
 &\le   K_2 C^{l-1} \frac{{\overline x}^{\alpha^\epsilon-1}}{\sigma_\epsilon({\overline x})} \int_0^{\overline x}  \frac{v^{(l-1)(N^\epsilon+1)-\alpha^\epsilon} \sigma_\epsilon(v)^l}{\delta^{(l-1)(N^\epsilon+1)} \sigma_\epsilon(\delta)^{l-1}} dv \Wert \overline Y\Wert_\delta^l\quad \forall\,0<{\overline x}\le \delta,
\end{aligned}
\end{equation}
with $C=C_2/C_1$. 
We claim that 
\begin{align}\label{eq:claim}
  \Wert \mathcal T^\epsilon\left[ \overline Y^l \right]\Wert_\delta\le \frac{2\epsilon}{l-1} K_2C^{l-1} \Wert \overline Y\Wert_\delta^l.
\end{align}
In order to prove this, we only have to show that 
\begin{align}
 \frac{{\overline x}^{\alpha^\epsilon-1}}{\sigma_\epsilon({\overline x})} \int_0^{\overline x}  \frac{v^{(l-1)(N^\epsilon+1)-\alpha^\epsilon} \sigma_\epsilon(v)^l}{\delta^{(l-1)(N^\epsilon+1)} \sigma_\epsilon(\delta)^{l-1}} dv\le \frac{2\epsilon}{l-1}\quad \forall\,0<{\overline x}\le \delta,\label{eq:compositionest1}
\end{align}
cf. \eqref{eq:compositionest}. Consider first the simplest case $0<{\overline x}\le \delta\le\delta_2\epsilon$. Then $\sigma_\epsilon({\overline x})=\sigma_\epsilon(v)=\sigma_\epsilon(\delta)=1$, for all $0\le {\overline x} \le v$, and we have
\begin{align*}
  \frac{{\overline x}^{\alpha^\epsilon-1}}{\sigma_\epsilon({\overline x})} \int_0^{\overline x}  \frac{v^{(l-1)(N^\epsilon+1)-\alpha^\epsilon} \sigma_\epsilon(v)^l}{\delta^{(l-1)(N^\epsilon+1)} \sigma_\epsilon(\delta)^{l-1}} dv &= \frac{1}{(l-1)(N^\epsilon+1)+1-\alpha^\epsilon }\left( \frac{{\overline x}}{\delta}\right)^{(l-1)(N^\epsilon+1)}\\
  &\le \frac{1}{(l-1)(N^\epsilon+1)+1-\alpha^\epsilon }\\
  &\le \frac{1}{(l-1)(N^\epsilon+1)+1-\alpha^\epsilon-l(1-\alpha^\epsilon) }\\
  &=\frac{1}{(l-1)(N^\epsilon+\alpha^\epsilon)}\\
  &= \frac{\epsilon}{l-1},
\end{align*}
and \eqref{eq:compositionest1} follows.
We are left with $0<{\overline x}\le \delta_2\epsilon<\delta$ and $\delta_2\epsilon<{\overline x}\le \delta$. For the former, we have  $\sigma_\epsilon({\overline x})=\sigma_\epsilon(v)=1$, $\sigma_\epsilon(\delta)=(\delta^{-1} \delta_2\epsilon)^{1-\alpha^\epsilon}$ and
\begin{align*}
 \frac{{\overline x}^{\alpha^\epsilon-1}}{\sigma_\epsilon({\overline x})} \int_0^{\overline x}  \frac{v^{(l-1)(N^\epsilon+1)-\alpha^\epsilon} \sigma_\epsilon(v)^l}{\delta^{(l-1)(N^\epsilon+1)} \sigma_\epsilon(\delta)^{l-1}} dv &= {\overline x}^{\alpha^\epsilon-1} \int_0^{\overline x} \frac{v^{(l-1)(N^\epsilon+1)-\alpha^\epsilon}}{\delta^{(l-1)(N^\epsilon+1)} (\delta^{-1} \delta_2\epsilon)^{(l-1)(1-\alpha^\epsilon)}}dv\\
 &=  \frac{1}{(l-1)(N^\epsilon+1)+1-\alpha^\epsilon} \left(\frac{{\overline x}}{\delta}\right)^{(l-1)(N^\epsilon+\alpha^\epsilon)} \left(\frac{{\overline x}}{\delta_2\epsilon}\right)^{(l-1)(1-\alpha^\epsilon)}\\
 &\le \frac{1}{(l-1)(N^\epsilon+1)+1-\alpha^\epsilon},\\
 &\le \frac{\epsilon}{l-1},
\end{align*}
and \eqref{eq:compositionest1} follows.
We finally consider $\delta_2\epsilon<{\overline x}\le \delta$:
\begin{align*}
 \frac{{\overline x}^{\alpha^\epsilon-1}}{\sigma_\epsilon({\overline x})} \int_0^{\overline x}  \frac{v^{(l-1)(N^\epsilon+1)-\alpha^\epsilon} \sigma_\epsilon(v)^l}{\delta^{(l-1)(N^\epsilon+1)} \sigma_\epsilon(\delta)^{l-1}} dv &=\frac{1}{(\delta_2\epsilon)^{1-\alpha^\epsilon} } \int_0^{\delta_2\epsilon} \frac{v^{(l-1)(N^\epsilon+1)-\alpha^\epsilon}}{\delta^{(l-1)(N^\epsilon+\alpha^\epsilon)} (\delta_2\epsilon)^{(l-1)(1-\alpha^\epsilon)}}dv\\
 &+\int_{\delta_2 \epsilon}^{\overline x} \frac{v^{(l-1)(N^\epsilon+1)-\alpha^\epsilon-l(1-\alpha^\epsilon)}}{\delta^{(l-1)(N^\epsilon+\alpha^\epsilon)}} dv\\
 &\le\frac{1}{(l-1)(N^\epsilon+1)+1-\alpha^\epsilon}  +\frac{1}{(l-1)(N^\epsilon+\alpha^\epsilon)}\\
 &\le \frac{2}{(l-1)(N^\epsilon+\alpha^\epsilon)}\\
 &=\frac{2 \epsilon}{l-1},
\end{align*}
and \eqref{eq:compositionest1} follows. Here we have used $\delta\ge \delta_2\epsilon$ in the denominator of the first integral on the right hand side. In turn, we obtain \eqref{eq:claim} and therefore 
\begin{align*}
 \Wert \mathcal T^\epsilon \left[\sum_{l=2}^\infty H_l(\cdot)\overline Y^l\right]\Wert_\delta &\le 2\epsilon E K_2C^{-1} R^{-1} \sum_{l=2}^\infty(CR\Wert \overline Y\Wert_\delta)^l\\
 &= 2\epsilon E K_2 \frac{CR  \Wert \overline Y\Wert_\delta^2 }{1-C R \Wert \overline Y\Wert_\delta}\\
 &\le 4\epsilon E K_2 C R \Wert \overline Y\Wert_\delta^2\quad \forall\,\Wert \overline Y\Wert_\delta<\frac12(CR)^{-1}.
\end{align*}
The fact that the mapping $\overline Y\mapsto \mathcal T^\epsilon [\sum_{l=2}^\infty H_l(\cdot)\overline Y^l]$ is $C^1$ and a contraction for all $\epsilon>0$ small enough follows from identical computations. Further details are therefore left out.

\end{proof}
%
%

The following will also be important: Define
\begin{align}\label{overlineU}
 \overline U^\epsilon(\overline x)&: = (N^\epsilon + a^\epsilon)  \overline w_{N^\epsilon}^\epsilon    \mathcal T^\epsilon\left[(\cdot)^{N^\epsilon+1}\right](\overline x);
\end{align}
this quantity corresponds to $M_0$ in \cite[Lemma 7.2]{MR4445442}. 
\begin{lemma}\label{lemma:overlineU}
Fix $\delta_2>0$ and suppose that $\epsilon^{-1}\notin \mathbb N$. Then we have the following statements regarding $\overline U$:
\begin{enumerate}
\item \label{overlineU1} $\overline U$ has an absolutely convergent power series representation 
  \begin{align}
  \overline U^\epsilon(\overline x)  =\frac{\Gamma(\alpha^{\epsilon}) \Gamma(1-\alpha^\epsilon)}{\epsilon \Gamma(\epsilon^{-1})}\sum_{k=N^\epsilon+1}^\infty \frac{\Gamma(k+a^\epsilon)}{\Gamma(k+1-\epsilon^{-1})} \overline x^k \quad \forall\, -1<\overline x<1.\label{overlineUsum}
 \end{align}
\item \label{overlineU2} For all $0<\epsilon\ll 1$, $\epsilon^{-1}\notin \mathbb N$, $0<\delta\le \frac34$ the following estimate holds:
 \begin{align}\label{eq:overlineUest}
  \Vert \overline U^\epsilon \Vert_\delta = \Wert \overline U^\epsilon \Wert_\delta \gtrless \frac{1}{1-\alpha^\epsilon} \Gamma(\alpha^\epsilon) (N^\epsilon)^{a^\epsilon+2-\alpha^\epsilon}\left(\frac{\delta}{1-\delta}\right)^{N^\epsilon+1} \sigma_\epsilon(\delta) \begin{cases}
                C_1,\\                                                                                                                                                                C_2.                                                                                                                                             \end{cases}
 \end{align}
Here $C_i=C_i(\delta_2,a^0)>0$, $i=1,2$. 
\item \label{overlineU3} Asymptotics for $\overline x=\mathcal O(\epsilon)$: Let $\overline x=\epsilon\overline x_2\in [-\epsilon\delta_2,\epsilon\delta_2]$, $\delta_2>0$ fixed. Then:
\begin{align}\label{eq:overlineU3}
 \overline U^\epsilon(\epsilon \overline x_2) =  \frac{\Gamma(\alpha^\epsilon) }{1-\alpha^\epsilon} (N^\epsilon)^{a^\epsilon+2-\alpha^\epsilon} (\epsilon \overline x_2)^{N^\epsilon+1} \left[1+\overline x_2\int_0^1 e^{(1-v)\overline x_2}  v^{1-\alpha^\epsilon} dv+o(1)\right],
\end{align}
with $o(1)$ uniform with respect to $\alpha^\epsilon\in (0,1)$.
\end{enumerate}


\end{lemma}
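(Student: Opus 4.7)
The proof plan is to treat each of the three items in turn as direct consequences of the corresponding statements in \lemmaref{TOp0} (item \ref{TuN1}) and \lemmaref{TOp} (items \ref{phiest} and \ref{Tu2}), combined with the Stirling-type calculation already carried out in \eqref{this2}. The main algebraic ingredient throughout is the identity
\begin{align*}
(N^\epsilon+a^\epsilon)\,\overline w_{N^\epsilon}^\epsilon = (N^\epsilon+a^\epsilon)\,\frac{\Gamma(\alpha^\epsilon)\Gamma(N^\epsilon+a^\epsilon)}{\epsilon\,\Gamma(\epsilon^{-1})} = \frac{\Gamma(\alpha^\epsilon)\Gamma(N^\epsilon+1+a^\epsilon)}{\epsilon\,\Gamma(\epsilon^{-1})},
\end{align*}
which, together with the sum in \eqref{eq:TuN1}, immediately gives the cancellation of $\Gamma(N^\epsilon+1+a^\epsilon)$ in the denominator and yields the expression for $\overline U^\epsilon$ in \eqref{overlineUsum}. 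This proves item \ref{overlineU1}; the absolute convergence for $|\overline x|<1$ is inherited from the corresponding statement for $\mathcal T^\epsilon[(\cdot)^{N^\epsilon+1}]$.

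For item \ref{overlineU2}, the starting observation is that the series \eqref{overlineUsum} has all coefficients of the same sign (the factor $\Gamma(k+1-\epsilon^{-1})$ alternates in sign with $k$ as $k\ge N^\epsilon+1$ since $\epsilon^{-1}\notin\mathbb N$, but the resulting product $\overline U^\epsilon(\overline x)$ on $0\le \overline x<1$ is positive and strictly increasing, just as $\mathcal T^\epsilon[(\cdot)^{N^\epsilon+1}]$ is, cf.\ \eqref{VertWertT}). Thus $\Vert\overline U^\epsilon\Vert_\delta=\Wert\overline U^\epsilon\Wert_\delta=\overline U^\epsilon(\delta)$. Plugging $\overline x=\delta$ into the two-sided estimate \eqref{eq:phiest} and multiplying by $(N^\epsilon+a^\epsilon)\overline w_{N^\epsilon}^\epsilon$, which by a direct application of Stirling in the form \eqref{stirling} (carried out exactly as in \eqref{this2}) equals $(1+o(1))\Gamma(\alpha^\epsilon)(N^\epsilon)^{a^\epsilon+2-\alpha^\epsilon}$ uniformly in $\alpha^\epsilon\in(0,1)$, gives the inequalities \eqref{eq:overlineUest} (possibly after absorbing the $(1+o(1))$ factor into the constants $C_1,C_2$).

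For item \ref{overlineU3}, the plan is to simply multiply the asymptotic expansion for $\mathcal T^\epsilon[(\cdot)^{N^\epsilon+1}](\epsilon\overline x_2)$ in item \ref{Tu2} of \lemmaref{TOp} by $(N^\epsilon+a^\epsilon)\overline w_{N^\epsilon}^\epsilon=(1+o(1))\Gamma(\alpha^\epsilon)(N^\epsilon)^{a^\epsilon+2-\alpha^\epsilon}$, absorbing the multiplicative $(1+o(1))$ into the bracketed $o(1)$-remainder. Uniformity with respect to $\alpha^\epsilon\in(0,1)$ is inherited directly from \lemmaref{TOp} item \ref{Tu2} (and the corresponding uniformity in \eqref{this2}).

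Since every step is a direct substitution and none of the three items requires any new estimate beyond what is already stated in the previous lemmas, the only slightly nontrivial point will be the careful tracking of uniformity in $\alpha^\epsilon\in(0,1)$ when invoking Stirling — specifically, ensuring that the factor $(1+o(1))$ arising from \eqref{this2} can be absorbed either into the constants $C_1,C_2$ (for item \ref{overlineU2}, where one should note that the bound is only required for $0<\epsilon\ll 1$) or into the bracketed $o(1)$ (for item \ref{overlineU3}). This is not really an obstacle, just a bookkeeping matter, and the proof will be quite short.
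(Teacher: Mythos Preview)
Your proposal is correct and follows essentially the same approach as the paper: for item~\ref{overlineU1} both combine the algebraic identity $(N^\epsilon+a^\epsilon)\overline w_{N^\epsilon}^\epsilon = \Gamma(\alpha^\epsilon)\Gamma(N^\epsilon+1+a^\epsilon)/(\epsilon\Gamma(\epsilon^{-1}))$ with \eqref{eq:TuN1}; for item~\ref{overlineU2} both use \eqref{VertWertT}, the Stirling calculation \eqref{this2}, and the two-sided bound \eqref{eq:phiest}; for item~\ref{overlineU3} both multiply the expansion from \lemmaref{TOp} item~\ref{Tu2} by the same Stirling factor. One small correction: your parenthetical remark that ``$\Gamma(k+1-\epsilon^{-1})$ alternates in sign with $k$'' is wrong --- for $k\ge N^\epsilon+1$ one has $k+1-\epsilon^{-1}\ge 2-\alpha^\epsilon>1$, so these values are all positive --- but this aside is not load-bearing since you correctly rely on \eqref{VertWertT} and the fact that $\overline U^\epsilon$ is a positive constant multiple of $\mathcal T^\epsilon[(\cdot)^{N^\epsilon+1}]$.
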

\begin{proof}

We prove the items \ref{overlineU1}--\ref{overlineU3} successively in the following.

\textit{Proof of item \ref{overlineU1}}.  
For \eqref{overlineUsum} we use item \ref{TuN1} of Lemma \ref{lemma:TOp0} and \eqref{eq:kinf}:
\begin{equation}\label{eq:iamdone}
\begin{aligned}
 & \overline w_{N^\epsilon}^\epsilon  \times  (N^\epsilon + a^\epsilon) \mathcal T^\epsilon\left[(\cdot)^{N^\epsilon+1}\right](\overline x) \\
 &= \frac{\Gamma(\alpha^\epsilon) \Gamma(N^\epsilon+a^\epsilon)}{\epsilon \Gamma(\epsilon^{-1})}\times \frac{(N^\epsilon + a^\epsilon) \Gamma(1-\alpha^\epsilon)}{\Gamma(N^\epsilon+1+a^\epsilon)} \sum_{k=N^{\epsilon}+1}^\infty \frac{\Gamma(k+a^\epsilon)}{\Gamma(k+1-\epsilon^{-1})}\overline x^k\\
 &= \frac{\Gamma(\alpha^\epsilon)\Gamma(1+\alpha^\epsilon)}{\epsilon \Gamma(\epsilon^{-1})}\sum_{k=N^{\epsilon}+1}^\infty \frac{\Gamma(k+a^\epsilon)}{\Gamma(k+1-\epsilon^{-1})}\overline x^k.
\end{aligned}
\end{equation}
 
 \textit{Proof of item \ref{overlineU2}}. 
Next, regarding  \eqref{eq:overlineUest} we use \eqref{VertWertT}, \eqref{barwkeps}, \eqref{eq:Gamma}, \eqref{stirling}, 
\begin{equation}\label{eq:overlineU2}
\begin{aligned}
\overline U^\epsilon  &=\frac{\Gamma(\alpha^\epsilon) \Gamma(N^\epsilon+a^\epsilon+1)}{\epsilon\Gamma(\epsilon^{-1})}  \mathcal T^\epsilon\left[(\cdot)^{N^\epsilon+1}\right] = \frac{\Gamma(\alpha^\epsilon) \Gamma(N^\epsilon+a^\epsilon+1)}{(1-\epsilon) \Gamma(N^\epsilon+\alpha^\epsilon-1)} \mathcal T^\epsilon\left[(\cdot)^{N^\epsilon+1}\right]\\
  & =\Gamma(\alpha^\epsilon) (1+o(1))  (N^\epsilon)^{a^\epsilon+2-\alpha^\epsilon} \mathcal T^\epsilon\left[(\cdot)^{N^\epsilon+1}\right],
\end{aligned}
\end{equation}
and subsequently \eqref{eq:phiest}.

\textit{Proof of item \ref{overlineU3}}. Finally, for \eqref{eq:overlineU3} we use \eqref{eq:overlineU2} and Lemma \ref{lemma:TOp} item \ref{Tu2}:
\begin{align*}
 \overline U^\epsilon(\epsilon \overline x_2) &= \Gamma(\alpha^\epsilon) (1+o(1))  (N^\epsilon)^{a^\epsilon+2-\alpha^\epsilon} \mathcal T^\epsilon\left[(\cdot)^{N^\epsilon+1}\right](\epsilon \overline x_2)\\
 &=\Gamma(\alpha^\epsilon) (N^\epsilon)^{a^\epsilon+2-\alpha^\epsilon} \frac{1}{1-\alpha^\epsilon} (\epsilon \overline x_2)^{N^\epsilon+1} \left[1+\overline x_2\int_0^1 e^{(1-v)\overline x_2}  v^{1-\alpha^\epsilon} dv+o(1)\right].
\end{align*}

\end{proof}

\subsection{Solving for the analytic weak-stable manifold}

In the following, we write $\overline m^\epsilon$ as (\rspp{recall the notation in \eqref{eq:jn} and \eqref{eq:rn}})
\begin{align}
 \overline m^\epsilon =  j^{N^\epsilon}[\overline m^\epsilon]+\overline M^\epsilon,\quad r^{N^\epsilon}[\overline m^\epsilon]=:\overline M^\epsilon;\label{eq:yexpansion}
\end{align}
we will use $\mathcal T^\epsilon$ to set up a fixed-point equation for $\overline M^\epsilon$. 
For this purpose, let
$$\overline G^\epsilon(\overline x,\overline Y):= \overline g^\epsilon(\overline x,j^{N^\epsilon}[\overline m^\epsilon](\overline x)+\overline Y)-\overline g^\epsilon(\overline x,j^{N^\epsilon}[\overline m^\epsilon](\overline x)).$$
We clearly have
\begin{align}\label{GG}
 \overline G^\epsilon(\overline x,\overline Y) =  \overline x^2 \overline G_1^\epsilon(\overline x) \overline Y+ \sum_{l\ge 2} \overline G_l^\epsilon(\overline x) \overline Y^l,
\end{align}
and for any $\overline D>0$:
$$\Vert \overline G_l^\epsilon\Vert_\delta \le \mu C \overline D^{-l+1}\quad\forall\, l\in \mathbb N,$$ with $C>0$ independent of $\mu$ and $\epsilon$, provided that \eqref{delta1} hold and that $0<\epsilon\ll 1$.  This is essentially identical to the computation leading to \eqref{eq:radiusproof} (with $j^{N^\epsilon-1}$ replaced by $j^{N^\epsilon}$), using $\Vert j^{N^\epsilon}[\overline m^\epsilon] \Vert_\delta\le K$, see \eqref{eq:JNbound}. 


The following result corresponds to \cite[Lemma 7.3]{MR4445442}.
\begin{lemma}
$\overline Y=M^\epsilon$ satisfies the fixed-point equation:
\begin{equation}\label{Yfp}
\begin{aligned}
 \overline Y(\overline x) &= 
 (N^\epsilon +a^\epsilon)(-1)^{N^\epsilon} \overline w_{N^\epsilon}^\epsilon \overline S^\epsilon_{N^\epsilon}\mathcal T^\epsilon\left[(\cdot)^{N^\epsilon+1}\right](\overline x)\\
 &-\mathcal T^\epsilon\left[r^{N^\epsilon}\left[\overline g^\epsilon(\cdot,j^{N^\epsilon}[\overline m^\epsilon])\right]\right](\overline x)-\mathcal T^\epsilon\left[ \overline G^\epsilon(\cdot,\overline Y)\right](\overline x).
\end{aligned}
\end{equation}
\end{lemma}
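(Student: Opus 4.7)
The plan is to derive \eqref{Yfp} by substituting the decomposition $\overline m^\epsilon = j^{N^\epsilon}[\overline m^\epsilon] + \overline Y$ with $\overline Y = \overline M^\epsilon$ into the differential equation \eqref{ybarEqn} satisfied by the analytic weak-stable manifold, and then inverting the resulting linear equation for $\overline Y$ via the operator $\mathcal T^\epsilon$ (Lemma \ref{lemma:TOp0} item \ref{Tdiff}).

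First, I would apply the linear operator
\begin{align*}
L := \epsilon\overline x(\overline x - 1)\frac{d}{d\overline x} + (1+\epsilon a^\epsilon \overline x)
\end{align*}
to the partial sum $P := j^{N^\epsilon}[\overline m^\epsilon] = \sum_{k=2}^{N^\epsilon}\overline m_k^\epsilon \overline x^k$. A direct term-by-term computation, combined with the recursion relation $(1-\epsilon k)\overline m_k^\epsilon + \epsilon(k-1+a^\epsilon)\overline m_{k-1}^\epsilon = \epsilon \overline{\mathcal G}^\epsilon[\overline m^\epsilon]_k$ from Lemma \ref{lemma:barmkepsrec} (with $\overline m_1^\epsilon = 0$), collapses the internal sum and leaves a single uncompensated boundary term at order $N^\epsilon+1$:
\begin{align*}
L[P] = \sum_{k=2}^{N^\epsilon} \epsilon \overline{\mathcal G}^\epsilon[\overline m^\epsilon]_k\, \overline x^k + \epsilon(N^\epsilon+a^\epsilon)\overline m_{N^\epsilon}^\epsilon\, \overline x^{N^\epsilon+1}.
\end{align*}

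Next, subtracting this identity from the equation $L[\overline m^\epsilon] = \epsilon \overline g^\epsilon(\overline x, \overline m^\epsilon) = \epsilon \overline g^\epsilon(\overline x, P) + \epsilon \overline G^\epsilon(\overline x, \overline Y)$ (using the definition of $\overline G^\epsilon$), I would observe the key algebraic cancellation: since $\overline{\mathcal G}^\epsilon[y]_k$ depends only on $y_2,\dots,y_{k-2}$ (cf.\ Lemma \ref{lemma:barSkeps}), truncation at order $N^\epsilon$ is invisible for $k\le N^\epsilon$, so
\begin{align*}
\overline g^\epsilon(\overline x,P) - \sum_{k=2}^{N^\epsilon}\overline{\mathcal G}^\epsilon[\overline m^\epsilon]_k\,\overline x^k = r^{N^\epsilon}\!\bigl[\overline g^\epsilon(\cdot,j^{N^\epsilon}[\overline m^\epsilon])\bigr].
\end{align*}
This yields an inhomogeneous linear ODE for $\overline Y$ of the form $L[\overline Y] = \epsilon\, \widetilde H$, where $\widetilde H$ collects the remainder, the boundary monomial, and $-\overline G^\epsilon(\cdot,\overline Y)$.

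Finally, since $\overline Y = \overline M^\epsilon = r^{N^\epsilon}[\overline m^\epsilon]$ satisfies $j^{N^\epsilon}[\overline Y]=0$ by construction, the uniqueness statement in Lemma \ref{lemma:TOp0} item \ref{Tdiff} applies (after flipping a sign to match the $(1-\overline x)$-convention used in \eqref{Theqn}), and gives $\overline Y = -\mathcal T^\epsilon[\widetilde H]$. Expanding and using $\overline m_{N^\epsilon}^\epsilon = (-1)^{N^\epsilon}\overline w_{N^\epsilon}^\epsilon \overline S_{N^\epsilon}^\epsilon$ from Lemma \ref{lemma:barSkeps} reproduces \eqref{Yfp} verbatim. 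The only delicate point is the verification of the cancellation identity for $L[P]$ and the book-keeping of the $\overline x^{N^\epsilon+1}$ boundary term; everything else is either linearity of $\mathcal T^\epsilon$ or an application of already-proved results.
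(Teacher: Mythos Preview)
Your proposal is correct and follows essentially the same route as the paper: derive the linear ODE satisfied by $\overline M^\epsilon$ (the paper states this ODE directly, while you spell out the computation of $L[P]$ and the cancellation $\overline{\mathcal G}^\epsilon[\overline m^\epsilon]_k=\overline{\mathcal G}^\epsilon[P]_k$ for $k\le N^\epsilon$), then invert via Lemma~\ref{lemma:TOp0} item~\ref{Tdiff} with the sign flip between \eqref{Theqn} and \eqref{ybarEqn}, and finally substitute $\overline m_{N^\epsilon}^\epsilon=(-1)^{N^\epsilon}\overline w_{N^\epsilon}^\epsilon\overline S_{N^\epsilon}^\epsilon$.
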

\begin{proof}
With $\overline m^\epsilon_k$ given by \eqref{barmkeps},  we obtain
 \begin{align*}
 \epsilon \overline x(\overline x-1) \frac{d\overline M^\epsilon}{d\overline x}
  +(1+\epsilon a^\epsilon\overline x)\overline M^\epsilon &= - \epsilon (N^\epsilon+ a^\epsilon)\overline m^\epsilon_{N^\epsilon} \overline x^{N^\epsilon+1}\\
  &+\epsilon\left(r^{N^\epsilon}\overline g^\epsilon(\overline x,j^{N^\epsilon}[\overline m^\epsilon](\overline x))+ \overline G^\epsilon(\overline x,\overline M^\epsilon)\right).
\end{align*}
Using  $\overline m_{N^\epsilon}^\epsilon = (-1)^{N^\epsilon} \overline w_{N^\epsilon}^\epsilon \overline S_{N^{\epsilon}}^\epsilon$, \eqref{overlineU} and Lemma \ref{lemma:TOp0} item \ref{Tdiff}, the result follows;  notice again the change of sign when comparing \eqref{Theqn} and \eqref{ybarEqn}.
\end{proof}

We denote the right hand side of \eqref{Yfp} by $\mathcal F(\overline Y)(\overline x)$:
\begin{equation}\label{eq:mathcalF}
\begin{aligned}\mathcal F(\overline Y):&=(N^\epsilon +a^\epsilon)(-1)^{N^\epsilon} \overline w_{N^\epsilon}^\epsilon \overline S^\epsilon_{N^\epsilon}\mathcal T^\epsilon\left[(\cdot)^{N^\epsilon+1}\right]\\
&-\mathcal T^\epsilon\left[r^{N^\epsilon}\left[\overline g^\epsilon(\cdot,j^{N^\epsilon}[\overline m^\epsilon])\right]\right]-\mathcal T^\epsilon\left[ \overline G^\epsilon(\cdot,\overline Y)\right],\end{aligned}\end{equation}
and
define the closed ball
  $$\mathcal B^C:=\mathcal D^\epsilon_\delta\cap \left\{\Wert Y\Wert_\delta \le C\right\},$$
of radius $C>0$.
%
%
%
\begin{proposition}\label{prop:fixpoint}
Suppose that $S_\infty^0\ne 0$ and that $\mu\in [0,\mu_0)$ with $\mu_0>0$ small enough. Then for any $K>0$, $\alpha^\epsilon\in (0,1)$, $N^\epsilon\gg 1$, there is an $0<\overline \delta \le \frac34$ such that for any $0<\delta\le \overline \delta$ the following holds:
\begin{enumerate}
 \item \label{condK} Boundedness of the ``leading order term'': \begin{align}
 \Vert  \overline w_{N^\epsilon}^\epsilon (\cdot)^{N^\epsilon}\Vert_{\delta} +\Vert  \overline U^\epsilon\Vert_\delta \le \frac{K}{\vert S_{\infty}^0 \vert},\label{eq:condK}
\end{align}
\item \label{Fcontraction} $\mathcal F:\mathcal B^{2K}_\delta\rightarrow \mathcal B^{2K}_\delta$, defined by \eqref{eq:mathcalF}, is a contraction. 
\end{enumerate}
\end{proposition}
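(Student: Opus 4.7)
I will check the two items separately. For item \ref{condK}, the two sup-norms are $\Vert \overline w_{N^\epsilon}^\epsilon (\cdot)^{N^\epsilon}\Vert_\delta = \overline w_{N^\epsilon}^\epsilon \delta^{N^\epsilon}$ and $\Vert \overline U^\epsilon\Vert_\delta$. Using the asymptotics \eqref{this2} for the former and the explicit upper bound in Lemma \ref{lemma:overlineU} item \ref{overlineU2} for the latter, both quantities take the form $c_1(\alpha^\epsilon,N^\epsilon)\,\delta^{N^\epsilon}$ and $c_2(\alpha^\epsilon,N^\epsilon)\,\delta^{N^\epsilon+1}$ with factors independent of $\delta$. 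For each fixed $N^\epsilon\gg1$ and $\alpha^\epsilon\in(0,1)$ the two upper bounds tend to $0$ as $\delta\to0$, so I choose $\overline\delta\in(0,3/4]$ small enough that each quantity is at most $K/(2|S_\infty^0|)$. Any $0<\delta\le\overline\delta$ then satisfies \eqref{eq:condK}.

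For item \ref{Fcontraction}, I decompose $\mathcal F$ according to \eqref{eq:mathcalF} into three pieces $A,B,C$ corresponding to the three terms on the right-hand side. The leading piece $A=(-1)^{N^\epsilon}\overline S_{N^\epsilon}^\epsilon \overline U^\epsilon$ (by the definition \eqref{overlineU}) satisfies $\Wert A\Wert_\delta=|\overline S_{N^\epsilon}^\epsilon|\,\Wert \overline U^\epsilon\Wert_\delta$. By Lemma \ref{lemma:SNepslimit} I have $|\overline S_{N^\epsilon}^\epsilon|\le(1+\eta)|S_\infty^0|$ for $N^\epsilon\gg1$, and item \ref{condK} gives $\Wert \overline U^\epsilon\Wert_\delta\le K/|S_\infty^0|$, hence $\Wert A\Wert_\delta\le(1+\eta)K$.

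For the residual piece $B=-\mathcal T^\epsilon[r^{N^\epsilon}[\overline g^\epsilon(\cdot,j^{N^\epsilon}[\overline m^\epsilon])]]$, I use the coefficient estimate from Lemma \ref{lemma:tildeg} (the statement there is for $j^{N^\epsilon-1}$, but the same proof applies to $j^{N^\epsilon}$ thanks to \eqref{eq:JNbound}), which yields $|\overline{\mathcal G}^\epsilon_{N^\epsilon+1}|\le C\overline w_2^\epsilon \overline w_{N^\epsilon-1}^\epsilon$ and $|\overline{\mathcal G}^\epsilon_k|\le C(\overline w_k^\epsilon)^2 e^{Q_4^\epsilon(k-4)}$ for $k\ge N^\epsilon+1$. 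The leading $k=N^\epsilon+1$ contribution is $\le C\overline w_2^\epsilon \overline w_{N^\epsilon-1}^\epsilon \,\mathcal T^\epsilon[(\cdot)^{N^\epsilon+1}](\delta)$; since $\overline w_2^\epsilon=\mathcal O(\epsilon)$ by Lemma \ref{lemma:wkeps} item \ref{C0} and $\overline w_{N^\epsilon-1}^\epsilon/\overline w_{N^\epsilon}^\epsilon=\alpha^\epsilon/(N^\epsilon-1+a^\epsilon)=\mathcal O(\epsilon\alpha^\epsilon)$, this is $\mathcal O(\epsilon)\Wert \overline U^\epsilon\Wert_\delta=\mathcal O(\epsilon K)$. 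The tail $k\ge N^\epsilon+2$ is handled by summing the individual bounds $\Wert\mathcal T^\epsilon[(\cdot)^k]\Wert_\delta$: the extra factor $\delta^{k-N^\epsilon-1}$ combined with the growth $e^{Q_4^\epsilon k}$ (where $Q_4^\epsilon=o(1)$ by \eqref{AepsBepsApp}) gives geometric convergence for $\delta\le\overline\delta$, and the total contribution is again $\mathcal O(\epsilon K)$. So $\Wert B\Wert_\delta\le \eta K$ for $\epsilon$ small.

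For the nonlinear piece $C=-\mathcal T^\epsilon[\overline G^\epsilon(\cdot,\overline Y)]$ I use the decomposition \eqref{GG}: the linear-in-$\overline Y$ contribution $\overline x^2\overline G_1^\epsilon(\overline x)\overline Y$ is bounded by Lemma \ref{lemma:TOp} item \ref{boundT2i} (with $i=2$) together with $\Vert \overline G_1^\epsilon\Vert_\delta\le \mu C$, yielding $\Wert\mathcal T^\epsilon[(\cdot)^2\overline G_1^\epsilon \overline Y]\Wert_\delta\le \tfrac12\mu C K_2\delta^2\Wert\overline Y\Wert_\delta$; the higher-order part is controlled by Lemma \ref{lemma:TOp} item \ref{composition} with $R=\overline D^{-1}$ and $E=\mu C$, giving $\mathcal O(\epsilon\mu)\Wert\overline Y\Wert_\delta^2$, provided I pick $\overline D>4CK$ (which is compatible with Lemma \ref{lemma:tildeg}). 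Both contributions are $o_{\mu\to0}(1)\cdot K$ on $\mathcal B^{2K}_\delta$. Adding $\Wert A\Wert_\delta+\Wert B\Wert_\delta+\Wert C\Wert_\delta\le(1+\eta)K+\eta K+o(1)K$, which is at most $2K$ for $\eta,\mu,\epsilon$ sufficiently small; this yields the self-mapping property. Since only $C$ depends on $\overline Y$, the contraction property follows from \eqref{DY} (giving $\mathcal O(\epsilon)$ for the higher-order part) combined with the estimate $\mathcal O(\mu)$ on the linear-in-$\overline Y$ part, both of which can be made strictly less than $1/2$.

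The main obstacle is the uniform (in $\alpha^\epsilon\in(0,1)$) control of the residual piece $B$: the coefficient growth from $e^{Q_4^\epsilon(k-4)}$ in Lemma \ref{lemma:tildeg} combined with the large-$k$ behavior of $\Wert\mathcal T^\epsilon[(\cdot)^k]\Wert_\delta$ must be handled carefully. The decisive gain is that the leading coefficient $\overline{\mathcal G}^\epsilon_{N^\epsilon+1}$ carries an extra factor $\overline w_2^\epsilon=\mathcal O(\epsilon)$ absent from $\overline U^\epsilon$, which makes the whole contribution small in $\epsilon$; the tail then converges geometrically thanks to $\delta\le 3/4$ and the convexity estimates encoded in $Q_4^\epsilon,P_4^\epsilon$ of Lemma \ref{lemma:wkeps}.
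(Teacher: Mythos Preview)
Your overall strategy matches the paper's: split $\mathcal F$ into the three terms of \eqref{eq:mathcalF}, bound the first by $(1+o(1))K$ via \eqref{eq:condK} and Lemma~\ref{lemma:SNepslimit}, show the second is $\mathcal O(\epsilon)$, and control the third by $\mathcal O(\mu)$ using items~\ref{boundT2i} and~\ref{composition} of Lemma~\ref{lemma:TOp}. Item~\ref{condK} and the pieces $A$, $C$ are handled essentially as in the paper.

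There is, however, a genuine gap in your treatment of the residual piece $B$. You assert that the coefficient bound \eqref{eq:gkest} of Lemma~\ref{lemma:tildeg} ``applies to $j^{N^\epsilon}$ thanks to \eqref{eq:JNbound}''. This is not justified. The proof of \eqref{eq:gkest} goes through Lemma~\ref{lemma:yklNew}, which in turn rests on the convolution estimates \eqref{eq:C51}--\eqref{eq:C52}; all of these are formulated for the truncation at order $N^\epsilon-1$ and for the seminorm \eqref{eq:seminorm}. The missing coefficient $\overline m_{N^\epsilon}^\epsilon$ has weight $\overline w_{N^\epsilon}^\epsilon\sim\Gamma(\alpha^\epsilon)(N^\epsilon)^{a^\epsilon+1-\alpha^\epsilon}$, which blows up as $\alpha^\epsilon\to 0$, so the convexity/convolution machinery does \emph{not} carry over with a uniform constant, and a sup-norm bound like \eqref{eq:JNbound} is of the wrong type to substitute for the coefficient-level control those lemmas require. (Incidentally, the bound should read $(\overline w_2^\epsilon)^2$, not $(\overline w_k^\epsilon)^2$.)

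The paper resolves this precisely by \emph{not} extending the coefficient estimate: it keeps Lemma~\ref{lemma:tildeg} at $j^{N^\epsilon-1}$ and inserts $q=\overline m_{N^\epsilon}^\epsilon\,\overline x^{N^\epsilon}$ into the expansion \eqref{tildeg}, obtaining
\[
r^{N^\epsilon}\bigl[\overline g^\epsilon(\cdot,j^{N^\epsilon}[\overline m^\epsilon])\bigr]
=\sum_{k\ge N^\epsilon+1}\overline{\mathcal G}^\epsilon[j^{N^\epsilon-1}[\overline m^\epsilon]]_k\,\overline x^{k}
+\overline g_1^\epsilon(\overline x)\,\overline m_{N^\epsilon}^\epsilon\,\overline x^{N^\epsilon+2}
+\sum_{l\ge 2}\overline g_l^\epsilon(\overline x)\,(\overline m_{N^\epsilon}^\epsilon)^l\,\overline x^{lN^\epsilon+1}.
\]
The first sum is then controlled by item~\ref{Thanal} of Lemma~\ref{lemma:TOp} together with \eqref{eq:gkest} (yielding $\mathcal O(\epsilon^2)K$), while the $q$-linear and $q$-nonlinear parts are estimated separately using $\vert\overline m_{N^\epsilon}^\epsilon\vert\,\delta^{N^\epsilon}\lesssim K$ from \eqref{mNeps}--\eqref{eq:condK} and item~\ref{boundT2i2}, each giving $\mathcal O(\mu\epsilon)$. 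This is the missing step in your argument for $B$.
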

\begin{proof}
%
%
%
 
 The first claim in item \ref{condK} is obvious as $\overline U^\epsilon(0)=0$, see also Lemma \ref{lemma:overlineU} and \eqref{mNeps}. We therefore proceed to proof item \ref{Fcontraction} regarding $\mathcal F:\mathcal B^{2K}_\delta\rightarrow \mathcal B^{2K}_\delta$ being a contraction. For this purpose, we estimate each of the three terms on the right hand side \eqref{Yfp} in the norm $\Wert \cdot\Wert_\delta$, recall \eqref{Cepsnorm}.

 \textbf{The first term}:
 \begin{align*}
 (N^\epsilon +a^\epsilon)(-1)^{N^\epsilon} \overline w_{N^\epsilon}^\epsilon \overline S^\epsilon_{N^\epsilon}\mathcal T^\epsilon\left[(\cdot)^{N^\epsilon+1}\right](\overline x).
 \end{align*}
We write the first term as
\begin{align*}
(1+o(1))(-1)^{N^\epsilon} S_{\infty}^0 \overline U^\epsilon(\overline x),
\end{align*}
using Lemma \ref{lemma:SNepslimit} and \eqref{overlineU}. Then by using \eqref{overlineUsum} and the assumption on $\delta$, see \eqref{eq:condK}, we have $$\Wert (N^\epsilon +a^\epsilon)(-1)^{N^\epsilon} \overline w_{N^\epsilon}^\epsilon \overline S^\epsilon_{N^\epsilon}\mathcal T^\epsilon\left[(\cdot)^{N^\epsilon+1}\right]\Wert_\delta \le \frac43 K.$$ 
 
  \textbf{The second term}:
  \begin{align*}
  -\mathcal T^\epsilon\left[r^{N^\epsilon}\left[\overline g^\epsilon(\cdot,j^{N^\epsilon}[\overline m^\epsilon])\right]\right](\overline x).
  \end{align*}  
  For the second term, we first use \rspp{Lemma \ref{lemma:tildeg}, setting 
  \begin{align*}
  q=\overline m^\epsilon_{N^\epsilon} \overline x^{N^\epsilon}
  \end{align*}
  in \eqref{tildeg}.}
  Therefore
  \begin{align*}
r^{N^\epsilon}\left[\overline g^\epsilon(\cdot,j^{N^\epsilon}[\overline m^\epsilon]))\right](\overline x)&= \sum_{k=N^{\epsilon}+1}^\infty \rsp{\overline{\mathcal G}^\epsilon[j^{N^\epsilon-1}[\overline m^\epsilon]]_k} \overline x^k +\rspp{\overline g_1^\epsilon}(\overline x) \overline m_{N^\epsilon}^\epsilon \overline x^{N^\epsilon+2}\\
&+\sum_{l= 2}^\infty \rspp{\overline g_{l}^\epsilon}(\overline x) (\overline m_{N^\epsilon}^\epsilon)^l \overline x^{ lN^\epsilon+1},
\end{align*}
using \eqref{tildeg}.
We now use item \ref{Thanal} of Lemma \ref{lemma:TOp}:
\begin{align*}
\Wert \mathcal T^\epsilon \left[\sum_{k=N^{\epsilon}+1}^\infty \rsp{\overline{\mathcal G}^\epsilon[j^{N^\epsilon-1}[\overline m^\epsilon]]_k} (\cdot)^k\right]\Wert_\delta &\le \frac{ER^{N^\epsilon+1}}{1-R\delta} \mathcal T\left[(\cdot)^{N^\epsilon+1}\right](\delta).
\end{align*}
Here $$E=C (\overline w_2^\epsilon)^2e^{-4Q_4^\epsilon}, \quad R=e^{Q_4^\epsilon},$$ cf. \eqref{eq:gkest} and \eqref{AepsBeps}\rspp{.} Consequently,
\begin{align*}
 ER^{N^\epsilon+1} = C (\overline w_2^\epsilon)^2 e^{Q_4^\epsilon (N^\epsilon-3)}=C  \overline w_2^\epsilon \overline w_{N^\epsilon-1}^\epsilon,
\end{align*}
using \eqref{AepsBeps} and therefore $R\delta\le \frac45$ for $0<\delta\le \frac34$ for all $0<\epsilon\ll 1$. \rspp{We then} arrive at 
\begin{align*}
\Wert \mathcal T^\epsilon \left[\sum_{k=N^{\epsilon}+1}^\infty \rsp{\overline{\mathcal G}^\epsilon[j^{N^\epsilon-1}[\overline m^\epsilon]]_k} (\cdot)^k\right]\Wert_\delta&\le 5 C  \overline w_2^\epsilon\overline w_{N^\epsilon-1}^\epsilon \mathcal T\left[(\cdot)^{N^\epsilon+1}\right](\delta)\\
&\le \epsilon^2 \overline C  K,
\end{align*}
with $\overline C>0$ large enough,
for all $0<\epsilon\ll 1$. Here we have also used \eqref{eq:overlineUest}, \eqref{eq:iamdone}, \eqref{eq:condK}, $\Gamma(\alpha^\epsilon)>1$ and $ \overline w_2^\epsilon=\mathcal O(\epsilon)$ to estimate
\begin{align*}
 5 C  \overline w_2^\epsilon\overline w_{N^\epsilon-1}^\epsilon \mathcal T\left[(\cdot)^{N^\epsilon+1}\right](\delta)\le   \epsilon^2 \overline C \vert S_\infty^0\vert \Wert \overline U^\epsilon \Wert_\delta\le \epsilon^2\overline C K,
\end{align*}
in the last inequality.
Proceeding completely analogously, we obtain
\begin{align*}
  \Wert \mathcal T^\epsilon \left[\rspp{\overline g_1^\epsilon}(\cdot) \overline m_{N^\epsilon}^\epsilon (\cdot)^{N^\epsilon+2}\right]\Wert_\delta &\le  \mu C \vert S_\infty^0\vert  \overline w_{N^\epsilon}^\epsilon \delta \mathcal T\left[(\cdot)^{N^\epsilon+1}\right](\delta) \le \mu \epsilon \overline C,
\end{align*}
and by Lemma \ref{lemma:TOp} item \ref{boundT2i2}
\begin{align*}
\\
 \Wert \mathcal T^\epsilon \left[\sum_{l=2}^\infty\rspp{\overline g_l^\epsilon}(\cdot)  (\overline m_{N^\epsilon}^\epsilon)^{l} (\cdot)^{lN^\epsilon+1}\right]\Wert_{\delta} &\le \mu C K_2 \sum_{l=2}^\infty \overline D^{-l+1} \vert \overline m_{N^\epsilon} \vert^l \delta^{N^\epsilon(l-1)}\mathcal T^\epsilon\left[(\cdot)^{N^\epsilon+1}\right](\delta)\\
 &= \mu C K_2 \sum_{l=2}^\infty \left( \overline D^{-1} \vert \overline m_{N^\epsilon} \vert \delta^{N^\epsilon}\right)^{l-1} \vert \overline m_{N^\epsilon}\vert \mathcal T^\epsilon\left[(\cdot)^{N^\epsilon+1}\right](\delta)\\
 &=\mu C K_2  \frac{\overline D^{-1}\vert \overline m_{N^\epsilon} \vert \delta^{N^\epsilon}}{1- \overline D^{-1} \vert \overline m_{N^\epsilon} \vert \delta^{N^\epsilon}} \vert \overline m_{N^\epsilon}\vert \mathcal T^\epsilon\left[(\cdot)^{N^\epsilon+1}\right](\delta)\\
 &\le \mu \epsilon \overline C,
%
%
%
\end{align*}
for some $\overline C>0$ independent of $\mu$ and $\epsilon$. Here we have again used \eqref{eq:condK}
and taken $\overline D>0$ large enough. 
In total, we conclude that
\begin{align*}
 \Wert -\mathcal T^\epsilon\left[r^{N^\epsilon} \left[\overline g^\epsilon(\cdot,j^{N^\epsilon}[\overline m^\epsilon])\right]\right] \Wert_\delta &\le \mathcal O(\epsilon).
\end{align*}
It follows that the second term is bounded by $\frac13 K$ for all $0<\epsilon\ll 1$.

\textbf{The third term}:
\begin{align*}
 -\mathcal T^\epsilon\left[  \overline G^\epsilon(\cdot,\overline Y)\right](\overline x).
\end{align*}
For the third term, we also use items \ref{boundT2i} (with $i=2$) and \ref{composition} of Lemma \ref{lemma:TOp}. Specifically, by \eqref{GG}, we directly obtain $$\Wert-\mathcal T^\epsilon\left[  \overline G^\epsilon(\cdot,\overline Y)\right] \Wert_\delta = O(\mu)\le \frac13 K,$$ for $\Wert Y\Wert_\delta \le 2K$ by taking $\overline D>0$ large enough. 

In total, $\mathcal F:\mathcal B^{2K}_\delta\rightarrow \mathcal B^{2K}_\delta$ is well-defined when \eqref{eq:condK} holds and $0<\epsilon\ll 1$, $\epsilon^{-1}\notin \mathbb N$ for $\mu>0$  small enough. The fact that $\mathcal F$ is a contraction follows directly from using item \ref{composition} of Lemma \ref{lemma:TOp} on the third term, see  \eqref{DY}. In fact, we find that the Lipschitz constant is $O(\mu)$ for all $0<\epsilon\ll 1$ (independently of $\alpha^\epsilon\in (0,1)$). 
\end{proof}

\begin{proposition}\label{prop:final}
Assume that the conditions of Proposition \ref{prop:fixpoint} hold true and that \eqref{eq:condK} holds. Let $\overline M^\epsilon$ denote the solution of the fixed-point equation \eqref{Yfp} \rsp{and define
\begin{align*}
\overline B^\epsilon(\overline x):=&j^{N^\epsilon-1}[\overline m^\epsilon](\overline x) -\mathcal T^\epsilon\left[r^{N^\epsilon}\left[\overline g^\epsilon(\cdot,j^{N^\epsilon}[\overline m^\epsilon])\right]\right](\overline x)-\mathcal T^\epsilon \left[\overline G^\epsilon(\cdot,\overline M^\epsilon)\right](\overline x),\\
 \overline V^\epsilon(\overline x): =&\overline w_{N^\epsilon}^\epsilon \overline x^{N^\epsilon} +\overline U^\epsilon(\overline x).
 \end{align*}
Then 
\begin{equation}\label{eq:mepsExpansion}
\begin{aligned}
 \overline m^\epsilon(\overline x) =& \overline B^\epsilon(\overline x)+(1+o(1))(-1)^{N^\epsilon}S_{\infty}^0\overline V^\epsilon(\overline x), 
%
\end{aligned}
\end{equation}
for all $0\le \overline x \le \delta$.}
Moreover, the following estimates hold true:
\begin{align}\label{eq:BVest}
 \Vert \overline B^\epsilon\Vert_{\delta}=\mathcal O(\epsilon),\quad \Vert\overline V^\epsilon(\overline x) \Vert_\delta \le \frac{K}{\vert S_\infty\vert}.
\end{align}

\end{proposition}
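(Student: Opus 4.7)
The plan is to start from the partial-sum decomposition \eqref{eq:yexpansion} and substitute the fixed-point equation \eqref{Yfp} for $\overline M^\epsilon$; the identity \eqref{eq:mepsExpansion} will then emerge by recognizing the common prefactor $(-1)^{N^\epsilon}\overline S_{N^\epsilon}^\epsilon$ and replacing it by $(1+o(1))S_\infty^0$ via Lemma \ref{lemma:SNepslimit}.

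Concretely, I would first write
\begin{align*}
 \overline m^\epsilon(\overline x) = j^{N^\epsilon-1}[\overline m^\epsilon](\overline x) + \overline m_{N^\epsilon}^\epsilon \overline x^{N^\epsilon} + \overline M^\epsilon(\overline x),
\end{align*}
use $\overline m_{N^\epsilon}^\epsilon = (-1)^{N^\epsilon}\overline w_{N^\epsilon}^\epsilon \overline S_{N^\epsilon}^\epsilon$ from Lemma \ref{lemma:barSkeps}, insert \eqref{Yfp}, and observe from the definition \eqref{overlineU} of $\overline U^\epsilon$ that
\begin{align*}
(N^\epsilon+a^\epsilon)(-1)^{N^\epsilon}\overline w_{N^\epsilon}^\epsilon \overline S_{N^\epsilon}^\epsilon \mathcal T^\epsilon\left[(\cdot)^{N^\epsilon+1}\right](\overline x) = (-1)^{N^\epsilon}\overline S_{N^\epsilon}^\epsilon \overline U^\epsilon(\overline x).
\end{align*}
Grouping the two $(-1)^{N^\epsilon}\overline S_{N^\epsilon}^\epsilon$-contributions and applying Lemma \ref{lemma:SNepslimit} produces the asserted expansion, with the remaining three terms coinciding precisely with the definition of $\overline B^\epsilon$ and with $\overline w_{N^\epsilon}^\epsilon \overline x^{N^\epsilon}+\overline U^\epsilon(\overline x)=\overline V^\epsilon(\overline x)$.

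The bound $\Vert \overline V^\epsilon\Vert_\delta \le K/|S_\infty^0|$ follows from the triangle inequality applied to hypothesis \eqref{eq:condK} of Proposition \ref{prop:fixpoint}. For $\Vert \overline B^\epsilon\Vert_\delta = \mathcal O(\epsilon)$ I would dominate the three pieces of $\overline B^\epsilon$ separately: piece (i) $\Vert j^{N^\epsilon-1}[\overline m^\epsilon]\Vert_\delta = \mathcal O(\epsilon)$ is exactly Lemma \ref{lemmaJN_1}; piece (ii) $\Wert \mathcal T^\epsilon[r^{N^\epsilon}[\overline g^\epsilon(\cdot,j^{N^\epsilon}[\overline m^\epsilon])]]\Wert_\delta = \mathcal O(\epsilon)$ is precisely the ``second term'' computation already carried out inside the proof of Proposition \ref{prop:fixpoint}, using the decomposition \eqref{tildeg} of Lemma \ref{lemma:tildeg} together with items \ref{Thanal}, \ref{boundT2i}, and \ref{boundT2i2} of Lemma \ref{lemma:TOp}; piece (iii) $\Wert \mathcal T^\epsilon[\overline G^\epsilon(\cdot,\overline M^\epsilon)]\Wert_\delta$ splits via \eqref{GG} into the linear-in-$\overline Y$ part $\overline x^2 \overline G_1^\epsilon \overline M^\epsilon$ (bounded by item \ref{boundT2i} with $i=2$) and the quadratic-and-higher part, for which item \ref{composition} of Lemma \ref{lemma:TOp} directly delivers an $\mathcal O(\epsilon)\Wert \overline M^\epsilon\Wert_\delta^2$ bound. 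The inequality $\Vert \cdot\Vert_\delta \le \Wert \cdot\Wert_\delta$ then transfers each estimate to the required sup norm.

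The main obstacle lies in piece (iii): item \ref{boundT2i} alone only yields an $\mathcal O(\mu)$ contribution for the linear part, since the estimate $\Vert \overline G_1^\epsilon \Vert_\delta \le \mu C$ from Lemma \ref{lemma:tildeg} is not quantitatively small in $\epsilon$. To upgrade this to $\mathcal O(\epsilon)$ I would exploit the structural vanishing $h_{k,1}^0 = 0$ of Remark \ref{remark:proph1} together with the extra $\epsilon^{k-1}$ that the rescaling \eqref{overlineg} places in front of every $h_{k,1}^\epsilon \overline x^k \overline y$ monomial of $\overline h^\epsilon$, and the fact that $j^{N^\epsilon}[\overline m^\epsilon] = \mathcal O(\overline x^2)$ so that all cross-terms with $l\ge 2$ in $\partial_y \overline h^\epsilon(\overline x, j^{N^\epsilon}[\overline m^\epsilon])$ pick up additional $\epsilon$-factors. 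Combined, these refine $\Vert \overline G_1^\epsilon\Vert_\delta$ to $\mathcal O(\mu\epsilon)$, which closes the argument.
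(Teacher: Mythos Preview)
Your approach mirrors the paper's exactly: substitute \eqref{Yfp} into \eqref{eq:yexpansion}, recognize $\overline S_{N^\epsilon}^\epsilon=(1+o(1))S_\infty^0$ via Lemma \ref{lemma:SNepslimit}, and bound the three constituents of $\overline B^\epsilon$ by appealing to Lemma \ref{lemmaJN_1} and the second- and third-term computations inside the proof of Proposition \ref{prop:fixpoint}. The paper's own proof is much terser but follows the same line.

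You also correctly flag a point the paper glosses over: the third-term estimate in the proof of Proposition \ref{prop:fixpoint} is only recorded as $\mathcal O(\mu)$, whereas the claim $\Vert\overline B^\epsilon\Vert_\delta=\mathcal O(\epsilon)$ requires more. Your proposed refinement via the $\epsilon$-weights in $\overline h^\epsilon$ is valid, and is in fact precisely the mechanism the authors allude to in Section \ref{sec:discussion}. One minor simplification: the structural vanishing $h_{k,1}^0=0$ is not actually needed for this step---the rescaling factors $\epsilon^{k-1}$ (for the $l=1$ part, $k\ge 2$) and $\epsilon^{k+l-2}$ (for $k\ge 1$, $l\ge 2$) by themselves force every contribution to $\partial_{\overline y}\overline h^\epsilon$ to carry at least one factor of $\epsilon$; combined with the $\delta^2$ gain from item \ref{boundT2i} (which absorbs the $\delta^{-2}$ coming from $\overline x^{-2}j^{N^\epsilon}[\overline m^\epsilon]$), this yields the $\mathcal O(\epsilon)$ bound for the linear piece of $\mathcal T^\epsilon[\overline G^\epsilon(\cdot,\overline M^\epsilon)]$.
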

\begin{proof}
\eqref{eq:mepsExpansion} follows directly from \eqref{eq:yexpansion} with $\overline Y=\overline M^\epsilon$ given by \eqref{Yfp}.  Subsequently,  the estimate for $\overline B^\epsilon$ follows from \eqref{eq:Vertdefn}
and the proof of Proposition \ref{prop:fixpoint} (see the second and third terms). Finally for the estimate for $\overline V^\epsilon$, we use that $\overline U^\epsilon$ is an increasing function of $\overline x\in [0,\delta]$ and
\begin{align*}
 \Vert \overline V^\epsilon\Vert_{\delta} &=\Vert \overline w_{N^\epsilon}^\epsilon (\cdot)^{N^\epsilon} + \overline U^\epsilon\Vert_{\delta}=\overline w_{N^\epsilon}^\epsilon \delta^{N^\epsilon} + \overline U^\epsilon(\delta)
 =\Vert \overline w_{N^\epsilon}^\epsilon (\cdot)^{N^\epsilon} \Vert_\delta+\Vert \overline U^\epsilon\Vert_{\delta}\le \frac{K}{\vert S_\infty^0\vert},
\end{align*}
using \eqref{eq:condK} in the final inequality.
\end{proof}

$\overline V^\epsilon$ has the following absolutely convergent power series expansion
\begin{align} \label{eq:Veps}
 \overline V^\epsilon(\overline x) = \frac{\Gamma(\alpha^\epsilon)\Gamma(1-\alpha^\epsilon) }{\epsilon\Gamma(\epsilon^{-1})}\sum_{k=N^\epsilon}^\infty \frac{\Gamma(k+a^\epsilon)}{\Gamma(k+1-\epsilon^{-1})}\overline x^k\quad -1< \overline x< 1.
\end{align}
This follows from \eqref{overlineUsum}.

\begin{remark}
  The quantity $\overline V^\epsilon$ corresponds to $\Theta_{\text{main}}$ in \cite[Eq. (7.25)]{MR4445442}.
 \end{remark}

 \subsection{Completing the proof of \lemmaref{lemma:Veps}}\label{sec:nodefinal}
 
We prove the items \ref{Vepsprop}--\ref{expansionV} successively in the following.

\textit{Proof of item \ref{Vepsprop}}. The power series expansion \eqref{eq:Veps}  of $\overline V^\epsilon$ in \eqref{eq:mepsExpansion} was proven in Proposition \ref{prop:final}. 
 The absolute convergence of this power series expansion follows from \eqref{stirling}:
 \begin{align}\label{eq:Vepscoeff}
  \frac{\Gamma(k+a^\epsilon)}{\Gamma(k+1-\epsilon^{-1}) } = (1+o(1)) k^{\epsilon^{-1}+a^\epsilon-1}.
 \end{align}
  Each term of the series $\overline V^\epsilon$ is positive for $\overline x>0$, which implies \eqref{eq:Vepsprop}.
  
  \textit{Proof of item \ref{Vepsprop2}}. For the lower bound, we use $\overline w_{N^\epsilon}^\epsilon >0$, the definition of $\overline V^\epsilon$:
  \begin{align*}
   \overline V^\epsilon(\overline x) = \overline w_{N^\epsilon}^\epsilon \overline x^{N^\epsilon} + \overline U^\epsilon(\overline x)\ge \overline U^\epsilon(\overline x) \quad \forall\, 0\le \overline x<1,
  \end{align*}
  and the lower bound of $\overline U^\epsilon$ (see \eqref{eq:overlineUest}):
  \begin{align*}
   \overline V^\epsilon(\overline x)\ge C_1 \frac{1}{1-\alpha^\epsilon}\Gamma(\alpha^\epsilon) (N^\epsilon)^{a^\epsilon+2-\alpha^\epsilon} \left(\frac{\overline x}{1-\overline x}\right)^{N^\epsilon+1}\sigma_\epsilon(\overline x)\quad \forall\, 0\le \overline x\le \frac34. 
  \end{align*}
  We take $\delta_2=\frac34$ in \eqref{eq:phibound} and obtain
  \begin{align*}
   \sigma_\epsilon(\overline x) \ge \epsilon^{1-\alpha^\epsilon}\quad \forall\, 0\le \overline x\le \frac34, 
  \end{align*}
  which together with 
  \begin{align*}
    (N^\epsilon)^{a^\epsilon+2-\alpha^\epsilon} \ge \frac12 \epsilon^{\alpha^\epsilon-a^\epsilon-2} \quad \mbox{and} \quad \frac{1}{1-\alpha^\epsilon}\Gamma(\alpha^\epsilon)>1 \quad \forall\,\alpha^\epsilon\in (0,1),\,0<\epsilon\ll 1,
    \end{align*}
    leads to 
\begin{align*}
\overline V^\epsilon(\overline x)\ge \frac12 C_1  \epsilon^{-a^\epsilon-1} \left(\frac{\overline x}{1-\overline x}\right)^{N^\epsilon+1} \quad \forall\, 0\le \overline x\le \frac34. 
\end{align*}
We now use that $a^0>-2$, recall \rsp{Hypothesis} \ref{assa}, to conclude that 
\begin{align*}
 \frac12 C_1  \epsilon^{-a^\epsilon-1}\ge \epsilon \Longleftrightarrow \frac12 C_1 \ge \epsilon^{2+a^\epsilon}\quad \forall\,0<\epsilon\ll 1.
\end{align*} 
Indeed, let $\nu=2+a^0>0$. Then by taking $0<\epsilon\ll 1$, we have that $\vert a^\epsilon-a^0\vert \le \frac12 \nu $ and hence
\begin{align*}
 \epsilon^{2+a^\epsilon}\le \epsilon^{\nu} \epsilon^{-\frac12 \nu} = \epsilon^{\frac12 \nu}\rightarrow 0\quad \mbox{for}\quad \epsilon\rightarrow 0.
\end{align*}
This completes the proof of item \ref{Vepsprop2}.

\textit{Proof of item \ref{Vepsblowup}}. The divergence with respect to $\alpha^\epsilon\rightarrow 0^+$ and $1^-$ is a direct consequence of the factors $\Gamma(\alpha^\epsilon)\Gamma(1-\alpha^\epsilon)$ in the definition of $\overline V^\epsilon$, recall \eqref{eq:Gamma0}.

\textit{Proof of item \ref{expansionV}}.
  Finally, in order to obtain \eqref{eq:expansionV} we use the form in \eqref{eq:mepsExpansion}:
 \begin{align*}
  \overline V^\epsilon(\overline x)=\overline w_{N^\epsilon}^\epsilon \overline x^{N^\epsilon}+\overline U^{\epsilon}(\overline x),
 \end{align*}
\eqref{eq:overlineU3} and
\begin{align*}
 \overline w_{N^\epsilon}^\epsilon \overline x^{N^\epsilon} = (1+o(1)) \Gamma(\alpha^\epsilon) (N^\epsilon)^{a^\epsilon+1-\alpha^\epsilon} \overline x^{N^\epsilon}.
\end{align*}
This gives
\begin{align*}
 \overline V^\epsilon(\epsilon \overline x_2) &= (1+o(1)) \Gamma(\alpha^\epsilon) (N^\epsilon)^{a^\epsilon+1-\alpha^\epsilon} (\epsilon \overline x_2)^{N^\epsilon}\\
 &+ \frac{\Gamma(\alpha^\epsilon) }{1-\alpha^\epsilon} (N^\epsilon)^{a^\epsilon+2-\alpha^\epsilon} (\epsilon \overline x_2)^{N^\epsilon+1} \left[1+\overline x_2\int_0^1 e^{(1-v)\overline x_2}  v^{1-\alpha^\epsilon} dv+o(1)\right]\\
 &=(1+o(1)) \Gamma(\alpha^\epsilon) (N^\epsilon)^{a^\epsilon+1-\alpha^\epsilon} (\epsilon \overline x_2)^{N^\epsilon} \\
 &\times \left(1+ \frac{\overline x_2}{1-\alpha^\epsilon}  \left[1+\overline x_2\int_0^1 e^{(1-v)\overline x_2}  v^{1-\alpha^\epsilon} dv+o(1)\right]\right).
\end{align*}

\subsection{Completing the proof of \thmref{main2}}\label{sec:completing}

We consider $I$ of the forms $[0,\delta]$, $0<\delta\le \frac32$, and $[-\delta,0]$, $0<\delta\le \delta_2 \epsilon$, separately in the following and prove the statement of \thmref{main2} in these cases. It will then follow that the statement is true for any $I\subset [-\delta_2\epsilon,\frac34]$ satisfying \eqref{VcondK}.

\textbf{The case $I=[0,\delta]$}. If $I=[0,\delta]$, $\delta\le \frac34$, satisfies \eqref{VcondK}, then
\begin{align*}
 \Vert \overline V^\epsilon \Vert_\delta = \Vert \overline w_{N^\epsilon}^\epsilon (\cdot)^{N^\epsilon}\Vert_\delta+\Vert \overline U^\epsilon\Vert_\delta \le K.
\end{align*}
%
We therefore apply 
Proposition \ref{prop:fixpoint} with $K$ replaced by $K\vert S_\infty^0\vert$. In this case, \thmref{main2} then follows from Proposition \ref{prop:final}, see \eqref{eq:mepsExpansion} and \eqref{eq:BVest}.

\textbf{The case $I=[-\delta,0]$}.
Now, we consider case $I=[-\delta,0]$ with $0<\delta\le\delta_2\epsilon$. In this case, we adapt the space $\mathcal D^\epsilon_\delta$ and the norm $\Wert \cdot\Wert_{\delta}$ in the following way:
\begin{align*}
\mathcal D^\epsilon_\delta:= \{H:[-\delta,0] \rightarrow \mathbb R\,\,\text{analytic}\,:\,\Wert H\Wert_{\delta} <\infty\},
\end{align*}
where
\begin{align}
 \Wert H\Wert_{\delta}:=\sup_{\overline x\in [-\delta,0)}\left| H(u) \frac{\mathcal T\left[(\cdot)^{N^\epsilon+1}\right](-\delta)}{\mathcal T\left[(\cdot)^{N^\epsilon+1}\right](\overline x)}\right|,  \label{Cepsnorm2}
\end{align}
and importantly:
\begin{align}
 0<\delta\le \delta_2\epsilon.\label{eq:delta2cond}
\end{align}
By \eqref{eq:phiest}, we have
\begin{align}
 C_1 \left(\frac{\vert \overline x\vert}{1- \overline x}\right)^{N^\epsilon+1}  \le (1-\alpha^\epsilon)\left|\mathcal T^\epsilon\left[(\cdot)^{N^\epsilon+1}\right](\overline x)\right|\le C_2 \left(\frac{\vert \overline x\vert }{1-\overline x}\right)^{N^\epsilon+1} \quad \forall\, -\delta_2\epsilon\le \overline x\le 0,\label{eq:phiest2}
\end{align}
see \eqref{eq:phidefn}.
\begin{lemma}
Fix $\delta_2>0$ and suppose that \eqref{eq:delta2cond} holds. Then the items \ref{Tbounded}--\ref{composition} in Lemma \ref{lemma:TOp} also hold true with $\Wert\cdot\Wert_{\delta}$ given by \eqref{Cepsnorm2}.
\end{lemma}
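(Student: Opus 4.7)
The plan is to mimic, step by step, the proofs of items \ref{Tbounded}--\ref{composition} of Lemma \ref{lemma:TOp}, exploiting the crucial simplification provided by the hypothesis \eqref{eq:delta2cond}: since $\delta \le \delta_2\epsilon$, the auxiliary function $\sigma_\epsilon$ from \eqref{eq:phidefn} identically equals $1$ on the whole interval $[-\delta,0]$. In each of those items, the original proof split the range of integration into two subintervals, $|\overline x| \le \delta_2\epsilon$ and $\delta_2\epsilon < |\overline x| \le \delta$, treated separately; here only the first regime is relevant and the bounds become cleaner (no logarithmic correction, in particular).

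First I would record the analogue of \eqref{eq:phiest} on the negative side: by \eqref{eq:phiest2} (itself proved as part of item \ref{phiest}), we have
\begin{align*}
C_1 \left(\frac{|\overline x|}{1-\overline x}\right)^{N^\epsilon+1} \le (1-\alpha^\epsilon)\left|\mathcal T^\epsilon[(\cdot)^{N^\epsilon+1}](\overline x)\right| \le C_2 \left(\frac{|\overline x|}{1-\overline x}\right)^{N^\epsilon+1},
\end{align*}
uniformly for $\overline x \in [-\delta_2\epsilon,0]$; in particular $|\mathcal T^\epsilon[(\cdot)^{N^\epsilon+1}](\overline x)|$ is strictly increasing in $|\overline x|$, so the norm \eqref{Cepsnorm2} is well-defined and the obvious analogue of \eqref{VertWertT} holds. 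The main estimate for item \ref{Tbounded} then follows by repeating the chain of inequalities of the original proof with $\int_0^{\overline x}$ interpreted as $-\int_{\overline x}^0$ and absolute values inserted throughout: one obtains $\Wert \mathcal T^\epsilon\Wert_\delta \le K_2/(1-\alpha^\epsilon)$ directly, without any logarithm, since $\sigma_\epsilon(\delta)=1$ on this regime.

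For items \ref{boundT2i}, \ref{boundT2i2}, and \ref{Thanal}, the identical bounds \eqref{Tuih}, \eqref{Tuih2} and the conclusion of item \ref{Thanal} follow from the corresponding ``$|\overline x| \le \delta_2\epsilon$'' branch of the original proofs, since the computations reduce to evaluating $\int_{\overline x}^0 |v|^{i-\alpha^\epsilon}dv = |\overline x|^{i+1-\alpha^\epsilon}/(i+1-\alpha^\epsilon)$ together with the geometric-series estimate of item \ref{Thanal}. For item \ref{composition}, the only nontrivial work is re-checking the key inequality \eqref{eq:compositionest1}, which here simplifies to
\begin{align*}
|\overline x|^{\alpha^\epsilon-1} \int_{\overline x}^0 \frac{|v|^{(l-1)(N^\epsilon+1)-\alpha^\epsilon}}{\delta^{(l-1)(N^\epsilon+1)}}\, dv = \frac{1}{(l-1)(N^\epsilon+1)+1-\alpha^\epsilon}\left(\frac{|\overline x|}{\delta}\right)^{(l-1)(N^\epsilon+1)} \le \frac{\epsilon}{l-1},
\end{align*}
using $(l-1)(N^\epsilon+1)+1-\alpha^\epsilon \ge (l-1)(N^\epsilon+\alpha^\epsilon) = (l-1)/\epsilon$. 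Summing the geometric series in $l$ as in the original argument then yields the desired contraction, including the $C^1$-dependence and the bound \eqref{DY}.

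The only (very mild) obstacle is bookkeeping with signs when transcribing the integral representation of $\mathcal T^\epsilon$ to $[-\delta,0]$: once one keeps the convention $v^{N^\epsilon+1}|v|^{-\alpha^\epsilon}$ in the integrand and notes that the product of sign factors inside and outside the integral cancels in the positivity/monotonicity of $|\mathcal T^\epsilon[(\cdot)^{N^\epsilon+1}](\overline x)|$, every inequality in the original proof transcribes with the substitution $\overline x \mapsto |\overline x|$. No genuinely new estimate is required, and the constants $K_2$, $C_1$, $C_2$ can be taken to be the same as in the original Lemma \ref{lemma:TOp}.
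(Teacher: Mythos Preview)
Your proposal is correct and takes essentially the same approach as the paper, whose proof consists of the single sentence ``The proof of Lemma \ref{lemma:TOp} carries over since \eqref{eq:phiest2} holds.'' You have simply filled in the details that the paper leaves implicit, namely that on $[-\delta,0]$ with $\delta\le\delta_2\epsilon$ one has $\sigma_\epsilon\equiv 1$, so only the first branch of each estimate in the original proof is needed.
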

\begin{proof}
The proof of Lemma \ref{lemma:TOp} carries over since \eqref{eq:phiest2} holds.
\end{proof}
In this way, we obtain similar versions of Proposition \ref{prop:fixpoint} (which only relies on the estimates in items \ref{Tbounded}--\ref{composition} in Lemma \ref{lemma:TOp}) and Proposition \ref{prop:final} with 
\begin{align*}
 \Vert H\Vert_\delta := \sup_{u\in [-\delta,0]} \vert H(u)\vert,
\end{align*}
using \eqref{eq:phiest2} to estimate $\overline V^\epsilon$ in the sup-norm. 
Then, by proceeding as above for $I=[0,\delta]$, 
we complete the proof of \thmref{main2} in the case $I=[-\delta,0]$, $0<\delta\le \delta_2\epsilon$, 



%
%
\section{Discussion}\label{sec:discussion}
In this paper, we have provided a detailed description of analytic weak-stable manifolds near  analytic saddle-nodes (under a certain smallness assumption of the quantity $\mu=\sup u^\epsilon$ in the general normal form, see \rsp{Hypothesis} \ref{deltacond}). In further details, we have identified the quantity $S_\infty^0$, with the property that $S_\infty^0\ne 0$ implies the following (cf. Theorems \ref{theorem:main1} and \ref{theorem:main2}):
\begin{enumerate}
\item[(R1)] \label{it:cm} The center manifold is nonanalytic.
\item[(R2)] \label{it:flapping} A certain flapping mechanism of the analytic weak-stable manifold $W^{ws}$.
\item[(R3)] \label{it:non} $W^{ws}$ does not intersect the unstable manifold of the saddle $W^u$ for all $0<\epsilon\ll 1$, $\epsilon^{-1}\notin \mathbb N$.
\end{enumerate}
 
The quantity $S_\infty^0$ is reminiscent of a Stokes constant, see e.g. \cite{stokes1902a}.
Overall our approach is inspired by \cite{MR4445442}, proving statement (R2) for a specific (rational) system. In summary, \cite{MR4445442} performs a blowup (scaling) transformation, writes the weak analytic manifold as a power series in the scaled coordinates, truncated at an order just below the resonant term, and then treats the remainder through a certain integral operator ($\mathcal T^\epsilon$). We also follow this strategy, but have brought the method into a form that is more in tune with dynamical systems theory (through normal forms, center manifolds and fixed-point arguments). 
In this way, we established (R1)--(R3) as a generic phenomena for analytic saddle-nodes (albeit still within the context of \rsp{Hypotheses} \ref{assa} and \ref{deltacond}). \rspp{We emphasize that (R3) does not follow from the results of \cite{MR4445442} when applied to their specific nonlinearity.}  We also feel that we have obtained a deeper understanding of the underlying phenomena and also streamlined the method of \cite{MR4445442} along the way. For an example of the latter, in our treatment of $\mathcal T^\epsilon$ we have used a Banach space of analytic functions $H(\overline x)=\mathcal O(\overline x^{N^\epsilon+1})$ and set up a fixed-point argument for the remainder; this approach does not depend upon \rsp{Hypothesis} \ref{deltacond} (see discussion below).


\rsp{We conjecture that \rsp{Hypotheses} \ref{assa} and \ref{deltacond} can be removed so that our statements hold true for any analytic and generic unfolding of a saddle-node (by virtue of the normal form, see Theorem \ref{thm:nf}). In fact, we believe that $a^0\le -2$  can be removed with only a few changes to our argument (we just have to handle the finite sums $${\sum_{k\ge 2\,:\,k+a^0\le 0} (\cdots),}$$ separately).  }



Let us emphasize where \rsp{Hypothesis} \ref{deltacond} is needed: It is used in the proof of $\widehat m^0\in \mathcal D^0$, see Proposition \ref{prop:m0D0}, and in the proof of the uniform boundedness of $\overline m^\epsilon$ in the semi-norm \eqref{eq:seminorm}, see Proposition \ref{prop:seminormmeps}. 

At the same time, it is important to emphasize that \textit{it is NOT needed in the treatment of the remainder, see Proposition \ref{prop:fixpoint}}. To see this, notice that in the proof of  Proposition \ref{prop:fixpoint},   we only  need that $\overline G$ has small Lipschitz-norm, see \rspp{the estimate of the third term in the proof of Proposition \ref{prop:fixpoint}}. To show this, we can first use the final condition of \eqref{eq:gkcond0}, see Remark \ref{remark:proph1}, to estimate the $\overline Y$-linear part of $\overline G$, and for the nonlinear part of $\overline G$  we can use Lemma \ref{lemma:TOp} item \ref{composition} (it is, in particular, $\mathcal O(\epsilon)$ in the norm $\Wert\cdot\Wert_\delta$ for all $\mu>0$). 

In other words, in order to remove \rsp{Hypothesis} \ref{deltacond} we just need to find alternative proofs of Proposition \ref{prop:m0D0} and Proposition \ref{prop:seminormmeps}. 

Let us focus on the former and $$\widehat m^0(x) = \sum_{k=2}^\infty {m_k^0}x^k\in \mathcal D^0\Longleftrightarrow \sup_{k\ge 2} \frac{\vert m_k^0\vert}{w_k^0} <\infty.$$ 
In \cite{MR4445442}, the authors show that their corresponding sequence $\frac{m_k^0}{w_k^0}$, $k\in \mathbb N \setminus\{1\}$, is bounded by essentially setting up a majorant equation for $S_k^0$ defined by
\begin{align}
 m_k^0 =: (-1)^k w_k^0 S_k^0,\label{eq:fuckthis}
\end{align}
see \cite[Lemma 5.4]{MR4445442}. 
It follows from \eqref{eq:recmk0} (with $f_k^0$ replaced by $\rsp{\mathcal G^0_k:=\mathcal G^0[\widehat m^0]_k}$) that $S_k^0$ satisfies the following recursion relation
\begin{align*}
 S_k^0 = S_{k-1}^0 + \frac{(-1)^k \rsp{\mathcal G^0_k} }{w_k^0},\quad w_k^0 = \Gamma(k+a^0),
\end{align*}
with $\rsp{\mathcal G^0_k}$ depending on $S_2^0,\ldots,S_{k-3}^0$ (upon using \eqref{eq:fuckthis}).
The proof of \cite[Lemma 5.4]{MR4445442} first consists of showing (using the majorant equation for $S_k^0$) that $\vert S_k^0\vert \le K_0 C_0^k$ for some $K_0>0$, $C_0>1$ and all $k$. This is Step 3 of their proof. One can take $K_0=1,C_0=e^{n}$ for some $n\in \mathbb N$. 

Then in Step 4 of the proof of \cite[Lemma 5.4]{MR4445442}, the authors show that the exponential bound can be improved: There is some $\delta=\frac{1}{m}>0$, $m\in \mathbb N$ large enough, and a $K_1>0$ large enough such that $\vert S_k^0\vert \le K_1 (C_0e^{-\delta})^k=K_1 e^{k\left(n-\frac{1}{m}\right)}$ for all $k$. Importantly, the authors of \cite{MR4445442} show that this process can be iterated (with $n$ and $m$ fixed) in the following sense: For each $l\in \mathbb N$ with $n-\frac{l}{m}\ge 0$, there is a $K_l>0$ such that $$\vert S_k^0\vert \le K_{l}e^{k(n-l\delta)}=K_l e^{k\left(n-\frac{l}{m}\right)},$$ for all $k$. Here $l\in \mathbb N$ is the number of applications of the improvement. Setting $l=nm$ then gives $\vert S_k^0\vert \le K_{nm}$ (uniform bound) for all $k$ as desired. 

For our general normal form in Theorem \ref{thm:nf}, it is straightforward to reproduce Step 3 and the existence of $C_0$ from \eqref{gk12} (without using $\mu>0$ small). However, we have not been able to reproduce the argument in Step 4 in the general framework. 
We will pursue this further (along with alternative approaches to majorize $S_k^0$) in future work. \rspp{In fact, at the time of writing, the first author of the present paper posted a preprint on arXiv, see \cite{kristiansen2024improvedgevrey1estimatesformal}, on the existence of \eqref{eq:Sinfty00} in the generic case (i.e. without the assumptions on $\mu$ and $a^0$). The author uses a separate approach based upon Borel-Laplace and the number $S_\infty^0$ is connected with a singularity in the Borel plane. The preprint \cite{kristiansen2024improvedgevrey1estimatesformal} does not address $S_\infty^0=0$, recall Remark \ref{rem:converse}.}

It is our belief that our results will find use in different areas of dynamical systems, in particular in the area of singularly perturbed systems where weak-stable manifolds play an important role. \rsp{Here we are for example thinking of the weak canard of the folded node, see \cite{beno1990a,szmolyan_canards_2001}. Having said that, our approach is inherently planar and the minimal dimension of the folded node is three (with a two-dimensional critical manifold), so progress in this direction is therefore not just a simple incremental step. At the same time, we are confident that the overall approach of the paper (using power series expansion, identifying leading order terms and setting up fixed-point formulations) has potential (and is novel) in the context of the folded node. A natural starting point to extensions in higher dimensions, could also be to consider saddle-nodes in $(x,y)\in \mathbb R^{1+n}$; this would also be interesting at the level of $\epsilon=0$ (where $S_\infty^0$ would have to be reinterpreted). As other examples of future research directions, we mention extensions to different planar bifurcations with higher co-dimension, including the unfolding of the pitchfork. This would also require an extension of $S_\infty^0$ to Poincar\'e rank $r=2$ (where $\dot x=x^{3}$), which we believe is an interesting topic in itself. Here the results of \cite{new} could be relevant. Finally, we would also like to explore connections in the future to the interesting results of C. Rousseau \cite{rousseau2005a}, see also \cite{rousseau2023a} and references therein, on the analytic classification of unfoldings of saddle-nodes.} 

\subsection*{Acknowledgement} The second author would like to acknowledge inspiring and pleasant discussions with Heinrich Freist{\"u}hler in an early phase of this research project. 

\newpage 
\bibliography{refs}
\bibliographystyle{plain}
\newpage 

\appendix

\section{Basic properties of the gamma function}\label{sec:Gamma}
The gamma function $z\mapsto \Gamma(z)$, defined for $\operatorname{Re}(z)>0$ by
\begin{align}
 \Gamma(z) = \int_0^\infty t^{z-1} e^{-t}dt,\label{eq:Gammadef}
\end{align}
will play a crucial role. We therefore collect a few well-known facts (see e.g. \cite[Chapter 5]{NIST}) that will be used throughout the manuscript. 

First, we recall that $\Gamma(n+1)=n!$ for all $n\in \mathbb N_0$, {which follows from $\Gamma(1)=1$ and the basic property}
\begin{align}
 \Gamma(z+1)=z\Gamma(z) \quad \forall \, \operatorname{Re}(z)>0.\label{eq:Gamma}
\end{align}
The gamma function can be analytically extended to the whole complex plane except zero and the negative integers (which are all simple poles); specifically, 
\begin{align}\label{eq:Gamma0}
 \lim_{x\rightarrow 0} x\Gamma(x) = \Gamma(1)=1,
\end{align}

In this paper, we will use Stirling's well-known formula:
\begin{align}\label{stirling0}
 \Gamma(x+1) = (1+o(1))\sqrt{2\pi x} \left(\frac{x}{e}\right)^x,
\end{align}
for $x\rightarrow \infty$. The following form
\begin{align}
 \frac{\Gamma(x+b)}{\Gamma(x)} = (1+o(1))  x^b,\label{stirling}
\end{align}
for $b\in \mathbb R$ and $x\rightarrow \infty$,  which can be obtained directly from \eqref{stirling0}, will also be needed. We will also use 
the reflection formula:
\begin{align}\label{reflection}
 \Gamma(z) \Gamma(1-z)= \frac{\pi}{\sin \pi z}\quad \forall\,z\notin \mathbb Z.
\end{align}
and the Euler integral of the first kind:
\begin{align}\label{eq:integral}
 \int_0^{1} (1-v)^{x-1} v^{y-1} dv = \frac{\Gamma(x)\Gamma(y)}{\Gamma(x+y)}\quad \forall\,x,y>0.
\end{align}

Finally, the digamma function $\phi$ is defined as the logarithmic derivative of the gamma function:
\begin{align}
 \phi(z): = \frac{\Gamma'(z)}{\Gamma(z)}.\label{eq:digamma}
\end{align}
It has a unique positive zero at $z\approx 1.4616312...$ and $\phi(z)$ is positive for all $z$-values larger than this number. It will be particularly important to us that $\phi$ is an increasing function of $z>0$:
\begin{align}
 \phi'(z)>0.\label{eq:digammaprop}
\end{align}

\section{Proof of Theorem \ref{thm:nf}}\applab{app:normalform}
 We first use \cite[Theorem 2.2]{rousseau2005a}, \rsp{where the following normal form (with unfolding parameter $\lambda$) is provided 
 \begin{equation}\nonumber
 \begin{aligned}
 \dot x &= \lambda-x^2,\\
  \dot y &=-y(1-a^{\lambda} x)+g^{\lambda}(x,y),
 \end{aligned}
 \end{equation}
 with
 \begin{align}
 g^{\lambda} (x,y) = (\lambda-x^2) f^{\lambda}(x)+y^2 u^{\lambda}(x,y).\label{eq:nfgapp0}
 \end{align}
 We focus on the singularity side of the bifurcation, i.e. $\lambda \ge 0$. The normal form is then analytic with respect to $\sqrt{\lambda}\ge 0$. 
Notice in comparison with \cite[Theorem 2.2]{rousseau2005a} that we reverse the direction of time and have replaced their $(a(\lambda),o(y))$ by $(-a^{\lambda},-y^2 u^{\lambda}(x,y))$, respectively. 
 We then put 
 \begin{align*}
  x=:-\widetilde x+\sqrt{\lambda},
 \end{align*}
so that 
\begin{equation}\nonumber
 \begin{aligned}
 \dot{\widetilde x} &= \widetilde x(\widetilde x-2\sqrt{\lambda}),\\
  \dot y &=-y(1-a^{\lambda} \sqrt{\lambda}+a^{\lambda}\widetilde x)+\widetilde g^{\lambda }(\widetilde x,y),
 \end{aligned}
 \end{equation}
 where $\widetilde g^{\lambda }$ is obtained from \eqref{eq:nfgapp0} and takes the same form (see \eqref{eq:nfgapp02}). We proceed to drop the tildes and then define (new tildes)
\begin{align}
\epsilon := \frac{2\sqrt{\lambda}}{\kappa(\sqrt{\lambda})},\quad \widetilde x := \frac{1}{\kappa(\sqrt{\lambda})} x.\label{eq:finaleps}
\end{align}
where
\begin{align*}
 \kappa(\sqrt{\lambda}): = 1-a^{\lambda} \sqrt{\lambda},
\end{align*}
which are all well-defined for all $0\le \lambda\ll 1$. We then obtain that
\begin{equation} \label{eq:nfapp0}
\begin{aligned}
 \widetilde x'  & =\widetilde x(\widetilde x-  \epsilon),\\
 y' &=- y(1+\widetilde a^{\epsilon} \widetilde x) +\widetilde g^{\epsilon}(\widetilde x,y),
\end{aligned}
\end{equation}
after dividing the right hand side by ${\kappa(\sqrt{\lambda})}$. This corresponds to a reparameterization of time. Here we have defined
\begin{align}
 \widetilde a^{\epsilon} = a^{\lambda( \epsilon)},\quad 
 \widetilde g^{ \epsilon} (\widetilde x,y) = \widetilde x(\widetilde x- \epsilon) \widetilde f^{ \epsilon}(\widetilde x)+y^2 \widetilde u^{ \epsilon}(x,y),\label{eq:nfgapp02}
 \end{align}
 and used that the first equation in \eqref{eq:finaleps} can be (analytically) inverted for $\sqrt{\lambda}=\sqrt{\lambda}( \epsilon)$, $\sqrt{\lambda}(0)=0$, $\frac{d\sqrt{\lambda}}{d\epsilon}(0)=\frac12$. We then drop the tildes again.}
 
 Now, in order to achieve the desired normal form in Theorem \ref{thm:nf}, we then apply three elementary transformations (T1)--(T3) to obtain \eqref{eq:nfapp0} (with tildes dropped) with $g^\epsilon$ given by \eqref{eq:gcondapp} and satisfying \eqref{eq:gkcond0app}, repeated here for convenience:
\begin{equation}\label{eq:gcond0}
\begin{aligned} 
 g^\epsilon(x,y) &= f^\epsilon(x)+u^\epsilon(x,y)\\
 f^\epsilon(x) = \sum_{k=2}^\infty f_{k}^\epsilon x^k, \quad & u^\epsilon(x,y) = \sum_{k=2}^\infty u_{k,1}^\epsilon x^k y+\sum_{k=1}^\infty\sum_{l=2}^\infty u_{k,l}^\epsilon x^k y^l,
\end{aligned}
\end{equation}
and \begin{align}
 \vert f_{k}^\epsilon\vert \le B \rho^{-k},\quad \vert u_{k,l}^\epsilon\vert \le  \mu \rho^{-k-l}\quad \mbox{and}\quad u_{k,1}^0= 0\quad \forall\,k,l\in \mathbb N,\,\epsilon \in[0,\epsilon_0),\label{eq:gkcond0fuck}
\end{align}
respectively.
The purposes of each of these successive transformations are:
\begin{enumerate} 
\item[(T1)] \label{t1} Remove the $x$-linear term of $g^\epsilon$ from \eqref{eq:nfgapp02}  ($-\epsilon x f^\epsilon(0)$) on the right hand side of the $y$-equation in \eqref{eq:nfapp0}. 
 \item[(T2)] \label{t2} Remove the $y$-linear term of the resulting nonlinearity $g^\epsilon=f^\epsilon+u^\epsilon$ obtained after application of (T1) for $\epsilon=0$: $u_{k,1}^0=0$ for all $k\in \mathbb N\setminus\{1\}$, see the last condition in \eqref{eq:gkcond0fuck}.
 \item[(T3)] \label{t3} Remove the $x=0$ part of the resulting  nonlinearity $g^\epsilon=f^\epsilon+u^\epsilon$ obtained after application of (T2): $u_{0,l}^\epsilon=0$ for all $l\in \mathbb N\setminus\{1\}$ and all $\epsilon\in [0,\epsilon_0)$, see  \eqref{eq:gcond0}.
\end{enumerate}

For (T1), we define $\widetilde y=y+\epsilon \frac{f^\epsilon(0)}{1-\epsilon}x$. 
This gives the following system
\begin{equation}\label{eq:normalformapp0}
\begin{aligned}
 \dot x &= (x-\epsilon)x,\\
 \dot{\widetilde y} &=-\widetilde y(1+\widetilde a^\epsilon x)+\widetilde g^\epsilon(x,\widetilde y),
\end{aligned}
\end{equation}
with 
\begin{align}\label{eq:gcondapp0}
\widetilde g^\epsilon(x,\widetilde y) = \sum_{k=2}^\infty \widetilde f_{k}^\epsilon x^k +\sum_{k=2}^\infty \widetilde u_{k,1}^\epsilon x^k y+\sum_{k=0}^\infty\sum_{l=2}^\infty \widetilde u_{k,l}^\epsilon x^k y^l,
\end{align}
and
\begin{align*}
 \widetilde a^\epsilon =a^\epsilon+2\epsilon \frac{f^\epsilon(0)}{1-\epsilon} h^\epsilon(0,0).
\end{align*}
This completes (T1). We drop the tildes.

For (T2), 
we introduce a new $x$-fibered diffeomorphism defined by
\begin{align*}
(x,y)\mapsto  \widetilde y = e^{-\psi(x)} y,\quad \psi(x) := \sum_{k=2}^\infty \frac{u_{k,1}^0}{k-1} x^{k-1}\Longrightarrow x^2\psi'(x) = \sum_{k=2}^\infty u_{k,1}^0 x^{k}. 
\end{align*}
A simple calculation then shows that in the new $(x,\widetilde y)$-coordinates, we obtain a system of the form \eqref{eq:normalformapp0} with $\widetilde g^\epsilon$ given by \eqref{eq:gcondapp0}, for a new $\widetilde f^\epsilon$ and a new $\widetilde u^\epsilon$ now satisfying $u_{k,1}^0=0$ for all $k\in \mathbb N$, upon dropping the tildes.  This completes (T2). 

Now, finally for (T3) we analytically linearize the $x=0$-subsystem: There is a locally defined analytic near-identity diffeomorphism $y\mapsto \widetilde y=\psi^\epsilon(y)$, $\psi^\epsilon(0)=0$, $\frac{d}{dy}\psi^\epsilon(0)=1$, depending analytically on $\epsilon\in [0,\epsilon_0)$, such that 
\begin{align*}
 \dot y = -y+\sum_{l=2}^\infty u_{0,l}^\epsilon y^l\Longrightarrow \dot{\widetilde y} &=-\widetilde y.
\end{align*}
In the coordinates $(x,\widetilde y)$, we therefore obtain the desired form \eqref{eq:nfapp0} with $g^\epsilon$ given by \eqref{eq:gcond0} and satisfying \eqref{eq:gkcond0fuck} upon dropping the tildes a final time. In particular, the estimates in \eqref{eq:gkcond0fuck} follow from Cauchy's estimate for all $\rho>0$ small enough. 
\end{document}